\newtheorem{theorem}{Theorem}[section]
\newtheorem{lemma}[theorem]{Lemma}
\newtheorem{proposition}[theorem]{Proposition}
\newtheorem{corollary}[theorem]{Corollary}
\newtheorem{conjecture}[theorem]{Conjecture}
\newtheorem{definition}[theorem]{Definition}
\DeclareMathOperator{\Gal}{\operatorname{Gal}}
\DeclareMathOperator{\Q}{\mathbf{Q}}
\DeclareMathOperator{\Z}{\mathbf{Z}}
\DeclareMathOperator{\F}{\mathbf{F}} 
\DeclareMathOperator{\G}{\mathscr{G}}
\DeclareMathOperator{\N}{\mathbf{N}}
\DeclareMathOperator{\Ker}{\mathrm{ker}}
\DeclareMathOperator{\Spec}{\operatorname{Spec}}
\DeclareMathOperator{\Spf}{\operatorname{Spf}}
\DeclareMathOperator{\Lie}{\mathrm{Lie}}
\DeclareMathOperator{\Og}{\mathcal{O}}
\DeclareMathOperator{\Pic}{\mathrm{Pic}}
\DeclareMathOperator{\rk}{\mathrm{rk}}
\DeclareMathOperator{\Gr}{\mathrm{Gr}}
\DeclareMathOperator{\Gm}{\mathbf{G}_m}
\DeclareMathOperator{\fppf}{\mathrm{fppf}}
\DeclareMathOperator{\Res}{\mathrm{Res}}
\DeclareMathOperator{\sep}{{^\mathrm{sep}}}
\theoremstyle{remark}
\newtheorem{remark}[theorem]{Remark}
\title{Chai's conjectures on base change conductors}
\date{}
\author{Otto Overkamp and Takashi Suzuki}
\begin{document}
\maketitle

{\abstract{The base change conductor is an invariant introduced by Chai which measures the failure of a semiabelian variety to have semiabelian reduction. We investigate the behaviour of this invariant in short exact sequences, as well as under duality and isogeny. Our results imply Chai's conjecture on the additivity of the base change conductor in short exact sequences, while also showing that a proposed generalisation of this conjecture fails. We use similar methods to show that the base change conductor is invariant under duality of Abelian varieties in equal positive characteristic (answering a question of Chai), as well as giving a new short proof of a formula due to Chai, Yu, and de Shalit which expresses the base change conductor of a torus in terms of its (rational) cocharacter module.}}
\tableofcontents
\section{Introduction} Let $\Og_K$ be a Henselian discrete valuation ring with field of fractions $K$ and \it perfect \rm residue field $\kappa.$ Let $B$ be a semiabelian variety over $K.$ It is well-known that $B$ admits a Néron lft-model $\mathscr{B} \to \Spec \Og_K,$ which is a smooth separated $\Og_K$-group scheme with generic fibre $B$ characterised by a universal property. We say that $B$ has \it semiabelian reduction \rm over $\Og_K$ if the identity component $\mathscr{B}^0$ of the Néron lft-model of $B$ is a semiabelian scheme. If $L$ is any finite separable extension of $K,$ we let $\Og_L$ be the integral closure of $\Og_K$ in $L.$ Note that $\Og_L$ is a discrete valuation ring finite and flat over $\Og_K$. By Grothendieck's semiabelian reduction theorem, there exists a finite Galois extension $L$ of $K$ such that $B_L$ has semiabelian reduction over $\Og_L.$ Choose such an $L$ and let $\mathscr{B}_L$ be the Néron lft-model of $B_L$ over $\Og_L.$ Following Chai \cite{Chai}, we define the \it base change conductor \rm of $B$ by
$$c(B):= \frac{1}{e_{L/K}} \ell_{\Og_L} (\mathrm{coker}(\Lie \mathscr{B} \otimes_{\Og_K} \Og_L \to \Lie \mathscr{B}_L)),$$
where $e_{L/K}$ denotes the ramification index of the extension $K\subseteq L$ and $\ell_{\Og_L}(-)$ denotes the length of an $\mathcal{O}_{L}$-module. This rational number measures the failure of $B$ to have semiabelian reduction over $\Og_K$ (or, equivalently, the failure of the connected component of the Néron lft-model of $B$ to commute with base change along the possibly ramified extension $\Og_K \subseteq \Og_L$). Moreover, $c(B)$ is independent of the choice of $L$. 

Now let 
\begin{align}0\to T\to B \to A \to 0\label{introsec}\end{align}
be an exact sequence of semiabelian varieties over $K.$ Because the induced sequence $0\to \mathscr{T} \to \mathscr{B} \to \mathscr{A} \to 0$ of Néron lft-models is usually not exact, understanding the behaviour of the base change conductor in short exact sequences is a highly delicate problem. The following two questions have been proposed in the literature (see, e. g., \cite{Chai}, \cite{CLN}): 
\begin{itemize} 
\item[(i)] Suppose $T$ is a torus and $A$ an Abelian variety in the sequence (\ref{introsec}) above. Does the equality $c(B)=c(T) + c(A)$ hold?
\item[(ii)] Does the equality $c(B)=c(T)+c(A)$ hold without any restrictions on $T,$ $B,$ and $A$?
\end{itemize}
Question (i) above was answered affirmatively by Chai in the special cases where $K$ has characteristic zero or where $\kappa$ is finite \cite[Theorem 4.1]{Chai}; the first of those two cases was later re-proven by Cluckers-Loeser-Nicaise \cite[Theorem 4.2.4]{CLN} using different methods. This question has since been widely believed to have an affirmative answer in general, which has become known as \it Chai's conjecture. \rm Beyond the results just described, this conjecture is known in the following two cases: 
\begin{itemize}
\item The semiabelian variety $B$ acquires semiabelian reduction over a finite \it tamely ramified \rm extension of $K,$ due to Halle-Nicaise \cite[Corollary 4.23]{HN}, and 
\item we have $B=\Pic ^ 0_{C/K}$ for a proper curve $C\to \Spec K,$ due to the first named author \cite{OvII}. 
\end{itemize}
The first main purpose of the present paper is to settle both questions (i) and (ii) in complete generality, without imposing any restrictions on the characteristic of $K$ or the semiabelian variety $B$. More precisely, we shall show that a slightly stronger statement than Chai's conjecture is true, whereas question (ii) has a negative answer: 
\begin{theorem} \label{intromainthm1}
(i) Assume that $T$ is a torus in the sequence (\ref{introsec}) above. Then $$c(B)=c(T)+c(A).$$
(ii) If $T$ is not a torus in the sequence (\ref{introsec}), then $c(B) \not= c(T) + c(A)$ in general. In fact, there are such examples where $T,$ $B,$ and $A$ are all Abelian varieties, and examples where $T$ is an Abelian variety and $A$ a torus.
\end{theorem}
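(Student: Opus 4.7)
For part (i), my plan is to reduce to the case where $\kappa$ is algebraically closed (since passing to the strict Henselisation preserves all three base change conductors, being formally unramified) and then to work directly with the Néron lft-models. Let $L/K$ be a finite Galois extension over which $B_L$ acquires semiabelian reduction; then $T_L$ and $A_L$ automatically acquire semiabelian reduction as well, and the sequence $0\to \mathscr{T}_L^0 \to \mathscr{B}_L^0 \to \mathscr{A}_L^0 \to 0$ of identity components is fppf-exact (Chevalley and standard properties of semiabelian schemes). Over $\Og_K$ itself, the induced complex $0\to \mathscr{T}\to \mathscr{B}\to \mathscr{A}$ is typically not right exact, but because $T$ is a torus, $\mathscr{T}$ is a closed subgroup scheme of $\mathscr{B}$ and the fppf quotient $\mathscr{A}':=\mathscr{B}/\mathscr{T}$ is representable by a smooth $\Og_K$-group scheme with generic fibre $A$ fitting into an exact sequence $0\to \mathscr{T}\to \mathscr{B} \to \mathscr{A}'\to 0$. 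By the Néron mapping property there is a canonical comparison morphism $\mathscr{A}'\to \mathscr{A}$ which is the identity on the generic fibre.

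Taking Lie algebras yields a short exact sequence $0\to\Lie\mathscr{T}\to\Lie\mathscr{B}\to\Lie\mathscr{A}'\to 0$ of finite free $\Og_K$-modules, and likewise after base change to $\Og_L$. After passing to $\Og_L$, both $\mathscr{A}'_{\Og_L}$ and $\mathscr{A}_L$ share the Lie algebra of the semiabelian reduction of $A_L$, because $\mathscr{A}'_{\Og_L}\to \mathscr{A}_L$ is a smooth morphism of smooth group schemes which is an isomorphism on identity components after restriction to the semiabelian part. Using the additivity of lengths in the above short exact sequence of $\Og_L$-modules, the identity $c(B)=c(T)+c(A)$ reduces to the statement that the cokernel of $\Lie\mathscr{A}'\otimes_{\Og_K}\Og_L\to \Lie\mathscr{A}_L$ has the same length over $\Og_L$ as the cokernel of $\Lie(\mathscr{B}/\mathscr{T})\otimes_{\Og_K}\Og_L\to \Lie\mathscr{A}_L$ coming from the Néron model comparison for $A$ alone. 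The hard part is establishing that this correction actually vanishes in all residual characteristics, including the case of wild ramification in equal positive characteristic where all previously known approaches (Chai's, Cluckers--Loeser--Nicaise's, Halle--Nicaise's) break down; this is where I expect the duality framework developed elsewhere in the paper to be crucial, by reducing the computation on $\mathscr{A}$ to a dual computation which behaves better under the smooth quotient construction.

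For part (ii), the strategy is to exhibit explicit counterexamples in equal positive characteristic, where the inseparable phenomena provide the needed slack. For the case where $T,B,A$ are all Abelian varieties, I would look for an isogeny $B\to A$ of Abelian varieties whose kernel $T$ is an Abelian subvariety, arranged so that $B$ acquires semiabelian reduction only after a wildly ramified extension and the reductions of $T$ and $A$ each require smaller extensions; the failure of the Néron model sequence to be right exact can then be computed and shown to produce an additive defect. For the case where $T$ is an Abelian variety and $A$ a torus, one would construct an extension of a wildly ramified torus by an Abelian variety with carefully chosen reduction type, exploiting the asymmetry that the cocharacter formula of Chai--Yu--de Shalit applies to $A$ but not to $T$. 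The main obstacle in (ii) is producing genuine examples rather than heuristic ones: one must compute Lie algebras of Néron lft-models of the relevant objects over a wildly ramified base, which requires either very explicit equations or a structural input such as a Prym-type construction attached to a family of wildly ramified covers of curves.
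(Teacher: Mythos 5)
Your plan for part (i) correctly identifies the crux of the problem but then leaves it unresolved. After forming the fppf quotient $\mathscr{A}'=\mathscr{B}/\mathscr{T}$ and observing that $\Lie\mathscr{T}\to\Lie\mathscr{B}\to\Lie\mathscr{A}'$ is short exact, the identity $c(B)=c(T)+c(A)$ reduces to showing that $\Lie\mathscr{A}'\to\Lie\mathscr{A}$ is an isomorphism (equivalently $\gamma(\Lie\mathscr{G}^\bullet)=0$). You state this, label it ``the hard part,'' and speculate that it will follow from the duality framework developed elsewhere in the paper. That guess is wrong: part (i) does not use duality at all. The actual key input is the Liu--Lorenzini--Raynaud theorem (their Theorems 2.1(b) and 2.10, reproduced here as Theorem \ref{Liepointsthm}) which asserts $\chi_{\mathrm{Lie}}(\widetilde{\mathscr{G}}^\bullet)=\chi_{\mathrm{points}}(\mathscr{G}^\bullet)$. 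Combined with the vanishing $H^1(K,T)=0$ (valid since $k$ is algebraically closed and $T$ is a torus), which makes $0\to T(K)\to B(K)\to A(K)\to 0$ exact and hence $\chi_{\mathrm{points}}(\mathscr{G}^\bullet)=0$, this gives $\gamma=0$ and closes the argument (Theorem \ref{Chaithm}). Without this bridge between $\chi_{\mathrm{Lie}}$ and $\chi_{\mathrm{points}}$ you have merely re-stated the conjecture, not proved it. You should also be more careful at the outset: the initial reduction to Néron models of finite type (passing from $T,B,A$ to $T^{\mathrm{b}},B^{\mathrm{b}},A^{\mathrm{b}}$) requires either isogeny invariance of $c(-)$ for tori or the alternative component-group argument given in the second proof, and you have not addressed this.

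For part (ii), your proposal is both vague and, in one place, internally inconsistent: you speak of ``an isogeny $B\to A$ of Abelian varieties whose kernel $T$ is an Abelian subvariety,'' but the kernel of an isogeny is finite, so this cannot happen. The constructions that actually work are quite different. For the case of three Abelian varieties, the paper takes $A$ with good reduction and a wildly ramified $L/K$ for which $\ker(H^1(K,A)\to H^1(L,A_L))$ is infinite (an input from Tan--Trihan--Tsoi's estimates for ordinary elliptic curves with supersingular reduction), and considers the sequence $0\to A\to\Res_{L/K}A_L\to C\to 0$; the infiniteness forces $\dim_k D_3>0$ and hence $\gamma\neq 0$ (Proposition \ref{ConIProp}). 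For the case with $A$ a torus on the right, one starts from an étale isogeny $E'\to E$ of Abelian varieties with $c(E')\neq c(E)$ (such exist already over $\Q_3$ by Chai) and performs a push-out against an embedding of the Cartier dual of the kernel into a torus; here arithmetic duality on the ind-rational pro-étale site is genuinely used, to transfer the dimension count from one complex to another (Subsection \ref{consIIsubs}). None of this is visible in your sketch.
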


See Theorem \ref{Chaithm} for Statement (i) and Subsections \ref{consIsubs} and \ref{consIIsubs} for Statement (ii).
Our results have applications in several (seemingly unrelated) fields of mathematics: 
\begin{itemize}
\item It is well-known that every Abelian variety $Y$ over $K$ has a \it rigid uniformisation \rm $0\to \Lambda \to B \to Y \to 0,$ where $\Lambda$ is an étale $K$-lattice, $B$ a semiabelian variety, and the sequence is to be understood in the category of rigid analytic $K$-groups (see, e. g., Section 1 of \cite{BX} for more details). Moreover, $B$ fits into an exact sequence $0\to T \to B \to A \to 0$ where $T$ is a torus and $A$ an Abelian variety with potentially good reduction. By \cite[Theorem 2.3]{BX}, we have $c(Y)=c(B),$ so by our results, calculating the base change conductor of $Y$ is reduced to calculating that of $T$ and that of $A.$ 

\item

If $K$ is the function field of a proper smooth curve $X$
over a finite field $\kappa = \F_{q}$ (for this paragraph only),
then by \cite{GS}, the coherent Euler characteristic
$\chi(X, \Lie \mathscr{B})$
appears in a special value formula for the $L$-function of $B$.
In the proof of \cite[Proposition 7.6]{GS},
the affirmative answer for Question (i) for finite $\kappa$,
in the form of the equality
	\[
			\chi(X, \Lie \mathscr{B})
		=
				\chi(X, \Lie \mathscr{T})
			+
				\chi(X, \Lie \mathscr{A}),
	\]
is used to reduce the special value formula to
those of the torus part $T$ and the abelian variety quotient $A$.
It is easy to see that $L$-functions are multiplicative
in exact sequences of semiabelian varieties.
Therefore one might expect this kind of additivity of $\chi(X, \Lie \mathrm{N\acute{e}ron})$
for any exact sequence of semiabelian varieties over $K$,
so that $\chi(X, \Lie \mathscr{B}^{\bullet})$ would be an invariant
of any object $B^{\bullet}$
of the bounded derived category of semiabelian varieties over $K$
(or of mixed motives over $K$ even more ambitiously).
However, our counterexamples to Question (ii) show that
this is not the case:
$\chi(X, \Lie \mathscr{B}^{\bullet})$ is not well-defined
(namely, not invariant under quasi-isomorphism).
What is well-defined instead
(under the assumption of the finiteness of Tate-Shafarevich groups) is
the \emph{ratio} of $q^{\chi(X, \Lie \mathscr{B}^{\bullet})}$
to the Weil-\'etale Euler characteristic
$\chi_{W}(X, \mathscr{B}^{0, \bullet})$.
This will be discussed in detail elsewhere.

\item As explained in Theorem 4.2.1 of  \cite{CLN}, Chai's conjecture is equivalent to a Fubini property of certain motivic integrals, which was hitherto known only in the case where $K$ has characteristic 0. Our results show that this Fubini property holds without any restriction on the characteristic of $K.$
\end{itemize}
Finally, we shall extend Chai's result on the invariance of the base change conductor under duality of Abelian varieties to the equal characteristic case:
\begin{theorem} \label{DualInv}
Let $A$ be an Abelian variety over $K$ with dual $A^\vee.$ Then $c(A)=c(A^\vee).$ \label{intromainthm2}
\end{theorem}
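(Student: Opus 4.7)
My plan is to combine rigid uniformization, the additivity theorem of this paper, the Chai--Yu--de~Shalit formula for tori, and Serre duality on abelian schemes. By rigid uniformization there is an exact sequence $0\to\Lambda\to E\to A\to 0$ of rigid analytic $K$-groups, where $E$ is a semiabelian variety fitting in $0\to T\to E\to B\to 0$ with $T$ a torus and $B$ an abelian variety of potentially good reduction. The dual abelian variety $A^\vee$ admits an analogous uniformization $0\to\Lambda^\vee\to E^\vee\to A^\vee\to 0$ with $0\to T^\vee\to E^\vee\to B^\vee\to 0,$ where the Raynaud construction identifies $\chi(T^\vee) = \Lambda$ as $G_K$-modules and $\Lambda^\vee = \chi(T).$ Applying the Bosch--Xarles identity $c(A)=c(E)$ from \cite[Theorem 2.3]{BX} to both $A$ and $A^\vee,$ together with Theorem \ref{Chaithm} applied to $E$ and $E^\vee,$ the statement reduces to
\[
c(T) + c(B) = c(T^\vee) + c(B^\vee).
\]

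I would prove $c(T)=c(T^\vee)$ and $c(B)=c(B^\vee)$ separately. For tori, the Chai--Yu--de~Shalit formula (reproved later in this paper) expresses $c(T)$ as a specific invariant of the rational cocharacter module $\chi_*(T)_\Q$ viewed as a $G_K$-representation, built from Artin conductors and thus self-dual under $V\mapsto V^*.$ The uniformization pairing $\Lambda\times\chi(T)\to\Z$ is non-degenerate over $\Q$ and $G_K$-equivariant, whence $\Lambda_\Q\cong\chi_*(T)_\Q$ as $G_K$-representations and therefore $\chi_*(T^\vee)_\Q\cong\chi_*(T)^*_\Q;$ the self-duality of the invariant then yields $c(T)=c(T^\vee).$

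For the abelian variety part, choose a finite Galois extension $L/K$ over which $B$ (and hence $B^\vee$) acquires good reduction, so that $\mathscr{B}_L$ and $\mathscr{B}_L^\vee$ are dual abelian schemes over $\Og_L.$ Serre duality on abelian schemes supplies a canonical isomorphism of invertible $\Og_L$-modules $\det\Lie\mathscr{B}_L\cong\det\Lie\mathscr{B}_L^\vee.$ I would descend this to a canonical isomorphism $\det\Lie\mathscr{B}^0\cong\det\Lie\mathscr{B}^{\vee,0}$ of invertible $\Og_K$-modules, compatibly with base change to $\Og_L,$ using the extension of the Poincar\'e bundle to the product of the N\'eron model identity components over $\Og_K.$ Expressing each of $c(B)$ and $c(B^\vee)$ as $e_{L/K}^{-1}$ times the length of the cokernel of the corresponding comparison map on $\det\Lie,$ the pair of compatible canonical isomorphisms identifies these two cokernels and yields $c(B)=c(B^\vee).$

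The main obstacle is the descent of the Serre-duality isomorphism from the abelian-scheme level over $\Og_L$ to the N\'eron-model level over $\Og_K.$ This is not formal, because the formation of N\'eron model identity components does not commute with the ramified base change $\Og_K\subseteq\Og_L$---this non-commutativity is precisely what the base change conductor measures. The integral structure of $\det\Lie\mathscr{B}^0$ must be pinned down by a geometric input over $\Og_K,$ namely the extension of the Poincar\'e bundle to the N\'eron models, and carrying out this descent rigorously is where the positive-characteristic content of the argument concentrates.
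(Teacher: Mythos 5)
Your overall decomposition is correct and matches the kind of bookkeeping the paper implicitly endorses: rigid uniformization plus \cite[Theorem 2.3]{BX} gives $c(A) = c(E)$, and Theorem~\ref{Chaithm} (Chai's conjecture) applied to the Raynaud extension $0 \to T \to E \to B \to 0$ gives $c(E) = c(T) + c(B)$; likewise on the dual side. Your torus half is also correct. The character module of the torus part $T^\vee$ of the Raynaud extension of $A^\vee$ is indeed the lattice $\Lambda$ of $A$, and the monodromy pairing $\Lambda \times X^\ast(T) \to \Z$ is $\Q$-non-degenerate and Galois equivariant, so $X_\ast(T^\vee)_\Q \cong X_\ast(T)^\ast_\Q$; since the Artin conductor is self-dual ($a(V) = a(V^\ast)$), the Chai--Yu--de~Shalit formula $c(T) = \frac{1}{2} a(X_\ast(T)_\Q)$ yields $c(T) = c(T^\vee)$.

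The abelian-variety half, however, has a genuine gap that you have accurately named but not filled. The Serre-duality trace map $R^g \pi_\ast \omega_{\mathscr{B}_L / \Og_L} \cong \Og_L$ does furnish a canonical isomorphism $\det \Lie \mathscr{B}_L \cong \det \Lie \mathscr{B}_L^\vee$ over $\Og_L$, and its generic fiber gives a canonical isomorphism $\det \Lie B \cong \det \Lie B^\vee$. But $\mathscr{B}^0$ over $\Og_K$ is not proper when $B$ has bad reduction, so there is no trace map at the $\Og_K$ level, and the assertion that the Poincar\'e biextension on $\mathscr{B} \times_{\Og_K} \mathscr{B}^{\vee,0}$ pins the two $\Og_K$-lattices $\det \Lie \mathscr{B}$ and $\det \Lie \mathscr{B}^\vee$ to each other under the generic-fiber isomorphism is precisely the statement $c(B) = c(B^\vee)$, restated. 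You have not reduced the problem so much as transported it: the ``descent'' step you flag as the obstacle is the entire content of the theorem in the potentially-good-reduction case, and this is exactly the case where inseparable isogenies make Chai's characteristic-zero argument fail. Without a concrete mechanism to carry out this descent, the argument is circular.

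The paper's actual proof avoids rigid uniformization entirely and works with an arbitrary isogeny $A \to A^\vee$. It proves the stronger statement (Proposition~\ref{0022}) that for any isogeny $A_1 \to A_2$ with dual $B_2 \to B_1$ one has $c(A_1) - c(A_2) = c(B_1) - c(B_2)$, using the second author's arithmetic duality over $\Spec k^{\mathrm{indrat}}_{\mathrm{proet}}$, the dimension-counting functional $\chi$ on the triangulated subcategory $\mathcal{D}$, and a case analysis on the kernel (multiplicative, \'etale, unipotent-connected) to show that the relevant $\chi$-values divided by $e_{L/K}$ are independent of the splitting field $L$, hence computable at $L = K$ where they vanish. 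Applying this with $A_1 = A$, $A_2 = A^\vee$ and any isogeny between them, the dual isogeny again runs $A \to A^\vee$, so $c(A) - c(A^\vee) = c(A^\vee) - c(A)$. This symmetrization is what sidesteps the need for any direct control of integral Hodge line bundles on N\'eron models---which is where your approach stalls.
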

This is non-trivial since base change conductors for Abelian varieties are not isogeny invariant as shown in \cite[(6.10)]{Chai}. The Theorem is known if the characteristic of $K$ is equal to 0 \cite[Theorem 6.7]{Chai}, but the proof given in \it loc. cit. \rm cannot easily be generalised to the positive characteristic case due to the existence of inseparable isogenies.
In \cite{Suz25},
this result is applied to prove the duality invariance of Faltings heights
of abelian varieties over global function fields.

The same method of proof, when applied to tori instead,
gives a new proof of the following result of
Chai, Yu, and de Shalit \cite[Theorem on p. 367 and Theorem 12.1]{CY}:

\begin{theorem}
    Let $T$ and $T'$ be tori over $K$ isogenous to each other.
    Then $c(T) = c(T')$. \label{intromainthm3}
\end{theorem}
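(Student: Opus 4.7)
The plan is to emulate the method used to prove duality invariance (Theorem \ref{DualInv}), combining the additivity of the base change conductor (Theorem \ref{intromainthm1}(i)) with a pushout construction to reduce isogeny invariance to a tractable special case.

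Let $\varphi \colon T \to T'$ be an isogeny with kernel $F$, a finite $K$-group scheme of multiplicative type. The Cartier dual of $F$ is a finite discrete $\Gal(K\sep/K)$-module, hence a quotient of a permutation Galois module; dualising this surjection yields an embedding $F \hookrightarrow S$ into an induced torus $S$. Form the pushout
$$G := (T \times S)/F,$$
where $F$ is embedded antidiagonally as $f \mapsto (f, -f)$. Then $G$ is itself a torus and fits into two short exact sequences of tori
\begin{align*}
0 \to T \to G \to S/F \to 0, \\
0 \to S \to G \to T' \to 0.
\end{align*}
Applying Theorem \ref{intromainthm1}(i) to each sequence yields
$$c(T) + c(S/F) = c(G) = c(S) + c(T'),$$
so that
$$c(T) - c(T') = c(S) - c(S/F). \qquad (\ast)$$

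The identity $(\ast)$ reduces the original question to the analogous question for the isogeny $S \to S/F$, in which $S$ is an induced torus of our choosing. For such $S$ the Néron lft-model is an explicit Weil restriction of $\Gm$, and $c(S)$ is computable directly from the permutation module $X^{*}(S)$. The next step is then to show that $c(S/F)$, read off from the short exact sequence $0 \to X^{*}(S/F) \to X^{*}(S) \to X^{*}(F) \to 0$ of Galois modules, agrees with $c(S)$. With a sufficiently generous choice of $S$ — one that positions $X^{*}(F)$ inside $X^{*}(S)$ so that the relevant quantities on both sides are visibly equal — this should be a direct character-module calculation, giving the vanishing of the right-hand side of $(\ast)$.

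The main obstacle, in perfect analogy with the abelian variety case of Theorem \ref{DualInv}, is the situation in positive characteristic where $F$ has an infinitesimal part, so that $\varphi$ is (partly) inseparable. The pushout trick alone is insufficient here, since the infinitesimal part of $F$ is propagated unchanged into the new isogeny $S \to S/F$. One therefore expects to need a separate Frobenius-twisting step — exactly of the flavour used to handle inseparable isogenies in the proof of Theorem \ref{DualInv} — to control the discrepancy between the Lie algebras of the Néron models of $S$ and $S/F$ caused by an infinitesimal quotient. Once this is in place, the universal identity $(\ast)$ yields $c(T) = c(T')$, completing the proof.
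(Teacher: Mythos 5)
The pushout reduction in your proposal is sound: embedding the multiplicative kernel $F$ into an induced torus $S$, forming $G=(T\times S)/F$, and invoking additivity on the two resulting short exact sequences of tori correctly yields the identity $(\ast)$, and thereby reduces isogeny invariance to the single case of the quotient map $S\to S/F$ with $S$ induced. (One small caution: the first proof of Theorem~\ref{Chaithm} given in the paper invokes Theorem~\ref{0025} itself to strip off split subtori of the quotient; you would need to appeal to the second proof there, which avoids this, to escape circularity.)

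The gap is in the claim that the remaining case is ``a direct character-module calculation.'' It is not. For an induced torus $S$, the N\'eron lft-model is indeed the explicit Weil restriction and $c(S)$ is computable, but the N\'eron model of $S/F$ has no comparably explicit description, and $c(S/F)$ cannot simply be read off from the exact sequence $0\to X^{*}(S/F)\to X^{*}(S)\to X^{*}(F)\to 0$. The assertion that a ``sufficiently generous'' choice of $S$ makes the two quantities ``visibly equal'' is precisely the hard content of the theorem (equivalently, of the Chai--Yu formula $c(T)=\tfrac12 a(X_{*}(T)\otimes\Q)$, which is what one is trying to prove), and no actual argument is given. Note also that the kernel of $S\to S/F$ is the \emph{same} $F$ as before; the pushout trades $T$ for $S$ but does not change the structure of the finite flat group scheme causing the difficulty, so no genuine simplification has occurred at the point where it is needed. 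Finally, the ``Frobenius-twisting step'' you invoke by analogy with Theorem~\ref{DualInv} is not how the paper treats inseparability in either case: the paper's proof of Theorem~\ref{0025} factors the isogeny into an infinitesimal part and a prime-to-$p$ part, then handles the infinitesimal kernel $N$ by schematic closure $\mathscr{N}\subset\mathscr{T}_{1}$, a comparison of $\mathbf{H}^{1}(\mathcal{O}_{L},\mathscr{N})$ with $\mathbf{H}^{1}(L,N)$ via the duality results of Propositions~\ref{FinDuality}--\ref{EtCriterion} on the ind-rational pro-\'etale site, and a dimension bookkeeping showing that $c(T_{1})_{L}'-c(T_{2})_{L}'$, divided by $e_{L/K}$, is independent of $L$ --- whence everything vanishes by taking $L=K$. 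To make your approach work you would need to supply an analogue of this machinery for the map $S\to S/F$, at which point the pushout reduction gives essentially no savings.
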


Chai, Yu, and de Shalit prove this first for mixed characteristic $K$
using B\'egueri's duality theory \cite{Beg}.
They then deduce the equal characteristic case
from the mixed characteristic case
by a Deligne-type approximation argument.
Here we directly prove the equal characteristic case of the above theorem
without reduction to the mixed characteristic case.
As mentioned in \cite{CY}, this theorem implies that
	\[
		c(T) = \frac{1}{2} a(X_{\ast}(T) \otimes_{\Z} \Q)
	\]
for any torus $T / K$,
where $a(-)$ denotes the Artin conductor.

We shall also show that the assumption in Chai's conjecture that $\kappa$ be perfect cannot be dropped, and that the base change conductor of algebraic tori is no longer invariant under isogeny if the residue field is imperfect. In particular, the formula just stated does not extend to the case of imperfect residue fields. 

The methods we shall employ come from several different sources. Our approach for proving Chai's conjecture is, to some extent, inspired by the first named author's previous work on this topic \cite{OvII}. The proof of Chai's conjecture for Jacobians given there relies both on the methods presented in the work of Liu-Lorenzini-Raynaud \cite{LLR}, as well as on the techniques for studying Néron models of Jacobians of singular curves introduced by the first named author \cite{OvI,Ov}. We shall show in this article that the use of such techniques can, in fact, be circumvented, thus leading to a stronger result. The key results from \cite{LLR} we use here are the formulas \cite[Theorems 2.1 (b) and 2.10]{LLR} writing lengths of cohomology objects of complexes of Lie algebras as dimensions of certain algebraic groups over the residue field $\kappa$.%
\footnote{
    Note that some errors in \cite{LLR} have been corrected
    in the corrigendum \cite{LLRII}.
    We only use results of \cite{LLR} unaffected by these errors.
}

The methods we shall use in order to construct the counterexamples mentioned in Theorem \ref{intromainthm1} (ii), as well as to prove Theorems \ref{intromainthm2} and \ref{intromainthm3}, are largely derived from the version of arithmetic duality theory developed by the second named author \cite{Suz}. This theory treats dualities for algebraic groups over $\kappa$ arising from those over $K$, and we will apply it to Liu-Lorenzini-Raynaud's algebraic groups mentioned above.

Another ingredient for counterexamples is
Tan-Trihan-Tsoi's estimates \cite{TTT} of
the kernel of the restriction map
$H^{1}(K, A) \to H^{1}(L, A)$
for ordinary elliptic curves $A$ with supersingular reduction
and highly ramified extensions $L / K$.

After a preprint version of this paper was uploaded to arXiv, the authors learned that Vologodsky also obtained the finite residue field case of Theorem \ref{DualInv} independently at more or less the same time.%
\newsavebox\myVerb%
\begin{lrbox}\myVerb%
    \footnotesize\verb|https://researchseminars.org/talk/MITNT/71/|%
\end{lrbox}%
\footnote{
    The abstract and the video of Vologodsky's talk at MIT
    on March 8th, 2023 titled
    ``Dual abelian varieties over a local field have equal volumes''
    are available at:
    \par\noindent
    \usebox\myVerb
}
His proof uses Tate-Milne's duality (for finite residue fields) while our proof uses the second named author's duality \cite{Suz} (for general perfect residue fields) among other similarities and differences.

Throughout this paper (except for Subsection \ref{ImperfectResPara}), we shall assume that $\Og_K$ is complete and that its residue field (henceforth to be denoted by $k$) is algebraically closed. This leads to no loss of generality because Néron lft-models commute with base change along the extension $\Og_K \subseteq \widehat{\Og_K^{\mathrm{sh}}}$ \cite[Chapter 10.1, Proposition 3]{BLR}. \\
\\
$\mathbf{Acknowledgement.}$ The authors would like to express their gratitude to Ki-Seng Tan for suggesting that his joint paper \cite{TTT} with Trihan and Tsoi might be relevant for Construction I (see Proposition \ref{ConIProp}) below. Moreover, they are grateful to Vadim Vologodsky for the information about his work and the anonymous referee for a large number of very valuable comments. The first named author's research was   conducted in the framework of the research training group \emph{GRK 2240: Algebro-Geometric Methods in Algebra, Arithmetic and Topology}.
\section{Chai's conjecture}
Throughout this section, we let $\Og_K$ be a complete discrete valuation ring with field of fractions $K,$ maximal ideal $\mathfrak{m},$ and algebraically closed residue field $k=\Og_K/\mathfrak{m}.$ 
\subsection{Some cohomological invariants}

Let 
$$A^\bullet \colon \, 0 \to A^1 \to A^2 \to ... \to A^n \to 0 $$ 
be a bounded complex of finitely generated free $\Og_K$-modules such that the induced complex $0\to A^1\otimes_{\Og_K}K \to ... \to A^n\otimes_{\Og_K}K\to 0$ is exact (i. e., $A^\bullet{\otimes_{\Og_K}^L}K\cong 0$ in $D^{b}(K)$). As usual, we shall write 
$$\det A^{\bullet} := \bigotimes_{i=1} ^n  (\det A^i)^{\otimes (-1)^i},$$ where $\det A^j := \bigwedge^{\rk_{\Og_K} A^j} A^ j$ for all $j.$ Then we obtain a morphism of $\Og_K$-modules 
$$\det A^\bullet \to \det A^\bullet{\otimes_{\Og_K}^L}K = K,$$ so there exists a unique integer $\gamma(A^\bullet)$ such that 
$\det A^\bullet = \mathfrak{m}^{-\gamma(A^\bullet)}$ as an $\Og_K$-submodule of $K.$ Moreover, we define the \it cohomological Euler characteristic \rm of $A^\bullet$ as
$$\chi(A^\bullet):=\sum_{i=1}^n (-1)^ i\ell_{\Og_K}(H^i(A ^ \bullet)).$$ Then we have
\begin{lemma}
For each complex $A^\bullet$ as above, $\gamma(A^\bullet)=\chi(A^\bullet).$ \label{chigammalem}
\end{lemma}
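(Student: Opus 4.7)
The plan is to reduce the claim to the case of a two-term complex by an inductive dévissage, then verify the base case directly using Smith normal form over the discrete valuation ring $\Og_K$.

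First, I would observe that both $\gamma$ and $\chi$ are additive in direct sums of complexes satisfying the hypothesis. For $\chi$ this is immediate from the additivity of length of cohomology. For $\gamma$, a decomposition $A^\bullet = A_1^\bullet \oplus A_2^\bullet$ yields a canonical factorisation $\det A^\bullet = \det A_1^\bullet \otimes_{\Og_K} \det A_2^\bullet$ that is compatible with the trivialisations obtained after $\otimes_{\Og_K} K$, from which additivity of $\gamma$ follows.

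For the base case of a two-term complex $[A^i \xrightarrow{d} A^{i+1}]$ in two consecutive degrees, generic exactness forces $\rk_{\Og_K} A^i = \rk_{\Og_K} A^{i+1} =: r$ and $d$ to be injective. Smith normal form over $\Og_K$ provides bases in which $d$ is diagonal with entries $\pi^{a_1}, \ldots, \pi^{a_r}$ (with $a_j \geq 0$ and $\pi$ a uniformiser), so $H^{i+1}(A^\bullet) \cong \bigoplus_j \Og_K/\pi^{a_j}\Og_K$ and all other cohomology vanishes. One then computes $\chi(A^\bullet) = (-1)^{i+1}\sum_j a_j$, and by tracing through the canonical trivialisation of $\det(A^\bullet \otimes_{\Og_K} K)$ induced by $\det(d \otimes K)$, the same value for $\gamma(A^\bullet)$.

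For the inductive step, let $n$ be the largest index with $A^n \neq 0$ and assume $A^\bullet$ has at least three non-zero terms. Since $H^n(A^\bullet \otimes_{\Og_K} K) = 0$, the map $d^{n-1} \otimes K$ is surjective, and Smith normal form applied to $d^{n-1}$ furnishes a decomposition $A^{n-1} = A^{n-1}_{\mathrm{top}} \oplus A^{n-1}_{\mathrm{bot}}$ with $\rk A^{n-1}_{\mathrm{top}} = \rk A^n$, such that $d^{n-1}|_{A^{n-1}_{\mathrm{top}}}$ is an injection into $A^n$ with torsion cokernel and $d^{n-1}$ vanishes on $A^{n-1}_{\mathrm{bot}}$. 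Since $\im d^{n-2} \subseteq \ker d^{n-1} = A^{n-1}_{\mathrm{bot}}$, the preceding differential factors through $A^{n-1}_{\mathrm{bot}}$, and $A^\bullet$ splits as a direct sum of complexes $A^\bullet_{\mathrm{top}} \oplus A^\bullet_{\mathrm{bot}}$, where $A^\bullet_{\mathrm{top}} = [A^{n-1}_{\mathrm{top}} \to A^n]$ sits in degrees $n-1, n$ and $A^\bullet_{\mathrm{bot}}$ has one fewer non-zero term. Both summands inherit generic exactness, so additivity and the inductive hypothesis close the argument. The only genuinely subtle point is the sign bookkeeping in the base case, where one must carefully track the canonical identification of $\det(A^\bullet \otimes_{\Og_K} K)$ with $K$ via the determinant of $d \otimes K$; I expect this to be the main source of small errors in writing the proof up, while everything else is routine linear algebra over a DVR.
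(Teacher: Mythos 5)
Your proof is correct and follows the same overall skeleton as the paper's: induction on the number of terms, with the two-term case as base case handled via elementary divisors of $\Og_K$. The inductive step is realized a little differently. The paper peels off $C^\bullet \colon 0 \to \mathrm{im}(d^{n-1}) \to A^n \to 0$ via the canonical \emph{term-wise short exact sequence} of complexes $0 \to B^\bullet \to A^\bullet \to C^\bullet \to 0$, with $B^\bullet$ ending in $\ker(d^{n-1})$; no choice of basis is needed, and one then uses additivity of $\gamma$ and $\chi$ for short exact sequences. You instead apply Smith normal form to $d^{n-1}$ to produce an explicit internal splitting $A^\bullet \cong A^\bullet_{\mathrm{top}} \oplus A^\bullet_{\mathrm{bot}}$ and use additivity for direct sums, which is the special case where the short exact sequence is split. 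Since $\Og_K$ is a PID the two decompositions yield isomorphic subcomplexes ($A^\bullet_{\mathrm{top}} \cong C^\bullet$, $A^\bullet_{\mathrm{bot}} \cong B^\bullet$), so the difference is cosmetic: your version makes the splitting explicit at the cost of a basis choice, the paper's is basis-free but needs the (equally routine) additivity of determinant lines in short exact sequences. Your self-diagnosis is accurate — the only real pitfall is the sign bookkeeping in the base case, where $\det A^\bullet = \bigotimes_i (\det A^i)^{\otimes(-1)^i}$ and the trivialisation of $\det A^\bullet \otimes K$ via $\det(d\otimes K)$ must be traced carefully; your formula $\chi = (-1)^{i+1}\sum_j a_j$ and the matching computation of $\gamma$ are consistent with the paper's conventions.
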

\begin{proof}
If $n=1,$ both invariants vanish, so there is nothing to prove. If $n=2,$ we choose a basis $e_1,..., e_r$ of $A^2$ such that $\lambda_1e_1,..., \lambda_r e_r$ is a basis of $A^1$ for some $r\in \N_0$ and suitable $\lambda_j\in \Og_K.$ Since $A^1\to A^2$ is injective, we have $\chi(A^\bullet)=v_K(\lambda_1) +...+ v_K(\lambda_r).$ Moreover, $\det A ^ \bullet$ is generated by $(\lambda_1\cdot...\cdot \lambda_r)^{-1}\cdot(e_1\wedge ... \wedge e_r)^\vee\otimes (e_1\wedge ... \wedge e_r)$ as a submodule of $\det A^\bullet{\otimes_{\Og_K}^L}K=K,$ which implies the claim.

In general, consider the two complexes $B^\bullet: \, 0\to A^1 \to ... \to A^{n-2} \to \ker(A^{n-1}\to A^n) \to 0$ and $C^\bullet: \, 0 \to \mathrm{im}(A^{n-1}\to A^n) \to A^n \to 0,$ declaring that $\mathrm{im}(A^{n-1}\to A^n)$ sit in degree $n-1.$ Then we have a (term-wise) exact sequence of complexes
$$0 \to B^\bullet \to A^\bullet \to C^\bullet \to 0,$$
which induces a canonical isomorphism 
$$\det A^\bullet = (\det B^\bullet)\otimes_{\Og_K} \det C^\bullet.$$ This isomorphism shows that $\gamma(A^\bullet)=\gamma(B^\bullet) + \gamma(C^\bullet),$ and the long exact cohomology sequence induced by the short exact sequence of complexes shows that $\chi(A^\bullet)=\chi(B^\bullet) + \chi(C^\bullet).$  Hence the claim follows in general by induction.
\end{proof}

\subsection{Complexes of group schemes}
In this paragraph, we shall recall the construction of another invariant which will play an important role in this article. Our discussion is based on \cite[Section 2]{LLR}. Let 
$$\mathscr{G}^\bullet: \, 0 \to \mathscr{G}^1 \to \mathscr{G}^2 \to... \to \mathscr{G}^{n-1} \to \mathscr{G}^{n} \to 0$$ be a complex of separated group schemes of finite type over $\Og_K.$ Moreover, putting $G^i:=\mathscr{G}^i\times_{\Og_K} K$, we shall assume that the induced complex
$$G^\bullet:\, 0\to G^1\to G^2 \to ... \to G^{n-1} \to G^n \to 0$$ is exact and consists of smooth algebraic $K$-groups. Denote by $\mathscr{G}^\bullet(\Og_K)$ the complex
$0 \to \mathscr{G}^1(\Og_K) \to \mathscr{G}^2(\Og_K) \to ... \to \mathscr{G}^n(\Og_K)\to 0.$ We shall need the following
\begin{definition}
Let $\Lie \mathscr{G}^\bullet$ be the complex $0\to \Lie\mathscr{G}^1 \to \Lie\mathscr{G}^2\to ...  \to\Lie\mathscr{G}^n\to 0.$ Moreover, we put
$$\gamma(\mathscr{G}^\bullet):=\gamma(\Lie\mathscr{G}^\bullet),$$ and $$\chi_{\mathrm{Lie}}(\mathscr{G}^\bullet):=\chi(\Lie\mathscr{G}^\bullet).$$ 
\end{definition}
Now recall (for example, from \cite[Chapter 7.1, Theorem 5]{BLR}) that any $\Og_K$-group scheme $\mathscr{K}$ of finite type with smooth generic fibre admits a \it smoothening \rm $$\psi\colon \widetilde{\mathscr{K}}\to \mathscr{K},$$
i, e., a smooth group scheme $\widetilde{\mathscr{K}}$ and a morphism $\psi$ as above such that, for any smooth $\Og_K$-scheme $T,$ any morphism $T\to \mathscr{K}$ factors uniquely via $\psi.$ This universal property guarantees in particular that the smoothening is functorial in $\mathscr{K},$ so that, given a complex $\mathscr{G}^\bullet$ as above, we can consider the induced complex
$$\widetilde{\mathscr{G}}^\bullet : \, 0 \to \widetilde{\mathscr{G}}^1 \to \widetilde{\mathscr{G}}^2 \to ... \to \widetilde{\mathscr{G}}^n\to 0.$$
The following Lemma is of central importance for the construction (and used implicitly throughout Section 2.5 of \cite{LLR}):
\begin{lemma} \label{commuteswithcokerlemma}
Let $\kappa$ be an \rm algebraically closed \it field and let $f\colon G\to H$ be a morphism of smooth algebraic groups over $\kappa.$ Then $(\mathrm{coker}\,f)(\kappa)=\mathrm{coker}\, f(\kappa),$ where $f(\kappa)\colon G(\kappa)\to H(\kappa)$ is the induced map on $\kappa$-points.
\end{lemma}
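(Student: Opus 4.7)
The plan is to prove the lemma by lifting $\kappa$-points through the canonical factorisation
$$G \twoheadrightarrow f(G) \hookrightarrow H \twoheadrightarrow H/f(G) = \mathrm{coker}\, f,$$
where $f(G)$ denotes the scheme-theoretic image of $f$, canonically identified with the quotient $G/\Ker f$. In the commutative setting relevant to \cite{LLR} and to the present article, $f(G)$ is a closed subgroup scheme of $H$ and $H/f(G)$ is representable by an algebraic $\kappa$-group. Both morphisms $G \to f(G)$ and $H \to H/f(G)$ are faithfully flat quotient maps---respectively, a $\Ker f$-torsor and an $f(G)$-torsor---and hence set-theoretically surjective.

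The key observation is that for any morphism of finite type $\kappa$-schemes $p\colon X \to Y$ which is set-theoretically surjective, and any $\kappa$-point $y$ of $Y$, the scheme-theoretic fibre $p^{-1}(y) = X \times_Y \Spec\kappa$ is a non-empty $\kappa$-scheme of finite type. Since $\kappa$ is algebraically closed, Hilbert's Nullstellensatz provides a $\kappa$-point in this fibre, which is the desired lift. Applying this to the two faithfully flat maps above, I obtain that both $G(\kappa) \to f(G)(\kappa)$ and $H(\kappa) \to (\mathrm{coker}\, f)(\kappa)$ are surjective.

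Combining these surjectivities with the tautological identifications $f(G)(\kappa) = f(G(\kappa))$ and $\Ker\bigl(H(\kappa) \to (\mathrm{coker}\, f)(\kappa)\bigr) = f(G)(\kappa)$ (the latter coming from the exact sequence $0 \to f(G) \to H \to \mathrm{coker}\, f \to 0$ of fppf sheaves, which remains left-exact on $\kappa$-points), I would conclude
$$(\mathrm{coker}\, f)(\kappa) = H(\kappa)/f(G(\kappa)) = \mathrm{coker}\, f(\kappa),$$
as required. I do not foresee any serious obstacle: the content of the lemma amounts to the fact that, over an algebraically closed field, taking $\kappa$-points is an exact functor on the category of smooth (commutative) algebraic groups, and this is an immediate consequence of the Nullstellensatz together with the representability of the relevant quotients.
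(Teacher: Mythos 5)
Your proposal is correct and follows essentially the same route as the paper: factor $f$ through its scheme-theoretic image $H'$, observe that $0\to H'\to H\to \mathrm{coker}\,f\to 0$ behaves well on $\kappa$-points, and lift $\kappa$-points through the surjection $G\to H'$ using the Nullstellensatz over the algebraically closed field $\kappa$. The only cosmetic difference is that the paper deduces $(\mathrm{coker}\,f)(\kappa)=H(\kappa)/H'(\kappa)$ directly from exactness of the quotient sequence in the \'etale topology (i.e.\ vanishing of \'etale $H^1$ over $\kappa$), whereas you invoke the Nullstellensatz fibre-lifting argument a second time for $H\to\mathrm{coker}\,f$; these are interchangeable.
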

\begin{proof}
Let $H'$ be the scheme-theoretic image of $f,$ which is a closed algebraic subgroup of $H$ smooth over $\kappa.$ The sequence $0\to H'\to H\to \mathrm{coker}\, f\to 0$ is exact in the étale topology, so $(\mathrm{coker}\,f)(\kappa)=H(\kappa)/H'(\kappa).$ Now let $g\colon G\to H'$ be the induced morphism. For every $x\in H'(\kappa),$ the scheme $g^{-1}(x)$ is non-empty and of finite type over $\kappa,$ so it has a $\kappa$-point because $\kappa$ is algebraically closed. This shows that $H'(\kappa)=\mathrm{im} \, f(\kappa),$ so the claim follows. 
\end{proof}

The Lemma fails if $\kappa$ is separably closed but imperfect, as shown by $f=[p]\colon \Gm\to \Gm$ with $p=\mathrm{char}\, k.$ Indeed, $\mathrm{coker}\, f=0$ in this case, but $\mathrm{coker}\, f(\kappa)=\kappa^\times/\kappa^{\times p},$ which is infinite. 
\begin{proposition}\rm (cf. \cite[p. 473]{LLR}) \it 
For $i=1,..., n,$ there exist canonical smooth group schemes $D_i$ of finite type over $k$ and isomorphisms
$$H^  i (\mathscr{G}^\bullet(\Og_K)) \cong D_i(k)$$ of Abelian groups. \label{Diconstructprop}
\end{proposition}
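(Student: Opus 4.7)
The plan is to follow the approach indicated in LLR, \S2.5: pass to smoothenings to obtain a complex of smooth $\Og_K$-group schemes, then build $D_i$ as a cokernel of smooth $k$-group schemes on the special fibre.

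First I would replace $\mathscr{G}^\bullet$ by its termwise smoothening $\widetilde{\mathscr{G}}^\bullet$. Since the identity $\Spec \Og_K \to \Spec \Og_K$ is smooth, the universal property of the smoothening yields $\widetilde{\mathscr{G}}^i(\Og_K) = \mathscr{G}^i(\Og_K)$ for every $i$, so the cohomology of the complex of $\Og_K$-points is unchanged; by functoriality, $\widetilde{\mathscr{G}}^\bullet$ is still a complex with the same generic fibre $G^\bullet$.

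Next, for each $i$ I would form the scheme-theoretic kernel $\mathcal{Z}^i := \ker(\widetilde{\mathscr{G}}^i \to \widetilde{\mathscr{G}}^{i+1})$, a closed subgroup scheme of $\widetilde{\mathscr{G}}^i$, and let $\widetilde{\mathcal{Z}}^i$ denote its smoothening. Because $d \circ d = 0$, the differential $\widetilde{\mathscr{G}}^{i-1} \to \widetilde{\mathscr{G}}^i$ factors through $\mathcal{Z}^i$, and the smoothness of $\widetilde{\mathscr{G}}^{i-1}$ combined with the universal property further lifts it uniquely to a morphism $\widetilde{\mathscr{G}}^{i-1} \to \widetilde{\mathcal{Z}}^i$. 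Passing to special fibres, I define
\[
  D_i \;:=\; \mathrm{coker}\bigl(\widetilde{\mathscr{G}}^{i-1}_k \to \widetilde{\mathcal{Z}}^i_k\bigr)
\]
as a quotient of smooth $k$-group schemes of finite type (with the convention $\widetilde{\mathscr{G}}^{0} = \widetilde{\mathscr{G}}^{n+1} = 0$); the canonicity follows from the functoriality of all operations used.

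Finally, I would identify $D_i(k)$ with $H^i(\mathscr{G}^\bullet(\Og_K))$. Lemma \ref{commuteswithcokerlemma} identifies $D_i(k)$ with $\widetilde{\mathcal{Z}}^i(k)/\mathrm{im}\bigl(\widetilde{\mathscr{G}}^{i-1}(k) \to \widetilde{\mathcal{Z}}^i(k)\bigr)$. Because $\Og_K$ is henselian with algebraically closed residue field and both $\widetilde{\mathcal{Z}}^i$ and $\widetilde{\mathscr{G}}^{i-1}$ are smooth, the reduction maps on $\Og_K$-points are surjective; and by the universal property of the smoothening, $\widetilde{\mathcal{Z}}^i(\Og_K) = \mathcal{Z}^i(\Og_K) = \ker\bigl(\widetilde{\mathscr{G}}^i(\Og_K) \to \widetilde{\mathscr{G}}^{i+1}(\Og_K)\bigr)$. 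A diagram chase then produces a surjective comparison map $H^i(\mathscr{G}^\bullet(\Og_K)) \twoheadrightarrow D_i(k)$.

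The main obstacle, and the heart of the argument, is the injectivity of this comparison, which amounts to showing that any $\Og_K$-point of $\widetilde{\mathcal{Z}}^i$ reducing to the image of some $k$-point of $\widetilde{\mathscr{G}}^{i-1}$ already lies in the image of $\widetilde{\mathscr{G}}^{i-1}(\Og_K)$. After lifting $k$-points through Hensel, this reduces to proving that the kernel of reduction on $\widetilde{\mathcal{Z}}^i(\Og_K)$ is captured by $\mathrm{im}\bigl(\widetilde{\mathscr{G}}^{i-1}(\Og_K)\bigr)$. This is where the exactness of $G^\bullet$ at the generic fibre enters decisively: on the generic fibre the map $\widetilde{\mathscr{G}}^{i-1}_K \to \widetilde{\mathcal{Z}}^i_K$ is surjective, so after possibly enlarging the smoothening one obtains, by a careful use of the smoothening universal property along the identity section, that infinitesimal cocycles are infinitesimal coboundaries, completing the injectivity and therefore the isomorphism.
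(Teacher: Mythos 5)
Your construction of $D_i$ via special fibres of the smoothenings does not give the correct group, and the injectivity claim at the end of the proposal is false. The gap is not cosmetic: the smoothening $\widetilde{\mathcal{Z}}^i$ is still just a finite-type $\Og_K$-group scheme, and its special fibre only sees the cohomology class ``modulo the maximal ideal,'' whereas $H^i(\mathscr{G}^\bullet(\Og_K))$ can contain arbitrarily deep information.

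Concretely, take $n=2$, $\mathscr{G}^1=\mathscr{G}^2=\mathbf{G}_{\mathrm{a}}$ over $\Og_K$, and let the differential be multiplication by $\pi^m$ for a uniformiser $\pi$ and some $m\geq 2$. The generic fibre complex is exact and smooth. Both terms, and the kernel $\mathcal{Z}^2=\mathbf{G}_{\mathrm{a}}$, are already smooth, so the smoothenings do nothing. Your definition gives
\[
D_2 = \mathrm{coker}\bigl(\mathbf{G}_{\mathrm{a},k}\xrightarrow{\;0\;}\mathbf{G}_{\mathrm{a},k}\bigr)=\mathbf{G}_{\mathrm{a},k},
\]
a group of dimension $1$. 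But $H^2(\mathscr{G}^\bullet(\Og_K))=\Og_K/\pi^m\Og_K$, which as the $k$-points of a unipotent group should have dimension $m$. The reduction-induced comparison map $\Og_K/\pi^m\Og_K\to k$ is surjective but has kernel $\pi\Og_K/\pi^m\Og_K\ne 0$, so no amount of ``careful use of the smoothening universal property'' will repair injectivity: your last step (that every $\Og_K$-point of $\widetilde{\mathcal{Z}}^i$ reducing into the image of $\widetilde{\mathscr{G}}^{i-1}_k$ already lifts to the image of $\widetilde{\mathscr{G}}^{i-1}(\Og_K)$) is simply false in this example, where it would assert $\mathfrak{m}\subseteq\pi^m\Og_K$.

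What the paper (following LLR, pp.~465 and 471--473) actually does is apply the Greenberg functor $\mathrm{Gr}_j$ at a \emph{sufficiently high level} $j\geq N$ to the auxiliary diagram built from $\mathscr{K}=\widetilde{\mathscr{G}}^{i-1}$ and $\mathscr{K}'=\widetilde{\mathcal{Z}}^i$ together with their ``hats'' $\widehat{\mathscr{K}}\to\widehat{\mathscr{K}'}$ (which make the top horizontal map smooth and surjective while the vertical maps are generic isomorphisms). The cokernels $C$, $C''$ at level $j$, and hence $D_i=\mathrm{coker}(C\to C'')$, stabilise for $j\geq N$ by \cite[Theorem 2.2(c) and Corollary 2.3]{LLR}, and this is what encodes the full ``depth'' of the cohomology. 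Your proposal replaces this by $j=0$ (the special fibre), which is not enough. To fix the argument you would need to reintroduce the Greenberg functor at high level, at which point you are back to the LLR construction and its stabilisation theorem; the smoothening alone cannot do the job.
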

\begin{proof}
We briefly recall the construction of the $D_i,$ referring the reader to \it loc. cit. \rm for more details. Let $\mathscr{K}$ and $\mathscr{K}'$ be the smoothenings of $\mathscr{G}^ {i-1}$ and $\ker(\mathscr{G}^ i \to \mathscr{G}^{i+1}),$ respectively. We obtain a morphism $\mathscr{K}\to \mathscr{K}'$ and a canonical isomorphism
$$H^ i (\mathscr{G}^\bullet(\Og_K)) = \mathrm{coker}(\mathscr{K}(\Og_K) \to \mathscr{K}'(\Og_K)).$$ As in \cite[p. 465]{LLR}, there exists a canonical diagram
$$\begin{CD}
\widehat{\mathscr{K}}@>>>\widehat{\mathscr{K}'}\\
@VVV@VVV\\
\mathscr{K}@>>>\mathscr{K}'
\end{CD}$$
of smooth $\Og_K$-group schemes of finite type such that the vertical arrows are isomorphisms at the generic fibre and such that the map $\widehat{\mathscr{K}}\to \widehat{\mathscr{K}'}$ is smooth and surjective. Denote by $\Gr_j$ the Greenberg functor of level $j,$ for any $j\geq 0$ (see, for example, \cite[ Section 7]{BGA}). We obtain a diagram of smooth $k$-group schemes of finite type
$$\begin{CD}
\Gr_j \widehat{\mathscr{K}}@>>>\Gr_j\widehat{\mathscr{K}'} @>>>0\\
@VVV@VVV\\
\Gr_j\mathscr{K}@>>>\Gr_j\mathscr{K}'\\
@VVV@VVV\\
C@>>> C''
\end{CD}$$
in which $C$ and $C''$ are the obvious cokernels. The top left horizontal arrow is smooth and surjective (and hence surjective on $k$-points). By  \cite[Theorem 2.2(c) and Corollary 2.3]{LLR}, as well as Lemma \ref{commuteswithcokerlemma} above, there exists $N\in \N$ such that for all $j\geq N,$ the maps $\mathrm{coker}(\widehat{\mathscr{K}}(\Og_K) \to \mathscr{K}(\Og_K)) \to C(k)$ and $\mathrm{coker}(\widehat{\mathscr{K}'}(\Og_K) \to \mathscr{K}'(\Og_K)) \to C''(k)$ are isomorphisms. Now put 
$$D_i:=\mathrm{coker}(C\to C'').$$ Then the diagrams above show that the canonical map
$$H^ i (\mathscr{G}^\bullet(\Og_K)) \to D_i(k)$$ is an isomorphism.
\end{proof}

\begin{remark} \label{kersmooth}
	The groups $C$ and $C''$ in the proof of Proposition \ref{Diconstructprop}
	are independent of $j \ge N$.
	More precisely, with these groups denoted by $C_{j}$ and $C_{j}''$,
	the natural morphisms
	$C_{j} \to C_{j - 1}$ and $C_{j}'' \to C_{j - 1}''$
	(denoted by
	$\operatorname{Coker}(g_{j}) \to \operatorname{Coker}(g_{j - 1})$
	and
	$\operatorname{Coker}(g_{j}'') \to \operatorname{Coker}(g_{j - 1}'')$
	in \cite[p. 471]{LLR})
	are isomorphisms.
	The proof of this fact in \cite{LLR} refers to Lemma 2.6 (c) there,
	which says that these morphisms are isomorphisms
        \emph{on $k$-valued points}:
	$C_{j}(k) \overset{\sim}{\to} C_{j - 1}(k)$ and
	$C_{j}''(k) \overset{\sim}{\to} C_{j - 1}''(k)$.
	A priori, this only implies that
	$C_{j} \to C_{j - 1}$ and $C_{j}'' \to C_{j - 1}''$
	are surjective with finite infinitesimal kernel.
	
	To show that they are indeed isomorphisms,
	we use the following fact given in \cite[Proposition 11.1]{BGA}:
	If $\mathscr{G}$ is a smooth group scheme over $\mathcal{O}_{K}$,
	then the natural morphism
	$\operatorname{Gr}_{j} \mathscr{G} \to \operatorname{Gr}_{j - 1} \mathscr{G}$
	is smooth for all $j$.
	Apply this to $\mathscr{G} = \mathscr{K}$.
	Together with the smoothness of
	$\operatorname{Gr}_{j - 1} \widehat{\mathscr{K}} $ over $k$,
	we know that the kernel of $C_{j} \to C_{j - 1}$ is smooth over $k$.
	Hence this kernel is zero for $j \ge N$.
	The same argument applies to $C_{j}'' \to C_{j - 1}''$.
        
        This shows that the group
        $D = \operatorname{coker}(C \to C'')$
        is independent of $j \ge N$.
        From the proof of the proposition, we have
            $
                    D
                \cong
                    \operatorname{coker}(
                            \operatorname{Gr}_{j} \mathscr{K}
                        \to
                            \operatorname{Gr}_{j} \mathscr{K}'
                    )
            $
        for $j \ge N$.
        Hence
            $
                    D
                \cong
                    \operatorname{coker}(
                            \operatorname{Gr} \mathscr{K}
                        \to
                            \operatorname{Gr} \mathscr{K}'
                    )
            $
        as a (pro)algebraic group over $k$,
        where $\operatorname{Gr} = \varprojlim_{j} \operatorname{Gr}_{j}$
        denotes the Greenberg transform of infinite level
        (\cite[Section 14]{BGA}).
\end{remark}

Following \cite[p. 473]{LLR}, we make the following 
\begin{definition} We define the invariant $\chi_{\mathrm{points}}(\mathscr{G}^\bullet)$ as
$$\chi_{\mathrm{points}}(\mathscr{G}^\bullet) : = \sum_{i=1}^n (-1) ^ i \dim_k D_i.$$
\end{definition}
The following result is stated in \cite{LLR} (see \it op. cit., \rm Theorem 2.10) without proof; we shall provide the details because it plays a fundamental role in our argument:
\begin{theorem}
For a complex $\mathscr{G}^\bullet$ as above, we have \label{Liepointsthm}
$$\chi_{\mathrm{Lie}}(\widetilde{\mathscr{G}}^\bullet)=\chi_{\mathrm{points}}(\mathscr{G}^\bullet).$$ 
\end{theorem}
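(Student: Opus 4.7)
The plan is to pass everything through the Greenberg transform and analyze via its standard filtration by Lie-algebra graded pieces. I first reduce to the case where each $\mathscr{G}^i$ is already smooth over $\Og_K$. By the universal property of smoothening, $\mathscr{G}^i(T) = \widetilde{\mathscr{G}}^i(T)$ for every smooth $\Og_K$-scheme $T$, so the kernels of the differentials of $\mathscr{G}^\bullet$ and of $\widetilde{\mathscr{G}}^\bullet$ have identical smoothenings; inspection of the construction of Proposition \ref{Diconstructprop} then shows that the $D_i$ attached to $\mathscr{G}^\bullet$ coincide with those attached to $\widetilde{\mathscr{G}}^\bullet$, while $\chi_{\mathrm{Lie}}(\widetilde{\mathscr{G}}^\bullet)$ is trivially unchanged.

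Under this reduction, the key technical tool is the canonical short exact sequence of complexes of smooth $k$-groups, valid for each $j \ge 1$,
$$0 \longrightarrow \Lie \mathscr{G}^\bullet \otimes_{\Og_K} \mathfrak{m}^j/\mathfrak{m}^{j+1} \longrightarrow \Gr_j \mathscr{G}^\bullet \longrightarrow \Gr_{j-1} \mathscr{G}^\bullet \longrightarrow 0,$$
arising from the standard filtration of the Greenberg functor on smooth groups (cf.\ \cite[Proposition 11.1]{BGA}; also Remark \ref{kersmooth}). Combined with Lemma \ref{commuteswithcokerlemma} and the construction of Proposition \ref{Diconstructprop}, this yields additivity of both $\chi_{\mathrm{Lie}}$ and $\chi_{\mathrm{points}}$ in short exact sequences of generically exact complexes of smooth $\Og_K$-groups: on the Lie side via the long exact cohomology sequence and additivity of length, and on the points side via the induced long exact sequence of the $D_i$ together with additivity of dimension in exact sequences of smooth $k$-groups. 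These additivity properties reduce the theorem to the case of a single morphism $f : \mathscr{G}^1 \to \mathscr{G}^2$ of smooth $\Og_K$-groups that is an isomorphism on generic fibres.

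In this two-term case, Lemma \ref{chigammalem} gives $\chi_{\mathrm{Lie}}(f) = \gamma(\Lie f) = v_{K}(\det \Lie f)$. On the other hand, using the identification of Remark \ref{kersmooth}, $D_1 = \Gr_j \widetilde{\ker f}$ and $D_2 = \operatorname{coker}(\Gr_j \mathscr{G}^1 \to \Gr_j \mathscr{G}^2)$ for $j$ sufficiently large; since $\dim_k \Gr_j \mathscr{G}^1 = \dim_k \Gr_j \mathscr{G}^2$, this simplifies to $\chi_{\mathrm{points}}(f) = \dim_k \Gr_j \ker f - \dim_k \Gr_j \widetilde{\ker f}$. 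I would then diagonalise $\Lie f$ via Smith normal form, writing it as a diagonal matrix with entries $\pi^{n_1},\dots,\pi^{n_r}$, and reduce (locally around the identity section, using that $\ker f$ is supported infinitesimally on the special fibre and the formal completions of smooth groups of dimension $d$ are $\Og_K$-isomorphic to $\Spf \Og_K[[x_1,\dots,x_d]]$) to the case $\mathscr{G}^1 = \mathscr{G}^2 = \Ga^r$ with $f = \operatorname{diag}(\pi^{n_i})$. Here $\ker f$ decomposes as a product of $\Spec \Og_K[T]/(\pi^{n_i} T)$, whose Greenberg transform has dimension $n_i$ (for $j \ge n_i$), while the smoothening is trivial, so $\chi_{\mathrm{points}}(f) = \sum_i n_i = v_{K}(\det \Lie f) = \chi_{\mathrm{Lie}}(f)$. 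The main obstacle will be the careful tracking of the smoothening correction $D_1 = \Gr_j \widetilde{\ker f}$ versus the naive kernel of $\Gr_j f$ (whose non-smoothness, especially for inseparable $f$, accounts precisely for the $\pi$-torsion contributions), as well as the rigorous justification of the local reduction to $\Ga^r$; these are essentially the technical content of Section 2 of \cite{LLR}.
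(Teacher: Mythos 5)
Your approach is genuinely different from the paper's. The paper inducts on the length of $\mathscr{G}^\bullet$, peeling off a three-term piece $\widetilde{\mathscr{H}}_2^\bullet \colon 0 \to \widetilde{\mathscr{K}} \to \mathscr{G}^{n-1} \to \mathscr{G}^n \to 0$ (with $\mathscr{K} = \ker(\mathscr{G}^{n-1} \to \mathscr{G}^n)$ and $\widetilde{\mathscr{K}}$ its smoothening) and invoking \cite[Theorem 2.1(b)]{LLR} as a black box for that base case; the bookkeeping that makes the induction close is the correction term $\delta = \ell_{\mathcal{O}_{K}}(\mathrm{coker}(\Lie \widetilde{\mathscr{K}} \to \Lie \mathscr{K}))$, which appears with opposite signs in the two halves and cancels. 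You instead propose to re-derive the LLR input from scratch via Greenberg filtrations and a Smith-normal-form computation. The initial reduction to the case where every $\mathscr{G}^i$ is smooth is the same and is fine.

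There are two genuine gaps. First, the reduction to a single generic isomorphism $f \colon \mathscr{G}^1 \to \mathscr{G}^2$ does not follow from the additivity you state. Additivity of $\chi_{\mathrm{Lie}}$ and $\chi_{\mathrm{points}}$ in short exact sequences of complexes of \emph{smooth} $\mathcal{O}_{K}$-groups cannot by itself split $\mathscr{G}^\bullet$: the scheme-theoretic kernels $\ker(\mathscr{G}^i \to \mathscr{G}^{i+1})$ are not smooth (that failure of smoothness is exactly what the theorem is measuring), and neither stupid truncations nor quotient complexes give short exact sequences all of whose terms are smooth with exact generic fibre. What is actually needed is the paper's induction through the non-smooth kernel $\mathscr{K}$ together with the $\delta$-correction, followed by the (LLR-internal) observation that $\widetilde{\mathscr{H}}_2^\bullet$ has the same $D_i$ as the two-term complex $[\mathscr{G}^{n-1}/\widetilde{\mathscr{K}} \to \mathscr{G}^n]$ because $\mathscr{G}^{n-1} \to \mathscr{G}^{n-1}/\widetilde{\mathscr{K}}$ is smooth surjective. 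You assert the reduction but supply neither step.

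Second, the Smith-normal-form endgame is not correct as stated. Choosing $\mathcal{O}_{K}$-bases in which $\Lie f = \operatorname{diag}(\pi^{n_1},\dots,\pi^{n_r})$ does not give $\mathcal{O}_{K}$-coordinates on the formal completions in which $f$ itself is diagonal: the formal group law on the completion of a smooth $\mathcal{O}_{K}$-group is generally not $\widehat{\mathbf{G}_{\mathrm{a}}}^{d}$, there is no logarithm over $\mathcal{O}_{K}$ in residue characteristic $p$, and a (non-group-theoretic) coordinate change linearising $f$ need not exist integrally. Since $\ker f$ is the fibre product of $f$ over the identity section, its Greenberg transform depends on $f$ itself and the group structure, not merely on $\Lie f$ and the formal-scheme type of the completions; so the reduction to $\mathbf{G}_{\mathrm{a}}^{r}$ with $f = \operatorname{diag}(\pi^{n_i})$ is not available. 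The mechanism LLR actually use is the dilatation formalism (factoring a generic isomorphism of smooth models into a chain of N\'eron dilatations and tracking $\Lie$ and $\operatorname{Gr}_j$ through each step), which is of a different nature. You flag this yourself, writing that the remaining work is ``essentially the technical content of Section 2 of [LLR]''; but then the proposal is a reduction of the theorem to the very result the paper quotes, not an independent proof of it.
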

\begin{proof}
Because the map $\widetilde{\mathscr{G}}^\bullet(\Og_K) \to \mathscr{G}^\bullet(\Og_K)$ is an isomorphism of complexes, we may assume without loss of generality that all the $\mathscr{G}^j$ are smooth.
There is nothing to do if $n = 0$ because $ \mathscr{G}^\bullet$ is the zero complex in this case.
In general, we let $\mathscr{K}:=\ker(\mathscr{G}^{n-1}\to \mathscr{G}^n)$ and decompose $\mathscr{G}^\bullet$ into the complexes 
$$\mathscr{H}^\bullet_1: \, 0 \to \mathscr{G}^1 \to ... \to \mathscr{G}^{n-2} \to \mathscr{K} \to 0,$$
as well as $$\mathscr{H}_2^\bullet: \, 0\to \mathscr{K} \to \mathscr{G}^{n-1} \to \mathscr{G}^n\to 0,$$ declaring that $\mathscr{K}$ be placed in degree $n-2$ in $\mathscr{H}^\bullet_2.$
Since the functors $\Gamma(\Og_K,-)$ and $\Lie -$ are left exact, we see the following:
On the one hand, denoting by $D_j^{\mu}$ the group schemes constructed in Proposition \ref{Diconstructprop} for $\mathscr{H}^\bullet_{\mu}$ ($\mu=1,2$), and by $D_j$ those constructed for $\mathscr{G}^\bullet$, we find $D_i=D_i^1$ for $i\leq n-1,$ $D^2_{n-2}=D^2_{n-1}=0,$ and $D_j=D^2_j$ for $j\geq n.$ On the other hand, $H^{i}(\Lie\mathscr{G}^\bullet) = H^{i}(\Lie\mathscr{H}^\bullet_1)$ for $i \leq n-1,$ $H^{n-2}(\Lie\mathscr{H}^\bullet_2)=H^{n-1}(\Lie\mathscr{H}^\bullet_2)$=0, and $H^{j}(\Lie\mathscr{H}^\bullet_2)=H^{j}(\Lie\mathscr{G}^\bullet)$ for $j\geq n.$ Note that, writing $\widetilde{\mathscr{K}}$ for the smoothening of $\mathscr{K},$ the canonical map $\Lie\widetilde{\mathscr{K}} \to \Lie \mathscr{K}$ is injective. Moreover, writing $\delta:=\ell_{\Og_K}(\mathrm{coker}(\Lie\widetilde{\mathscr{K}}\to\Lie\mathscr{K})),$ we see that $\chi_{\mathrm{Lie}}(\widetilde{\mathscr{H}}^\bullet_1) = \chi_{\mathrm{Lie}}(\mathscr{H}_1^\bullet)-(-1)^{n-1}\delta$ and $\chi_{\mathrm{Lie}}(\widetilde{\mathscr{H}}^\bullet_2) = \chi_{\mathrm{Lie}}(\mathscr{H}_2^\bullet)+(-1)^{n-1}\delta.$ This implies that 
$$\chi_{\mathrm{Lie}}(\mathscr{G}^\bullet)=\chi_{\mathrm{Lie}}(\widetilde{\mathscr{H}}_1^\bullet)+\chi_{\mathrm{Lie}}(\widetilde{\mathscr{H}}_2^\bullet),$$ as well as
$$\chi_{\mathrm{points}}(\mathscr{G}^\bullet)=\chi_{\mathrm{points}}(\mathscr{H}_1^\bullet)+\chi_{\mathrm{points}}(\mathscr{H}_2^\bullet)  =\chi_{\mathrm{points}}(\widetilde{\mathscr{H}}_1^\bullet)+\chi_{\mathrm{points}}(\widetilde{\mathscr{H}}_2^\bullet).$$
The equality
	\[
			\chi_{\mathrm{Lie}}(\widetilde{\mathscr{H}}_2^\bullet)
		=
			\chi_{\mathrm{points}}(\widetilde{\mathscr{H}}_2^\bullet)
	\]
is precisely \cite[Theorem 2.1(b)]{LLR}.
Hence the statement for $n$ reduces to the statement for $n - 1$.
By induction, we get the result.
\end{proof}

\subsection{Complexes of Néron models}
Let $G^\bullet:\,0 \to T \to B \to A \to 0$ be an exact sequence of semiabelian varieties over $K$ and let 
$$\mathscr{G}^\bullet\colon \, 0 \to \mathscr{T} \to \mathscr{B}\to \mathscr{A} \to 0$$ be the associated complex of Néron lft-models over $\Og_K$ with $\mathscr{T}$ placed in degree 1. Let $L$ be a finite separable extension of $K$ such that $T,$ $B,$ and $A$ all acquire semiabelian reduction over the integral closure $\Og_L$ of $\Og_K$ in $L,$ and let $\mathscr{T}_L,$ $\mathscr{B}_L,$ and $\mathscr{A}_L$ be the Néron lft-models of $T_L,$ $B_L,$ and $A_L,$ respectively. The induced complex of Néron lft-models will be called $\mathscr{G}_L^\bullet.$ Following Chai \cite{Chai}, we define the \it base change conductor \rm of $B$ as
$$c(B):=\frac{1}{[L:K]} \ell_{\Og_L}(\mathrm{coker}((\Lie \mathscr{B})\otimes_{\Og_K}\Og_L \to \Lie \mathscr{B}_L)),$$ and similarly for $T$ and $A.$ Then we have the following
\begin{conjecture}\rm (Chai \cite{Chai}) \it Assume that $T$ is a torus and that $A$ is an Abelian variety. Then 
$$c(B)=c(T)+c(A).$$
\end{conjecture}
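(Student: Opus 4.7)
The plan is to convert the identity $c(B) = c(T) + c(A)$, via the identifications $\gamma = \chi$ of Lemma~\ref{chigammalem} and $\chi_{\mathrm{Lie}} = \chi_{\mathrm{points}}$ of Theorem~\ref{Liepointsthm}, into a vanishing statement for the smooth $k$-groups $D_i$ attached to the complex $\mathscr{G}^\bullet$, and then to read off that vanishing from the torus hypothesis on $T$.

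First, I would consider the canonical map of complexes of free $\Og_L$-modules $\Lie\mathscr{G}^\bullet\otimes_{\Og_K}\Og_L \to \Lie\mathscr{G}^\bullet_L$, induced by the Néron universal property. It is an isomorphism on generic fibres and hence termwise injective, and its termwise cokernels have $\Og_L$-lengths $[L:K]c(T)$, $[L:K]c(B)$, $[L:K]c(A)$ by the very definition of the base change conductor. Additivity of the cohomological Euler characteristic in the resulting short exact sequence of complexes then yields
$$[L:K]\bigl(c(B)-c(T)-c(A)\bigr) = \chi(\Lie\mathscr{G}^\bullet_L) - \chi(\Lie\mathscr{G}^\bullet\otimes_{\Og_K}\Og_L).$$
Because $T$, $B$ and $A$ all acquire semiabelian reduction over $\Og_L$, the sequence $0\to\mathscr{T}^0_L\to\mathscr{B}^0_L\to\mathscr{A}^0_L\to 0$ of identity components is short exact (one identifies $\mathscr{T}^0_L$ as the toric part of the semiabelian scheme $\mathscr{B}^0_L$ and $\mathscr{A}^0_L$ as its abelian quotient, both characterised by their generic fibres via Néron's universal property), so $\Lie\mathscr{G}^\bullet_L$ is itself exact and $\chi(\Lie\mathscr{G}^\bullet_L)=0$. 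Moreover, since $k$ is algebraically closed, $\Og_L/\Og_K$ is totally ramified of degree $[L:K]$, giving $\chi(\Lie\mathscr{G}^\bullet\otimes_{\Og_K}\Og_L)=[L:K]\,\chi_{\mathrm{Lie}}(\mathscr{G}^\bullet)$. The conjecture is therefore equivalent to $\chi_{\mathrm{Lie}}(\mathscr{G}^\bullet)=0$.

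To establish this last equality I would apply Theorem~\ref{Liepointsthm} directly (the Néron lft-models being already smooth, no smoothening is needed) and rewrite $\chi_{\mathrm{Lie}}(\mathscr{G}^\bullet)=\sum_{i=1}^{3}(-1)^i\dim_k D_i$. The complex $\mathscr{G}^\bullet(\Og_K)$ coincides with $0\to T(K)\to B(K)\to A(K)\to 0$ by the Néron mapping property; its cohomology vanishes in degrees $1$ and $2$ because $T\hookrightarrow B$ is a closed immersion whose image is the scheme-theoretic kernel of $B\to A$, so $D_1=D_2=0$. In degree $3$ the cohomology is $A(K)/\im(B(K)\to A(K))$, which the long exact fppf-cohomology sequence of $0\to T\to B\to A\to 0$ embeds into $H^1(K,T)$. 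The torus hypothesis enters exactly here: $H^1(K,T)$ is finite for a torus over a strict Henselian local field (a standard fact, provable via a resolution of $T$ by quasi-trivial tori together with Hilbert~90 and the vanishing of higher étale cohomology of $\Og_K$). Hence $D_3(k)$ is finite and, $D_3$ being smooth over the algebraically closed field $k$, it is finite étale and zero-dimensional.

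The most delicate step in executing the plan will be confirming that $0\to\mathscr{T}^0_L\to\mathscr{B}^0_L\to\mathscr{A}^0_L\to 0$ really is a short exact sequence of $\Og_L$-group schemes, since the naïve sequence of full Néron lft-models is generally non-exact; this rests on the uniqueness and Néron-functoriality of the toric part and abelian quotient of a semiabelian scheme over $\Og_L$. Modulo this structural point and the quoted finiteness of $H^1(K,T)$, the argument is essentially a bookkeeping exercise combining Lemma~\ref{chigammalem}, Theorem~\ref{Liepointsthm}, and Proposition~\ref{Diconstructprop}.
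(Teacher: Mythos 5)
Your overall strategy is the same as the paper's first proof of Theorem~\ref{Chaithm}: pass from $c(B)-c(T)-c(A)$ to $\chi_{\mathrm{Lie}}(\mathscr{G}^\bullet)$ (essentially re-deriving Proposition~\ref{Chigammaprop} by hand, together with the observation that $\Lie\mathscr{G}^\bullet_L$ is exact), then use Theorem~\ref{Liepointsthm} to reduce to showing $\sum (-1)^i\dim_k D_i=0$, and finally deduce this from the exactness of $\mathscr{G}^\bullet(\Og_K)$ together with the vanishing (you use finiteness, which suffices) of $H^1(K,T)$ for a torus $T$.

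There is, however, one genuine gap. You apply Theorem~\ref{Liepointsthm}, and implicitly Proposition~\ref{Diconstructprop}, directly to the complex $\mathscr{G}^\bullet$ of full N\'eron lft-models. But that entire framework (Section 2.2) is set up for complexes of \emph{separated group schemes of finite type} over $\Og_K$: the Greenberg transforms $\Gr_j$ and the results of \cite{LLR} that underpin the construction of the $D_i$ all require quasi-compactness. If the torus $T$ has a nontrivial maximal split subtorus --- which the statement of the conjecture allows --- then the N\'eron lft-model $\mathscr{T}$ (and hence $\mathscr{B}$) is \emph{not} of finite type, and the $D_i$ are simply not defined. You remark that ``no smoothening is needed,'' but the issue is not smoothness, it is finite type. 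The paper's proof handles this explicitly at the outset by replacing $T$, $B$, $A$ with $T^{\mathrm{b}}$, $B^{\mathrm{b}}$, $A^{\mathrm{b}}$ (quotients by the maximal split subtorus), invoking \cite[Lemma~4.1.3 and Proposition~3.4.2]{CLN} to see that $c$ is unchanged and the resulting sequence is still exact; since $A$ is abelian, $A^{\mathrm{b}}=A$ and this reduction does not even require isogeny invariance of $c$ for tori. Only after that reduction are the N\'eron lft-models of finite type, so that Proposition~\ref{Diconstructprop} and Theorem~\ref{Liepointsthm} apply. (Alternatively, one could work throughout with the identity-component complex $\mathscr{G}^{0,\bullet}$, which is always of finite type, as the paper's second proof does --- but then the input to Theorem~\ref{Liepointsthm} and the bookkeeping of component groups must be redone; your proposal does not take this route either.) Inserting the reduction to the no-split-subtorus case would make your argument complete.
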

The main difficulty comes from the fact that Néron lft-models do not in general commute with ramified base change, and that the complex $\mathscr{G}^\bullet$ above is usually not exact. We shall see that, in general, the additivity of the base change conductor as in Chai's conjecture (without the hypotheses on $T$ and $A$) is equivalent to a growth condition on the invariant $\gamma(-)=\chi_{\mathrm{Lie}}(-)$, generalising \cite[Proposition 4.1.1]{CLN}:
\begin{proposition}
Let $G^\bullet$ be as above. Then we have 
$$c(B)-c(T)-c(A)=\frac{1}{[L:K]}\gamma(\mathscr{G}_L^\bullet) - \gamma(\mathscr{G}^\bullet) \in \frac{1}{[L:K]}\Z.$$ \label{Chigammaprop}
In particular, the following are equivalent:\\
(i) $c(B)=c(T)+c(A),$\\
(ii) $\gamma(\mathscr{G}_L^\bullet)=[L:K]\gamma(\mathscr{G}^\bullet),$ and\\
(iii) $\chi_{\mathrm{Lie}}(\mathscr{G}_L^\bullet)=[L:K]\chi_{\mathrm{Lie}}(\mathscr{G}^\bullet)$ 
\end{proposition}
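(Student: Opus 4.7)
The plan is to exploit two basic transformation rules for the invariant $\gamma(-)$ applied to the diagram of Lie algebras induced by the Néron mapping property. Applying $\Lie$ to the canonical morphism $\mathscr{G}^\bullet \times_{\Og_K} \Og_L \to \mathscr{G}_L^\bullet$ produces a morphism of complexes of finitely generated free $\Og_L$-modules
\[
    \Lie \mathscr{G}^\bullet \otimes_{\Og_K} \Og_L \longrightarrow \Lie \mathscr{G}_L^\bullet
\]
that is an isomorphism after $-\otimes_{\Og_L} L$ (so both complexes are acyclic on the generic fibre, making $\gamma$ defined on each). Each componentwise map is injective, since its source is free over $\Og_L$ and it becomes an isomorphism after inverting a uniformiser; by the very definition of the base change conductor, together with $e_{L/K}=[L:K]$ (because $k$ is algebraically closed), the cokernels of the vertical arrows over $\mathscr{T}$, $\mathscr{B}$, $\mathscr{A}$ have $\Og_L$-lengths $[L:K]\,c(T)$, $[L:K]\,c(B)$, $[L:K]\,c(A)$ respectively.

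Next I would establish two elementary formulas for $\gamma$. First, for any complex $A^\bullet$ of finitely generated free $\Og_K$-modules acyclic after $-\otimes_{\Og_K} K$, one has $\gamma(A^\bullet \otimes_{\Og_K} \Og_L) = [L:K]\,\gamma(A^\bullet)$; this reduces to noting that $\det(A^\bullet) \otimes_{\Og_K} \Og_L$ sits in $L$ with $v_L$-valuation equal to $e_{L/K}$ times the $v_K$-valuation of $\det(A^\bullet) \subset K$. Second, if $f \colon A^\bullet \to B^\bullet$ is a degreewise injective morphism between bounded $L$-acyclic complexes of finitely generated free $\Og_L$-modules with matching ranks in each degree, then
\[
    \gamma(B^\bullet) - \gamma(A^\bullet) = \sum_i (-1)^i \ell_{\Og_L}(\mathrm{coker}\, f^i),
\]
which is a direct elementary divisor computation identifying $\ell_{\Og_L}(\det B^i/\det A^i)$ with $\ell_{\Og_L}(\mathrm{coker}\, f^i)$ and multiplying out. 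Applied to the diagram above, with $\mathscr{T},\mathscr{B},\mathscr{A}$ in degrees $1,2,3$, these two formulas combine to give
\[
    \gamma(\mathscr{G}_L^\bullet) - [L:K]\,\gamma(\mathscr{G}^\bullet)
        = [L:K]\bigl(c(B)-c(T)-c(A)\bigr),
\]
which after dividing by $[L:K]$ is the claimed identity. Because $\gamma(\mathscr{G}^\bullet), \gamma(\mathscr{G}_L^\bullet) \in \Z$ by construction, the right-hand side automatically lies in $\frac{1}{[L:K]}\Z$.

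With the main formula in hand, the equivalence of (i) and (ii) is immediate, while the equivalence of (ii) and (iii) follows at once from Lemma \ref{chigammalem}, which gives $\gamma=\chi_{\mathrm{Lie}}$ on both $\mathscr{G}^\bullet$ and $\mathscr{G}_L^\bullet$. The only potential obstacle is bookkeeping: one must check that $\Lie$ of a Néron lft-model is indeed finitely generated and free of rank equal to the generic dimension (which follows because the Lie algebra depends only on the identity component, and the identity component is smooth of finite type), that the componentwise rank-matching required for the $\gamma$-difference formula holds (it does, as the ranks match those of the generic fibre), and that the ramification index is tracked correctly in the base change formula for $\gamma$. None of these pose genuine difficulty under the standing assumption that $k$ is algebraically closed.
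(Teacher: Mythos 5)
Your proof is correct and follows essentially the same route as the paper's: both arguments compute the valuation of $\det \Lie \mathscr{G}_L^\bullet$ relative to $\det \Lie \mathscr{G}^\bullet$ by comparing determinants degree by degree through the canonical maps $\Lie \mathscr{G}^i \otimes_{\Og_K} \Og_L \hookrightarrow \Lie \mathscr{G}_L^i$, whose cokernels have lengths $[L:K]c(T), [L:K]c(B), [L:K]c(A)$ by definition of the base change conductor. You simply factor the computation into two reusable lemmas (base-change scaling of $\gamma$ and the alternating-sum formula for a degreewise injection), whereas the paper writes out the chain of module equalities $\mathfrak{m}_L^{-\gamma(\mathscr{G}_L^\bullet)} = \cdots = \mathfrak{m}_L^{d(c(T)-c(B)+c(A))-d\gamma(\mathscr{G}^\bullet)}$ in one go; the equivalence of (i)--(iii) via Lemma \ref{chigammalem} is handled identically.
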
 
\begin{proof}
Let $\mathfrak{m}_L\subseteq \Og_L$ be the maximal ideal and write $d:=[L:K].$ By the definition of $c(-)$, we have an equality $$\det \Lie \mathscr{T}_L = \mathfrak{m}_L^{-dc(T)}(\det \Lie\mathscr{T}) \otimes_{\Og_K} \Og_L;$$ analogous equalities hold with $T$ replaced by $B$ and $A.$
Using the definition of $\gamma(-),$ we obtain the equalities
\begin{align*}
\mathfrak{m}_L^{-\gamma(\mathscr{G}^\bullet _L)} & = 
\det \Lie \mathscr{G}^\bullet _L \\ & =
(\det \Lie\mathscr{T}_L)^\vee \otimes_{\Og_L} \det \Lie\mathscr{B}_L \otimes_{\Og_L} (\det \Lie\mathscr{A}_L)^\vee \\
&= \mathfrak{m}_L ^{d(c(T)-c(B)+c(A))} (\det \Lie \mathscr{G}^\bullet) \otimes_{\Og_K} \Og_L \\
&= \mathfrak{m}_L ^{d(c(T)-c(B)+c(A)) - d\gamma(\mathscr{G}^\bullet)}
\end{align*}
 of $\Og_L$-submodules of $L.$ This implies claim (i). The equivalence between (i), (ii), and (iii) then follows immediately using Lemma \ref{chigammalem}.
\end{proof}

\begin{remark}\label{Remark210} If $T$ is a torus, $T_L$ is split because $T_L$ has semiabelian reduction and $k$ is algebraically closed. In particular, the complex $\mathscr{G}^\bullet_L$ is exact \cite[Remark 4.8 (a)]{Chai}, so $\gamma(\mathscr{G}^\bullet_L)=0.$ Therefore we recover \cite[Proposition 4.1.1]{CLN}, which says that $(i)$ is equivalent to $\gamma(\Lie \mathscr{G}^\bullet)=0$ in this case.
\end{remark}

We have now assembled the tools required to prove Chai's conjecture. In fact, we shall prove a slightly stronger statement. For any semiabelian variety $B,$ we denote by $B^{\mathrm{b}}$ the quotient of $B$ by its maximal \it split \rm subtorus. 
\begin{theorem} \rm(Chai's conjecture) \it
Let $0\to T\to B\to A\to 0$ be an exact sequence of semiabelian varieties over $K$ and suppose that $T$ is a torus.\label{Chaithm} Then $$c(B)=c(T)+c(A).$$ 
\end{theorem}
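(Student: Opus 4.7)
The plan is to deploy the cohomological machinery of Subsections 2.1--2.3 in order to reduce the statement to a dimension count of certain $k$-groups.

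Since $T$ is a torus, Remark~\ref{Remark210} gives $\gamma(\mathscr{G}_L^\bullet)=0$, and Proposition~\ref{Chigammaprop} then reduces $c(B)=c(T)+c(A)$ to the single equality $\gamma(\mathscr{G}^\bullet)=0$. By Lemma~\ref{chigammalem} this is $\chi_{\mathrm{Lie}}(\mathscr{G}^\bullet)=0$. Because Néron lft-models are smooth, $\widetilde{\mathscr{G}}^\bullet=\mathscr{G}^\bullet$, so Theorem~\ref{Liepointsthm} further reduces the task to
$$\chi_{\mathrm{points}}(\mathscr{G}^\bullet) = -\dim_k D_1 + \dim_k D_2 - \dim_k D_3 = 0,$$
where the $D_i$ are the smooth $k$-group schemes produced by Proposition~\ref{Diconstructprop}.

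The vanishing of $D_1$ and $D_2$ is immediate. The Néron mapping property identifies $\mathscr{G}^\bullet(\Og_K)$ with $0 \to T(K) \to B(K) \to A(K) \to 0$, and the long exact sequence of Galois cohomology shows that $0\to T(K)\to B(K)\to A(K)$ is exact at the first two spots. Hence $H^1(\mathscr{G}^\bullet(\Og_K))=H^2(\mathscr{G}^\bullet(\Og_K))=0$, so $D_1(k)=D_2(k)=0$ by Proposition~\ref{Diconstructprop}; since the $D_i$ are smooth and $k$ is algebraically closed, $D_1=D_2=0$. The whole statement therefore collapses to the finiteness of $D_3$, i.e.\ $\dim_k D_3=0$, whose $k$-points are $\operatorname{coker}(B(K)\to A(K))$, embedded into $H^1(K,T)$ via the connecting homomorphism.

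This last finiteness is the main obstacle: one must control the full $k$-scheme structure of $D_3$, not merely its $k$-points. The plan here is to exploit that, since $k$ is algebraically closed, $T$ becomes split over some totally ramified finite Galois extension $L/K$, so by Remark~\ref{Remark210} the base-changed complex $\mathscr{G}_L^\bullet$ is exact as a sequence of smooth $\Og_L$-group schemes. Using the description of $D_3$ as a cokernel of Greenberg transforms (Remark~\ref{kersmooth}) together with Galois descent for the action of $\Gamma=\Gal(L/K)$, the failure of exactness over $\Og_K$ should be visible only in an étale (hence $0$-dimensional) subquotient of the relevant Greenberg transforms, yielding $\dim_k D_3 = 0$ and thus the theorem.
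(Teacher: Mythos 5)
Your reduction of the theorem to $\gamma(\mathscr{G}^\bullet)=\chi_{\mathrm{Lie}}(\mathscr{G}^\bullet)=0$ via Remark~\ref{Remark210} and Proposition~\ref{Chigammaprop}, and further to $\chi_{\mathrm{points}}(\mathscr{G}^\bullet)=0$ via Theorem~\ref{Liepointsthm}, is exactly the paper's strategy. But there are two genuine gaps.

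First, Theorem~\ref{Liepointsthm} and the construction of the $D_i$ in Proposition~\ref{Diconstructprop} apply only to complexes of group schemes \emph{of finite type} over $\Og_K$. If any of $T,B,A$ contains a split subtorus, its N\'eron lft-model is merely locally of finite type, and none of the $\chi_{\mathrm{points}}$ machinery applies directly. The paper handles this first: using \cite[Proposition 3.4.2]{CLN}, \cite[Lemma 4.1.3]{CLN} (that $c(B)=c(B^{\mathrm{b}})$) and the isogeny invariance of $c(-)$ for tori (or, in the second proof, an argument through component groups), one reduces to the case where $T,B,A$ have no split subtorus and hence N\'eron models of finite type by \cite[Chapter 10.2, Theorem 1]{BLR}. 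Your argument skips this reduction entirely, so the invocation of $D_1,D_2,D_3$ is not yet justified.

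Second, and more importantly, what you call ``the main obstacle''~--- controlling the full scheme structure of $D_3$ rather than just its $k$-points~--- is not actually an obstacle, because $D_3(k)$ is already zero. Since $k$ is algebraically closed, $H^1(K,T)=0$ for any torus $T$ over $K$ (\cite[Lemma 4.3]{Chai}, essentially Lang's theorem on $C_1$-fields). Therefore $B(K)\to A(K)$ is surjective, the complex $\mathscr{G}^\bullet(\Og_K)$ is \emph{exact} in all degrees, and $D_3(k)=\operatorname{coker}(B(K)\to A(K))=0$. Smoothness of $D_3$ and algebraic closedness of $k$ then force $D_3=0$, exactly as you argued for $D_1$ and $D_2$. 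Your proposed Galois-descent detour through $\mathscr{G}^\bullet_L$ and the Greenberg transforms is therefore unnecessary; it is also not obviously sound as stated, since N\'eron lft-models do not commute with the ramified base change $\Og_K\subset\Og_L$~--- that failure is precisely what $c(-)$ measures~--- so exactness of $\mathscr{G}^\bullet_L$ does not descend in any naive way to information about $D_3$ over $\Og_K$. The missing ingredient $H^1(K,T)=0$ is what makes the whole argument close.
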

\begin{proof}
We shall give two related (but independent) proofs. Let $\mathscr{G}^\bullet$ be the complex $0\to \mathscr{T}\to \mathscr{B}\to \mathscr{A}\to 0$ of Néron lft-models induced by the sequence from the Theorem. Moreover, let $T'$ be the kernel of the map $B^{\mathrm{b}}\to A^{\mathrm{b}}.$ By \cite[Proposition 3.4.2 (1)]{CLN}, we obtain an exact sequence
$$0\to T'\to B^{\mathrm{b}} \to A^{\mathrm{b}} \to 0,$$ as well as an isogeny $T^{\mathrm{b}}\to T'.$ Because $c(B)=c(B^{\mathrm{b}})$  for any semiabelian variety $B$ \cite[Lemma 4.1.3]{CLN}, and because the base change conductor is invariant under isogeny for algebraic tori (\cite[Theorem on p. 367 and Theorem 12.1]{CY} or Theorem \ref{0025} below), we may assume that $T,$ $A,$ and $B$ contain no split subtorus, and hence that their Néron lft-models are of finite type \cite[Chapter 10.2, Theorem 1]{BLR}. Therefore we can apply our previous results to $\mathscr{G}^\bullet.$ Because $k$ is algebraically closed, we have $H^1(K, T)=0$ \cite[Lemma 4.3]{Chai}, so the sequence $0\to T(K)\to B(K)\to A(K)\to 0$ is exact. However, this sequence is canonically identified with the complex $\mathscr{G}^\bullet(\Og_K)$ by the universal property of the Néron lft-model, so we find that 
$\chi_{\mathrm{points}}(\mathscr{G}^\bullet)=0.$ Theorem \ref{Liepointsthm} now shows that $\chi_{\mathrm{Lie}}(\mathscr{G}^\bullet)=0,$ which, in turn, implies that 
$$\gamma(\Lie\mathscr{G}^\bullet)=0$$ (but note that $\chi_{\mathrm{Lie}}(\mathscr{G}^\bullet)=0$ does not imply that $\Lie \mathscr{G}^\bullet$ is exact). However, this is equivalent to the claim that $c(B)=c(T)+c(A)$ by Proposition \ref{Chigammaprop} and the Remark thereafter. Note that, if $A$ has no split subtorus (as is the case in Chai's original conjecture), then the sequence $0\to T^{\mathrm{b}}\to B^{\mathrm{b}}\to A^{\mathrm{b}}\to 0$ is exact \cite[Proposition 3.4.2(2)]{CLN}, so we do not need to invoke the invariance of $c(-)$ under isogeny for tori in that case.

We shall now give a second argument, which allows us to circumvent using the isogeny invariance of $c(-)$ in all cases: Consider the commutative diagram
$$\begin{CD}
&&0&& 0 && 0\\
&&@VVV@VVV@VVV\\
0@>>>\mathscr{T}^{0}(\Og_K)@>>>\mathscr{B}^{0}(\Og_K) @>>> \mathscr{A}^{0}(\Og_K)@>>>0\\
&&@VVV@VVV@VVV\\
0@>>>\mathscr{T}(\Og_K)@>>>\mathscr{B}(\Og_K) @>>> \mathscr{A}(\Og_K)@>>>0\\
&&@VVV@VVV@VVV\\
0@>>> \Phi_T @>>> \Phi_B@>>>\Phi_A @>>>0\\
&&@VVV@VVV@VVV\\
&&0&& 0 && 0,
\end{CD}$$
where $\Phi_T,$ $\Phi_B,$ and $\Phi_A$ are the obvious cokernels. Denoting the first non-zero row of this diagram by $\mathscr{G}^{0, \bullet}(\Og_K)$ and the third by $\Phi^\bullet,$ we obtain the exact triangle
$$\mathscr{G}^{0, \bullet}(\Og_K) \to \mathscr{G}^\bullet(\Og_K) \to \Phi^\bullet \to \mathscr{G}^{0, \bullet}(\Og_K)[1]$$ in the bounded derived category of Abelian groups. This follows from the fact that all columns of the diagram above are exact. However, as in the first proof, we see hat the middle row in the diagram is exact, so it is quasi-isomorphic to the zero object. In particular, the morphism 
$$\Phi^\bullet \to \mathscr{G}^{0, \bullet}(\Og_K)[1]$$ is a quasi-isomorphism. For $i=1,2,3,$ denote by $D_i$ the group schemes constructed in Proposition \ref{Diconstructprop} from the complex $\mathscr{G}^{0,\bullet}.$ By \cite[Proposition 3.5]{HN}, the terms appearing in $\Phi^\bullet$ are finitely generated, and hence so are the $D_i(k)\cong H^ i(\Phi^\bullet[-1]).$ Since $k$ is algebraically closed, the groups $D^0_j(k)$ are $n$-divisible for any $n\in \N$ invertible in $k.$ Since they are also finitely generated, we see that $D_j^0(k)$ is finite for all $j,$ so $D_j^0=0$ for all $j$ since $k$-points are dense in any smooth $k$-scheme. This shows that the $D_j$ are étale and hence $\dim_kD_j=0$ for all $j,$ which implies in particular that $\chi_{\mathrm{points}}(\mathscr{G}^{0,\bullet})=0.$ Because the map $\Lie\mathscr{G}^{0,\bullet}\to \Lie \mathscr{G}^\bullet$ is an isomorphism of complexes, we deduce that $\gamma(\Lie \mathscr{G}^\bullet)=0$ and conclude as in the first proof. 
\end{proof}

It is possible to generalise this result slightly, as follows:
\begin{theorem}
Let $0\to T\to B\to A \to 0$ be an exact sequence of semiabelian varieties over $K.$ Let $\mathscr{G}^\bullet$ be the induced complex of Néron lft-models over $\Og_K.$ \label{generalthm} Then the following are equivalent:\\
(i) $\gamma(\mathscr{G}^\bullet)=0$\\
(ii) the Abelian group 
$$\Delta:=\mathrm{coker}(B(K)\to A(K))=\ker(H ^ 1(K,T) \to H ^1 (K,B))$$
is finite.

In particular, if $B(L) \to A(L)$ is surjective for all finite separable extensions $L$ of $K,$ then $c(B)=c(T)+c(A).$
\end{theorem}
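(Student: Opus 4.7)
The plan is to generalise the second argument of Theorem \ref{Chaithm}, applying the cohomological machinery of Section 2.2 to the complex of identity components
\begin{equation*}
    \mathscr{G}^{0,\bullet}
    \colon
    0 \to \mathscr{T}^0 \to \mathscr{B}^0 \to \mathscr{A}^0 \to 0,
\end{equation*}
which, in contrast to $\mathscr{G}^\bullet$ itself, always consists of smooth $\Og_K$-group schemes of finite type and hence satisfies the hypotheses of Proposition \ref{Diconstructprop}. Since Lie algebras do not see non-identity components, $\Lie \mathscr{G}^\bullet = \Lie \mathscr{G}^{0,\bullet}$, and Lemma \ref{chigammalem} together with Theorem \ref{Liepointsthm} reduces the problem to computing
\begin{equation*}
    \gamma(\mathscr{G}^\bullet)
    = \chi_{\mathrm{points}}(\mathscr{G}^{0,\bullet})
    = \sum_{i=1}^{3} (-1)^{i} \dim_{k} D_{i},
\end{equation*}
where the $D_i$ are the smooth $k$-group schemes of Proposition \ref{Diconstructprop} attached to $\mathscr{G}^{0,\bullet}$, characterised by $D_i(k) = H^i(\mathscr{G}^{0,\bullet}(\Og_K))$.

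To evaluate these dimensions I would analyse the short exact sequence of three-term complexes
\begin{equation*}
    0
    \to \mathscr{G}^{0,\bullet}(\Og_K)
    \to \mathscr{G}^\bullet(\Og_K)
    \to \Phi^\bullet
    \to 0,
\end{equation*}
where $\Phi^\bullet \colon 0 \to \Phi_T \to \Phi_B \to \Phi_A \to 0$ is the complex of component groups, as in the second proof of Theorem \ref{Chaithm}. By the Néron mapping property, the middle complex is $0 \to T(K) \to B(K) \to A(K) \to 0$, whose only nonvanishing cohomology is $\Delta$, sitting in degree $3$; by \cite[Proposition 3.5]{HN}, every term of $\Phi^\bullet$ is finitely generated. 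The associated long exact cohomology sequence therefore collapses to $D_1(k) = 0$, $D_2(k) \cong \ker(\Phi_T \to \Phi_B)$, and a four-term exact sequence
\begin{equation*}
    0 \to H^{2}(\Phi^\bullet) \to D_{3}(k) \to \Delta \to H^{3}(\Phi^\bullet) \to 0.
\end{equation*}
Exactly as in the second proof of Theorem \ref{Chaithm}, finite generation of $D_i(k)$ combined with the $n$-divisibility (for $n$ coprime to $\mathrm{char}\,k$) of the $k$-points of the smooth connected commutative identity component $D_i^\circ$ forces $\dim_k D_i = 0$. Applied to $D_1$ and $D_2$ this yields $\dim_k D_1 = \dim_k D_2 = 0$, so $\gamma(\mathscr{G}^\bullet) = -\dim_{k} D_3$, and (i) becomes equivalent to finite generation of $D_3(k)$.

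The remaining and main technical point is the reduction from finite generation of $D_3(k)$ to finiteness of $\Delta$. Here the key observation is that $\Delta$ is torsion: it embeds into $H^1(K, T)$, and the positive-degree cohomology of the profinite group $\Gal(K^{\mathrm{sep}}/K)$ with values in any discrete module is torsion. Combined with the fact that both $H^{2}(\Phi^\bullet)$ and $H^{3}(\Phi^\bullet)$ are finitely generated, the four-term exact sequence then delivers the desired equivalence: finite generation of $D_3(k)$ is equivalent to finiteness of $\Delta' := \ker(\Delta \to H^{3}(\Phi^\bullet))$, which in turn is equivalent to finiteness of $\Delta$ itself (the cokernel $\Delta/\Delta'$ being a finitely generated torsion group, hence finite). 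This establishes (i) $\Leftrightarrow$ (ii). For the final assertion, the surjectivity hypothesis forces $\Delta_K = \Delta_L = 0$ for any finite separable $L/K$ over which $T$, $B$, and $A$ acquire semiabelian reduction; both are finite, so $\gamma(\mathscr{G}^\bullet) = \gamma(\mathscr{G}_L^\bullet) = 0$ by the equivalence just established, and Proposition \ref{Chigammaprop} yields $c(B) = c(T) + c(A)$.
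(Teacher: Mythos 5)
Your proposal is correct and follows essentially the same route as the paper's proof: both arguments use the short exact sequence of complexes $0\to\mathscr{G}^{0,\bullet}(\Og_K)\to\mathscr{G}^{\bullet}(\Og_K)\to\Phi^{\bullet}\to 0$, compute the cohomology of the middle term via the Néron mapping property, invoke Theorem \ref{Liepointsthm} to convert $\gamma(\mathscr{G}^{\bullet})$ into $-\dim_k D_3$ after killing $\dim_k D_1$ and $\dim_k D_2$ by the finite-generation/divisibility argument, and then transfer finite generation of $D_3(k)$ to finiteness of the torsion group $\Delta$ using the finite generation of the $H^i(\Phi^\bullet)$. The concluding step for the last assertion is also the same, applying the established equivalence over both $\Og_K$ and $\Og_L$ and then invoking Proposition \ref{Chigammaprop}.
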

\begin{proof}
Let $$\mathscr{G}^\bullet:\, 0\to\mathscr{T}\to\mathscr{B}\to\mathscr{A}\to 0$$
be the complex of Néron lft-models induced by the sequence from the Theorem. For $i=1,2,3,$ let $D_i$ be the canonical $k$-group scheme such that $H^i(\mathscr{G}^{0,\bullet}(\Og_K))=D_i(k)$ constructed in Proposition \ref{Diconstructprop}. We shall use the same notation as in the proof of Theorem \ref{Chaithm}. Since the sequence
$$0\to T(K)\to B(K) \to A(K)$$ is exact, 
the long exact cohomology sequence induced by the exact triangle
$\mathscr{G}^{0, \bullet}(\Og_K) \to \mathscr{G}^\bullet(\Og_K) \to \Phi^\bullet \to \mathscr{G}^{0, \bullet}(\Og_K)[1]$ gives us the canonical isomorphisms $H^1(\mathscr{G}^{0,\bullet}(\Og_K))=0$ and $H^2(\mathscr{G}^{0,\bullet}(\Og_K))=H^1(\Phi^\bullet),$ as well as a diagram
$$\begin{CD}H^2(\Phi^\bullet)@>>> H^3(\mathscr{G}^{0,\bullet}(\Og_K))@>>> H^3(\mathscr{G}^\bullet(\Og_K))@>>> H^3(\Phi^\bullet)\\
&&@V{\cong}VV@VV{\cong}V\\
&&D_3(k)&& \Delta.
\end{CD}$$
As in the proof of Theorem \ref{Chaithm}, we see that the $H^i(\Phi^\bullet)$ are finitely generated for all $i,$ so that (ii) is true if and only if $D_3(k)$ is finitely generated (because $\Delta$ is torsion), which happens if and only if $\dim_k D_3=0.$
Moreover, since $D_1(k)=0$ and $D_2(k)$ is finitely generated, we see that 
$$-\gamma(\mathscr{G}^\bullet)=\dim_k D_3.$$ These observations together imply the equivalence $(i) \iff (ii).$ The last claim now follows from Proposition \ref{Chigammaprop}.
\end{proof}

\begin{remark} We shall see later that the first proof we gave above cannot be generalised to give another proof of Theorem \ref{generalthm}. The reason is that the invariance of $c(-)$ under isogeny does not generalise to all semiabelian varieties.
\end{remark}

\section{The ind-rational pro-\'etale site}
\label{Indrat-Proet-Section}
We will recall from \cite{Suz} some notation and facts
about the ind-rational pro-\'etale site and
ind- and pro-algebraic groups.

As before, let $k$ be an algebraically closed
(or, more generally, a perfect) field of characteristic $p > 0$.
As in \cite[Section 2.1]{Suz},
a $k$-algebra is said to be \emph{rational}
if it can be written as a finite product
$k_{1}' \times \dots \times k_{n}'$,
where each $k_{i}'$ is the perfection
(direct limit along Frobenius)
of a finitely generated field over $k$.
A $k$-algebra is said to be \emph{ind-rational}
if it is a filtered direct limit of rational $k$-algebras.
Define a Grothendieck site
$\Spec k^{\mathrm{indrat}}_{\mathrm{proet}}$
to be (the opposite of) the category of ind-rational $k$-algebras
with $k$-algebra homomorphisms endowed with the pretopology
where a covering $\{k' \to k_{i}'\}_{i}$ is
a finite family of ind-\'etale homomorphisms
such that $k' \to \prod_{i} k_{i}'$ is faithfully flat.
Let $\operatorname{Ab}(k^{\mathrm{indrat}}_{\mathrm{proet}})$
be the category of sheaves of abelian groups
on $\Spec k^{\mathrm{indrat}}_{\mathrm{proet}}$.
Let $\operatorname{\mathbf{Hom}}_{k^{\mathrm{indrat}}_{\mathrm{proet}}}$
be the sheaf-Hom functor for
$\operatorname{Ab}(k^{\mathrm{indrat}}_{\mathrm{proet}})$
and 
$\operatorname{\mathbf{Ext}}^{n}_{k^{\mathrm{indrat}}_{\mathrm{proet}}}$
its $n$-th right derived functor.

Let $\mathrm{Alg} / k$ be the category of perfections
(inverse limits along Frobenius)
of commutative algebraic groups over $k$
with $k$-group scheme morphisms.
(Here an algebraic group over $k$ means
a quasi-compact smooth group scheme over $k$.)
Let $\mathrm{PAlg} / k$ be the pro-category of $\mathrm{Alg} / k$,
$\mathrm{IAlg} / k$ the ind-category of $\mathrm{Alg} / k$
and $\mathrm{IPAlg} / k$ the ind-category of $\mathrm{PAlg} / k$.
The endofunctors $(\;\cdot\;)^{0}$ (identity component)
and $\pi_{0}$ (component group) on $\mathrm{Alg} / k$
naturally extend to endofunctors on $\mathrm{IPAlg} / k$.
An object $G \in \mathrm{IPAlg} / k$ is said to be \emph{connected}
if $G^{0} \overset{\sim}{\to} G$.
The natural functors
    \[
        \begin{CD}
                \mathrm{Alg} / k
            @>>>
                \mathrm{PAlg} / k
            @.
            \\ @VVV @VVV @. \\
                \mathrm{IAlg} / k
            @>>>
                \mathrm{IPAlg} / k
            @>>>
                \operatorname{Ab}(k^{\mathrm{indrat}}_{\mathrm{proet}})
        \end{CD}
    \]
are fully faithful exact by \cite[Proposition (2.3.4)]{Suz},
where the right lower functor is the Yoneda functor.
If an object $G \in \mathrm{Alg} / k$ is the perfection of a unipotent group,
then by \cite[Proposition (2.4.1) (b)]{Suz},
the sheaf
$\operatorname{\mathbf{Hom}}_{k^{\mathrm{indrat}}_{\mathrm{proet}}}
(G, \Q / \Z)$
agrees with the Pontryagin dual of $\pi_{0}(G)$,
the sheaf
$\operatorname{\mathbf{Ext}}^{1}_{k^{\mathrm{indrat}}_{\mathrm{proet}}}
(G, \Q / \Z)$
agrees with the Breen-Serre dual
(\cite[Chapter III, Lemma 0.13]{Milne}) of $G^{0}$
and the sheaf
$\operatorname{\mathbf{Ext}}^{n}_{k^{\mathrm{indrat}}_{\mathrm{proet}}}
(G, \Q / \Z)$
vanishes for $n \ge 2$.
The Breen-Serre dual of $G$ has the same dimension as $G$
by \cite[Proposition 1.2.1]{Beg}.

Any object $G \in \mathrm{IAlg} / k$ commutes with filtered direct limits
as a functor from the category of ind-rational $k$-algebras
to the category of abelian groups.
In other words, $G$ is locally of finite presentation as a sheaf;
see the paragraphs after \cite[Proposition (2.4.1)]{Suz}
for more details about this notion.
In particular, $G$ is determined by values on rational $k$-algebras
and hence by values on perfections of finitely generated fields over $k$,
with
    \[
            G(k')
        \cong
            G(\overline{k'})^{\Gal(\overline{k'} / k')}
    \]
functorially in perfect fields $k'$ over $k$.

As before, let $K$ be a complete discrete valuation field
with ring of integers $\mathcal{O}_{K}$ and residue field $k$.
We have a canonical $W(k)$-algebra structure on $\mathcal{O}_{K}$,
where $W$ denotes the functor of $p$-typical Witt vectors of infinite length.
This structure $W(k) \to \mathcal{O}_{K}$ factors through $k$
if $K$ has equal characteristic.
As in \cite[Section 3.1]{Suz},
for any ind-rational $k$-algebra $k'$,
define
    \begin{gather*}
                \mathbf{O}_{K}(k')
            =
                \varprojlim_{n \ge 1}
                    \bigl(
                        W(k') \otimes_{W(k)} \mathcal{O}_{K} / \mathfrak{m}^{n}
                    \bigr),
        \\
                \mathbf{K}(k')
            =
                \mathbf{O}_{K}(k') \otimes_{\mathcal{O}_{K}} K.
    \end{gather*}
If $k'$ is a (perfect) field,
then the ring $\mathbf{O}_{K}(k')$ is a complete discrete valuation ring
with residue field $k'$ and
ramification index $1$ over $\mathcal{O}_{K}$
in the sense of \cite[Section 3.6, Definition 1]{BLR}.

For any fppf sheaf of abelian groups $G$ on $\mathcal{O}_{K}$
and any integer $n$, define a sheaf
$\mathbf{H}^{n}(\mathcal{O}_{K}, G)$
on $\Spec k^{\mathrm{indrat}}_{\mathrm{proet}}$
to be the sheafification of the presheaf
    \[
            k'
        \mapsto
            H^{n}(\mathbf{O}_{K}(k'), G),
    \]
where $k'$ runs through ind-rational $k$-algebras.%
\footnote{
    The sheafification here is the pro-\'etale sheafification.
    The notation in \cite[Section 3.3]{Suz} is actually
    $\Tilde{\mathbf{H}}^{n}(\mathcal{O}_{K}, G)$.
    The notation $\mathbf{H}^{n}(\mathcal{O}_{K}, G)$
    in \cite[Section 3.3]{Suz} is reserved for
    the \'etale sheafification of the same presheaf.
    But the \'etale sheafification is already a pro-\'etale sheaf
    under the ``P-acyclicity'' condition (\cite[Section 2.4]{Suz}),
    which is practically almost always satisfied (\cite[Section 3.4]{Suz}).
    In this paper, we exclusively use the pro-\'etale sheafified version.
}
Set $\mathbf{\Gamma} = \mathbf{H}^{0}$.
If $k'$ is an algebraically closed field extension of $k$,
then
    \[
            \mathbf{H}^{n}(\mathcal{O}_{K}, G)(k')
        \cong
            H^{n}(\mathbf{O}_{K}(k'), G)
    \]
(that is, the sheafification is unnecessary for $k'$-valued points).

Similarly, for any fppf sheaf of abelian groups $G$ on $K$,
define a sheaf
$\mathbf{H}^{n}(K, G)$
on $\Spec k^{\mathrm{indrat}}_{\mathrm{proet}}$
to be the sheafification of the presheaf
    \[
            k'
        \mapsto
            H^{n}(\mathbf{K}(k'), G).
    \]
Set $\mathbf{\Gamma} = \mathbf{H}^{0}$.
If $k'$ is an algebraically closed field extension of $k$,
then
    \[
            \mathbf{H}^{n}(K, G)(k')
        \cong
            H^{n}(\mathbf{K}(k'), G).
    \]

We recall the following results:

\begin{proposition}[{\cite[Proposition (3.4.3) (b)]{Suz}}] \label{SuzFinFlatProp}
    Let $N$ be a commutative finite flat group scheme over $K$.
    \begin{enumerate}
        \item
            The sheaf $\mathbf{\Gamma}(K, N)$ is
            a finite \'etale group scheme over $k$.
        \item
            The sheaf $\mathbf{H}^{1}(K, N)$ is
            an object of $\mathrm{IPAlg} / k$.
        \item
            We have $\mathbf{H}^{n}(K, N) = 0$ for $n \ge 2$.
    \end{enumerate}
\end{proposition}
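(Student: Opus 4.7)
The plan is to reduce the three assertions about the finite flat group scheme $N$ to analogous statements for smooth commutative algebraic groups over $K$, by placing $N$ in a short exact sequence with smooth terms.

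I would first construct a short exact sequence of fppf sheaves $0 \to N \to G \to G' \to 0$ over $\Spec K$ with $G$ and $G'$ smooth commutative algebraic $K$-groups. Concretely, the prime-to-$p$ part of $N$ can be embedded as roots of unity inside a suitable Weil restriction $\Res_{L/K} \Gm$ after trivialising $N$ over a finite separable extension $L/K$, while the $p$-primary part requires Raynaud's embedding of finite flat commutative group schemes into abelian varieties; in either case $G' = G / N$ is automatically smooth because $G \to G'$ is an fppf quotient by $N$.

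Next I would establish the three claims for smooth commutative algebraic $K$-groups $H$ in place of $N$ (with a careful choice of $G$, $G'$): namely, that $\mathbf{\Gamma}(K, H)$ and $\mathbf{H}^{1}(K, H)$ lie in $\mathrm{IPAlg} / k$, and that $\mathbf{H}^{n}(K, H) = 0$ for $n \geq 2$. For $k' / k$ algebraically closed, the Galois group of $\mathbf{K}(k')$ has cohomological dimension $\leq 1$ away from the residue characteristic since the inertia is pro-cyclic there; the residue-characteristic part is handled by Artin--Schreier--Witt theory in equal characteristic, and in mixed characteristic by a $p$-divisible group and duality analysis tailored to the chosen resolution.

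With these ingredients in hand, the long exact cohomology sequence associated to $0 \to N \to G \to G' \to 0$ yields all three parts. Part (3) is immediate from the vanishing of $\mathbf{H}^{n}(K, G)$ and $\mathbf{H}^{n}(K, G')$ in degrees $n \geq 2$. For part (1), $\mathbf{\Gamma}(K, N) = \ker(\mathbf{\Gamma}(K, G) \to \mathbf{\Gamma}(K, G'))$ takes finite values on each algebraically closed $k' / k$ (because $N$ is finite over $K$), and continuity in $k'$ promotes this to a finite \'etale $k$-group. For part (2), $\mathbf{H}^{1}(K, N)$ is an extension of $\ker(\mathbf{H}^{1}(K, G) \to \mathbf{H}^{1}(K, G'))$ by $\mathrm{coker}(\mathbf{\Gamma}(K, G) \to \mathbf{\Gamma}(K, G'))$; both outer terms lie in $\mathrm{IPAlg} / k$ by Step 2 together with the exactness of the inclusion $\mathrm{IPAlg} / k \hookrightarrow \operatorname{Ab}(k^{\mathrm{indrat}}_{\mathrm{proet}})$ from \cite[Proposition (2.3.4)]{Suz}, from which one concludes that $\mathbf{H}^{1}(K, N)$ itself lies in $\mathrm{IPAlg} / k$.

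The main obstacle is arranging the smooth resolution so that $\mathbf{H}^{n}(K, G) = \mathbf{H}^{n}(K, G') = 0$ holds for $n \geq 2$ simultaneously, uniformly in the characteristic and ramification of $K$. In mixed characteristic with $N$ of $p$-power order, the $p$-cohomological dimension of $\mathbf{K}(k')$ can be $2$, so this vanishing is not automatic even for tori; one must instead use abelian varieties for which duality forces the required vanishing. This is the technical crux of the argument, and is where one expects the bulk of the work in \cite[Proposition (3.4.3)]{Suz} to lie.
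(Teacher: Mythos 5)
The paper does not actually prove this proposition: it is stated purely as a citation to \cite[Proposition (3.4.3) (b)]{Suz}, and the authors use it as an imported input to their later arguments. So there is no in-paper proof to compare your proposal against; the best one can do is assess whether your sketch would, if completed, reconstruct a valid proof and whether it matches the route taken in \cite{Suz}.

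Your overall skeleton (place $N$ in a short exact sequence $0 \to N \to G \to G' \to 0$ of fppf sheaves with $G$, $G'$ smooth, prove the analogous statements for $G$ and $G'$, then run the long exact sequence) is indeed the right high-level strategy and is what \cite{Suz} does. But two of your specific choices create unnecessary trouble. First, invoking Raynaud's embedding of the $p$-primary part into an abelian variety is overkill: the standard device, already used elsewhere in this very paper (see the smooth resolutions in the proof of Proposition \ref{0021}), is B\'egueri's resolution $N \hookrightarrow \Res_{M/K}\Gm$ where $M$ is the Cartier dual of $N$; this gives a smooth affine $G$ with smooth affine quotient $G' = G/N$ \emph{uniformly in the characteristic and the $p$-part of $N$}, and avoids any need to glue a prime-to-$p$ construction with a separate $p$-part construction. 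Second, your stated worry that ``the $p$-cohomological dimension of $\mathbf{K}(k')$ can be $2$, so this vanishing is not automatic even for tori'' is misplaced: for $k'$ algebraically closed the residue field of $\mathbf{K}(k')$ is algebraically closed, so $\Br(\mathbf{K}(k')) = 0$, and one has $\mathbf{H}^{n}(K, T) = 0$ for $n \ge 1$ for tori (and more generally for the Weil-restriction groups arising in B\'egueri's resolution); this is exactly the content of \cite[Proposition (3.4.3) (e)]{Suz}, which the paper also uses in the proof of Theorem \ref{0025}. With the B\'egueri resolution in hand, $\mathbf{H}^{1}(K, T) = 0$ even collapses your claimed extension for part (2) to a single cokernel $\mathbf{\Gamma}(K, T')/\mathbf{\Gamma}(K, T)$, which lies in $\mathrm{IPAlg}/k$ since that category is abelian.

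The remaining genuine gap in your proposal is your Step 2: you gesture at ``Artin--Schreier--Witt theory'' and ``a $p$-divisible group and duality analysis'' without saying how the pro-\'etale sheaves $\mathbf{\Gamma}(K, G)$ and $\mathbf{H}^{1}(K, G)$ are identified with objects of $\mathrm{IPAlg}/k$ for smooth $G$. That identification is the technical core of \cite[Section 3.4]{Suz} (via N\'eron lft-models and the Greenberg transform of infinite level, as recalled in Section \ref{Indrat-Proet-Section} of this paper for abelian varieties), and your sketch does not fill it in; a complete argument would need to carry this out for the Weil-restriction groups you choose.
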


\begin{proposition}[{\cite[Propositions (3.4.2) (b) and (3.4.6)]{Suz}}] \label{FinFlatOK}
    Let $N$ be a commutative finite flat group scheme over $\mathcal{O}_{K}$.
    \begin{enumerate}
        \item
            The sheaf $\mathbf{\Gamma}(\mathcal{O}_{K}, N)$ is
            a finite \'etale group scheme over $k$.
        \item
            The sheaf $\mathbf{H}^{1}(\mathcal{O}_{K}, N)$ is
            an object of $\mathrm{PAlg} / k$ that is connected.
        \item
            The sheaf
            $\mathbf{H}^{1}(K, N) / \mathbf{H}^{1}(\mathcal{O}_{K}, N)$
            is an object of $\mathrm{IAlg} / k$.
        \item
            We have $\mathbf{H}^{n}(\mathcal{O}_{K}, N) = 0$ for $n \ge 2$.
    \end{enumerate}
\end{proposition}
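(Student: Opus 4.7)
The plan is to use a smooth resolution of $N$ together with the triviality of higher cohomology of smooth affine group schemes over $\mathbf{O}_K(k')$ for algebraically closed $k'$. Concretely, choose a short exact sequence of fppf sheaves
\[
    0 \to N \to \mathcal{G} \to \mathcal{G}' \to 0
\]
on $\Spec \mathcal{O}_K$ with $\mathcal{G}$ and $\mathcal{G}'$ smooth affine commutative group schemes of finite type over $\mathcal{O}_K$; such a resolution exists by a standard construction involving the Cartier dual of $N$ and a Weil restriction, and when $N$ is connected one can moreover arrange $\mathcal{G}'$ to be connected. For any algebraically closed ind-rational $k'$, the ring $\mathbf{O}_K(k')$ is a strictly Henselian discrete valuation ring with residue field $k'$, so by Lang's theorem $H^i_{\fppf}(\mathbf{O}_K(k'), \mathcal{G}) = H^i_{\et}(k', \mathcal{G}_{k'}) = 0$ for $i \geq 1$, and likewise for $\mathcal{G}'$. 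Hence $\mathbf{H}^i(\mathcal{O}_K, \mathcal{G}) = \mathbf{H}^i(\mathcal{O}_K, \mathcal{G}') = 0$ for $i \geq 1$, while $\mathbf{\Gamma}(\mathcal{O}_K, \mathcal{G}) = \Gr \mathcal{G}$ and $\mathbf{\Gamma}(\mathcal{O}_K, \mathcal{G}') = \Gr \mathcal{G}'$ lie in $\mathrm{PAlg}/k$ as perfected Greenberg transforms at infinite level.

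From the resulting long exact sequence, (4) is immediate, as is the presentation $\mathbf{H}^1(\mathcal{O}_K, N) = \mathrm{coker}(\Gr \mathcal{G} \to \Gr \mathcal{G}')$, placing this sheaf in $\mathrm{PAlg}/k$. For (1), the identification $\mathbf{\Gamma}(\mathcal{O}_K, N) = \ker(\Gr \mathcal{G} \to \Gr \mathcal{G}')$ combined with the injection $\mathbf{\Gamma}(\mathcal{O}_K, N) \hookrightarrow \mathbf{\Gamma}(K, N)$ (from separatedness of $N$) exhibits the source as a subsheaf of the finite étale group $\mathbf{\Gamma}(K, N)$ of Proposition \ref{SuzFinFlatProp}(1); I would then verify that every section of $N$ over $\mathbf{O}_K(k')$ arises from a $K$-point of $N$ extending to $\mathcal{O}_K$, so $\mathbf{\Gamma}(\mathcal{O}_K, N)$ is the constant subsheaf cut out by this extension condition, hence finite étale. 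For the connectedness in (2), I reduce via the connected-étale sequence $0 \to N^0 \to N \to N^{\et} \to 0$ (using $\mathbf{H}^1(\mathcal{O}_K, N^{\et}) = 0$ by strict Henselianness and $k$ algebraically closed) to the case $N$ connected, where the connected choice of $\mathcal{G}'$ presents $\mathbf{H}^1(\mathcal{O}_K, N)$ as a quotient of the pro-connected group $\Gr \mathcal{G}'$.

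Statement (3) requires comparison with the generic-fibre picture. The analogous resolution over $K$ yields $\mathbf{H}^1(K, N) = \mathrm{coker}(\mathcal{G}(\mathbf{K}(-)) \to \mathcal{G}'(\mathbf{K}(-)))$, and the inclusions $\Gr \mathcal{G} \hookrightarrow \mathcal{G}(\mathbf{K}(-))$ and $\Gr \mathcal{G}' \hookrightarrow \mathcal{G}'(\mathbf{K}(-))$ assemble into a morphism of short exact sequences; injectivity of $\mathbf{H}^1(\mathcal{O}_K, N) \to \mathbf{H}^1(K, N)$ follows from the valuative criterion of properness applied to finite flat $N$-torsors. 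A snake-lemma computation then identifies $\mathbf{H}^1(K, N)/\mathbf{H}^1(\mathcal{O}_K, N)$ as a subquotient of $\mathrm{coker}(\mathcal{G}(\mathbf{K}(-))/\Gr \mathcal{G} \to \mathcal{G}'(\mathbf{K}(-))/\Gr \mathcal{G}')$, so the claim reduces to showing that the ``negative loop'' quotient $\mathcal{H}(\mathbf{K}(-))/\Gr \mathcal{H}$ lies in $\mathrm{IAlg}/k$ for every smooth affine commutative $\mathcal{O}_K$-group scheme $\mathcal{H}$. This last step is the principal technical obstacle: one expects the affine-Grassmannian-type structure with filtered pieces $\mathcal{H}(\mathfrak{m}^{-n}\mathbf{O}_K(-))/\mathcal{H}(\mathbf{O}_K(-))$ being finite-dimensional, but the proof requires dévissage of $\mathcal{H}$ through its multiplicative and unipotent layers together with a careful analysis of the valuation filtration on $\mathbf{O}_K(k')$. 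A secondary subtlety is engineering the resolution so that $\mathcal{G}'$ can be taken connected when $N$ is connected, as required in the argument for (2).
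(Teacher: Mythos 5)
The paper itself does not prove this proposition: it is imported verbatim from \cite[Propositions (3.4.2) (b) and (3.4.6)]{Suz}, so there is no in-paper argument to compare against. Your sketch does follow the broad strategy used in that reference and in B\'egueri's work \cite{Beg}: resolve $N$ by smooth affine commutative $\mathcal{O}_K$-group schemes, pass to perfected Greenberg transforms, and exploit vanishing of higher cohomology of smooth groups over the strictly Henselian rings $\mathbf{O}_K(k')$. Parts (4) and the pro-algebraicity half of (2) do fall out quickly this way, and your reduction of connectedness to the infinitesimal case via the connected--\'etale sequence is sound. For (1) you overcomplicate matters: $N$ is finite, hence proper, over $\mathcal{O}_K$, so the valuative criterion over the complete discrete valuation ring $\mathbf{O}_K(k')$ gives $N(\mathbf{O}_K(k')) = N(\mathbf{K}(k'))$ on the nose, so that $\mathbf{\Gamma}(\mathcal{O}_K, N) = \mathbf{\Gamma}(K, N)$ identically and (1) is immediate from Proposition \ref{SuzFinFlatProp}(1); there is no ``extension condition'' to cut out.

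However, you explicitly leave two gaps, and they are not peripheral. The claim that $\mathcal{H}(\mathbf{K}(-))/\Gr \mathcal{H} \in \mathrm{IAlg}/k$ for every smooth affine commutative $\mathcal{O}_K$-group $\mathcal{H}$ is essentially the technical content of the very propositions being cited; in your reduction it carries all of (3), so deferring it as a ``principal technical obstacle'' means (3) is not proved at all. Similarly, the assertion that $\mathcal{G}'$ can be arranged to be connected when $N$ is connected is stated but not shown; the B\'egueri resolution $N \hookrightarrow \Res_{M/\mathcal{O}_K} \Gm$ (with $M$ the Cartier dual) does have this property, since a Weil restriction of $\Gm$ along a finite flat scheme is connected and quotients of connected group schemes are connected, but that needs to be said. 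There is also an unflagged issue: to present $\mathbf{H}^1(K, N)$ as a cokernel you need $\mathbf{H}^1(K, \mathcal{G}_K) = \mathbf{H}^1(K, \mathcal{G}'_K) = 0$, and the generic fibres of these groups are not tori when $M_K$ is non-\'etale (they are extensions of tori by smooth connected unipotent groups), so this vanishing requires a word; and ``Lang's theorem'' is not the right citation over an algebraically closed residue field. As written, this is a plausible roadmap but not a proof; to complete it you would either need to carry out the d\'evissage yourself or, as the paper does, simply cite \cite{Suz}.
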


\begin{proposition}
\label{FinDuality}
    Assume that $K$ has equal characteristic.
    Let $N$ be a commutative finite flat group scheme over $\mathcal{O}_{K}$
    with Cartier dual $M$.
    Then there exist canonical isomorphisms
        \begin{gather*}
                    \mathbf{H}^{1}(K, N)^{0} / \mathbf{H}^{1}(\mathcal{O}_{K}, N)
                \cong
                    \operatorname{\mathbf{Ext}}^{1}_{k^{\mathrm{indrat}}_{\mathrm{proet}}}
                    \bigl(
                        \mathbf{H}^{1}(\mathcal{O}_{K}, M),
                        \Q / \Z
                    \bigr),
                \\
                    \pi_{0}(\mathbf{H}^{1}(K, N))
                \cong
                    \operatorname{\mathbf{Hom}}_{k^{\mathrm{indrat}}_{\mathrm{proet}}}
                    \bigl(
                        \mathbf{\Gamma}(K, M),
                        \Q / \Z
                    \bigr).                    
        \end{gather*}
    In particular, $\pi_{0}(\mathbf{H}^{1}(K, N))$ is finite \'etale.
\end{proposition}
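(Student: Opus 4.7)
The plan is to deduce both isomorphisms from the local duality for finite flat group schemes developed by the second author in \cite{Suz}, applied through the Cartier-duality pairing between $N$ and $M$, and then to match the connected/component decomposition of $\mathbf{H}^{1}(K, N)$ against the ``integral vs.\ ramified'' decomposition of $\mathbf{H}^{1}(K, M)$ under that pairing.

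First, I would invoke Proposition \ref{FinFlatOK} to produce the short exact sequence
\[
    0 \to \mathbf{H}^{1}(\mathcal{O}_{K}, N)
    \to \mathbf{H}^{1}(K, N)
    \to \mathbf{H}^{1}(K, N) / \mathbf{H}^{1}(\mathcal{O}_{K}, N)
    \to 0,
\]
presenting $\mathbf{H}^{1}(K, N)$ as an extension of an object of $\mathrm{IAlg} / k$ by a connected object of $\mathrm{PAlg} / k$. Because the kernel is connected, the quotient map induces an isomorphism on $\pi_{0}$, while the preimage of the identity component of the quotient equals $\mathbf{H}^{1}(K, N)^{0}$, giving a short exact sequence
\[
    0 \to \mathbf{H}^{1}(\mathcal{O}_{K}, N)
    \to \mathbf{H}^{1}(K, N)^{0}
    \to \bigl( \mathbf{H}^{1}(K, N) / \mathbf{H}^{1}(\mathcal{O}_{K}, N) \bigr)^{0}
    \to 0.
\]
So it suffices to identify $\pi_{0}$ of the quotient with the Pontryagin dual of $\mathbf{\Gamma}(K, M)$, and the identity component of the quotient with the Breen--Serre dual of $\mathbf{H}^{1}(\mathcal{O}_{K}, M)$.

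Next I would combine the Cartier-duality pairing $N \otimes^{\mathbf{L}} M \to \Gm[1]$ with Suzuki's local duality over $K$ from \cite{Suz}, which in the ind-rational pro-\'etale site produces a derived perfect pairing between $\mathbf{H}^{\ast}(K, N)$ and $\mathbf{H}^{\ast}(K, M)$ landing in $\Q / \Z$. The crucial compatibility is that the integral cohomologies $\mathbf{H}^{1}(\mathcal{O}_{K}, N)$ and $\mathbf{H}^{1}(\mathcal{O}_{K}, M)$ annihilate each other under this pairing, which is the analogue for our site of the classical fact that in Tate local duality the unramified cohomologies of mutually Cartier-dual finite flat group schemes are mutual annihilators; in equal characteristic this follows from the duality pairing on $\mathcal{O}_{K}$, together with $\mathbf{H}^{n}(\mathcal{O}_{K}, \Gm) = 0$ for $n \ge 2$ and Proposition \ref{FinFlatOK}. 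Taking annihilators then identifies $\bigl( \mathbf{H}^{1}(K, N) / \mathbf{H}^{1}(\mathcal{O}_{K}, N) \bigr)^{0}$ and $\pi_{0}(\mathbf{H}^{1}(K, N))$ with the duals of the connected and \'etale parts of $\mathbf{H}^{1}(K, M)$ respectively, and the latter parts are exactly $\mathbf{H}^{1}(\mathcal{O}_{K}, M)$ and $\mathbf{\Gamma}(K, M)$ by Proposition \ref{FinFlatOK} and Proposition \ref{SuzFinFlatProp}.

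The main obstacle is the precise invocation of duality in the ind-rational pro-\'etale site in a form strong enough to yield the decomposed statements on identity components and component groups, rather than merely a perfect pairing on the whole object. The reason the right-hand sides involve $\operatorname{\mathbf{Ext}}^{1}$ on one line and $\operatorname{\mathbf{Hom}}$ on the other is exactly that the internal duality on $\mathrm{IPAlg} / k$ that Suzuki establishes sends a connected perfect unipotent group to its Breen--Serre dual $\operatorname{\mathbf{Ext}}^{1}(-, \Q / \Z)$, while a finite \'etale group is sent to its Pontryagin dual $\operatorname{\mathbf{Hom}}(-, \Q / \Z)$; the two types of summands on the right are dual to the two pieces produced by the connected--component filtration on the left. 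Once the annihilator compatibility is in hand, the two isomorphisms drop out, and the finiteness of $\pi_{0}(\mathbf{H}^{1}(K, N))$ then follows from the finite \'etaleness of $\mathbf{\Gamma}(K, M)$ given by Proposition \ref{SuzFinFlatProp}(1) applied to $M$.
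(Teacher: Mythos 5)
The broad strategy is close to the paper's (use the second author's duality in the ind-rational pro-\'etale site, then read off the connected and \'etale pieces via $\operatorname{\mathbf{Ext}}^{1}$ and $\operatorname{\mathbf{Hom}}$), and your opening reduction using Proposition~\ref{FinFlatOK} to pass from $\mathbf{H}^{1}(K, N)$ to the quotient $\mathbf{H}^{1}(K, N) / \mathbf{H}^{1}(\mathcal{O}_{K}, N)$ is correct. However, there is a concrete error in the final identification, and the annihilator step as stated does not work.

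You claim that the connected part of $\mathbf{H}^{1}(K, M)$ is $\mathbf{H}^{1}(\mathcal{O}_{K}, M)$ and that its \'etale part is $\mathbf{\Gamma}(K, M)$. Neither is true. The group $\mathbf{H}^{1}(\mathcal{O}_{K}, M)$ is a connected \emph{subgroup} of $\mathbf{H}^{1}(K, M)$, but it is in general strictly smaller than the identity component $\mathbf{H}^{1}(K, M)^{0}$; indeed, applying the very proposition being proved to $M$ (with Cartier dual $N$) shows that $\mathbf{H}^{1}(K, M)^{0} / \mathbf{H}^{1}(\mathcal{O}_{K}, M) \cong \operatorname{\mathbf{Ext}}^{1}_{k^{\mathrm{indrat}}_{\mathrm{proet}}}(\mathbf{H}^{1}(\mathcal{O}_{K}, N), \Q/\Z)$, which is non-zero as soon as $N$ fails to be \'etale over $\mathcal{O}_{K}$ (Proposition~\ref{EtCriterion}). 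And $\mathbf{\Gamma}(K, M)$ is the degree-zero cohomology $\mathbf{H}^{0}(K, M)$, not a piece of $\mathbf{H}^{1}(K, M)$ at all. What gets decomposed into a finite \'etale piece and a connected unipotent piece is the two-term complex $R\mathbf{\Gamma}(\mathcal{O}_{K}, M)$ (which has $\mathbf{\Gamma}(\mathcal{O}_{K}, M) = \mathbf{\Gamma}(K, M)$ in degree $0$ and $\mathbf{H}^{1}(\mathcal{O}_{K}, M)$ in degree $1$), not the single sheaf $\mathbf{H}^{1}(K, M)$. This conflation of a cohomological-degree decomposition with a connected--\'etale decomposition is what makes the annihilator argument go off the rails: the duality that produces the right-hand side is not duality between $\mathbf{H}^{1}(K, N)$ and $\mathbf{H}^{1}(K, M)$ with integral classes as mutual annihilators, but rather the local duality over $\mathcal{O}_{K}$ pairing $R\mathbf{\Gamma}_{x}(\mathcal{O}_{K}, N)$ with $R\mathbf{\Gamma}(\mathcal{O}_{K}, M)$.

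The paper's proof avoids the annihilator heuristic entirely: it cites \cite[Theorem (5.2.1.2)]{Suz}, a spectral sequence $E_{2}^{ij} = \operatorname{\mathbf{Ext}}^{i}_{k^{\mathrm{indrat}}_{\mathrm{proet}}}(\mathbf{H}^{2-j}(\mathcal{O}_{K}, M), \Q/\Z) \Rightarrow \mathbf{H}^{i+j}_{x}(\mathcal{O}_{K}, N)$, whose abutment is concentrated in degree $2$ and equals $\mathbf{H}^{1}(K, N)/\mathbf{H}^{1}(\mathcal{O}_{K}, N)$. The vanishing and connectedness statements of Proposition~\ref{FinFlatOK}, together with $\mathbf{\Gamma}(\mathcal{O}_{K}, M)$ being finite \'etale, kill all $E_{2}$-terms except $(i,j) = (0,2)$ and $(1,1)$. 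This yields the short exact sequence
\[
    0 \to \operatorname{\mathbf{Ext}}^{1}_{k^{\mathrm{indrat}}_{\mathrm{proet}}}(\mathbf{H}^{1}(\mathcal{O}_{K}, M), \Q/\Z)
      \to \mathbf{H}^{1}(K, N)/\mathbf{H}^{1}(\mathcal{O}_{K}, N)
      \to \operatorname{\mathbf{Hom}}_{k^{\mathrm{indrat}}_{\mathrm{proet}}}(\mathbf{\Gamma}(\mathcal{O}_{K}, M), \Q/\Z)
      \to 0,
\]
which is then observed to be a connected--\'etale sequence, and the two desired isomorphisms follow because $\mathbf{H}^{1}(\mathcal{O}_{K}, N)$ is connected. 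So the missing ingredient in your argument is precisely this spectral sequence; once one cites it, there is nothing left to prove, and the ``annihilator'' framing is neither needed nor accurate as written.
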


\begin{proof}
    This is \cite[Theorem (5.2.1.2)]{Suz}.
    More explicitly, this theorem says that
    we have a spectral sequence
        \[
                E_{2}^{i j}
            =
                \operatorname{\mathbf{Ext}}^{i}_{k^{\mathrm{indrat}}_{\mathrm{proet}}}(\mathbf{H}^{2 - j}(\mathcal{O}_{K}, M), \Q / \Z)
            \Longrightarrow
                \mathbf{H}^{i + j}_{x}(\mathcal{O}_{K}, N),
        \]
    where the object $\mathbf{H}^{n}_{x}(\mathcal{O}_{K}, N)$ on the right is
    $\mathbf{H}^{1}(K, N) / \mathbf{H}^{1}(\mathcal{O}_{K}, N)$
    for $n = 2$ and zero otherwise
    (\cite[Proposition (3.4.6)]{Suz} and its proof).
    By Proposition \ref{FinFlatOK},
    we have $E_{2}^{i j} = 0$ unless
    $(i, j) = (0, 2)$ or $(1, 1)$.
    Hence this spectral sequence reduces to an exact sequence
        \[
                0
            \to
                \operatorname{\mathbf{Ext}}^{1}_{k^{\mathrm{indrat}}_{\mathrm{proet}}}(\mathbf{H}^{1}(\mathcal{O}_{K}, M), \Q / \Z)
            \to
                \mathbf{H}^{1}(K, N) / \mathbf{H}^{1}(\mathcal{O}_{K}, N)
            \to
                \operatorname{\mathbf{Hom}}_{k^{\mathrm{indrat}}_{\mathrm{proet}}}(\mathbf{\Gamma}(\mathcal{O}_{K}, M), \Q / \Z)
            \to
                0.
        \]
    The first (non-zero) term is connected
    by \cite[Proposition (2.4.1) (b)]{Suz}.
    The third term is finite, and
    $\mathbf{\Gamma}(\mathcal{O}_{K}, M) = \mathbf{\Gamma}(K, M)$
    (\cite[Proposition (3.4.2) (b)]{Suz}).
    Hence this exact sequence is a connected-\'etale sequence.
    Since $\mathbf{H}^{1}(\mathcal{O}_{K}, N)$ is connected
    (Proposition \ref{FinFlatOK}),
    we have
        \[
                \bigl(
                    \mathbf{H}^{1}(K, N) / \mathbf{H}^{1}(\mathcal{O}_{K}, N)
                \bigr)^{0}
            =
                \mathbf{H}^{1}(K, N)^{0} / \mathbf{H}^{1}(\mathcal{O}_{K}, N),
        \]
        \[
                \pi_{0} \bigl(
                    \mathbf{H}^{1}(K, N) / \mathbf{H}^{1}(\mathcal{O}_{K}, N)
                \bigr)
            =
                \pi_{0}(\mathbf{H}^{1}(K, N)).
        \]
    Hence the above exact sequence induces the desired isomorphisms.
\end{proof}

\begin{proposition}[{\cite[Proposition (3.4.3) (d)]{Suz}}]
    Let $A$ be an Abelian variety over $K$.
    \begin{enumerate}
        \item
            The sheaf $\mathbf{\Gamma}(K, A)$ is
            an object of $\mathrm{PAlg} / k$.
            It is the perfection of the Greenberg transform of infinite level
            of the N\'eron model of $A$.
        \item
            The sheaf $\mathbf{H}^{1}(K, A)$ is
            an object of $\mathrm{IAlg} / k$.
        \item
            We have $\mathbf{H}^{n}(K, A) = 0$ for $n \ge 2$.
    \end{enumerate}
\end{proposition}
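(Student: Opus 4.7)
The plan is to reduce everything to the finite flat case (Proposition \ref{SuzFinFlatProp}) via the Kummer sequences $0 \to A[n] \to A \xrightarrow{n} A \to 0$, while handling Part (1) directly via the N\'eron mapping property.

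For Part (1), the identification $\mathbf{\Gamma}(K, A)(k') = A(\mathbf{K}(k')) = \mathscr{A}(\mathbf{O}_{K}(k'))$ on algebraically closed field extensions $k'/k$ follows immediately from the N\'eron mapping property: $\mathbf{O}_{K}(k')$ is a complete discrete valuation ring of ramification index one over $\mathcal{O}_{K}$, so every $\mathbf{K}(k')$-point of $A$ extends uniquely to the N\'eron model $\mathscr{A}$. The right-hand side is by construction the value at $k'$ of the perfection of the Greenberg transform of infinite level of $\mathscr{A}$, which lies in $\mathrm{PAlg}/k$. It then remains to promote this pointwise identification to an equality of pro-\'etale sheaves on $\Spec k^{\mathrm{indrat}}_{\mathrm{proet}}$, using that both sides are locally of finite presentation and that $\mathbf{O}_{K}(-)$ is compatible with the sheaf structure.

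For Parts (2) and (3), the Kummer sequence yields
\[
    0 \to \mathbf{H}^{i - 1}(K, A) / n \to \mathbf{H}^{i}(K, A[n]) \to \mathbf{H}^{i}(K, A)[n] \to 0
\]
for each $i \ge 1$ and $n \ge 1$. By Proposition \ref{SuzFinFlatProp}, the middle term vanishes for $i \ge 2$, forcing $\mathbf{H}^{i}(K, A)[n] = 0$ for all $n$ and all $i \ge 2$. Combined with the standard fact that $\mathbf{H}^{i}(K, A)$ is torsion for $i \ge 1$ (verified sectionwise, since profinite Galois cohomology of a discrete module is torsion in positive degree), this yields Part (3). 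For Part (2), the $i = 1$ instance of the displayed sequence exhibits $\mathbf{H}^{1}(K, A)[n]$ as a quotient of $\mathbf{H}^{1}(K, A[n]) \in \mathrm{IPAlg}/k$ by the image of $\mathbf{\Gamma}(K, A)/n \in \mathrm{PAlg}/k$, and passing to the filtered colimit $\varinjlim_{n} \mathbf{H}^{1}(K, A)[n] = \mathbf{H}^{1}(K, A)$ (by torsion) recovers the full sheaf.

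The main obstacle is ensuring that the quotient appearing in Part (2) is genuinely algebraic rather than merely pro-algebraic. Here I would invoke the finer decomposition of Proposition \ref{FinFlatOK}, which isolates the connected pro-algebraic subobject $\mathbf{H}^{1}(\mathcal{O}_{K}, A[n]) \subset \mathbf{H}^{1}(K, A[n])$ whose quotient already lies in $\mathrm{IAlg}/k$, and verify that the image of $\mathbf{\Gamma}(K, A)/n$ contains $\mathbf{H}^{1}(\mathcal{O}_{K}, A[n])$ — equivalently, that every class in $\mathbf{H}^{1}(\mathcal{O}_{K}, A[n])$ dies in $\mathbf{H}^{1}(K, A)$. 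This absorption statement is where the sheaf-theoretic machinery recalled in Section \ref{Indrat-Proet-Section} does its real work, and it is really the heart of the argument.
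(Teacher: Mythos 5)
The paper does not actually give a proof of this proposition: it is recorded verbatim as a citation from \cite[Proposition (3.4.3) (d)]{Suz}, so there is no internal argument to compare your attempt against. What can be said is that your sketch is a reasonable guess at how the result is established, but it has genuine gaps where the real work would lie.

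For Part (1), the Néron-property identification of $\mathbf{\Gamma}(K,A)(k')$ with $\mathscr{A}(\mathbf{O}_K(k'))$ for algebraically closed $k'$ is fine, but the step where you "promote this pointwise identification to an equality of sheaves, using that both sides are locally of finite presentation" does not work as stated: the paper explicitly flags that only objects of $\mathrm{IAlg}/k$ are locally of finite presentation (commute with filtered colimits of ind-rational algebras), whereas the Greenberg transform of infinite level is a pro-object and does not have this property. Identifying $\mathbf{\Gamma}(K,A)$ as a sheaf with that pro-algebraic group requires a different argument, and this is precisely what the cited result in \cite{Suz} supplies.

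For Part (2), you correctly locate the crux — showing that the image of $\mathbf{\Gamma}(K,A)/n$ in $\mathbf{H}^1(K,A[n])$ absorbs a connected pro-algebraic piece so that the quotient is ind-algebraic rather than merely ind-pro-algebraic — but you then explicitly leave this step unproved, which means the argument as written is incomplete. Moreover, your plan to invoke Proposition \ref{FinFlatOK} here has a technical flaw: that proposition concerns finite flat group schemes over $\mathcal{O}_K$, while $A[n]$ is only defined over $K$. Its schematic closure in the Néron model is quasi-finite flat and separated but typically not finite, so the decomposition $\mathbf{H}^1(\mathcal{O}_K, A[n]) \subset \mathbf{H}^1(K, A[n])$ you want to use is not directly available in this form. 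Part (3) via the Kummer sequence and the vanishing of $\mathbf{H}^{\ge 2}(K, A[n])$ from Proposition \ref{SuzFinFlatProp} is sound.
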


\begin{proposition}[{\cite[Theorem (7.2)]{Suz}}] \label{SuzDualityProp}
    Let $A$ be an Abelian variety over $K$ with dual $B$.
    Then there exists a canonical isomorphism
        \[
                \mathbf{H}^{1}(K, A)
            \cong
                    \operatorname{\mathbf{Ext}}^{1}_{k^{\mathrm{indrat}}_{\mathrm{proet}}}
                    \bigl(
                        \mathbf{\Gamma}(K, B),
                        \Q / \Z
                    \bigr).
        \]
\end{proposition}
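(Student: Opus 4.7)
The plan is to construct the map via the Poincar\'e biextension and to verify it is an isomorphism by reducing to the finite flat duality of Proposition \ref{FinDuality} through a limit argument. Recall that the duality between $A$ and $B$ is encoded by the Poincar\'e biextension $\mathcal{P}$ of $(A, B)$ by $\Gm$, which corresponds to a canonical class in $\operatorname{Ext}^{1}(A \otimes^{L} B, \Gm)$ in a suitable derived category of fppf sheaves on $K$. Applying the derived global sections functor on $\Spec k^{\mathrm{indrat}}_{\mathrm{proet}}$ and pairing with $\mathcal{P}$ yields a natural cup-product pairing
$$\mathbf{\Gamma}(K, B) \otimes^{L} \mathbf{H}^{1}(K, A) \to \mathbf{H}^{2}(K, \Gm),$$
and after identifying $\mathbf{H}^{2}(K, \Gm)$ with $\Q / \Z$ via a Brauer-type invariant for $K$, this produces the desired candidate morphism $\mathbf{H}^{1}(K, A) \to \operatorname{\mathbf{Ext}}^{1}_{k^{\mathrm{indrat}}_{\mathrm{proet}}}(\mathbf{\Gamma}(K, B), \Q / \Z)$.

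To establish that this candidate morphism is an isomorphism, I would reduce to the case of finite flat group schemes using the Kummer sequences
$$0 \to A[n] \to A \xrightarrow{[n]} A \to 0, \qquad 0 \to B[n] \to B \xrightarrow{[n]} B \to 0,$$
together with the Cartier duality $A[n]^{\vee} \cong B[n]$. These express both sides of the desired isomorphism as limits in $n$ of expressions involving $\mathbf{H}^{1}(K, A[n])$, $\mathbf{H}^{1}(K, B[n])$, and $\mathbf{\Gamma}(K, B[n])$. Proposition \ref{FinDuality} then supplies the comparison at each finite level, and a careful bookkeeping of the connected-\'etale decomposition from its proof should assemble the finite-level isomorphisms into the claimed global one.

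The main obstacle is the $p$-primary torsion in equal characteristic $p$: the group schemes $A[p^{m}]$ are not \'etale, and the resulting Kummer-type reduction is far more delicate than in the mixed-characteristic or finite-residue-field settings. Classical Tate-Milne duality handles only the prime-to-$p$ part and the \'etale quotients; to capture the connected component $\mathbf{H}^{1}(K, A)^{0}$ one must exploit the first isomorphism in Proposition \ref{FinDuality}, which matches it with a Breen-Serre dual of a pro-algebraic group. Organising these data coherently across all $n$, in a way that yields an isomorphism in $\mathrm{IAlg}/k$ rather than merely an abstract isomorphism of abelian groups, is precisely what the ind-rational pro-\'etale machinery of \cite{Suz} is designed to accomplish, and is the reason the argument must be carried out in that framework.
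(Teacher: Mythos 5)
The paper does not prove this proposition: it is quoted as Theorem~(7.2) of \cite{Suz}, so there is no ``paper's own proof'' to compare against. What you have written is a sketch of a \emph{different} argument, closer in spirit to the classical Tate--B\'egueri approach (cup product with the Poincar\'e biextension plus a Kummer-sequence reduction to finite flat group schemes), whereas the result in \cite{Suz} is obtained via the algebraic structures on local cohomology and a Galois-descent argument from the semistable case --- that is the content of that paper's title.

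As an argument in its own right, your sketch has two concrete gaps in the reduction step. First, Proposition~\ref{FinDuality} is a duality for a finite flat group scheme $N$ over $\mathcal{O}_{K}$ (together with its Cartier dual over $\mathcal{O}_{K}$), relating $\mathbf{H}^{1}(K,N)^{0}/\mathbf{H}^{1}(\mathcal{O}_{K},N)$ and $\pi_{0}(\mathbf{H}^{1}(K,N))$ to cohomology of the dual over $\mathcal{O}_{K}$; it is not a pure $K$-level statement. For a general abelian variety $A$ over $K$, the torsion $A[n]$ has no canonical finite flat extension over $\mathcal{O}_{K}$: the $n$-torsion of the N\'eron model is only quasi-finite, and the schematic closure is finite flat only after passing to an extension with semiabelian reduction. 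So the sentence ``Proposition~\ref{FinDuality} then supplies the comparison at each finite level'' is not available as stated; you would need exactly the semistable-reduction-plus-descent machinery that \cite{Suz} develops. Second, $\mathbf{\Gamma}(K,B)$ is not a torsion object --- its identity component is a large connected pro-algebraic group --- so $\operatorname{\mathbf{Ext}}^{1}_{k^{\mathrm{indrat}}_{\mathrm{proet}}}(\mathbf{\Gamma}(K,B),\Q/\Z)$ is not a colimit of $\operatorname{\mathbf{Ext}}^{1}(\mathbf{\Gamma}(K,B[n]),\Q/\Z)$ in any direct way. The Kummer sequence instead feeds in $\mathbf{\Gamma}(K,B)/n$ and $\operatorname{\mathbf{Ext}}^{2}(\mathbf{\Gamma}(K,B[n]),\Q/\Z)$, and controlling these requires information about divisibility of $\mathbf{\Gamma}(K,B)^{0}$ and vanishing of higher Ext-sheaves that you have not supplied. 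The phrase ``careful bookkeeping of the connected-\'etale decomposition'' is carrying the whole weight of the proof and is precisely where the real work lies.
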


Below we give a few more.

\begin{proposition}
\label{FinDualMixed}
    The statements of Proposition \ref{FinDuality} remain true
    even if $K$ has mixed characteristic.
\end{proposition}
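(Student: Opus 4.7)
The plan is to carry over the argument for Proposition \ref{FinDuality} essentially unchanged, the only point requiring attention being whether the duality spectral sequence used in the equal characteristic proof remains available in mixed characteristic. First I would note that all of the structural facts invoked in the equal characteristic proof---Propositions \ref{SuzFinFlatProp} and \ref{FinFlatOK}, which provide finiteness of $\mathbf{\Gamma}$, algebraicity of $\mathbf{H}^{1}$, connectedness of $\mathbf{H}^{1}(\mathcal{O}_{K}, \cdot)$, and the vanishing $\mathbf{H}^{n}(\mathcal{O}_{K}, \cdot) = 0$ for $n \ge 2$---are already stated in \cite{Suz} without any restriction on the characteristic of $K$, so they remain at our disposal verbatim in the mixed characteristic setting.

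The next and crucial step is to verify the existence, in mixed characteristic, of the spectral sequence
$$E_{2}^{i j} = \operatorname{\mathbf{Ext}}^{i}_{k^{\mathrm{indrat}}_{\mathrm{proet}}}(\mathbf{H}^{2 - j}(\mathcal{O}_{K}, M), \Q / \Z) \Longrightarrow \mathbf{H}^{i + j}_{x}(\mathcal{O}_{K}, N)$$
from \cite[Theorem (5.2.1.2)]{Suz}. This is the step I expect to be the main obstacle: the duality theory of \cite{Suz} in mixed characteristic rests on B\'egueri's classical local duality \cite{Beg} rather than on a self-contained equal characteristic construction, and one must check that B\'egueri's pairing fits into the ind-rational pro-\'etale framework so as to yield a spectral sequence of exactly the same shape. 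I would approach this either by invoking the relevant mixed-characteristic formulation directly from \cite{Suz}, or, failing that, by translating B\'egueri's pairing on finite flat group schemes over $\mathbf{O}_{K}(k')$ into an $\operatorname{\mathbf{Ext}}$-pairing on the ind-rational pro-\'etale site, from which the spectral sequence follows by standard derived-functor machinery.

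Once the spectral sequence is in hand, the remainder is purely formal and parallels the equal characteristic argument. The vanishing $\mathbf{H}^{n}(\mathcal{O}_{K}, M) = 0$ for $n \ge 2$ collapses the spectral sequence to a short exact sequence
$$0 \to \operatorname{\mathbf{Ext}}^{1}_{k^{\mathrm{indrat}}_{\mathrm{proet}}}(\mathbf{H}^{1}(\mathcal{O}_{K}, M), \Q / \Z) \to \mathbf{H}^{1}(K, N) / \mathbf{H}^{1}(\mathcal{O}_{K}, N) \to \operatorname{\mathbf{Hom}}_{k^{\mathrm{indrat}}_{\mathrm{proet}}}(\mathbf{\Gamma}(\mathcal{O}_{K}, M), \Q / \Z) \to 0,$$
and, using $\mathbf{\Gamma}(\mathcal{O}_{K}, M) = \mathbf{\Gamma}(K, M)$ together with \cite[Proposition (2.4.1)(b)]{Suz} to see that the outer terms are respectively connected and finite \'etale, one identifies this sequence with the connected-\'etale sequence of $\mathbf{H}^{1}(K, N) / \mathbf{H}^{1}(\mathcal{O}_{K}, N)$. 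Combined with the connectedness of $\mathbf{H}^{1}(\mathcal{O}_{K}, N)$ from Proposition \ref{FinFlatOK}(2), the two desired isomorphisms then follow exactly as in the equal characteristic proof.
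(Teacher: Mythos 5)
Your proposal takes a genuinely different route from the paper. You correctly identify the main obstacle---the duality spectral sequence of \cite[Theorem (5.2.1.2)]{Suz} is an equal-characteristic result and is not available off the shelf in mixed characteristic---but you then propose to fill the gap by reconstructing the spectral sequence in the mixed-characteristic setting (by translating B\'egueri's pairing into an $\operatorname{\mathbf{Ext}}$-pairing on the ind-rational pro-\'etale site and appealing to ``standard derived-functor machinery''). The paper sidesteps this entirely. Its strategy is instead to show that the sheaves $\mathbf{H}^{1}(\mathcal{O}_{K}, N)$ and $\mathbf{H}^{1}(K, N)$ defined on $\Spec k^{\mathrm{indrat}}_{\mathrm{proet}}$ coincide with the classical algebraic-group structures that B\'egueri puts on $H^{1}(\mathcal{O}_{K}, N)$ and $H^{1}(K, N)$ (as cokernels of perfected Greenberg transforms of a resolution by smooth group schemes over $\mathcal{O}_{K}$, respectively by N\'eron lft-models of a resolution of $N_{K}$ by tori); the identifications come from \cite[Propositions (3.4.2) and (3.4.3)(e)]{Suz}. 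Once the two formulations are matched up, the desired connected-\'etale decomposition and the two isomorphisms are literally B\'egueri's \cite[Proposition 6.1.2 and Theorem 6.3.2]{Beg}, so there is nothing left to derive formally.

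The upshot of the comparison: your route is not wrong in outline, but it is heavier than necessary and the critical step is under-specified. Setting up a duality spectral sequence of exactly the required shape from a pairing is not a matter of ``standard machinery''---it requires constructing compatible complexes, a trace map, and verifying perfectness, which in mixed characteristic is precisely the content of B\'egueri's memoir. The paper's observation is that one does not need to re-package B\'egueri's duality as a spectral sequence in the ind-rational pro-\'etale framework at all: identifying the underlying objects suffices, and then B\'egueri's theorems already deliver the split into connected and \'etale parts. If you pursue your approach you would essentially have to re-prove B\'egueri's duality in the new language; the paper's approach buys you the result essentially for free from hers.
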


\begin{proof}
    This is basically a translation of B\'egueri's results \cite{Beg}.
    More precisely, in \cite[Section 4.2]{Beg},
    she gives $H^{1}(\mathcal{O}_{K}, N)$ a structure
    as the perfection of an algebraic group over $k$,
    which is defined as the cokernel of
    $\operatorname{Gr}^{\mathrm{perf}}(G^{1})
    \to \operatorname{Gr}^{\mathrm{perf}}(G^{2})$,
    where $0 \to N \to G^{1} \to G^{2} \to 0$ is a resolution of $N$
    by commutative smooth group schemes over $\mathcal{O}_{K}$
    and $\operatorname{Gr}^{\mathrm{perf}}$ is
    the perfection of the Greenberg transform of infinite level.
    By \cite[Proposition (3.4.2)]{Suz} and its proof,
    we can see that this algebraic structure agrees with
    our $\mathbf{H}^{1}(\mathcal{O}_{K}, N)$.
    Similarly, in \cite[Section 4.3]{Beg},
    her algebraic structure for $H^{1}(K, N)$ is defined as
    the cokernel of
    $\operatorname{Gr}^{\mathrm{perf}}(\mathscr{T}^{1})
    \to \operatorname{Gr}^{\mathrm{perf}}(\mathscr{T}^{2})$,
    where $0 \to N_{K} \to T^{1} \to T^{2} \to 0$ is a resolution
    of the generic fiber $N_{K}$ by tori
    and $\mathscr{T}^{i}$ is the N\'eron lft-model of $T^{i}$.
    By \cite[Proposition (3.4.3) (e)]{Suz},
    this agrees with our $\mathbf{H}^{1}(K, N)$.
    Then our statements correspond to
    \cite[Proposition 6.1.2 and Theorem 6.3.2]{Beg}.
\end{proof}

\begin{proposition}
\label{EtCriterion}
    Let $N$ be a commutative finite flat group scheme over $\mathcal{O}_{K}$.
    Assume that its generic fiber $N_{K}$ is \'etale.
    Then $\mathbf{H}^{1}(\mathcal{O}_{K}, N)$ is an object of $\mathrm{Alg} / k$.
    Its dimension is the $\mathcal{O}_{K}$-length of
    the pullback along the zero section
    of $\Omega^{1}_{N / \mathcal{O}_{K}}$.
    In particular, $\mathbf{H}^{1}(\mathcal{O}_{K}, N)$ is zero
    if and only if $N$ is \'etale (over $\mathcal{O}_{K}$).
\end{proposition}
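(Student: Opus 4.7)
The plan is to resolve $N$ by smooth commutative $\mathcal{O}_{K}$-group schemes, use this to identify $\mathbf{H}^{1}(\mathcal{O}_{K}, N)$ with a cokernel of Greenberg transforms, and exploit the \'etaleness of $N_{K}$ to show this cokernel is algebraic of the stated dimension.

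First, I would exhibit a short exact sequence
\[
    0 \to N \to G^{1} \to G^{2} \to 0
\]
of commutative smooth $\mathcal{O}_{K}$-group schemes; such resolutions exist by standard techniques, for instance by adapting B\'egueri's construction as in the proof of Proposition \ref{FinDualMixed} in mixed characteristic, or by combining torus and Artin--Schreier--Witt resolutions in equal characteristic. Since $\mathbf{O}_{K}(k')$ is strictly Henselian for every algebraically closed ind-rational $k$-algebra $k'$, we have $H^{1}(\mathbf{O}_{K}(k'), G^{i}) = 0$ by smoothness of $G^{i}$, and the long exact cohomology sequence yields an isomorphism
\[
    \mathbf{H}^{1}(\mathcal{O}_{K}, N) \cong \operatorname{Gr}(G^{2}) / \operatorname{Gr}(G^{1})
\]
as sheaves on $\operatorname{Spec} k^{\mathrm{indrat}}_{\mathrm{proet}}$, where $\operatorname{Gr}$ denotes the perfection of the Greenberg transform of infinite level.

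Second, I would show this quotient is algebraic of dimension $\delta := \ell_{\mathcal{O}_{K}}(\operatorname{coker}(\mathrm{Lie}\, G^{1} \to \mathrm{Lie}\, G^{2}))$. Since $N_{K}$ is \'etale, $\mathrm{Lie}\, G^{1} \hookrightarrow \mathrm{Lie}\, G^{2}$ is an injection between free $\mathcal{O}_{K}$-modules of the same rank, so $\delta$ is finite. At finite Greenberg level $n$, $\operatorname{Gr}_{n}(G^{i}) \twoheadrightarrow \operatorname{Gr}_{n - 1}(G^{i})$ is smooth surjective with vector-group kernel $\mathrm{Lie}\, G^{i} \otimes_{\mathcal{O}_{K}} \mathfrak{m}^{n}/\mathfrak{m}^{n + 1}$, while the kernel of $\operatorname{Gr}_{n}(G^{1}) \to \operatorname{Gr}_{n}(G^{2})$ is $\operatorname{Gr}_{n}(N)$, giving the dimension identity $\dim C_{n} = \dim \operatorname{Gr}_{n}(N)$ for $C_{n} := \operatorname{Gr}_{n}(G^{2}) / \operatorname{Gr}_{n}(G^{1})$. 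Using that $N_{K}$ is \'etale (so that $G^{1} \to G^{2}$ is an \'etale isogeny on generic fibres), I would argue by induction on $n$, via a snake-lemma analysis comparing the kernels and cokernels of the two columns, that $\dim \operatorname{Gr}_{n}(N)$ stabilises to $\delta$ and the transition maps $C_{n} \to C_{n - 1}$ are eventually isomorphisms. Consequently, $\mathbf{H}^{1}(\mathcal{O}_{K}, N) = \varprojlim_{n} C_{n}$ lies in $\mathrm{Alg}/k$ and has dimension $\delta$.

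Third, I would identify $\delta$ with $\ell_{\mathcal{O}_{K}}(e^{*}\Omega^{1}_{N/\mathcal{O}_{K}})$ via the conormal sequence of the $N$-torsor $G^{1} \to G^{2}$,
\[
    e^{*}\Omega^{1}_{G^{2}/\mathcal{O}_{K}} \to e^{*}\Omega^{1}_{G^{1}/\mathcal{O}_{K}} \to e^{*}\Omega^{1}_{N/\mathcal{O}_{K}} \to 0,
\]
in which the leftmost map becomes injective after $\otimes_{\mathcal{O}_{K}} K$ since $N_{K}$ is \'etale; Lemma \ref{chigammalem} applied to the resulting complex (and its Lie-algebra dual) then yields $\delta = \ell_{\mathcal{O}_{K}}(e^{*}\Omega^{1}_{N/\mathcal{O}_{K}})$. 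The final statement is immediate: by Proposition \ref{FinFlatOK}, $\mathbf{H}^{1}(\mathcal{O}_{K}, N)$ is connected, so it vanishes iff its dimension vanishes, iff $e^{*}\Omega^{1}_{N/\mathcal{O}_{K}} = 0$, iff $N$ is \'etale over $\mathcal{O}_{K}$ (by translation invariance of $\Omega^{1}$ on a group scheme). The main technical obstacle is the algebraicity step---upgrading the pro-algebraic structure from Proposition \ref{FinFlatOK} to genuine algebraicity by proving stabilisation of $\dim \operatorname{Gr}_{n}(N)$---which is precisely where the \'etaleness hypothesis on $N_{K}$ enters decisively.
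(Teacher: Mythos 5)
Your overall strategy is correct, and it is in fact a reconstruction of exactly the argument that the paper delegates to its reference: the paper's proof simply identifies $\mathbf{H}^{1}(\mathcal{O}_{K}, N)$ with the perfection of the algebraic structure on $H^{1}(\mathcal{O}_{K}, N)$ defined in Bertapelle--Gonz\'alez-Avil\'es \cite[Section 16]{BGA} (using a smooth resolution of $N$, as you do), and then quotes \cite[Theorem 16.3 and Proposition 16.6]{BGA} for the algebraicity, the dimension formula in terms of $e^{*}\Omega^{1}_{N/\mathcal{O}_{K}}$, and the \'etale criterion. So where the paper cites, you re-prove. Your steps 1 and 3 are sound as written, and the identification $\delta = \ell_{\mathcal{O}_{K}}(e^{*}\Omega^{1}_{N/\mathcal{O}_{K}})$ via the conormal sequence (using that $N = G^{1} \times_{G^{2}, e} \operatorname{Spec} \mathcal{O}_{K}$, so $e^{*}\Omega^{1}_{N/\mathcal{O}_{K}} \cong e^{*}\Omega^{1}_{G^{1}/G^{2}}$) is exactly right.

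The one place where you should be more careful is the stabilisation step, which you yourself flag as the obstacle: showing that $\dim \operatorname{Gr}_{n}(N)$ is eventually constant equal to $\delta$, and that $C_{n} \to C_{n-1}$ is eventually an isomorphism (not merely an isomorphism on $k$-points). This is genuinely the technical heart and corresponds precisely to \cite[Theorem 16.3]{BGA}; in the paper's framework it also interacts with the smoothness-of-kernel argument recorded in Remark \ref{kersmooth}. Leaving it as ``a snake-lemma analysis'' understates how much work lives there. A second, smaller caveat: your explicit description of the kernel of $\operatorname{Gr}_{n}(G^{i}) \to \operatorname{Gr}_{n-1}(G^{i})$ as $\operatorname{Lie} G^{i} \otimes_{\mathcal{O}_{K}} \mathfrak{m}^{n}/\mathfrak{m}^{n+1}$ is literally correct only in equal characteristic; in mixed characteristic the kernel is still connected unipotent of the expected dimension but the description differs, and the proposition is used in the paper in both characteristics. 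The citation-based proof in the paper sidesteps this. Neither of these is a fatal flaw, but they are the places where a fully written-out version of your argument would need real content.
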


\begin{proof}
    As in the proof of Proposition \ref{FinDualMixed},
    we can see that $\mathbf{H}^{1}(\mathcal{O}_{K}, N)$ agrees with
    the perfection of the algebraic structure on $H^{1}(\mathcal{O}_{K}, N)$
    defined in \cite[Section 16]{BGA}.
    Therefore the result follows from
    \cite[Theorem 16.3 and Proposition 16.6]{BGA}.
\end{proof}

\begin{proposition}
\label{LocCohStr}
    Let $N$ be a commutative finite flat group scheme over $\mathcal{O}_{K}$.
    Then $\mathbf{H}^{1}(K, N)^{0} / \mathbf{H}^{1}(\mathcal{O}_{K}, N)$
    is a direct limit indexed by $\N$ of
    perfections of connected unipotent algebraic groups over $k$
    with injective transition morphisms.
\end{proposition}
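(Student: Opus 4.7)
The plan is to apply the duality isomorphism from Proposition \ref{FinDuality} in equal characteristic (and its mixed-characteristic counterpart \ref{FinDualMixed}), giving
    \[
            \mathbf{H}^{1}(K, N)^{0} / \mathbf{H}^{1}(\mathcal{O}_{K}, N)
        \cong
            \operatorname{\mathbf{Ext}}^{1}_{k^{\mathrm{indrat}}_{\mathrm{proet}}}
            (\mathbf{H}^{1}(\mathcal{O}_{K}, M), \Q / \Z)
    \]
with $M$ the Cartier dual of $N$, and to compute the right-hand side via a Breen-Serre-style duality applied to a pro-unipotent presentation of $\mathbf{H}^{1}(\mathcal{O}_{K}, M)$.

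First I would write $\mathbf{H}^{1}(\mathcal{O}_{K}, M)$, a connected object of $\mathrm{PAlg} / k$ by Proposition \ref{FinFlatOK}, as an $\N$-indexed cofiltered limit $\varprojlim_{n} G_{n}$ of perfections of connected unipotent algebraic groups over $k$ with smooth surjective transitions having connected kernels. To do so, I would resolve $M$ by a short exact sequence $0 \to M \to \mathscr{G}^{1} \to \mathscr{G}^{2} \to 0$ of smooth commutative $\mathcal{O}_{K}$-group schemes, realise $\mathbf{H}^{1}(\mathcal{O}_{K}, M)$ as the cokernel of $\operatorname{Gr}^{\mathrm{perf}}(\mathscr{G}^{1}) \to \operatorname{Gr}^{\mathrm{perf}}(\mathscr{G}^{2})$ following the proof of Proposition \ref{FinDualMixed}, and take $G_{n}$ to be the quotient at Greenberg level $n$. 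The smoothness assertion recalled in Remark \ref{kersmooth} controls the transition kernels, and the fact that $M$ is annihilated by some $p^{N}$ (where $p$ is the residue characteristic) implies that every $G_{n}$ is $p^{N}$-torsion. A connected $p^{N}$-torsion perfection of a commutative algebraic $k$-group is the perfection of a unipotent group, since on the perfection of a torus $[p^{N}]$ equals a power of Frobenius and is hence invertible, while on the perfection of a positive-dimensional abelian variety $[p^{N}]$ has only finite kernel.

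Second I would exchange $\operatorname{\mathbf{Ext}}^{1}(-, \Q / \Z)$ with the pro-limit to get
    \[
            \operatorname{\mathbf{Ext}}^{1}_{k^{\mathrm{indrat}}_{\mathrm{proet}}}
            (\mathbf{H}^{1}(\mathcal{O}_{K}, M), \Q / \Z)
        \cong
            \varinjlim_{n}
            \operatorname{\mathbf{Ext}}^{1}_{k^{\mathrm{indrat}}_{\mathrm{proet}}}
            (G_{n}, \Q / \Z),
    \]
this being a formal consequence of Breen-Serre duality for pro-unipotent groups and of the Mittag-Leffler structure already implicit in the spectral sequence of \cite[Theorem (5.2.1.2)]{Suz} used to prove Proposition \ref{FinDuality}. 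Each $\operatorname{\mathbf{Ext}}^{1}(G_{n}, \Q / \Z)$ is then, by \cite[Proposition (2.4.1)(b)]{Suz}, the Breen-Serre dual of $G_{n}$, a perfection of a connected unipotent algebraic group of the same dimension as $G_{n}$ by \cite[Proposition 1.2.1]{Beg}. Injectivity of the transitions in the colimit follows from the six-term exact sequence attached to $0 \to K_{n} \to G_{n+1} \to G_{n} \to 0$: both $\operatorname{\mathbf{Hom}}(G_{n+1}, \Q / \Z)$ and $\operatorname{\mathbf{Hom}}(K_{n}, \Q / \Z)$ vanish because they compute Pontryagin duals of the trivial component groups of the connected objects $G_{n+1}$ and $K_{n}$.

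The main obstacle will be arranging the pro-unipotent refinement cleanly: although the $p^{N}$-torsion argument excludes toric and abelian parts from the abstract pro-limit, verifying that each finite-level $G_{n}$ in a concrete pro-system, together with its transition kernel $K_{n}$, is genuinely a perfection of a connected unipotent algebraic group may require refining the naive Greenberg truncation. The justification of the Ext-pro-limit commutation in the ind-rational pro-\'etale site is a secondary but also non-trivial point that must be carried out within the framework of \cite{Suz}.
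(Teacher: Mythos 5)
Your proposal follows essentially the same route as the paper: apply Proposition \ref{FinDuality}/\ref{FinDualMixed}, present $\mathbf{H}^{1}(\mathcal{O}_{K}, M)$ as a pro-limit of perfections of connected unipotent groups with surjective transitions and connected kernels, commute $\operatorname{\mathbf{Ext}}^{1}_{k^{\mathrm{indrat}}_{\mathrm{proet}}}(-,\Q/\Z)$ past the pro-limit (the paper cites \cite[Proposition (2.3.3) (c)]{Suz} for this step), and deduce injectivity of the transition morphisms in the colimit from the vanishing of $\operatorname{\mathbf{Hom}}_{k^{\mathrm{indrat}}_{\mathrm{proet}}}(-, \Q/\Z)$ on connected groups via the six-term sequence. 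The only place the paper is more economical is the pro-unipotent presentation of $\mathbf{H}^{1}(\mathcal{O}_{K}, M)$: rather than rebuilding it from a smooth resolution and Greenberg truncations and then invoking a $p^{N}$-torsion argument to exclude toric and abelian parts (where, as you rightly flag, connectedness of the transition kernels is the delicate point), the paper simply cites \cite[Proposition 16.1 (ii) and Lemma 16.2]{BGA}, which delivers exactly this presentation with surjective transitions and connected kernels.
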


\begin{proof}
    Let $M$ be the Cartier dual of $N$.
    By \cite[Proposition 16.1 (ii) and Lemma 16.2]{BGA},
    the group $\mathbf{H}^{1}(\mathcal{O}_{K}, M)$ is
    an inverse limit $\varprojlim_{n \ge 1} G_{n}$ of perfections of
    connected unipotent algebraic groups over $k$
    such that each transition morphism
    $\varphi_{n} \colon G_{n + 1} \to G_{n}$
    is surjective with connected kernel.
    Hence
        \[
                \operatorname{\mathbf{Ext}}^{1}_{k^{\mathrm{indrat}}_{\mathrm{proet}}}
                \bigl(
                    \mathbf{H}^{1}(\mathcal{O}_{K}, M),
                    \Q / \Z
                \bigr)
            \cong
                \varinjlim_{n \ge 1}
                \operatorname{\mathbf{Ext}}^{1}_{k^{\mathrm{indrat}}_{\mathrm{proet}}}
                (G_{n}, \Q / \Z)
        \]
    by \cite[Proposition (2.3.3) (c)]{Suz}.
    Each term 
    $\operatorname{\mathbf{Ext}}^{1}_{k^{\mathrm{indrat}}_{\mathrm{proet}}}
    (G_{n}, \Q / \Z)$
    is the perfection of a connected unipotent algebraic group
    by \cite[Proposition (2.4.1) (b)]{Suz}.
    Let $G_{n}' = \Ker(\varphi_{n})$.
    Then we have an exact sequence
        \[
                0
            \to
                \operatorname{\mathbf{Ext}}^{1}_{k^{\mathrm{indrat}}_{\mathrm{proet}}}
                (G_{n}, \Q / \Z)
            \to
                \operatorname{\mathbf{Ext}}^{1}_{k^{\mathrm{indrat}}_{\mathrm{proet}}}
                (G_{n + 1}, \Q / \Z)
            \to
                \operatorname{\mathbf{Ext}}^{1}_{k^{\mathrm{indrat}}_{\mathrm{proet}}}
                (G_{n}', \Q / \Z)
            \to
                0
        \]
    by \cite[Proposition (2.4.1) (a)]{Suz}.
    Now the result follows from Propositions \ref{FinDuality}
    and \ref{FinDualMixed}.
\end{proof}

The following proposition requires $k$ to be algebraically closed.

\begin{proposition}
\label{MultCriterion}
    Let $N$ be a commutative finite flat group scheme over $\mathcal{O}_{K}$.
    Assume that its generic fiber is multiplicative.
    Then $H^{1}(K, N) / H^{1}(\mathcal{O}_{K}, N)$ is finite
    if and only if $N$ is multiplicative (over $\mathcal{O}_{K}$).
\end{proposition}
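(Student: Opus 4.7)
The plan is to pass to the Cartier dual $M$ of $N$, and then control the quotient $\mathbf{H}^{1}(K, N) / \mathbf{H}^{1}(\mathcal{O}_{K}, N)$ using the duality theorems of the previous subsection. Note that $M_{K}$ is \'etale (being dual to the multiplicative $N_{K}$), and that $N$ is multiplicative over $\mathcal{O}_{K}$ if and only if $M$ is \'etale over $\mathcal{O}_{K}$. By Proposition \ref{EtCriterion}, $\mathbf{H}^{1}(\mathcal{O}_{K}, M)$ is then an object of $\mathrm{Alg} / k$, and it vanishes if and only if $M$ is \'etale over $\mathcal{O}_{K}$.

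Next, I would apply Propositions \ref{FinDuality} and \ref{FinDualMixed}, which together cover the equal and mixed characteristic cases, to identify
    \[
            \bigl( \mathbf{H}^{1}(K, N) / \mathbf{H}^{1}(\mathcal{O}_{K}, N) \bigr)^{0}
        \cong
            \operatorname{\mathbf{Ext}}^{1}_{k^{\mathrm{indrat}}_{\mathrm{proet}}}
            \bigl( \mathbf{H}^{1}(\mathcal{O}_{K}, M), \Q / \Z \bigr),
    \]
and to note that $\pi_{0}(\mathbf{H}^{1}(K, N))$ is finite \'etale. By B\'egueri's dimension-preservation result for Breen-Serre duals (cited in Section \ref{Indrat-Proet-Section}), the connected part on the right is the perfection of a connected unipotent algebraic group of dimension $\dim \mathbf{H}^{1}(\mathcal{O}_{K}, M)$. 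Thus $\mathbf{H}^{1}(K, N) / \mathbf{H}^{1}(\mathcal{O}_{K}, N)$ lies in $\mathrm{Alg} / k$, with a completely described connected-\'etale decomposition.

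Finally, I would pass to $k$-points. Since $k$ is algebraically closed and $\mathbf{H}^{1}(\mathcal{O}_{K}, N)$ is connected (Proposition \ref{FinFlatOK}(2)), the short exact sequence of sheaves induces a short exact sequence on $k$-valued points, yielding
    \[
            H^{1}(K, N) / H^{1}(\mathcal{O}_{K}, N)
        \cong
            \bigl( \mathbf{H}^{1}(K, N) / \mathbf{H}^{1}(\mathcal{O}_{K}, N) \bigr)(k).
    \]
The right-hand side is an extension of a finite group by $U(k)$, where $U$ is the perfection of a connected unipotent algebraic group over the algebraically closed field $k$ of dimension $\dim \mathbf{H}^{1}(\mathcal{O}_{K}, M)$; such a $U$ has finite $k$-points if and only if $\dim U = 0$. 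Hence the quotient in the statement is finite precisely when $\mathbf{H}^{1}(\mathcal{O}_{K}, M) = 0$, i.e., when $N$ is multiplicative over $\mathcal{O}_{K}$. The main point requiring care is the identification of $k$-points with the sheaf-level quotient, which rests on the vanishing of pro-\'etale $H^{1}$ of connected (pro)algebraic groups over an algebraically closed point; all other steps are essentially formal consequences of the duality machinery already developed.
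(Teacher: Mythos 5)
Your proposal is correct and follows essentially the same route as the paper: pass to the Cartier dual $M$, apply Propositions \ref{FinDuality} and \ref{FinDualMixed} to identify the connected part of the quotient with $\operatorname{\mathbf{Ext}}^1(\mathbf{H}^1(\mathcal{O}_K, M), \Q/\Z)$, and finish with Proposition \ref{EtCriterion} plus Cartier duality. The one genuine (if small) structural difference is in the middle of the argument. The paper invokes Proposition \ref{LocCohStr} — a general structure result that shows $\mathbf{H}^1(K, N)^0/\mathbf{H}^1(\mathcal{O}_K, N)$ is a directed union of perfections of connected unipotent groups with injective transitions — and deduces that finiteness of its $k$-points forces it to vanish. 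You instead note that since $M_K$ is étale, Proposition \ref{EtCriterion} already gives that $\mathbf{H}^1(\mathcal{O}_K, M)$ lies in $\mathrm{Alg}/k$, so its Breen-Serre dual is itself the perfection of a connected unipotent \emph{algebraic} group of matching dimension; finiteness of $k$-points of a connected algebraic group is then just the statement that its dimension is zero. Your route trades the generic structure theorem \ref{LocCohStr} for a dimension count that exploits the specific hypothesis on $N_K$, which is a perfectly sound and arguably slightly more self-contained argument in this particular setting. One micro-point worth making explicit: to apply the ``$\operatorname{\mathbf{Ext}}^1(-,\Q/\Z)$ equals Breen-Serre dual, same dimension'' formalism, you need $\mathbf{H}^1(\mathcal{O}_K, M)$ to be the perfection of a \emph{unipotent} algebraic group, not merely algebraic and connected; this is indeed the case (it is pro-unipotent for any finite flat $M$ by the references cited in the proof of Proposition \ref{LocCohStr}, and algebraicity from \ref{EtCriterion} then gives unipotence), but it should be said.
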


\begin{proof}
    Let $M$ be the Cartier dual of $N$.
    Since $\pi_{0}(\mathbf{H}^{1}(K, N))$ is always finite \'etale
    by Proposition \ref{FinDuality},
    the finiteness of $H^{1}(K, N) / H^{1}(\mathcal{O}_{K}, N)$
    is equivalent to the finiteness of
    $\bigl(
        \mathbf{H}^{1}(K, N)^{0} / \mathbf{H}^{1}(\mathcal{O}_{K}, N)
    \bigr)(k)$.
    By Proposition \ref{LocCohStr},
    this in turn is equivalent to the vanishing of
    $\mathbf{H}^{1}(K, N)^{0} / \mathbf{H}^{1}(\mathcal{O}_{K}, N)$.
    By Propositions \ref{FinDuality} and \ref{FinDualMixed}
    and the connectedness of $\mathbf{H}^{1}(\mathcal{O}_{K}, M)$,
    this in turn is equivalent to
    $\mathbf{H}^{1}(\mathcal{O}_{K}, M) = 0$,
    which is equivalent to $M$ being \'etale
    by Proposition \ref{EtCriterion}.
    Cartier duality then gives the result.
\end{proof}

\section{Examples}
\subsection{Additivity of $c(-)$ does not imply $\gamma=0$}
Suppose $0\to T\to B \to A\to 0$ is an exact sequence of semiabelian varieties over $K,$ and denote by
$$\mathscr{G}^\bullet:\; 0\to \mathscr{T}\to\mathscr{B}\to\mathscr{A}\to 0$$ the induced complex of Néron lft-models over $\Og_K.$ If $T$ is a torus, we have seen that Chai's conjecture is equivalent to the vanishing of $\gamma(\mathscr{G}^\bullet)$ (see Remark \ref{Remark210}).
Chai \cite{Chai}, as well as Cluckers-Loeser-Nicaise \cite{CLN}, have posed the question whether this still holds if we drop the assumption that $T$ be a torus. As pointed out on p. 907 of \cite{CLN}, this latter claim implies that, if $T,$ $B,$ and $A$ all have semiabelian reduction, then $\gamma(\mathscr{G} ^ \bullet)=0.$ In this subsection, we shall construct a family of counterexamples to this claim. More precisely, we shall construct complete discrete valuation rings $\Og_K$ with algebraically closed residue field and exact sequences $0\to T \to B \to A \to 0$ of semiabelian varieties over $K$ with \it semiabelian reduction \rm over $\Og_K$ such that $\gamma(\mathscr{G}^\bullet)\not=0.$ Our construction will produce examples in arbitrary (mixed or equal) characteristic as long as the residue characteristic is positive. Note that the assumption on the reduction of our semiabelian varieties implies that all three base change conductors vanish, so the equality $c(B)=c(T)+c(A)$ is trivially verified.

We begin with a complete discrete valuation ring $\Og_K$ with algebraically closed residue field $k$ and write $p:=\mathrm{char}\; k.$ Let $\mathscr{F}$ be a finite flat commutative group scheme over $\Og_K$ with multiplicative generic fibre $\mathscr{F}_K$ and of order $p^n$ for some $n\in \N.$

Moreover, we choose $\mathscr{F}$ such that it is not multiplicative globally. Note that, if $\mathrm{char}\; K >0,$ such an $\mathscr{F}$ always exists; for example one could take an anisotropic torus $T$ over $K$ and denote by $\mathscr{T}$ the Néron model of $T.$ For any $n>0,$ the scheme-theoretic closure of $T[p^n]$ in $\mathscr{T}$ has the desired properties. In general, one can choose an elliptic curve $\mathscr{E}\to\Spec\Og_K$ which is generically ordinary and has supersingular special fibre.  Then $\mathscr{E}_K[p^n]$ has a multiplicative subgroup scheme, the scheme-theoretic closure of which in $\mathscr{E}$ again has the desired properties. 

By assumption, we can choose a closed immersion $\mathscr{F}_K\to T$ for some algebraic torus $T$ over $\Og_K.$ Moreover, by \cite[Theorem A.6]{Milne}, there exists an Abelian scheme $\mathscr{A}\to\Spec \Og_K$ and a closed immersion $\mathscr{F}\to\mathscr{A}.$ We obtain exact sequences $0 \to \mathscr{F}_K\to T\to T'\to 0$ over $K$ and $ 0\to \mathscr{F}\to \mathscr{A}\to \mathscr{A}' \to 0$ over $\Og_K,$ where $T'$ and $\mathscr{A}'$ are the obvious cokernels (both of which exist by \cite[Théorème 4.C]{An}). Now define the semiabelian variety $B$ such that it fits into the push-out diagram
\begin{align}\begin{CD} \label{Diag1}
0 @>>> \mathscr{F}_K@>>>\mathscr{A}_K@>>>\mathscr{A}'_K@>>>0\\
&&@VVV@VVV@VV{=}V\\
0@>>>T@>>>B@>>>\mathscr{A}'_K@>>>0.
\end{CD}\end{align}
Because $H^1(K,T)=0=H ^ 2(K,T)$ \cite[Lemma 4.3]{Chai}, the map $H ^ 1(K,B)\to H ^ 1(K, \mathscr{A}'_K)$ is an isomorphism. 
Moreover, comparing the fppf-cohomology sequences induced by $ 0\to \mathscr{F}\to \mathscr{A}\to \mathscr{A}' \to 0$ and $ 0\to \mathscr{F}_K\to \mathscr{A}_K\to \mathscr{A}'_K \to 0$ yields an exact sequence
$$0\to H^1(\Og_K, \mathscr{F})\to H ^ 1(K, \mathscr{F}_K)\to H ^ 1(K, \mathscr{A}_K) \to H ^ 1(K, \mathscr{A}_K').$$
In particular, we find
\begin{align}\ker (H ^ 1(K,\mathscr{A}_K)\to H ^1(K,B)) \nonumber &= \ker (H ^ 1(K,\mathscr{A}_K)\to H ^1(K,\mathscr{A}_K'))\\ \label{sequence2}
&=H ^ 1(K, \mathscr{F}_K)/H ^ 1(\Og_K, \mathscr{F}).
\end{align}
Now observe that, since all group schemes appearing in our construction are commutative, $B$ also fits into the push-out diagram
\begin{align}\begin{CD} \label{Diag3}
0 @>>> \mathscr{F}_K@>>>T @>>>T'@>>>0\\
&&@VVV@VVV@VV{=}V\\
0@>>>\mathscr{A}_K@>>>B@>>>T'@>>>0.
\end{CD}\end{align}
Now we have
\begin{proposition}
Let $\mathscr{G}^\bullet$ be the complex of Néron lft-models induced by the bottom row of diagram (\ref{Diag3}). Then $\gamma(\mathscr{G}^\bullet)\not=0.$ 
\end{proposition}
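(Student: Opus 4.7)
The plan is to apply Theorem \ref{generalthm} to the bottom row of diagram (\ref{Diag3}), namely $0 \to \mathscr{A}_K \to B \to T' \to 0$, in which the role of ``$T$'' is played by the abelian variety $\mathscr{A}_K$ and the role of ``$A$'' by the torus $T'$. That theorem tells us that $\gamma(\mathscr{G}^\bullet) = 0$ if and only if the group
$$\Delta := \ker\bigl(H^1(K, \mathscr{A}_K) \to H^1(K, B)\bigr)$$
is finite, so the whole task reduces to proving that $\Delta$ is infinite.

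For this, I would simply invoke the identification already carried out in equation (\ref{sequence2}): by comparing the fppf cohomology sequences of $0 \to \mathscr{F} \to \mathscr{A} \to \mathscr{A}' \to 0$ over $\Og_K$ and of its generic fibre, and using $H^i(K, T) = 0$ for $i = 1, 2$, one obtains
$$\Delta = H^1(K, \mathscr{F}_K)/H^1(\Og_K, \mathscr{F}).$$
Note that $B$ is defined by the same push-out whichever of the two diagrams (\ref{Diag1}) or (\ref{Diag3}) is used, so the map $H^1(K, \mathscr{A}_K) \to H^1(K, B)$ appearing in the definition of $\Delta$ for the sequence (\ref{Diag3}) coincides with the one analysed for (\ref{Diag1}), and the identification really does apply.

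The final step is to invoke Proposition \ref{MultCriterion}: since $\mathscr{F}_K$ is multiplicative but $\mathscr{F}$ itself was deliberately chosen not to be multiplicative over $\Og_K$, the quotient $H^1(K, \mathscr{F}_K)/H^1(\Og_K, \mathscr{F})$ is forced to be infinite, whence $\Delta$ is infinite and therefore $\gamma(\mathscr{G}^\bullet) \neq 0$. In this argument the main (and essentially the only non-formal) ingredient is Proposition \ref{MultCriterion}, which rests on the local duality machinery developed in Section \ref{Indrat-Proet-Section}; everything else is a direct combination of already-established identifications with Theorem \ref{generalthm}. The only ``hard'' conceptual issue is confirming that the failure of $\mathscr{F}$ to be multiplicative actually produces infinitely many non-trivial $H^1$-classes not coming from $\Og_K$, and this is precisely the content of Proposition \ref{MultCriterion}.
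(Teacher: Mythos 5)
Your proof is correct and is essentially the same as the paper's. The paper's own proof does not explicitly cite Theorem \ref{generalthm} as a black box but instead re-runs its argument in place (computing $\dim_k D_1 = \dim_k D_2 = 0$ and $\dim_k D_3 > 0$, then invoking Theorem \ref{Liepointsthm}); you collapse that to a direct appeal to the equivalence in Theorem \ref{generalthm}, which is a clean and legitimate shortcut, while the remaining ingredients --- equation (\ref{sequence2}) and Proposition \ref{MultCriterion} --- are used identically.
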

\begin{proof}
Using the bottom sequence of diagram (\ref{Diag3}) as well as equation (\ref{sequence2}), we find that
\begin{align*}
\mathrm{coker}(B(K)\to T'(K))&=\ker (H ^ 1(K,\mathscr{A}_K)\to H ^1(K,B))\\
&=H ^ 1(K, \mathscr{F}_K)/H ^ 1(\Og_K, \mathscr{F}),
\end{align*}
which is infinite by Proposition \ref{MultCriterion} and our choice of $\mathscr{F}.$ Using the same notation and reasoning as in the proof of Theorem \ref{generalthm}, we find that $\dim_k D_1=\dim_k D_2 = 0$ but $\dim_k D_3>0.$ In particular, if $\mathscr{G}^\bullet$ denotes the complex of Néron lft-models induced by the bottom row of diagram (\ref{Diag3}), then $\gamma(\mathscr{G}^\bullet)\not=0$ by Theorem \ref{Liepointsthm}.
\end{proof}

Finally, observe that the properties of $\mathscr{F},$ $\mathscr{A},$ $T,$ and the morphisms between those objects remain true if we replace $K$ by a finite extension. In particular, we may assume without loss of generality that $T$ is a \it split \rm torus. In this case, the bottom row of diagram (\ref{Diag1}) shows that $T,$ $B,$ and $\mathscr{A}'_K$ all have semiabelian reduction.

The following result, which we record as it might be of general interest, also shows that the step of taking a finite extension in the construction above cannot be removed:
\begin{theorem}
Assume that $\Og_K$ is of mixed characteristic and that $\langle p \rangle = \mathfrak{m}_K^e$ for some $e<p-1.$ Let $0\to T\to B\to A\to 0$ be an exact sequence of semiabelian varieties over $K$ with semiabelian reduction over $\Og_K.$ Then the induced complex
$$\mathscr{G}^ \bullet: \, 0\to \mathscr{T}\to\mathscr{B}\to \mathscr{A}\to 0$$ is exact at $\mathscr{T}$ and $\mathscr{B},$ and the map $\mathscr{B}/\mathscr{T}\to\mathscr{A}$ is an open immersion. In particular, $\gamma(\mathscr{G}^ \bullet)=0.$
\end{theorem}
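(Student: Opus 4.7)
The strategy is to invoke Raynaud's uniqueness theorem for prolongations of finite flat commutative $p$-power group schemes in the low ramification range $e < p - 1$, propagating exactness from the generic fibre to the Néron lft-models. Since $T$, $B$, and $A$ all have semiabelian reduction, the identity components $\mathscr{T}^0$, $\mathscr{B}^0$, $\mathscr{A}^0$ are semiabelian schemes, so their $n$-torsion schemes are finite flat over $\Og_K$ for every $n \ge 1$. These provide prolongations of the exact sequence $0 \to T[p^n] \to B[p^n] \to A[p^n] \to 0$; under $e < p - 1$, Raynaud's theorem guarantees both uniqueness of such prolongations and preservation of short exactness, so
\[
    0
    \to
        \mathscr{T}^0[p^n]
    \to
        \mathscr{B}^0[p^n]
    \to
        \mathscr{A}^0[p^n]
    \to
        0
\]
is short exact over $\Og_K$. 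For primes $\ell \ne p$ the analogous exactness is automatic because the relevant torsion is \'etale.

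From this, scheme-theoretic exactness at the identity components follows. The kernel $\mathscr{K} := \ker(\mathscr{T}^0 \to \mathscr{B}^0)$ is a closed subgroup scheme with trivial generic fibre, and the display above forces $\mathscr{K}[n] = 0$ for every $n \ge 1$. The special fibre $\mathscr{K}_k$ is then a closed subgroup of the semiabelian variety $\mathscr{T}^0_k$ containing no nontrivial $n$-torsion for any $n$; since a nonzero closed subgroup of a semiabelian variety over an algebraically closed field always carries nontrivial torsion at every prime, this forces $\mathscr{K}_k = 0$, and a Nakayama-type argument on the ideal sheaf of $\mathscr{K}$ in $\mathscr{T}^0$ then yields $\mathscr{K} = 0$. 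Hence $\mathscr{T}^0 \hookrightarrow \mathscr{B}^0$ is a closed immersion. A fibre-dimension count together with the resulting surjectivity of $\mathscr{B}^0 \twoheadrightarrow \mathscr{A}^0$ shows that the fppf quotient $\mathscr{B}^0/\mathscr{T}^0$ is representable by a semiabelian scheme, which is identified with $\mathscr{A}^0$ by the universal property of Néron models applied to the isomorphism of generic fibres.

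The passage to the full Néron lft-models is then formal. The kernel of $\mathscr{T} \to \mathscr{B}$ has trivial identity component and trivial generic fibre, hence is a trivial \'etale $\Og_K$-group scheme; similarly the kernel $\mathscr{K}'$ of $\mathscr{B} \to \mathscr{A}$ agrees with $\mathscr{T}$ on both identity components and generic fibres, so $\mathscr{K}'/\mathscr{T}$ vanishes and $\mathscr{K}' = \mathscr{T}$. The monomorphism $\mathscr{B}/\mathscr{T} \hookrightarrow \mathscr{A}$ has image equal to a union of connected components of $\mathscr{A}$, hence is an open immersion. Since open immersions are \'etale, the Lie algebra complex $\Lie \mathscr{G}^\bullet$ is exact, and Lemma \ref{chigammalem} yields $\gamma(\mathscr{G}^\bullet) = 0$. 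The main obstacle is the appeal to Raynaud's theorem: its failure once $e \ge p - 1$ is precisely what powers the counterexamples constructed earlier in this subsection, where a finite flat prolongation of a multiplicative group scheme need no longer be multiplicative globally. A subordinate technical point is the representability and semiabelianness of $\mathscr{B}^0/\mathscr{T}^0$, which rests on the smoothness of $\mathscr{B}^0 \to \mathscr{A}^0$ coming from exactness of the $p$-divisible group sequence together with the smoothness of $\mathscr{T}^0$.
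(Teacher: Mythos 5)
The premise ``their $n$-torsion schemes are finite flat over $\Og_K$ for every $n \ge 1$'' is false, and it is on this premise that your whole application of Raynaud's theorem rests. For a semiabelian scheme $\mathscr{G}^0$ over $\Og_K$, the group scheme $\mathscr{G}^0[p^n]$ is quasi-finite flat, but it is finite only when the toric rank is constant across the fibres of $\mathscr{G}^0$. Semiabelian reduction does not ensure this. For instance, if $A$ is an elliptic curve with split multiplicative reduction, the generic fibre of $\mathscr{A}^0[p^n]$ is $A[p^n]$ of order $p^{2n}$, while the special fibre is $\boldsymbol{\mu}_{p^n}\subset\Gm$ of order $p^n$, so $\mathscr{A}^0[p^n]$ is not finite. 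Raynaud's theorem concerns finite flat group schemes, so one cannot simply feed the full torsion into it.

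The correct handling requires separating $\mathscr{G}^0[p^n]$ into its finite part $\mathscr{G}^0[p^n]^{\mathrm{f}}$ (to which Raynaud applies) and the remaining ``off'' part, and controlling the latter via Grothendieck's orthogonality theorem. This is precisely the point the paper's proof pivots on: it simply reduces the statement to \cite[Chapter 7.5, Theorem 4(ii)]{BLR} (which treats abelian varieties using exactly that finite/off decomposition plus Raynaud), and then verifies the one extra hypothesis their argument needs in the semiabelian setting, namely that the quotients $\mathscr{T}^0[p^n]/\mathscr{T}^0[p^n]^{\mathrm{f}}$ are generically constant. The paper does this by writing $\mathscr{T}^0$ as an extension of $\mathscr{E}^0$ by $\Gm^d$ over $\Og_K$ and applying the snake lemma to see the off parts of $\mathscr{T}^0[p^n]$ and $\mathscr{E}^0[p^n]$ agree. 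Your high-level strategy---Raynaud on torsion, then a torsion-free-kernel argument, then pass to component groups---is the same as in BLR and is sound once the quasi-finite issue is addressed, but as written it fails at the first step. (A subordinate point: your justification that $\mathscr{B}^0 \twoheadrightarrow \mathscr{A}^0$ and that the quotient is a semiabelian scheme is not really ``resulting'' from what precedes; once you pass to finite parts and off parts, you should make sure the surjectivity and representability steps still go through or, better, cite BLR as the paper does.)
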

\begin{proof}
This result is proven in \cite[Chapter 7.5, Theorem 4(ii)]{BLR} in the case where $T,$ $B,$ and $A$ are all Abelian varieties. The same proof can be taken \it mutatis mutandis \rm as long as we can show that, for all $n\geq 0,$ the quasi-finite flat group schemes $\mathscr{T}^0[p^n]/\mathscr{T}^0[p^n]^ \mathrm{f}$ are generically constant, where $(-)^\mathrm{f}$ denotes the finite part of a quasi-finite scheme over a Henselian local base \cite[Tag 04GG (13)]{Stacks}. Note that, since $T$ has semiabelian reduction, there exist $d\in \N,$ an Abelian variety $E$ over $K$ with semiabelian reduction, and a exact sequence
$$0\to \mathbf{G}^d_{\mathrm{m}}\to T\to E\to 0.$$
Let $\mathscr{E}$ be the Néron model of $E.$ Because the component group of the Néron lft-model of $\mathbf{G}^d_{\mathrm{m}}$ is torsion-free, we obtain an exact sequence
$0\to \mathbf{G}^d_{\mathrm{m}}\to \mathscr{T} ^ 0\to \mathscr{E}^ 0 \to 0$ over $\Og_K,$ which induces exact sequences
$$0\to \boldsymbol{\mu}_{p^n}^d \to \mathscr{T} ^ 0[p^n]\to \mathscr{E}^ 0[p^n] \to 0.$$ Taking the pullback of this sequence along the map $\mathscr{E}^0[p^n]^{\mathrm{f}} \to \mathscr{E}^0[p^n]$ shows that there are exact sequences
$$0\to \boldsymbol{\mu}_{p^n}^d \to \mathscr{T} ^ 0[p^n]^{\mathrm{f}}\to \mathscr{E}^ 0[p^n] ^{\mathrm{f}} \to 0$$
for all $n.$ Now the snake lemma shows that the map
$$\mathscr{T} ^ 0[p^n]/\mathscr{T} ^ 0[p^n]^{\mathrm{f}}\to \mathscr{E}^ 0[p^n]/\mathscr{E}^ 0[p^n] ^{\mathrm{f}}$$ is an isomorphism, so the claim follows from Grothendieck's orthogonality theorem as in \it loc. cit.\rm
\end{proof} 

Finally, we shall construct counterexamples to the most natural generalisation of Chai's conjecture, which has been proposed (as a question) in \cite[p. 733]{Chai}, as well as \cite[Question 2.4.1]{CLN}: Given an appropriate choice of $\Og_K$, we shall show that there are exact sequences 
$$0 \to T\to B\to A\to 0$$ of semiabelian varieties over $K$ such that $$c(B)\not=c(T)+c(A).$$ In one of our counterexamples, the base ring will even satisfy both hypotheses the disjunction of which Chai calls the \it awkward assumption \rm \cite[p. 733]{Chai}.

\subsection{Non-additivity in general: Construction I} \label{consIsubs}
Let $\Og_K$ be a complete discrete valuation ring with algebraically closed residue field $k.$ Suppose we can find the following objects:
\begin{itemize}
    \item An Abelian variety $A$ over $K$ with good reduction over $\Og_K,$ and
    \item a finite Galois extension $L$ of $K$ such that the kernel of the map $H ^ 1 (K,A) \to H ^ 1 (L, A_L)$ is infinite. 
\end{itemize}
Starting from these objects, we construct the exact sequence
\begin{align}
0\to A \to \Res_{L/K}A_L \to C \to 0, \label{Rseq}
\end{align}
where $C$ is the cokernel of the closed immersion $A \to \Res_{L/K}A_L.$ 
\begin{proposition} \label{ConIProp}
Under the hypotheses above, we have $c(A)=0$ and $$c(C) < c(\Res_{L/K}A_L).$$
\end{proposition}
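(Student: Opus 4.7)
The plan is to settle $c(A) = 0$ immediately from the good reduction hypothesis, and then to prove the strict inequality $c(C) < c(\Res_{L/K} A_L)$ by combining Theorem \ref{generalthm} with Proposition \ref{Chigammaprop} applied to the complex $\mathscr{G}^\bullet\colon 0 \to \mathscr{A}\to\mathscr{R}\to\mathscr{C}\to 0$ of Néron lft-models induced by the sequence (\ref{Rseq}). The vanishing $c(A)=0$ is immediate: the Néron model of $A$ is an Abelian scheme over $\Og_K$ and therefore commutes with arbitrary base change.

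Next I would verify that the Lie-algebra Euler characteristic over $L$ vanishes. The key observation is that $(\Res_{L/K} A_L)\otimes_K L \cong \prod_{\sigma\in\Gal(L/K)} A_L$, so all three semiabelian varieties $A_L$, $(\Res_{L/K} A_L)_L$ and $C_L$ are Abelian varieties with good reduction over $\Og_L$; since the quotient of an Abelian scheme by a closed Abelian subscheme is again an Abelian scheme, the induced sequence $0\to \mathscr{A}_L \to \mathscr{R}_L \to \mathscr{C}_L\to 0$ of Néron models over $\Og_L$ is short exact. This forces $\chi_{\mathrm{Lie}}(\mathscr{G}^\bullet_L) = 0$ and hence $\gamma(\mathscr{G}^\bullet_L) = 0$. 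By Proposition \ref{Chigammaprop} the desired inequality then reduces to the single claim $-\gamma(\mathscr{G}^\bullet) > 0$.

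To establish this, I would apply Theorem \ref{generalthm} to the sequence (\ref{Rseq}). The abelian group $\Delta$ appearing there is computed via Shapiro's lemma for Weil restriction: the isomorphism $H^1(K,\Res_{L/K} A_L) \cong H^1(L, A_L)$ transports the connecting map $H^1(K,A)\to H^1(K,\Res_{L/K} A_L)$ into the restriction map $H^1(K,A)\to H^1(L,A_L)$, so $\Delta$ is precisely the kernel of the latter and is infinite by hypothesis. Inspecting the proof of Theorem \ref{generalthm} one reads off $-\gamma(\mathscr{G}^\bullet) = \dim_k D_3$, where $D_3$ is the third group scheme from Proposition \ref{Diconstructprop} attached to $\mathscr{G}^{0,\bullet}$, and $\Delta$ being infinite forces $\dim_k D_3 > 0$. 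Combining these observations with Proposition \ref{Chigammaprop} yields
\[
    c(\Res_{L/K} A_L) - c(C) = -\gamma(\mathscr{G}^\bullet) > 0,
\]
as required. I expect the only delicate points to be the identification of $\Delta$ via Shapiro's lemma for Weil restriction and the careful sign bookkeeping in Proposition \ref{Chigammaprop}; everything else is a direct application of the machinery already assembled in Section 2.
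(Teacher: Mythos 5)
Your proof is correct and follows the paper's first proof essentially verbatim: both establish $\gamma(\mathscr{G}_L^\bullet)=0$, then invoke Theorem \ref{Liepointsthm} and Proposition \ref{Chigammaprop} to identify $c(\Res_{L/K}A_L) - c(C)$ with $\dim_k D_3$, and finally use the infinitude of $\ker(H^1(K,A)\to H^1(L,A_L))$ to conclude $\dim_k D_3 > 0$. The only minor variation is that the paper justifies $\gamma(\mathscr{G}_L^\bullet)=0$ by noting that the sequence (\ref{Rseq}) splits over $L$ (the projection onto one factor of $\prod_\sigma A_L$ gives a retraction), which is slightly slicker than your argument via good reduction of all three terms over $\Og_L$ and the Abelian-scheme quotient; both are valid.
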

\begin{proof}
Let $\mathscr{A}$ be the Néron model of $A$ over $\Og_K$ and let $\Og_L$ be the integral closure of $\Og_K$ in $L.$ Because $A$ has good reduction, we know that $\mathscr{A}_{\Og_L}:=\mathscr{A}\times_{\Og_K}\Og_L$ is the Néron model of its generic fibre; in particular $\Res_{\Og_L/\Og_K} \mathscr{A}_{\Og_L}$ is the Néron model of $\Res_{L/K}A_L.$ Let $\mathscr {C}$ be the Néron model of $C$ over $\Og_K$ and let
$$\mathscr{G}^\bullet: \, 0\to \mathscr{A} \to \Res_{\Og_L/\Og_K} \mathscr{A}_{\Og_L} \to  \mathscr{C} \to 0 $$
be the induced complex of Néron models. Note that $c(A)=0$ because $A$ has good reduction. We shall give two proofs.

\it First proof: \rm Note that the sequence (\ref{Rseq}) becomes split over $L,$ so the induced sequence $\mathscr{G}_L^\bullet$ of Néron models over $\Og_L$ is exact. In particular, $\gamma(\mathscr{G}_L^\bullet)=0.$ Let $D_3$ be the group scheme constructed in Proposition \ref{Diconstructprop} such that 
$$H^3(\mathscr{G}^\bullet(\Og_K)) = D_3(k).$$ Using  Proposition \ref{Chigammaprop}, Theorem \ref{Liepointsthm}, and the fact that $H^ i(\mathscr{G}^\bullet(\Og_K))=0$ for $i \not=3,$ we see that
$$c(\Res_{L/K} A) - c(C) = -\gamma(\mathscr{G^\bullet})= -\chi_{\mathrm{points}}(\mathscr{G}^\bullet) = \dim_k D_3.$$ Because $D_3$ is of finite type over $k$ but $D_3(k)$ is infinite by assumption, we must have $\dim_k D_3>0.$ This implies the claim.

\it Second proof: \rm Note that the map 
$$\mathscr{A} \to \Res_{\Og_L/\Og_K} \mathscr{A}_{\Og_L}$$
is a closed immersion. The cokernel $\mathscr{Q}$ of this morphism (taken as an fppf-sheaf) is representable by a smooth group scheme of finite type over $\Og_K$ by \cite[Théorème 4.C]{An}. In particular, we obtain a canonical map $\mathscr{Q}\to\mathscr{C},$ which is generically an isomorphism. 
Since $\Og_K$ is strictly Henselian and the map $\Res_{\Og_L/\Og_K} \mathscr{A}_{\Og_L} \to \mathscr{Q}$ is smooth, the morphism 
$$\Res_{\Og_L/\Og_K} \mathscr{A}_{\Og_L}(\Og_K) \to \mathscr{Q}(\Og_K)$$
is surjective.  
By \cite[Corollary 2.3]{LLR}, the map $\mathscr{Q}\to\mathscr{C}$ is the composition of finitely many dilatations in smooth algebraic subgroups of the special fibre. Observe that at least one of the centres of these dilatations must have positive codimension. Indeed, the map $\mathscr{Q}\to\mathscr{C}$ would otherwise be an open immersion, so that the cokernel of $\Res_{L/K}A_L(K) \to C(K)$ would be finite, contradicting one of our assumptions. Using \cite[Proposition 2.2 (b)]{LLR}, we deduce that the $\Og_K$-length $\ell$ of the cokernel of the map
$$\Lie \mathscr{Q}\to\Lie \mathscr{C}$$ is positive. Once again, we denote by $\mathscr{G}_L^\bullet$ the sequence of Néron models induced by the base change of sequence (\ref{Rseq}) to $L,$ which is exact as we have already seen in the first proof. Let $q$ denote the $\Og_L$-length of the cokernel of the map
$$(\Lie \mathscr{Q})\otimes_{\Og_K}\Og_L \to \Lie \mathscr{G}_L ^ 3.$$ The snake lemma shows that $$c(\Res_{L/K} A_L) = \frac{q}{[L:K]} = c(C) + \ell,$$ which also implies the claim.
\end{proof}

Now we will construct a pair $(A, L)$
satisfying the conditions stated at the beginning of the section
in the equal characteristic case.

Assume that $K$ is the completed maximal unramified extension
of a complete discrete valuation field $K_{1}$ of equal characteristic
with finite residue field $k_{1}$.
Let $A$ be an ordinary elliptic curve over $K_{1}$
with good supersingular reduction.
Denote its base change to $K$ by the same symbol $A$.
Let $L_{1} / K_{1}$ be a totally ramified Galois extension of degree $p$.
Let $L = L_{1} K = L_1 \otimes_{K_1} K.$

\begin{proposition}
    If the valuation of the discriminant of $L / K$ is large enough
    (for a fixed $A$),
    then the kernel of the map
    $H^{1}(K, A) \to H^{1}(L, A_{L})$
    is infinite.
\end{proposition}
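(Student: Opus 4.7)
The plan is to combine inflation--restriction with the estimates of \cite{TTT}. Since $L / K$ is Galois cyclic of order $p$, inflation--restriction gives an isomorphism
\[
        \ker \bigl( H^{1}(K, A) \to H^{1}(L, A_{L}) \bigr)
    \cong
        H^{1}(G, A(L)),
\]
where $G = \Gal(L / K) \cong \Z / p$. Hence it suffices to show that this Tate cohomology group is infinite once the valuation of the discriminant of $L / K$ is large enough.

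Let $\mathscr{A}$ denote the N\'eron model of $A$ over $\Og_{K}$, which is an abelian scheme since $A$ has good reduction, and write $\widehat{\mathscr{A}}$ for its formal group along the identity section. Because $k$ is algebraically closed, the reduction map yields a $G$-equivariant short exact sequence
\[
        0
    \to
        \widehat{\mathscr{A}}(\mathfrak{m}_{L})
    \to
        A(L)
    \to
        \mathscr{A}_{k}(k)
    \to
        0,
\]
with trivial $G$-action on the quotient. Since $|G| = p$, each $H^{i}(G, -)$ is $p$-torsion; the supersingular reduction forces $\mathscr{A}_{k}(k)[p] = 0$, while the divisibility of $\mathscr{A}_{k}(k)$ then implies both $H^{1}(G, \mathscr{A}_{k}(k)) = 0$ and the vanishing of the connecting map $\mathscr{A}_{k}(k) \to H^{1}(G, \widehat{\mathscr{A}}(\mathfrak{m}_{L}))$. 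The long exact sequence therefore collapses to
\[
        H^{1}(G, A(L))
    \cong
        H^{1}(G, \widehat{\mathscr{A}}(\mathfrak{m}_{L})).
\]

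At this point I invoke \cite{TTT}, whose results concern exactly the present regime: $A$ is generically ordinary (so $\widehat{\mathscr{A}}$ is of height $1$ over $K$) but supersingular on the special fibre (so height $2$), and $L / K$ is wildly totally ramified of degree $p$. Via the formal logarithm, $\widehat{\mathscr{A}}(\mathfrak{m}_{L})$ is related to a $G$-module constructed from $\mathfrak{m}_{L}$ itself (up to Frobenius twists and ordinary-isogeny factors), for which high wild ramification produces large Herbrand--type cohomology; the resulting lower bounds for $|H^{1}(G, \widehat{\mathscr{A}}(\mathfrak{m}_{L}))|$ grow without bound as the valuation of the discriminant of $L / K$ increases.

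The main technical obstacle is to upgrade these unbounded lower bounds---typically stated layer by layer in the $[p^{n}]$-filtration of $\widehat{\mathscr{A}}$---into genuine infinitude for a single sufficiently ramified $L$. This should follow by verifying that the TTT estimates stack consistently across torsion levels: once the valuation of the discriminant exceeds a threshold depending only on $A$, each finite layer contributes new $\mathbb{F}_{p}$-independent classes, producing an $\mathbb{F}_{p}$-vector space of infinite dimension inside $H^{1}(G, \widehat{\mathscr{A}}(\mathfrak{m}_{L}))$ and hence inside the kernel.
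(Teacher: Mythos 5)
Your initial reduction step — the isomorphism $\ker(H^1(K,A) \to H^1(L,A_L)) \cong H^1(G, A(L))$ and the further identification with $H^1(G, \widehat{\mathscr{A}}(\mathfrak{m}_L))$ via the good-reduction, supersingular-special-fibre exact sequence — is correct as stated. The real problem is the invocation of \cite{TTT}: their Theorem~A concerns $H^1(G, A(L_n))$ where $L_n/K_n$ are finite-residue-field layers of a tower inside $L/K$, and gives lower bounds $\log_p |H^1(G, A(L_n))| \ge Cn$ that grow with $n$. It does not directly give a statement about $H^1(G, \widehat{\mathscr{A}}(\mathfrak{m}_L))$ over the algebraically closed residue field $k$. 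You acknowledge the gap in your final paragraph ("The main technical obstacle is to upgrade...") but the proposed fix — stacking estimates "across torsion levels" of the $[p^n]$-filtration — is aimed at the wrong place: the subtlety is the passage from the finite-residue-field layers $L_n$ to $L$ (which is not the direct limit of the $L_n$ as a complete field), not the $[p^n]$-torsion filtration of the formal group. As written the argument is incomplete.

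The paper resolves exactly this by avoiding the passage to $L$ altogether: it shows that for each $n$ the inflation map $H^1(K_n, A) \to H^1(K, A)$ is injective (the kernel is $H^1(\Gal(k/k_n), A(K))$, which vanishes by \cite[Chapter I, Proposition 3.8]{Milne} since $A$ has good reduction), so $\ker(H^1(K_n,A) \to H^1(L_n,A))$ embeds into $\ker(H^1(K,A) \to H^1(L,A))$ for every $n$. Since \cite{TTT} gives unbounded orders for the former as $n \to \infty$, the latter must be infinite. No limiting or continuity statement over the algebraically closed residue field is needed, and the reduction to the formal group is bypassed entirely, since \cite{TTT} bounds $H^1(G, A(L_n))$ directly. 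You should replace the last two paragraphs of your argument with this Milne-injectivity device.
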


\begin{proof}
    For any integer $n \ge 1$,
    let $k_{n}$ be the degree $n$ subextension of $k / k_{1}$.
    Let $K_{n}$ and $L_{n}$ be the corresponding unramified extensions
    of $K_{1}$ and $L_{1}$, respectively.
    Then the kernel of the map
    $H^{1}(K_{n}, A) \to H^{1}(K, A)$
    is $H^{1}(\Gal(k / k_{n}), A(K))$,
    which is zero by \cite[Chapter I, Proposition 3.8]{Milne}
    (see also the erratum on Milne's homepage
    about the proof of this proposition)
    since $A$ has good reduction.
    Hence it is enough to show that the kernel of the map
    $H^{1}(K_{n}, A) \to H^{1}(L_{n}, A_{L})$
    has unbounded order as $n \to \infty$
    if the discriminant of $L / K$ is large enough.
    This kernel is isomorphic to
    $H^{1}(G, A(L_{n}))$,
    where $G = \Gal(L_{1} / K_{1})$.
    Denote the logarithm function to base $p$ by $\log_{p}$.
    
    We use the following result of Tan, Trihan and Tsoi \cite{TTT}:
    They construct,
    in \cite[Theorem A]{TTT},
    an explicit constant $C \in \Z$
    depending only on $A$, $L_{1} / K_{1}$ and not on $n$
    such that
        \[
                \log_{p} |H^{1}(G, A(L_{n}))|
            \ge
                C n
        \]
    for all $n \ge 1$.
    This constant $C$ is positive
    if the discriminant of $L / K$ is large enough
    by \cite[Theorem A (i)]{TTT}
    (the number $\flat_{w}$ written before \cite[Theorem A]{TTT} is $1$
    if the valuation $(p - 1) f_{v}$ of the discriminant of $L / K$ is
    large enough, so that \cite[Theorem A (i)]{TTT} applies).

    This immediately implies the result.
\end{proof}

\subsection{Non-additivity in general: Construction II} \label{consIIsubs}
We shall now give a construction of a second family of similar examples, making use of arithmetic duality theory as set out in Section \ref{Indrat-Proet-Section}; see \cite{Suz} for more details. As before, $\Og_K$ denotes a complete discrete valuation ring with algebraically closed residue field $k.$
Suppose we can find an étale isogeny $E' \to E$ of Abelian varieties over $K$ such that $c(E) \not= c(E').$ Let $\mathscr{F}_K$ be the Cartier dual of the kernel of this isogeny, which is multiplicative by assumption. Moreover, let $\mathscr{E}'$ and $\mathscr{E}$ be the Néron models of $E'$ and $E,$ respectively, so that we have an exact sequence of group schemes
$$0\to \mathscr{K} \to \mathscr{E}' \to \mathscr{E}$$ over $\Og_K,$ where $\mathscr{K}$ is the obvious kernel. Let $\mathscr{H}^\bullet: 0\to \mathscr{K} \to \mathscr{E}' \to \mathscr{E}\to 0$ be the associated complex, and let $D_{K,\mathscr{H}}$ be the $k$-group scheme constructed in Proposition \ref{Diconstructprop} such that 
$$H ^ 3 (\mathscr{H} ^ \bullet(\Og_K))=D_{K,\mathscr{H}}(k).$$
Finally, we embed $\mathscr{F}_K$ into an algebraic torus $T$ over $K$ and let $A$ and $A'$ be the dual Abelian varieties of $E$ and $E'$ respectively. If $T'$ denotes the quotient $T/\mathscr{F}_K,$ we obtain a commutative diagram
\begin{align}\begin{CD}
0@>>> \mathscr{F}_K @>>> T @>>> T' @>>> 0\\
&&@VVV@VVV@VV{\mathrm{Id}}V\\
0@>>> A @>>>  B @>>> T' @>>> 0 \label{Diag5}
\end{CD}\end{align}
with exact rows. Let 
$$\mathscr{G}^\bullet : \, 0\to \mathscr{A} \to \mathscr{B} \to \mathscr{T}' \to 0$$ be the complex of Néron lft-models induced by the bottom row of Diagram (\ref{Diag5}). Let $D_{K,\mathscr{G}}$ be the $k$-group scheme constructed in Proposition \ref{Diconstructprop} such that 
$$H^3(\mathscr{G}^{0,\bullet})=D_{K,\mathscr{G}}(k).$$
Let $L$ be a finite separable extension of $K$ with ring of integers $\Og_L$ such that $A,$ $B,$ and $T'$ all acquire semiabelian reduction over $L.$ Let $D_{L, \mathscr{G}}$ and $D_{L,\mathscr{H}}$ be the algebraic $k$-groups constructed analogously from the base changes of the relevant exact sequences of algebraic $K$-groups to $L.$
Just for the next Lemma, we drop the assumption that $c(E') \not=c(E).$
\begin{lemma}
We have $c(E')=c(E)$ if and only if $\dim_k D_{L, \mathscr{H}}=[L:K]\dim_k D_{K, \mathscr{H}}.$ \label{dimlem}
\end{lemma}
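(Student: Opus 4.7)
The plan is to express each of $\dim_k D_{K,\mathscr{H}}$ and $\dim_k D_{L,\mathscr{H}}$ as the $\Og_K$-length (resp.\ $\Og_L$-length) of a single cokernel between Lie algebras of N\'eron models, and then to compare these lengths against those appearing in the definitions of $c(E)$ and $c(E')$ via one commutative square.

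First I would apply Theorem \ref{Liepointsthm} to the complex $\mathscr{H}^\bullet$. Since the generic fibre $\mathscr{K}_K$ is \'etale, the smoothening $\widetilde{\mathscr{K}}$ is a smooth $\Og_K$-group scheme with $0$-dimensional generic fibre, hence is itself \'etale, so $\Lie \widetilde{\mathscr{K}} = 0$. As $\mathscr{E}'$ and $\mathscr{E}$ are already smooth, the map $\Lie \mathscr{E}' \to \Lie \mathscr{E}$ is an injection of free $\Og_K$-modules of equal rank with torsion cokernel, whence
\[
    \chi_{\mathrm{Lie}}(\widetilde{\mathscr{H}}^\bullet)
    = -\ell_{\Og_K}\bigl(\mathrm{coker}(\Lie \mathscr{E}' \to \Lie \mathscr{E})\bigr).
\]
On the points side, $H^1(\mathscr{H}^\bullet(\Og_K)) = 0$ because $\mathscr{K} \to \mathscr{E}'$ is a closed immersion, while $H^2(\mathscr{H}^\bullet(\Og_K)) = \mathscr{K}_K(K)/\mathscr{K}(\Og_K)$ is a subquotient of the finite group $\mathscr{K}_K(K)$; hence $\dim_k D_1 = \dim_k D_2 = 0$ and $\chi_{\mathrm{points}}(\mathscr{H}^\bullet) = -\dim_k D_{K,\mathscr{H}}$. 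Theorem \ref{Liepointsthm} therefore yields
\[
    \dim_k D_{K,\mathscr{H}} = \ell_{\Og_K}\bigl(\mathrm{coker}(\Lie \mathscr{E}' \to \Lie \mathscr{E})\bigr),
\]
and (since \'etaleness of $\mathscr{K}_K$ is preserved under base change) the same argument over $\Og_L$ gives
\[
    \dim_k D_{L,\mathscr{H}} = \ell_{\Og_L}\bigl(\mathrm{coker}(\Lie \mathscr{E}'_L \to \Lie \mathscr{E}_L)\bigr).
\]

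Next I would consider the commutative square over $\Og_L$
\[
    \begin{CD}
        \Lie \mathscr{E}' \otimes_{\Og_K} \Og_L @>>> \Lie \mathscr{E} \otimes_{\Og_K} \Og_L \\
        @VVV @VVV \\
        \Lie \mathscr{E}'_L @>>> \Lie \mathscr{E}_L
    \end{CD}
\]
whose entries are free $\Og_L$-modules of equal rank and whose arrows are all injective with torsion cokernel. By the definition of the base change conductor, the vertical cokernels have $\Og_L$-lengths $[L:K]c(E')$ and $[L:K]c(E)$; by the first step together with the equality $e_{L/K} = [L:K]$ (valid because $k$ is algebraically closed, so that base change multiplies the length of a finite-length $\Og_K$-module by $[L:K]$), the horizontal cokernels have $\Og_L$-lengths $[L:K]\dim_k D_{K,\mathscr{H}}$ and $\dim_k D_{L,\mathscr{H}}$. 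Computing the $\Og_L$-length of the cokernel of the diagonal composition in the two ways afforded by additivity of length in nested submodules gives
\[
    [L:K]c(E') + \dim_k D_{L,\mathscr{H}}
    = [L:K]\dim_k D_{K,\mathscr{H}} + [L:K]c(E),
\]
so dividing by $[L:K]$ yields the claimed equivalence.

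The only point that requires care is the vanishing $\Lie \widetilde{\mathscr{K}} = 0$: the finite flat scheme $\mathscr{K}$ itself may well have a non-\'etale special fibre, but its smoothening is forced to be \'etale because the generic fibre is $0$-dimensional and \'etale, and a smooth scheme whose generic fibre is $0$-dimensional is \'etale. Everything else is routine length bookkeeping.
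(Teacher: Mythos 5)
Your proof is correct and uses exactly the same essential ingredients as the paper's (the paper's proof is the single sentence ``prove as in Proposition~\ref{Chigammaprop}, using $\Lie \widetilde{\mathscr{K}} = 0$''): you apply Theorem~\ref{Liepointsthm} to $\mathscr{H}^\bullet$ together with $D_1 = D_2 = 0$ and $\Lie \widetilde{\mathscr{K}} = 0$ to identify $\dim_k D_{K,\mathscr{H}}$ with $\ell_{\Og_K}(\mathrm{coker}(\Lie \mathscr{E}' \to \Lie \mathscr{E}))$, and similarly over $\Og_L$, and then compare against $c(E)$, $c(E')$. The only cosmetic difference is that where the paper routes the comparison through the determinant/$\gamma$ formalism of Proposition~\ref{Chigammaprop} (via Lemma~\ref{chigammalem}), you instead do the length bookkeeping directly in the commutative square of Lie algebra lattices and use additivity of length in nested free submodules; this is a clean reorganisation that sidesteps the fact that $\Lie\mathscr{K}$ itself need not be free, and it makes the justification $\Lie\widetilde{\mathscr{K}}=0$ (a smooth finite-type $\Og_K$-group with finite \'etale generic fibre is \'etale, since fibre dimension is locally constant for flat finite-type morphisms and no component can lie over the closed point by flatness) fully explicit where the paper leaves it to the reader.
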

\begin{proof}
The Lemma can be proven exactly as Proposition \ref{Chigammaprop}, using that $\Lie \widetilde{\mathscr{K}}=0.$
\end{proof}

Now we reimpose the condition that $c(E')\not=c(E).$ Our next goal will be to compare the dimensions of $D_{K,\mathscr{G}}$ and $D_{K,\mathscr{H}};$ in fact, we shall see that they are equal. This part of the argument will require results from arithmetic duality. We shall work with the site $\Spec k^{\mathrm{indrat}}_{\mathrm{pro\acute{e}t}}$ introduced in Section \ref{Indrat-Proet-Section} above.
\begin{proposition}
There is a canonical morphism
$$D_{K,\mathscr{G}}^{\mathrm{perf}}\to\ker(\mathbf{H}^ 1(K,A)\to \mathbf{H}^ 1(K,A'))$$
of sheaves on $\Spec k^{\mathrm{indrat}}_{\mathrm{pro\acute{e}t}}$ with finite (étale) kernel and cokernel. In particular, $\ker(\mathbf{H}^ 1(K,A)\to \mathbf{H}^ 1(K,A')) \in \mathrm{Alg}/k.$\label{Dkerprop}
\end{proposition}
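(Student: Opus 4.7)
The plan is to realise $D_{K,\mathscr{G}}^{\mathrm{perf}}$ as a sheaf-theoretic cokernel on the ind-rational pro-\'etale site and compare it with $\ker(\mathbf{H}^{1}(K,A) \to \mathbf{H}^{1}(K,A'))$ via the long exact sequences associated with the two exact sequences containing $B$, controlling the discrepancy by N\'eron component groups.

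First, applying Remark \ref{kersmooth} to the complex $\mathscr{G}^{0,\bullet}$ at degree $3$ yields $D_{K,\mathscr{G}} \cong \operatorname{coker}(\operatorname{Gr} \mathscr{B}^{0} \to \operatorname{Gr} \mathscr{T}'^{0})$ as a pro-algebraic group over $k$; no further smoothening is required since $\mathscr{B}^{0}$ and $\mathscr{T}'^{0}$ are already smooth. Combining this with \cite[Proposition~(3.4.3)]{Suz}, which identifies $\mathbf{\Gamma}(K, \cdot)$ with the perfection of the Greenberg transform of the N\'eron lft-model, one obtains
\[
    D_{K,\mathscr{G}}^{\mathrm{perf}}
    \cong
    \operatorname{coker}\bigl(\mathbf{\Gamma}(K,B)^{0} \to \mathbf{\Gamma}(K,T')^{0}\bigr)
\]
as sheaves on $\Spec k^{\mathrm{indrat}}_{\mathrm{pro\acute{e}t}}$; in particular this sheaf is connected.

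Next, the long exact cohomology sequence on the site associated with $0 \to A \to B \to T' \to 0$ gives
\[
    \operatorname{coker}\bigl(\mathbf{\Gamma}(K, B) \to \mathbf{\Gamma}(K, T')\bigr)
    \cong
    \ker\bigl(\mathbf{H}^{1}(K, A) \to \mathbf{H}^{1}(K, B)\bigr).
\]
For every algebraically closed field extension $k'/k$, the residue field of $\mathbf{K}(k')$ is again algebraically closed, so \cite[Lemma 4.3]{Chai} gives $\mathbf{H}^{1}(K, S) = 0$ for every torus $S$ over $K$. Applied to the torus part of $B$ in its semiabelian form $0 \to T \to B \to A' \to 0$, this yields $\mathbf{H}^{1}(K, B) \hookrightarrow \mathbf{H}^{1}(K, A')$, whence $\ker(\mathbf{H}^{1}(K, A) \to \mathbf{H}^{1}(K, B)) = \ker(\mathbf{H}^{1}(K, A) \to \mathbf{H}^{1}(K, A'))$. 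The inclusion of identity components, together with the formula of the first step, induces the desired canonical map.

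Finally, to bound the kernel and cokernel I use the exact triangle $C^{0, \bullet} \to C^{\bullet} \to \Phi^{\bullet}$ on the site, where $C^{\bullet}$ is the complex $0 \to \mathbf{\Gamma}(K, A) \to \mathbf{\Gamma}(K, B) \to \mathbf{\Gamma}(K, T') \to 0$, $C^{0, \bullet}$ its identity-component version, and $\Phi^{\bullet} = (0 \to \Phi_{A} \to \Phi_{B} \to \Phi_{T'} \to 0)$ consists of the corresponding N\'eron component-group sheaves. Its long exact cohomology sequence reads
\[
    0
    \to H^{2}(\Phi^{\bullet})
    \to D_{K, \mathscr{G}}^{\mathrm{perf}}
    \to \ker\bigl(\mathbf{H}^{1}(K, A) \to \mathbf{H}^{1}(K, A')\bigr)
    \to H^{3}(\Phi^{\bullet})
    \to 0.
\]
The main obstacle is to show that $H^{2}(\Phi^{\bullet}) = \ker(\Phi_{B} \to \Phi_{T'})/\operatorname{image}(\Phi_{A})$ and $H^{3}(\Phi^{\bullet}) = \operatorname{coker}(\Phi_{B} \to \Phi_{T'})$ are finite; they are then automatically \'etale, being constant subquotients over the algebraically closed $k$. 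Both $\Phi_{B}$ and $\Phi_{T'}$ are a priori only finitely generated, and the desired finiteness should follow from the fact that they have the same torsion-free rank---namely, that of the coinvariants of the cocharacter module of the common torus part $T$ of $B$ (which is isogenous to $T'$)---so that $\Phi_{B} \to \Phi_{T'}$ has image of maximal rank, combined with finiteness of $\Phi_{A}$. The concluding claim $\ker(\mathbf{H}^{1}(K, A) \to \mathbf{H}^{1}(K, A')) \in \mathrm{Alg}/k$ then follows immediately from the corresponding property of $D_{K, \mathscr{G}}^{\mathrm{perf}}$ (Proposition \ref{Diconstructprop}).
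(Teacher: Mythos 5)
Your proof follows the same overall strategy as the paper's: realize $D_{K,\mathscr{G}}^{\mathrm{perf}}$ as the sheafified cokernel of $\mathbf{\Gamma}(K,B)^{0}\to\mathbf{\Gamma}(K,T')^{0}$, identify the non-connected cokernel $\operatorname{coker}(\mathbf{\Gamma}(K,B)\to\mathbf{\Gamma}(K,T'))$ with $\ker(\mathbf{H}^{1}(K,A)\to\mathbf{H}^{1}(K,A'))$ via the long exact sequence and the vanishing $\mathbf{H}^{1}(K,T)=0$, and control the discrepancy by the component-group complex $\Phi^{\bullet}$. Steps one through three, as well as the exact sequence relating $D_{K,\mathscr{G}}^{\mathrm{perf}}$ and the kernel through $H^{2}(\Phi^{\bullet})$ and $H^{3}(\Phi^{\bullet})$, are correct and match the paper's argument (the paper phrases the last step as a snake lemma together with exactness of sheafification rather than a triangle, but the content is the same).

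The gap is in the final finiteness claim. You assert that $\ker(\Phi_{B}\to\Phi_{T'})$ and $\operatorname{coker}(\Phi_{B}\to\Phi_{T'})$ are finite because $\Phi_{B}$ and $\Phi_{T'}$ have the same torsion-free rank, ``so that $\Phi_{B}\to\Phi_{T'}$ has image of maximal rank.'' That implication does not hold: equality of ranks of source and target of a map of finitely generated abelian groups says nothing about the rank of the image (the zero map $\Z\to\Z$ has equal ranks on both sides). What is actually needed is that $\Phi_{B}\to\Phi_{T'}$ is rationally surjective and rationally injective, and this requires an argument beyond a rank count. Concretely, one uses that $B$ sits in $0\to T\to B\to A'\to 0$ with $\Phi_{A'}$ finite (it is the component group of an Abelian variety), so that by \cite[Proposition 5.1.1.3 (2)]{HN2} the induced map $\Phi_{T}\to\Phi_{B}$ has finite kernel and cokernel; and that the composite $\Phi_{T}\to\Phi_{B}\to\Phi_{T'}$ is the map on component groups induced by the isogeny $T\to T'$, which has finite kernel and cokernel because multiplication by the degree factors through it. Combining the two gives the required finiteness for $\Phi_{B}\to\Phi_{T'}$. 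This is precisely the ``simple diagram chase'' the paper performs with $\Phi_{A'}$; your argument needs that extra input to close.
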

\begin{proof}
Let $\mathscr{D}$ be the presheaf $k' \mapsto \mathrm{coker} (\mathscr{B}^ 0 (\mathbf{O}_K(k')) \to \mathscr{T}'^0(\mathbf{O}_K(k')))$ on $\Spec k^{\mathrm{indrat}}_{\mathrm{pro\acute{e}t}}.$ By Remark \ref{kersmooth}, we obtain a canonical morphism $\mathscr{D} \to D_{K,\mathscr{G}}^{\mathrm{perf}},$ which becomes an isomorphism after sheafification. Now let $\Phi_{A'},\Phi_B, \Phi_{T'}$ be the groups of connected components of $\mathscr{A}'_k, \mathscr{B}_k,$ and $\mathscr{T}'_k,$ respectively, where $\mathscr{A}'$ is the Néron model of $A'.$ Since $A'$ is an Abelian variety, $\Phi_{A'}$ is finite. A simple diagram chase using \cite[Proposition 5.1.1.3 (2)]{HN2} shows that so are the kernel $\Psi_1$ and the cokernel $\Psi_2$ of $\Phi_B\to \Phi_{T'}.$ Using the snake lemma and the exactness of sheafification, we obtain an exact sequence 
$$\Psi_1 \to D_{K,\mathscr{G}}^{\mathrm{perf}} \to \ker(\mathbf{H}^ 1(K,A)\to \mathbf{H}^ 1(K,B)) \to \Psi_2$$ of sheaves on $\Spec k^{\mathrm{indrat}}_{\mathrm{pro\acute{e}t}}.$ Since $B$ also fits into a diagram analogous to (\ref{Diag1}), we have $\ker(\mathbf{H}^ 1(K,A)\to \mathbf{H}^ 1(K,B))=\ker(\mathbf{H}^ 1(K,A)\to \mathbf{H}^ 1(K,A'))$ and the Proposition follows.
\end{proof}
\begin{lemma}
There exists a morphism
$$\mathbf{Ext}^1_{k^{\mathrm{indrat}}_{\mathrm{pro\acute{e}t}}}(D_{K,\mathscr{H}}^{\mathrm{perf}}, \Q/\Z) \to \ker(\mathbf{H}^ 1(K,A)\to \mathbf{H}^ 1(K,A'))$$
which has finite kernel and cokernel. \label{Lem47}
\end{lemma}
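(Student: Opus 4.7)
The plan is to identify $D_{K,\mathscr{H}}^{\mathrm{perf}}$ explicitly as a cokernel of Greenberg transforms of N\'eron models, and then to deduce the desired morphism via the duality isomorphism of Proposition~\ref{SuzDualityProp} together with its naturality. Since $\mathscr{E}'$ and $\mathscr{E}$ are already smooth (being N\'eron models of abelian varieties), no smoothening is required in the construction of Proposition~\ref{Diconstructprop}, so Remark~\ref{kersmooth} gives
\[
    D_{K,\mathscr{H}} \cong \operatorname{coker}\bigl(\operatorname{Gr}(\mathscr{E}') \to \operatorname{Gr}(\mathscr{E})\bigr).
\]
After perfection, invoking the identification of $\mathbf{\Gamma}(K, E)$ with the perfection of the Greenberg transform of the N\'eron model of $E$ (\cite[Proposition (3.4.3) (d)]{Suz}), this yields a short exact sequence
\[
    0 \to Q \to \mathbf{\Gamma}(K, E) \to D_{K,\mathscr{H}}^{\mathrm{perf}} \to 0
\]
in $\operatorname{Ab}(k^{\mathrm{indrat}}_{\mathrm{pro\acute{e}t}})$, where $Q$ denotes the image of $\mathbf{\Gamma}(K, E') \to \mathbf{\Gamma}(K, E)$.

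I would then apply the derived functor $\mathbf{Ext}^{\bullet}_{k^{\mathrm{indrat}}_{\mathrm{pro\acute{e}t}}}(-, \Q/\Z)$ to this sequence and use Proposition~\ref{SuzDualityProp} to identify $\mathbf{Ext}^{1}(\mathbf{\Gamma}(K, E), \Q/\Z) \cong \mathbf{H}^{1}(K, A)$, and similarly for $E'$, $A'$. The resulting long exact sequence reads
\[
    \mathbf{Hom}(Q, \Q/\Z) \to \mathbf{Ext}^{1}(D_{K,\mathscr{H}}^{\mathrm{perf}}, \Q/\Z) \xrightarrow{\alpha} \mathbf{H}^{1}(K, A) \xrightarrow{\beta} \mathbf{Ext}^{1}(Q, \Q/\Z).
\]
By naturality of the duality with respect to the factorisation $\mathbf{\Gamma}(K, E') \twoheadrightarrow Q \hookrightarrow \mathbf{\Gamma}(K, E)$ (the main subtlety of the argument), the canonical map $\mathbf{H}^{1}(K, A) \to \mathbf{H}^{1}(K, A')$ equals the composition of $\beta$ with the map $\mathbf{Ext}^{1}(Q, \Q/\Z) \to \mathbf{H}^{1}(K, A')$ induced by the epimorphism $\mathbf{\Gamma}(K, E') \twoheadrightarrow Q$. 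Hence the image of $\alpha$ lies in $\ker(\mathbf{H}^{1}(K, A) \to \mathbf{H}^{1}(K, A'))$, and restricting the codomain gives the required morphism.

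Finally, I would bound the kernel and cokernel. The kernel of $\alpha$ is the image of $\mathbf{Hom}(Q, \Q/\Z)$; since $\Q/\Z$ is \'etale, every morphism from $Q$ to it factors through $\pi_{0}(Q)$, which is a quotient of $\pi_{0}(\mathbf{\Gamma}(K, E')) = \Phi_{E'}$ and hence finite. The cokernel of $\alpha$ (with codomain restricted to the kernel above) injects via $\beta$ into $\ker\bigl(\mathbf{Ext}^{1}(Q, \Q/\Z) \to \mathbf{H}^{1}(K, A')\bigr)$; applying the long exact $\mathbf{Ext}$-sequence to $0 \to \mathbf{\Gamma}(K, N) \to \mathbf{\Gamma}(K, E') \to Q \to 0$ (where $N = \ker(E' \to E)$ is finite \'etale over $K$) identifies this kernel as a quotient of $\mathbf{Hom}(\mathbf{\Gamma}(K, N), \Q/\Z)$, which is finite because $\mathbf{\Gamma}(K, N) = N(K)$ is a finite \'etale $k$-group scheme. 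Thus both the kernel and cokernel of the sought morphism are finite.
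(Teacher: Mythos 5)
Your proposal is correct and follows essentially the same route as the paper. You identify $D_{K,\mathscr{H}}^{\mathrm{perf}}$ as the cokernel of $\mathbf{\Gamma}(K,E') \to \mathbf{\Gamma}(K,E)$, factor that map through its image (your $Q$ is the paper's $\Delta$), apply the long exact $\mathbf{Ext}^{\bullet}_{k^{\mathrm{indrat}}_{\mathrm{pro\acute{e}t}}}(-,\Q/\Z)$ sequence, and invoke Proposition~\ref{SuzDualityProp} together with its naturality, exactly as in the paper. Your discussion of the finiteness bounds is slightly more explicit than the paper's (which simply states that $\pi_0(\Delta)$ and $\boldsymbol{\Gamma}(K,\mathscr{K}_K)$ are finite, citing Proposition~\ref{SuzFinFlatProp} for the latter): you unpack the first finiteness by observing that $\pi_0(Q)$ is a quotient of the N\'eron component group of $E'$, and you spell out how $\ker(\mathbf{Ext}^1(Q,\Q/\Z)\to \mathbf{H}^1(K,A'))$ is controlled by the second long exact sequence. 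These are the same ideas, just with a bit more of the bookkeeping written out.
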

\begin{proof}
From the exact sequence $0\to \mathscr{K}_K \to E' \to E \to 0$ we obtain an exact sequence
\begin{align*}
0\to \boldsymbol{\Gamma}(K, \mathscr{K}_K)\to \boldsymbol{\Gamma}(K, E')\overset{\delta}{\to} \boldsymbol{\Gamma}(K, E) \to D_{K,\mathscr{H}}^{\mathrm{perf}} \to 0
\end{align*}
in $\mathrm{PAlg} / k.$ Denote by $\Delta$ the image of $\delta.$ Then we have exact sequences
\begin{align*}
\mathbf{Hom}_{k^{\mathrm{indrat}}_{\mathrm{pro\acute{e}t}}}(\pi_0(\Delta), \Q/\Z) &\to \mathbf{Ext}^1_{k^{\mathrm{indrat}}_{\mathrm{pro\acute{e}t}}}(D_{K,\mathscr{H}}^{\mathrm{perf}}, \Q/\Z)\\ &\to
\mathbf{Ext}^1_{k^{\mathrm{indrat}}_{\mathrm{pro\acute{e}t}}}(\boldsymbol{\Gamma}(K, E), \Q/\Z)
\to \mathbf{Ext}^1_{k^{\mathrm{indrat}}_{\mathrm{pro\acute{e}t}}}(\Delta, \Q/\Z) \to 0
\end{align*}
and 
\begin{align*}
\mathbf{Hom}_{k^{\mathrm{indrat}}_{\mathrm{pro\acute{e}t}}}(\boldsymbol{\Gamma}(K, \mathscr{K}_K), \Q/\Z) &\to \mathbf{Ext}^1_{k^{\mathrm{indrat}}_{\mathrm{pro\acute{e}t}}}(\Delta, \Q/\Z) \to \mathbf{Ext}^1_{k^{\mathrm{indrat}}_{\mathrm{pro\acute{e}t}}}(\boldsymbol{\Gamma}(K, E'), \Q/\Z).
\end{align*}
Here we use \cite[ Proposition 2.4.1(a)]{Suz}. Moreover, by Proposition \ref{SuzDualityProp}, we have a commutative diagram
$$\begin{CD}
\mathbf{Ext}^1_{k^{\mathrm{indrat}}_{\mathrm{pro\acute{e}t}}}(\boldsymbol{\Gamma}(K, E), \Q/\Z)@>>>\mathbf{Ext}^1_{k^{\mathrm{indrat}}_{\mathrm{pro\acute{e}t}}}(\boldsymbol{\Gamma}(K, E'), \Q/\Z)\\
@AAA@AAA\\
\mathbf{H}^ 1(K,A) @>>> \mathbf{H}^ 1(K,A')
\end{CD}$$
with vertical isomorphisms. Hence all we must show is that the canonical map from $\mathbf{Ext}^1_{k^{\mathrm{indrat}}_{\mathrm{pro\acute{e}t}}}(D_{K,\mathscr{H}}^{\mathrm{perf}}, \Q/\Z)$ into the kernel of the top row of the diagram has finite kernel and cokernel. However, this follows from the fact that $\pi_0(\Delta)$ and $\boldsymbol{\Gamma}(K, \mathscr{K}_K)$ are finite (see Proposition \ref{SuzFinFlatProp} for the second of those objects). 
\end{proof}
\begin{corollary}
We have $\dim_k D_{K,\mathscr{G}}=\dim_k D_{K,\mathscr{H}}.$ \label{dimcor}
\end{corollary}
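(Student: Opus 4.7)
The plan is to combine Proposition \ref{Dkerprop} and Lemma \ref{Lem47} through the common target $\ker(\mathbf{H}^{1}(K,A)\to\mathbf{H}^{1}(K,A'))$, and then invoke Breen--Serre duality to compare the remaining Ext-term with $D_{K,\mathscr{H}}$. Since a morphism in $\mathrm{Alg}/k$ with finite kernel and cokernel preserves dimensions, the two previous results together give
\[
    \dim_{k} D_{K,\mathscr{G}}
    \;=\;
    \dim_{k} \ker\bigl(\mathbf{H}^{1}(K,A)\to\mathbf{H}^{1}(K,A')\bigr)
    \;=\;
    \dim_{k} \mathbf{Ext}^{1}_{k^{\mathrm{indrat}}_{\mathrm{pro\acute{e}t}}}(D_{K,\mathscr{H}}^{\mathrm{perf}},\Q/\Z),
\]
so it suffices to show that the right-hand side equals $\dim_{k} D_{K,\mathscr{H}}$.

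For this second equality I would appeal to the Breen--Serre duality discussion from Section \ref{Indrat-Proet-Section}: if $G\in\mathrm{Alg}/k$ is such that $G^{0}$ is the perfection of a connected unipotent algebraic group, then $\mathbf{Ext}^{1}_{k^{\mathrm{indrat}}_{\mathrm{pro\acute{e}t}}}(G,\Q/\Z)$ is the Breen--Serre dual of $G^{0}$ and has the same dimension (by \cite[Proposition (2.4.1)(b)]{Suz} and \cite[Proposition 1.2.1]{Beg}). So I need to check that $(D_{K,\mathscr{H}}^{\mathrm{perf}})^{0}$ is unipotent up to perfection. Unwinding the construction of Proposition \ref{Diconstructprop} for the complex $\mathscr{H}^{\bullet}$, together with the long exact cohomology sequence attached to $0\to\mathscr{K}_{K}\to E'\to E\to 0$ and the N\'eron universal property, identifies $D_{K,\mathscr{H}}^{\mathrm{perf}}$ with the kernel of the morphism $\mathbf{H}^{1}(K,\mathscr{K}_{K})\to\mathbf{H}^{1}(K,E')$, so in particular realises it as a subsheaf of $\mathbf{H}^{1}(K,\mathscr{K}_{K})$.

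Since by construction $\mathscr{F}_{K}$ is multiplicative, its Cartier dual $\mathscr{K}_{K}$ is finite \'etale and extends uniquely to a finite \'etale group scheme $\mathscr{N}$ over $\Og_{K}$, for which Proposition \ref{EtCriterion} gives $\mathbf{H}^{1}(\Og_{K},\mathscr{N})=0$. Proposition \ref{LocCohStr} then exhibits $\mathbf{H}^{1}(K,\mathscr{K}_{K})^{0}$ as a filtered direct limit, with injective transition morphisms, of perfections of connected unipotent algebraic groups. The key step is to conclude from this that the finite-type connected subsheaf $(D_{K,\mathscr{H}}^{\mathrm{perf}})^{0}$ factors through a single stage of this colimit, whence it is itself the perfection of a connected unipotent algebraic group. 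This is the main obstacle I foresee: it requires a Noetherian argument showing that a quasi-compact connected sub-object of a filtered colimit in $\mathrm{IAlg}/k$ with injective transitions must be contained in one of the terms. Granting this, the Breen--Serre dimension formula yields $\dim_{k}\mathbf{Ext}^{1}_{k^{\mathrm{indrat}}_{\mathrm{pro\acute{e}t}}}(D_{K,\mathscr{H}}^{\mathrm{perf}},\Q/\Z)=\dim_{k} D_{K,\mathscr{H}}$, which combined with the display above gives the corollary.
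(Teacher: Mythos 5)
The student's proof takes a genuinely different route from the paper, and it contains a real gap. The paper's own proof passes to an uncountable algebraically closed residue field $k'$ via a base change $\Og_K \subseteq \mathbf{O}_K(k')$ of ramification index $1$, and then observes that $D_{K,\mathscr{H}}(k')$ embeds into the torsion group $H^1(\mathbf{K}(k'),\mathscr{K}_K)$ while $D_{K,\mathscr{G}}(k')$ maps to the torsion group $H^1(\mathbf{K}(k'),A)$ with finite kernel; over an uncountable algebraically closed field a smooth connected group with torsion rational points is necessarily unipotent, so both identity components are unipotent and the Breen--Serre duality and finite-kernel/cokernel arguments preserve dimension. You instead aim to establish unipotence of $(D_{K,\mathscr{H}}^{\mathrm{perf}})^{0}$ by structural analysis of $\mathbf{H}^{1}(K,\mathscr{K}_K)$, via Propositions~\ref{EtCriterion} and~\ref{LocCohStr}.

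The gap is the assertion that $\mathscr{K}_K$ ``extends uniquely to a finite \'etale group scheme $\mathscr{N}$ over $\Og_K$''. Since $\Og_K$ is strictly Henselian, a finite \'etale $K$-group scheme extends to a finite \'etale $\Og_K$-group scheme if and only if the inertia action on it is trivial, i.e.\ if it is a constant group scheme. But the hypothesis is only that $\mathscr{F}_K$ is multiplicative, equivalently that $\mathscr{K}_K$ is \'etale over $K$; this does not force constancy of $\mathscr{K}_K$. In the mixed-characteristic example \cite[(6.10.1)]{Chai} driving Construction II, the kernel of the \'etale isogeny is a Galois module that can be ramified, so no \'etale $\Og_K$-model exists, and the claimed vanishing $\mathbf{H}^1(\Og_K,\mathscr{N})=0$ from Proposition~\ref{EtCriterion} is unavailable. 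A repair is possible: take $\mathscr{N}$ to be the schematic closure of $\mathscr{K}_K$ in $\mathscr{E}'$, which is finite flat (but usually not \'etale), and note that $\mathbf{H}^1(\Og_K,\mathscr{N})$ is itself a pro-(connected unipotent) group by the BGA structure theory cited in the proof of Proposition~\ref{LocCohStr}; then $\mathbf{H}^1(K,\mathscr{K}_K)^{0}$ is an extension of an ind-unipotent group by a pro-unipotent one, so a connected algebraic subgroup of it is still unipotent. (The ``Noetherian'' point you flag — factoring a quasi-compact connected subobject through a stage of the filtered colimit with injective transitions — is not the real obstacle; that step works in the theory.) Compared with this repaired structural argument, the paper's uncountability trick is considerably cleaner and applies uniformly to $D_{K,\mathscr{G}}$ and $D_{K,\mathscr{H}}$ alike.
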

\begin{proof}
By construction, $D_{K,\mathscr{G}}$ and $D_{K,\mathscr{H}}$ are smooth algebraic groups over $k$ such that $D_{K,\mathscr{H}}(k)$ embeds into $H ^ 1 (K, \mathscr{K}_K)$ and such that there is a homomorphism $D_{K,\mathscr{G}}(k) \to H ^ 1(K,A)$ with finite kernel (the finiteness is a consequence of the snake lemma and the fact that $\mathscr{A}$ is of finite type over $\Og_K$). We may here assume that $k$ is uncountable by replacing $\Og_K$ with an extension of ramification index 1, such as $\Og_K \subseteq \mathbf{O}_K(k')$ with an uncountable algebraically closed extension $k'$ of $k.$ Since both (étale) cohomology groups just mentioned are torsion, our assumption on $k$ guarantees that both $D_{K,\mathscr{G}}$ and $D_{K,\mathscr{H}}$ have unipotent identity components. Therefore the Corollary follows from Proposition \ref{Dkerprop} and Lemma \ref{Lem47} together with the fact that the dimension is invariant under taking perfections as well as isogenies and Breen-Serre duals (see \cite[Proposition 1.2.1]{Beg} for the last claim).
\end{proof}

We are now ready to prove
\begin{theorem}
Let $0 \to A \to B\to T' \to 0$ be the bottom sequence from diagram (\ref{Diag5}), and assume that $c(E') \not=c(E).$ Then $c(B) \not = c(A) + c(T').$
\end{theorem}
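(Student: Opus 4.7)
The plan is to apply Proposition \ref{Chigammaprop} to rewrite the desired inequality as
\[
    \gamma(\mathscr{G}_L^\bullet) \neq [L:K]\,\gamma(\mathscr{G}^\bullet),
\]
and then to identify $\gamma(\mathscr{G}^\bullet)$ with $-\dim_k D_{K,\mathscr{G}}$, with the analogous equality for the base change to $\Og_L$. Once this is done, the proof becomes a formal consequence: Corollary \ref{dimcor}, applied over both $K$ and $L$, turns everything into a statement about $D_{K,\mathscr{H}}$ and $D_{L,\mathscr{H}}$, and Lemma \ref{dimlem} combined with the hypothesis $c(E') \ne c(E)$ forces the strict inequality $\dim_k D_{L,\mathscr{H}} \ne [L:K]\dim_k D_{K,\mathscr{H}}$.

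To identify $\gamma(\mathscr{G}^\bullet)$ with $-\dim_k D_{K,\mathscr{G}}$, I would exploit the fact that the Lie algebra functor is insensitive to component groups, so that $\Lie\mathscr{G}^\bullet = \Lie\mathscr{G}^{0,\bullet}$ as complexes. Lemma \ref{chigammalem} together with Theorem \ref{Liepointsthm} applied to the smooth complex $\mathscr{G}^{0,\bullet}$ then yields
\[
    \gamma(\mathscr{G}^\bullet) = \chi_{\mathrm{points}}(\mathscr{G}^{0,\bullet}) = -\dim_k D_{1}^{0} + \dim_k D_{2}^{0} - \dim_k D_{3}^{0},
\]
where $D_i^0$ denotes the group scheme from Proposition \ref{Diconstructprop} attached to $\mathscr{G}^{0,\bullet}$. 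By definition $D_{3}^{0} = D_{K,\mathscr{G}}$, so the identification reduces to the vanishing of $\dim_k D_{1}^{0}$ and $\dim_k D_{2}^{0}$.

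The step I expect to be the main technical point (though still essentially formal) is the vanishing of $\dim_k D_{2}^{0}$. The triviality of $D_{1}^{0}$ is immediate from the inclusion
\[
    D_{1}^{0}(k) = \ker(\mathscr{A}^{0}(\Og_K) \to \mathscr{B}^{0}(\Og_K)) \hookrightarrow \ker(A(K) \to B(K)) = 0,
\]
which, together with the density of $k$-points in smooth $k$-schemes, forces $D_{1}^{0} = 0$. For $D_{2}^{0}$, I would note that any class in $H^{2}(\mathscr{G}^{0,\bullet}(\Og_K))$ is represented by some $b \in \mathscr{B}^{0}(\Og_K)$ whose generic fibre lies in $\ker(B(K) \to T'(K)) = A(K)$; the (unique) lift $a \in \mathscr{A}(\Og_K)$ of $b$ along the closed immersion $A \hookrightarrow B$ then determines a well-defined injection
\[
    H^{2}(\mathscr{G}^{0,\bullet}(\Og_K)) \hookrightarrow \mathscr{A}(\Og_K)/\mathscr{A}^{0}(\Og_K) = \Phi_{A}(k),
\]
and $\Phi_{A}(k)$ is finite because $A$ is an Abelian variety. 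Since any positive-dimensional smooth $k$-group has infinitely many $k$-points, the finiteness of $D_{2}^{0}(k)$ forces $\dim_k D_{2}^{0} = 0$. Repeating this entire argument after base change to $\Og_L$ gives the analogous identity $\gamma(\mathscr{G}_L^\bullet) = -\dim_k D_{L,\mathscr{G}}$, and the conclusion then follows as outlined in the first paragraph.
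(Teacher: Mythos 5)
Your proposal is correct and takes essentially the same route as the paper: Proposition \ref{Chigammaprop} reduces the claim to an inequality of $\gamma$-invariants, these are identified with $-\dim_k D_{K,\mathscr{G}}$ and $-\dim_k D_{L,\mathscr{G}}$ respectively, and then Corollary \ref{dimcor} (applied over both $K$ and $L$) together with Lemma \ref{dimlem} and the hypothesis $c(E')\neq c(E)$ yields the conclusion. The only difference is that you spell out the intermediate step $\gamma(\mathscr{G}^\bullet)=-\dim_k D_{K,\mathscr{G}}$ explicitly (via the vanishing of $\dim_k D_1^0$ and $\dim_k D_2^0$, the latter by embedding $H^2(\mathscr{G}^{0,\bullet}(\Og_K))$ into the finite group $\Phi_A(k)$), whereas the paper leaves this implicit, relying on the analysis already carried out in the proof of Theorem \ref{generalthm}; both routes to this vanishing are valid.
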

\begin{proof}
Let $L$ be a finite separable extension of $K$ over which all semiabelian varieties in the sequence from the Theorem acquire semiabelian reduction. Corollary \ref{dimcor} and Lemma \ref{dimlem} together imply that $\dim_k D_{L, \mathscr{G}} \not = [L:K]\dim_k D_{K, \mathscr{G}},$ so the claim follows from Proposition \ref{Chigammaprop}. 
\end{proof}

In particular, we obtain an example of non-additivity of $c(-)$ as soon as we can find an étale isogeny $E'\to E$ such that $c(E')\not=c(E).$ Such an example is constructed in \cite[(6.10.1)]{Chai} over the base field $\Q_3.$
Note that this field is of characteristic 0 and has finite residue field, so it satisfies both parts of Chai's \it awkward assumption \rm (\it op. cit., \rm p. 733). 

\subsection{Examples with imperfect residue field} \label{ImperfectResPara}
In this paragraph, we shall construct examples which show that the base change conductor is not additive in general in exact sequences $0\to T\to B\to A\to 0$ of semiabelian varieties if the residue field of $K$ is imperfect, even when $T$ is a torus. We shall begin by constructing such an example where $T,$ $B,$ and $A$ are all tori, which moreover shows that the base change conductor is not invariant under isogeny if the residue field is imperfect. Using rigid geometry, we shall use this exact sequence of tori to give a counterexample to the statement of Chai's conjecture in this setup.
\subsubsection{Algebraic tori}
Let $\Og_K$ be a complete discrete valuation ring with separably closed \it imperfect \rm residue field $\kappa$ of equal characteristic 2. Let $\pi$ be a uniformiser of $\Og_K$ and $c\in \Og_K$ be an element whose image in $\kappa$ is not contained in $\kappa^2.$ Consider the polynomial 
$$f_i(X):=X^2 + \pi ^ i X + c\in \Og_K[X]$$ for some $i \in \N.$ The polynomial $f_i$ is clearly irreducible and separable over $K.$ 
\begin{lemma}
Let $K_i$ be the splitting field of $f_i$ and let $T_i$ be the torus $(\Res_{K_i/K}\Gm)/\Gm.$ \label{piilem} Then 
$$c(T_i)=i.$$
\end{lemma}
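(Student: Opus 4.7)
The plan is a direct Lie-algebra computation using the Weil restriction presentation of $T_{i}$. First, I would analyse the arithmetic of the extension $K_{i} / K$: since $\operatorname{char} \kappa = 2$, the derivative $f_{i}'(X) = \pi^{i}$ is nonzero in $K$, so $f_{i}$ is separable, and its roots $\alpha, \beta \in K_{i}$ satisfy $\alpha + \beta = \pi^{i}$ and $\alpha \beta = c$. The residue of $\alpha$ satisfies $\bar{\alpha}^{2} = \bar{c} \notin \kappa^{2}$, so $\kappa(\bar{\alpha}) / \kappa$ is purely inseparable of degree $2$, giving $f(K_{i} / K) = 2$ and $e(K_{i} / K) = 1$. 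Hence $\Og_{K_{i}} = \Og_{K}[\alpha]$ is $\Og_{K}$-free on $\{1, \alpha\}$, and the different is $\mathfrak{d}_{K_{i} / K} = (f_{i}'(\alpha)) = (\pi^{i})$, of $\Og_{K_{i}}$-length $i$.

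Next, I would identify the identity component of the Néron lft-model of $T_{i}$. The inclusion $\Og_{K} \hookrightarrow \Og_{K_{i}}$ induces a closed immersion $\Gm \hookrightarrow \mathscr{R}' := \Res_{\Og_{K_{i}} / \Og_{K}} \Gm$ of smooth affine $\Og_{K}$-group schemes, and the fppf quotient $\mathscr{T}_{i} := \mathscr{R}' / \Gm$ is a smooth $\Og_{K}$-group scheme of finite type with generic fibre $T_{i}$ (representability by \cite[Théorème 4.C]{An}). Using $H^{1}_{\fppf}(\Og_{K}, \Gm) = 0$ together with $e(K_{i} / K) = 1$, one checks $\mathscr{T}_{i}(\Og_{K}) = \Og_{K_{i}}^{\times} / \Og_{K}^{\times} = K_{i}^{\times} / K^{\times} = T_{i}(K)$. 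Since $T_{i}$ is anisotropic (its cocharacter lattice is the sign character of $\Gal(K_{i} / K)$), this identifies $\mathscr{T}_{i}$ with the identity component of the Néron lft-model.

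Third, I would pass to Lie algebras. The smooth short exact sequence $0 \to \Gm \to \mathscr{R}' \to \mathscr{T}_{i} \to 0$ yields an exact sequence $0 \to \Og_{K} \to \Og_{K_{i}} \to \Lie \mathscr{T}_{i} \to 0$, so $\Lie \mathscr{T}_{i}$ is $\Og_{K}$-free of rank $1$. Over $\Og_{K_{i}}$, the base change $(T_{i})_{K_{i}} \cong \Gm$ has Néron lft-model $\Gm$, and the smoothening fits into a commutative diagram of short exact sequences of $\Og_{K_{i}}$-modules
\[
    \begin{CD}
        \Og_{K_{i}} @>>> \Og_{K_{i}} \otimes_{\Og_{K}} \Og_{K_{i}} @>>> \Lie \mathscr{T}_{i} \otimes_{\Og_{K}} \Og_{K_{i}} \\
        @| @VVV @VVV \\
        \Og_{K_{i}} @>>> \Og_{K_{i}} \oplus \Og_{K_{i}} @>>> \Og_{K_{i}}.
    \end{CD}
\]
Via the isomorphism $\Og_{K_{i}} \otimes_{\Og_{K}} \Og_{K_{i}} \cong \Og_{K_{i}}[X] / (f_{i}(X))$ and the factorisation $f_{i}(X) = (X - \alpha)(X - \beta)$ in $\Og_{K_{i}}[X]$, the middle vertical is the Chinese remainder map $g(X) \mapsto (g(\alpha), g(\beta))$, whose cokernel is $\Og_{K_{i}} / (\alpha - \beta) = \Og_{K_{i}} / (\pi^{i})$ of length $i$.

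Finally, the snake lemma (left vertical an isomorphism) forces the right vertical to have cokernel of length exactly $i$ as well, and dividing by $e(K_{i} / K) = 1$ gives $c(T_{i}) = i$. The most delicate step will be the identification of $\mathscr{R}' / \Gm$ with the identity component of the Néron lft-model: since $\kappa$ is only separably closed and not perfect, verifying the full Néron mapping property needs a little care. Once it is granted, everything reduces to the clean diagram chase above, driven by the explicit value $\mathfrak{d}_{K_{i} / K} = (\pi^{i})$ of the different.
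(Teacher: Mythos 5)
Your overall plan is sound and your explicit computations (separability, $e(K_i/K)=1$, $f(K_i/K)=2$, $\Og_{K_i}=\Og_K[\alpha]$, $\mathfrak{d}_{K_i/K}=(\pi^i)$, the CRT description of $\Og_{K_i}\otimes_{\Og_K}\Og_{K_i}\to\Og_{K_i}\oplus\Og_{K_i}$, and the snake-lemma step) are all correct and, once unwound, amount to the same ``$c(\Res_{K_i/K}\Gm)=\tfrac{1}{2}v_K(\mathrm{disc}(K_i/K))$'' computation that the paper's proof invokes. The route differs from the paper's in one essential respect: the paper does not try to identify $\Res_{\Og_{K_i}/\Og_K}\Gm/\Gm$ with the N\'eron lft-model of $T_i$ by hand; it instead cites Chai--Yu \cite[Lemma 11.2]{CY} to obtain exactness of the sequence of N\'eron lft-models attached to $0\to\Gm\to\Res_{K_i/K}\Gm\to T_i\to 0$, from which $c(T_i)=c(\Res_{K_i/K}\Gm)$ follows by additivity, and then only the $\Res$-torus's conductor is computed.

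The gap you flag is genuine, and ``needs a little care'' understates it. You argue that $\mathscr{T}_i:=\Res_{\Og_{K_i}/\Og_K}\Gm/\Gm$ is (the identity component of) the N\'eron lft-model because $\mathscr{T}_i(\Og_K)=T_i(K)$ and $T_i$ is anisotropic. The relevant criterion --- that a smooth separated finite-type group model $G$ is the N\'eron model provided $G(\Og_K^{\mathrm{sh}})\to G_K(K^{\mathrm{sh}})$ is surjective --- is \cite[Chapter 7.4, Theorem 1]{BLR}, and its proof of the implication you need uses perfectness of the residue field. In Subsection \ref{ImperfectResPara} the residue field $\kappa$ is separably closed but deliberately \emph{imperfect}, so this criterion cannot be applied; indeed the whole subsection is about pathologies over imperfect residue fields, and the paper itself notes (after Lemma \ref{commuteswithcokerlemma}) that standard statements break in precisely this setting. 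Without this identification you do not know that $\Lie\mathscr{T}_i$ is the Lie algebra of the N\'eron lft-model, and the remainder of your diagram chase is conditional. The simplest repair is to do what the paper does: cite \cite[Lemma 11.2]{CY} for exactness of the induced sequence of N\'eron lft-models, which immediately yields $\mathscr{N}^0(T_i)\cong\mathscr{T}_i$ (and in fact makes the ambient fppf-quotient construction unnecessary); after that your CRT/different computation closes the argument.
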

\begin{proof}
Because the sequence of Néron lft-models induced by the exact sequence $0\to \Gm \to \Res_{K_i/K}\Gm \to T_i \to 0$ is exact \cite[Lemma 11.2]{CY}, we have $c(T_i)=c(\Res_{K_i/K}\Gm).$ One checks easily that the base change conductor of the induced torus is equal to one-half of the valuation of the discriminant of $K_i,$ so the claim is proven.
\end{proof}

Now let $L$ be the compositum of $K_i$ and $K_j$ for distinct $i,j \in \N.$ Then 
$$\Gal(L/K)\cong \Z/2\Z \times \Z/2\Z$$
(non-canonically). We fix such an isomorphism by choosing generators $\sigma_i, \sigma_j$ of $\Gal(L/K)$ such that $\sigma_i$ fixes $K_j$ and $\sigma_j$ fixes $K_i.$ Let $T$ be the torus over $K$ 
whose character lattice is given by $\Z^2$ with the Galois action 
$$\sigma_i \mapsto \begin{pmatrix} -1 & 1 \\ 0 & 1 \end{pmatrix}, \hspace{0.5 in} \sigma_j \mapsto \begin{pmatrix} 1 & -1 \\ 0 & -1 \end{pmatrix}.$$
By construction, we have an exact sequence
$$0 \to T_j\to T  \to T_i \to 0$$
of algebraic tori. From now on, we shall put $i:=1$ and $j:=2.$ This will simplify later calculations. 

Note that we already know the base change conductors of $T_1$ and $T_2.$ In order to calculate that of $T,$ we choose roots $\alpha_i$ of $f_i$ for $i=1,2,$ and put 
$$\beta:= \pi \alpha_1 + \alpha_2 \in L.$$
A simple calculation shows that
$$g(X):=X^2 + \pi^2X + \pi ^2 c + c $$
is the minimal polynomial of $\beta.$ Let $F:=K(\beta).$ We claim that the kernel of the canonical projection $\Z[\Gal(L/K)] \to \Z[\Gal(F/K)]\to 0$ is isomorphic to $X^\ast(T).$ Clearly, the elements $\mathrm{Id}_L - \sigma_1\sigma_2$ and $\sigma_1-\sigma_2$ lie in the kernel of this map; this follows from the fact that, by Galois theory, $F$ is the maximal subfield of $L$ on which $\sigma_1\sigma_2$ acts trivially. Moreover, these two elements generate a saturated sublattice of $\Z[\Gal(L/K)]$ of rank 2, which must therefore be the whole kernel. Putting $e_1:=1-\sigma_1+\sigma_2-\sigma_1\sigma_2$ and $e_2:=\sigma_1-\sigma_2,$ we see that this kernel and $X^\ast(T)$ are indeed isomorphic as integral $\Gal(L/K)$-representations. We have, in particular, \it resolved $T$ by induced tori \rm by constructing an exact sequence
$$0\to \Res_{F/K}\Gm \to \Res_{L/K}\Gm \to T \to 0.$$
\begin{lemma}
We have $c(\Res_{L/K}\Gm)=6.$ \label{matrixlem}
\end{lemma}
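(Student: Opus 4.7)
The plan is to apply the formula $c(\Res_{M/K}\Gm) = \tfrac{1}{2} v_K(\mathrm{disc}(M/K))$ for a finite separable extension $M/K$ (the formula invoked in the proof of Lemma~\ref{piilem}), and compute $v_K(\mathrm{disc}(L/K))$ through the tower $K \subseteq K_1 \subseteq L$ via multiplicativity of the different.

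First I would analyse the ramification of $K_1/K$. The Newton polygon of $f_1(X) = X^2 + \pi X + c$ has coefficient valuations $(0, 1, 0)$, so all roots are units in $\Og_{K_1}$; reducing modulo $\mathfrak{m}$ yields $X^2 + \bar c$, which is irreducible over the imperfect field $\kappa$ since $\bar c \notin \kappa^2$. Therefore $e_{K_1/K} = 1$, $f_{K_1/K} = 2$, and the residue field of $K_1$ is $\kappa(\sqrt{\bar c})$. Since the residue extension is monogenic, it lifts to $\Og_{K_1} = \Og_K[\alpha_1]$, so the different is $\mathfrak{d}_{K_1/K} = (f_1'(\alpha_1)) = (\pi)$ in characteristic $2$; this recovers $v_K(\mathrm{disc}(K_1/K)) = 2$ as in Lemma~\ref{piilem}.

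Next I would realise $L$ as a totally ramified quadratic extension of $K_1$. Using $c = \alpha_1^2 + \pi\alpha_1$ (from $f_1(\alpha_1)=0$ in characteristic $2$), a direct substitution shows that $y := \alpha_1 + \alpha_2 \in L$ satisfies
$$p(Y) = Y^2 + \pi^2 Y + \pi(\pi+1)\alpha_1 \in \Og_{K_1}[Y].$$
The $v_{K_1}$-valuations of the coefficients of $p$ are $(1, 2, 0)$ in degrees $(0, 1, 2)$, so the Newton polygon is a single segment of slope $-1/2$. Hence $v_{K_1}(y) = 1/2$, which forces $e_{L/K_1} = 2$, $v_L(y) = 1$; so $y$ is a uniformiser and $\Og_L = \Og_{K_1}[y]$. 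Consequently $\mathfrak{d}_{L/K_1} = (p'(y)) = (\pi^2) = \mathfrak{m}_L^4$ in characteristic $2$.

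Finally, multiplicativity of the different in the tower gives
$$v_L(\mathfrak{d}_{L/K}) = v_L(\mathfrak{d}_{L/K_1}) + e_{L/K_1}\cdot v_{K_1}(\mathfrak{d}_{K_1/K}) = 4 + 2 \cdot 1 = 6,$$
and taking norms (with $f_{L/K} = 2$) yields $v_K(\mathrm{disc}(L/K)) = f_{L/K}\cdot v_L(\mathfrak{d}_{L/K}) = 12$; dividing by $2$ gives $c(\Res_{L/K}\Gm) = 6$. I expect the main obstacle to be the Newton polygon step that pins down $e_{L/K_1} = 2$ with $y$ as a uniformiser, which crucially relies on the characteristic-$2$ simplification of the minimal polynomial of $y$; once this is in place the discriminant computation is a routine tower bookkeeping.
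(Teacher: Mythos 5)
Your proof is correct, and it takes a genuinely different (though kindred) route from the paper. Both arguments hinge on the same key observation that $\gamma := \alpha_1 + \alpha_2$ generates a totally ramified quadratic extension of $K_1$ with $\Og_L = \Og_{K_1}[\gamma]$; the paper verifies this by exhibiting the Eisenstein minimal polynomial $X^2 + \pi^2 X + \pi\alpha_1 + \pi^2\alpha_1$, while you reach the same conclusion via the Newton polygon of the (identical, just rewritten) polynomial $p(Y)$. After that the two proofs diverge: the paper writes down the $4\times 4$ matrix of Galois conjugates of the basis $1,\alpha_1,\alpha_2,\alpha_1\alpha_2$ of $\Og_L$ over $\Og_K$, row-reduces it to upper-triangular form with diagonal $1,\pi,\pi^2,\pi^3$, and reads off the $\Og_L$-length $2+4+6=12$ of the cokernel directly, then divides by $e_{L/K}=2$. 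You instead compute the differents $\mathfrak{d}_{K_1/K}=(\pi)$ and $\mathfrak{d}_{L/K_1}=(\pi^2)$ from monogenicity, use multiplicativity in the tower, pass to the discriminant via $v_K(\mathrm{disc}(L/K)) = f_{L/K}\,v_L(\mathfrak{d}_{L/K})$, and invoke the formula $c(\Res_{L/K}\Gm) = \tfrac12 v_K(\mathrm{disc}(L/K))$ (the same formula the paper appeals to in Lemma~\ref{piilem}). Your route is a bit cleaner conceptually and avoids explicit matrix manipulation, at the cost of relying more heavily on the different-discriminant machinery (which is entirely available here since $\Og_{K_1}/\Og_K$ and $\Og_L/\Og_{K_1}$ are monogenic and $L/K$ is separable, despite the inseparable residue extensions); the paper's matrix computation is more self-contained and closer to the definition of $c(-)$.
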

\begin{proof}
Let us first show that $\Og_L=\Og_K[\alpha_1, \alpha_2].$ To see this, put $\gamma:=\alpha_1+\alpha_2.$ Then $\gamma$ is a root of the Eisenstein polynomial $X^2 + \pi^2 X + \pi \alpha_1+\pi^2\alpha_1$ over $K(\alpha_1).$ Because $f_1$ is irreducible after reduction modulo $\pi,$ we have $\Og_{K_1}=\Og_K[\alpha_1],$ so we obtain 
$$\Og_L=\Og_K[\alpha_1][\gamma]=\Og_K[\alpha_1, \alpha_2].$$ Put $\delta:=\alpha_1\alpha_2.$ The number $2c(T)=e_{L/K}c(T)$ is therefore equal to the $\Og_L$-length of the cokernel of the map
$$\Og_L\otimes_{\Og_K} \Og_L \to \Og_L^4$$
given by the matrix
$$\begin{pmatrix}
1 & \alpha_1 & \alpha_2 & \delta \\
1 & \alpha_1 + \pi & \alpha_2 & \delta + \pi \alpha_2 \\
1 & \alpha_1 & \alpha_2 + \pi^2 & \delta + \pi^2\alpha_1 \\
1 & \alpha_1 + \pi & \alpha_2 + \pi ^2 & \delta + \pi ^2 \alpha_1 + \pi\alpha_2 + \pi^3
\end{pmatrix}$$
with respect to the basis $1\otimes 1, \alpha_1\otimes 1, \alpha_2 \otimes 1, \delta \otimes 1$ on the source and the standard basis on the target. Subtracting the first row from the other three and then the second and third from the fourth transforms this matrix into 
$$ \begin{pmatrix}
1 & \alpha_1 & \alpha_2 & \delta \\
0 & \pi & 0 & \pi \alpha_2 \\
0 & 0 & \pi ^2 & \pi^2 \alpha_1 \\
0 & 0 & 0 & \pi^3
\end{pmatrix}.$$
This shows that the cokernel has a composition series with successive quotients $\Og_L/\langle \pi \rangle,$ $\Og_L/\langle \pi^2 \rangle,$ and $\Og_L/\langle \pi^3 \rangle.$ The $\Og_L$-lengths of these modules are 2, 4, and 6, respectively, so that the length of the cokernel is indeed equal to 6.
\end{proof}
\begin{corollary}
Let $0\to T_2 \to T \to T_1 \to 0$ be the sequence of algebraic $K$-tori constructed above. \label{nonaddcor}Then $c(T) \not=c(T_2)+ c(T_1).$
\end{corollary}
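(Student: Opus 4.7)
The plan is to compute $c(T)$ via the induced-torus resolution
\[
    0 \to \Res_{F/K}\Gm \to \Res_{L/K}\Gm \to T \to 0
\]
constructed above and to compare it with $c(T_{1}) + c(T_{2}) = 1 + 2 = 3$, which is immediate from Lemma \ref{piilem}.

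First I would identify $\Og_{F}$ and the ramification index of $L / K$. Since $g(X) \bmod \pi$ equals $X^{2} + \bar{c}$, irreducible over $\kappa$ by our choice of $c$, the subring $\Og_{K}[\beta] \subseteq F$ has residue field $\kappa(\bar{c}^{1/2})$ and is itself a DVR with uniformiser $\pi$; this identifies $\Og_{F} = \Og_{K}[\beta]$. Next, combining $\alpha_{i}^{2} = \pi^{i}\alpha_{i} + c$ for $i = 1, 2$ in characteristic $2$ yields
\[
    (\alpha_{1} + \alpha_{2})^{2} = \pi(\alpha_{1} + \pi\alpha_{2})
\]
with $\alpha_{1} + \pi\alpha_{2} \in \Og_{L}^{\times}$, so $\alpha_{1} + \alpha_{2}$ is a uniformiser of $\Og_{L}$ and $e_{L/K} = 2$. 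A short Galois calculation (both $\sigma_{1}$ and $\sigma_{2}$ shift $\beta = \pi\alpha_{1} + \alpha_{2}$ by $\pi^{2}$) shows that the two $K$-embeddings $F \hookrightarrow L$ send $\beta$ to $\beta$ and to $\beta + \pi^{2}$, so the $2 \times 2$ matrix computation analogous to Lemma \ref{matrixlem} gives
\[
    \ell_{\Og_{L}}\bigl(\mathrm{coker}(\Og_{F} \otimes_{\Og_{K}} \Og_{L} \to \Og_{L}^{2})\bigr) = v_{L}(\beta - \beta') = v_{L}(\pi^{2}) = 4,
\]
whence $c(\Res_{F / K}\Gm) = 4 / e_{L/K} = 2$.

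Finally I would invoke exactness of the Néron lft-model sequence for the induced-torus resolution above---the analogue of \cite[Lemma 11.2]{CY} used in the proof of Lemma \ref{piilem}---which holds because both outer terms have Néron lft-models given by the Weil restrictions $\Res_{\Og_{L}/\Og_{K}}\Gm$ and $\Res_{\Og_{F}/\Og_{K}}\Gm$, and one verifies that their fppf quotient represents the identity component of the Néron lft-model of $T$. Additivity of $c$ in that sequence then yields
\[
    c(T) = c(\Res_{L/K}\Gm) - c(\Res_{F/K}\Gm) = 6 - 2 = 4 \ne 3 = c(T_{1}) + c(T_{2}),
\]
proving the corollary. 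The hard part will be establishing this exactness: additivity of $c$ definitively fails for the original sequence $0 \to T_{2} \to T \to T_{1} \to 0$ (this is precisely the content of the corollary), so one must be careful to verify that it nevertheless holds for the auxiliary resolution by induced tori.
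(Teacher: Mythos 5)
Your proof follows the same route as the paper: resolve $T$ by induced tori via $0 \to \Res_{F/K}\Gm \to \Res_{L/K}\Gm \to T \to 0$, compute $c(\Res_{L/K}\Gm) = 6$ (Lemma~\ref{matrixlem}) and $c(\Res_{F/K}\Gm) = 2$, deduce $c(T) = 4 \ne 3 = c(T_1) + c(T_2)$. Your intermediate bookkeeping is correct and somewhat more explicit than the paper's: you verify $\Og_F = \Og_K[\beta]$ and $e_{L/K}=2$ directly, and you obtain $c(\Res_{F/K}\Gm)=2$ from a $2\times 2$ matrix over $\Og_L$ with determinant of $L$-valuation $4$, whereas the paper simply reads it off from the discriminant formula already extracted in the proof of Lemma~\ref{piilem} (the discriminant of $g$ in characteristic $2$ is $\pi^{4}$). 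These are the same number computed two ways.

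The genuine gap is exactly the one you flag at the end: you do not actually establish the additivity of $c(-)$ along the auxiliary resolution. Your justification---that $\Res_{\Og_L/\Og_K}\Gm$ and $\Res_{\Og_F/\Og_K}\Gm$ are the identity components of the relevant N\'eron lft-models and that ``one verifies that their fppf quotient represents the identity component of the N\'eron lft-model of $T$''---is not an argument but a restatement of the claim that needs to be proved; and as you yourself observe, the statement cannot be taken for granted, since the whole point of the corollary is that the analogous additivity fails for $0\to T_{2}\to T\to T_{1}\to 0$. The paper closes this gap by citing \cite[Remark 4.8 (a)]{Chai}, which (in the spirit of \cite[Lemma 11.2]{CY} already invoked in Lemma~\ref{piilem}) gives the exactness of the complex of N\'eron lft-models when the subtorus is induced; this is what distinguishes the auxiliary sequence from the original one, where $T_{2}$ is anisotropic and not induced. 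Without that citation or an equivalent argument, your proof does not yet yield $c(T) = c(\Res_{L/K}\Gm) - c(\Res_{F/K}\Gm)$, and hence does not establish the corollary.
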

\begin{proof}
We already know from Lemma \ref{piilem} that $c(T_1)=1$ and $c(T_2)=2.$ Moreover, the proof of the same Lemma shows that $c(\Res_{F/K}\Gm)=2.$ Hence the resolution 
$$0\to \Res_{F/K}\Gm \to \Res_{L/K}\Gm \to T \to 0,$$ together with \cite[Remark 4.8 (a)]{Chai} and Lemma \ref{matrixlem} above shows that $c(T)= 6-2=4.$
\end{proof}

\begin{remark} Since $T$ is isogenous to $T_2 \times_K T_1,$ the Corollary also shows that the base change conductor of an algebraic torus is not invariant under isogeny if the residue field is imperfect. This shows in particular that the formula given by Chai, Yu, and de Shalit for the base change conductor of a torus $T$ in terms of the Galois module $X_\ast(T)\otimes_{\Z}\Q$ \cite[Theorem on p. 367 and Theorem 12.1]{CY} cannot be generalised to the case of imperfect residue fields.
\end{remark}
\subsubsection{Raynaud extensions}
We shall now construct an exact sequence $0\to T_2 \to B \to E\to 0$ such that $T_2$ is the torus constructed in the preceding paragraph, $E$ is an elliptic curve over $K,$ and such that $c(B)\not=c(T_2) + c(E).$ 
Recall that the character lattice of an algebraic torus can be seen as an étale group scheme over $K$ which becomes isomorphic to a constant group scheme $\Z^d$ for some $d\in \N$ after a finite separable extension of $K.$ Such a group scheme will be called an \it étale lattice \rm over $K.$ We have a canonical isomorphism 
$$X^\ast(T_1) \otimes_{\Z} X^\ast(T_1) = \Z,$$
where $\Z$ carries the trivial Galois action. Note that we also have an exact sequence 
$$0 \to T_1 \to T \to T_2 \to 0;$$ this can be seen directly from the definition of the Galois action on $X^\ast(T).$ In particular, we obtain a morphism
$$X^\ast(T) \otimes_{\Z} X^\ast(T_1) \to X^\ast(T_1) \otimes_{\Z} X^\ast(T_1) = \Z \to \Gm, $$
where the last map is given by $1\mapsto q $ for some $q\in K^\times$ with $v_K(q)>0.$ This map induces a commutative diagram
$$\begin{CD}
X^\ast(T_1) @>{=}>> X ^\ast(T_1)\\
@VVV@VVV\\
T @>>> T_1.
\end{CD}$$
It is easy to see that the quotient $T_1/X^\ast(T_1)$ (taken as a rigid $K$-group) is equal to the rigid $K$-group associated with the quadratic twist $E$ of the Tate curve with period $q$ along the extension $K\subseteq K_1.$ To proceed, we shall need the following
\begin{lemma}
Let $0\to \mathscr{F} \to \mathscr{G} \to \mathscr{H} \to \mathscr{E}$
be a complex of smooth formal group schemes over $\mathrm{Spf} \, \Og_K$ which is exact as a complex of sheaves on the small formal smooth site of $\Og_K,$ and such that $\mathscr{E}$ is étale. Then the map $\mathscr{G}\to \mathscr{H}$ is smooth. \label{issmoothlem}
\end{lemma}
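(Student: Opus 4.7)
My plan is to reduce smoothness of $\mathscr{G} \to \mathscr{H}$ to the torsor principle applied to an open subgroup of $\mathscr{H}$. Since $\mathscr{E}$ is \'etale, its unit section is an open immersion, so $K := \ker(\mathscr{H} \to \mathscr{E}) = \mathscr{H} \times_{\mathscr{E}} e$ is an open (and closed) formal subgroup of $\mathscr{H}$. The complex condition $\mathscr{G} \to \mathscr{H} \to \mathscr{E}$ forces $\mathscr{G} \to \mathscr{H}$ to factor through the open immersion $K \hookrightarrow \mathscr{H}$. Since open immersions are smooth, it suffices to show that the induced morphism $\mathscr{G} \to K$ is smooth.

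Next I would exploit the remaining exactness conditions. Exactness at $\mathscr{F}$ and $\mathscr{G}$ (combined with the representability of kernels in the category of formal group schemes) identifies $\mathscr{F}$ with $\ker(\mathscr{G} \to K)$. Exactness at $\mathscr{H}$, on the other hand, says precisely that $\mathscr{G} \to K$ is an epimorphism of sheaves on the small formal smooth site. In particular, applying this to $\mathrm{id}_K$, there exists a smooth cover $K' \to K$ and a section $s\colon K' \to \mathscr{G} \times_K K'$. Translation by $s$ then yields an isomorphism of formal schemes over $K'$
\[
    \mathscr{F} \times_{\mathrm{Spf}\,\Og_K} K'
    \xrightarrow{\sim}
    \mathscr{G} \times_K K',
    \qquad
    (f, h) \mapsto (\iota(f) \cdot s(h), h),
\]
where $\iota \colon \mathscr{F} \hookrightarrow \mathscr{G}$ is the given inclusion; this is the standard trivialization of the $\mathscr{F}$-torsor $\mathscr{G} \to K$.

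Because $\mathscr{F}$ is smooth over $\mathrm{Spf}\,\Og_K$ by hypothesis, the base change $\mathscr{F} \times_{\mathrm{Spf}\,\Og_K} K' \to K'$ is smooth, and hence so is $\mathscr{G} \times_K K' \to K'$. Since smoothness is local for the smooth topology on the target and $K' \to K$ is a smooth cover, $\mathscr{G} \to K$ is smooth. Composing with $K \hookrightarrow \mathscr{H}$ gives the required smoothness of $\mathscr{G} \to \mathscr{H}$.

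The one point that needs a little care is the passage from ``$\mathscr{G} \to K$ is a sheaf epimorphism on the formal smooth site'' to the existence of an honest smooth cover $K' \to K$ that splits it; this is standard provided one works in the category of formal schemes smooth (and locally of finite type) over $\mathrm{Spf}\,\Og_K$, where the smooth site has enough representable covers. Once this is granted, the torsor-trivialization argument proceeds exactly as for schemes and the conclusion follows from smooth descent of smoothness.
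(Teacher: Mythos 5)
Your proof is correct, and it takes a genuinely different route from the paper's. Both arguments begin with the same reduction: since $\mathscr{E}$ is \'etale its unit section is open, so one replaces $\mathscr{H}$ by the open subgroup $K=\ker(\mathscr{H}\to\mathscr{E})$ (equivalently, assumes $\mathscr{E}=0$), and both exploit the fact that exactness at $\mathscr{H}$ makes $\mathscr{G}\to K$ a sheaf epimorphism on the small formal smooth site. From there the two proofs diverge. The paper passes to the finite-level truncations $\mathscr{G}_n\to\mathscr{H}_n$ and argues that local sections in the smooth topology force surjectivity of the Lie algebra maps $\Lie\mathscr{G}_n\to\Lie\mathscr{H}_n$, then invokes the Lie-algebra criterion for smoothness of a homomorphism between smooth group schemes; notably this does not use the smoothness of $\mathscr{F}$. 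You instead use the torsor principle: $\mathscr{G}\to K$ is an $\mathscr{F}$-torsor, so it trivializes over a smooth cover $K'\to K$, becomes a product $\mathscr{F}\times_{\Spf\Og_K}K'$ there, inherits smoothness from $\mathscr{F}$, and one finishes by fppf (smooth) descent of smoothness. Your argument leans on the smoothness of $\mathscr{F}$ (which is in the hypotheses, so this is fine) and is more uniformly ``categorical'' --- it would read identically for ordinary group schemes --- whereas the paper's argument is tailored to the formal setting via the levelwise Lie algebra computation. The one delicate point you flag, namely the existence of a representable smooth cover splitting the sheaf epimorphism, is indeed where some care is needed; it holds because $K$, being open in $\mathscr{H}$, is itself an object of the small formal smooth site, so applying sheaf surjectivity to $\mathrm{id}_{K}$ produces the required cover.
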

\begin{proof}
We may replace $\mathscr{H}$ by an open formal subgroup scheme without changing the conclusion of the Lemma. But since $\mathscr{E}$ is étale, its unit section is an open immersion, so we may assume without loss of generality that $\mathscr{E}=0.$ Then our assumption implies that the map $\mathscr{G}\to \mathscr{H}$ has sections locally in the smooth topology. Denoting by $\mathscr{G}_n$ and $\mathscr{H}_n$ the levels of $\mathscr{G}$ and $\mathscr{H},$ respectively, for $n\in \N,$ this implies that the maps $\Lie \G_n \to \Lie \mathscr{H}_n$ are surjective for all such $n.$ Hence the claim follows.
\end{proof}

Now consider the exact sequence
$$0 \to T_2 \to T/X ^ \ast(T_1) \to E\to 0 $$
of sheaves on the small rigid smooth site induced by the sequence $0\to T_2 \to T \to T_1 \to 0.$
Then we have
\begin{proposition}
The rigid $K$-group $T/X ^ \ast(T_1)$ is the rigid $K$-group associated with a semiabelian variety $B$ over $K,$ and we have an exact sequence $$0 \to T_2 \to B \to E \to 0$$ of semiabelian varieties over $K.$ Moreover, we have $c(B)=c(T)$ and $c(E)=c(T_1).$ In particular, $c(B) \not= c(T_2) + c(E).$
\end{proposition}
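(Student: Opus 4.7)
The proof splits into two substantive steps: the algebraicity of $T/X^\ast(T_1)$ together with the exact sequence, and the computation of the two base change conductors.

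For the first step, I would observe that by construction the composition $X^\ast(T_1) \to T \twoheadrightarrow T_1$ is the standard inclusion of the lattice into $T_1$ (this is the content of the commutative diagram displayed just above the proposition). Hence $X^\ast(T_1) \to T$ is injective with image meeting the subtorus $T_2 = \ker(T \to T_1)$ trivially. Dividing the exact sequence $0 \to T_2 \to T \to T_1 \to 0$ by $X^\ast(T_1)$ (which acts trivially on $T_2$) in the category of rigid analytic $K$-groups therefore yields a short exact sequence
$$0 \to T_2 \to T/X^\ast(T_1) \to T_1/X^\ast(T_1) \to 0$$
of rigid $K$-groups, whose right-hand term is by construction the analytification of the elliptic curve $E$. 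Since a rigid-analytic extension of an abelian variety by a torus is automatically the analytification of a unique algebraic semiabelian variety (Bosch--L\"utkebohmert--Raynaud uniformisation), the middle term arises from a semiabelian variety $B$ and the sequence $0 \to T_2 \to B \to E \to 0$ holds in the algebraic category.

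For the second step, the sequence $0 \to X^\ast(T_1) \to T \to B \to 0$ of rigid $K$-groups is now precisely the rigid uniformisation of $B$: the Raynaud extension of $B$ (which has toric part $T_2$ and abelian quotient $E$) is an extension of the uniformising torus $T_1$ of $E$ by $T_2$, i.e.\ equals $T$, while the uniformising lattice is $X^\ast(T_1)$. The equality $c(B) = c(T)$ then follows from the natural extension of Bosch--Xarles \cite[Theorem 2.3]{BX} from abelian varieties to semiabelian varieties, which ultimately rests on the identification of the Lie algebras of the identity components of the N\'eron lft-models of $B$ and of its Raynaud extension. The equality $c(E) = c(T_1)$ is the original BX Theorem 2.3 applied to the Tate-style uniformisation $0 \to X^\ast(T_1) \to T_1 \to E \to 0$. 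Combining these with Corollary \ref{nonaddcor}, which asserts $c(T) \not= c(T_2) + c(T_1)$, immediately yields the desired non-equality $c(B) \not= c(T_2) + c(E)$.

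The main obstacle, in my view, is the algebraicity step: one must invoke the full strength of rigid uniformisation for \emph{semi}abelian varieties rather than the abelian-variety form recalled in the introduction, and one must check that the lattice $X^\ast(T_1) \hookrightarrow T$ satisfies the Raynaud positivity condition necessary for the quotient to descend to the algebraic category. The latter is however automatic from the choice $v_K(q) > 0$ made in constructing the pairing, and the former is a standard consequence of the Bosch--L\"utkebohmert--Raynaud theory.
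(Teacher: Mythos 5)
Your overall architecture is correct: obtain the exact rigid sequence, algebraise it, then deduce both $c$-equalities from the rigid-analytic uniformisation, finishing with Corollary \ref{nonaddcor}. However, two of your three substantive steps are left as assertions where the paper actually does genuine work, and your proposed citations are not quite the ones that do the job.

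For the algebraicity step, the phrase ``Bosch--L\"utkebohmert--Raynaud uniformisation'' points in the wrong direction: that theory starts from a given algebraic abelian or semiabelian variety and constructs its uniformisation, whereas here you must go the other way, from a rigid-analytic extension of $E$ by $T_2$ to an algebraic group. The paper does this by first using Galois descent to reduce to the case where both $T_2$ and $X^\ast(T_1)$ are split, and then invoking the canonical identification $\mathrm{Ext}^1_L(E_L, \Gm) \cong E_L(L)$, which Bosch (in the paper cited as \cite{Bosch}, p.\ 1272) establishes in the rigid category as well as the algebraic one; this is precisely the comparison $\mathrm{Ext}^1_{\mathrm{rig}} \cong \mathrm{Ext}^1_{\mathrm{alg}}$ needed for the middle term to algebraise uniquely. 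Your positivity remark about $v_K(q)>0$ is reasonable context, but the paper's route bypasses checking Raynaud's positivity condition altogether.

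For $c(B) = c(T)$, the ``natural extension of \cite[Theorem 2.3]{BX} to semiabelian varieties'' is exactly the content that needs proof, and the paper supplies it rather than cites it: from \cite[Proposition 4.5]{BX} one gets a complex $0 \to \widehat{\mathscr{N}} \to \widehat{\mathscr{T}} \to \widehat{\mathscr{B}} \to H^1(\Gal(K^{\mathrm{sep}}/K), X^\ast(T_1))$ of \emph{formal} N\'eron lft-models, exact only as a complex of sheaves on the small formal smooth site; the crucial input is the paper's own Lemma \ref{issmoothlem}, which, because $\widehat{\mathscr{N}}$ is \'etale, forces $\widehat{\mathscr{T}} \to \widehat{\mathscr{B}}$ to be smooth and hence $\Lie\mathscr{T} \to \Lie\mathscr{B}$ to be an isomorphism (compatibly with base change). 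Your proposal correctly identifies that the argument ``ultimately rests on the identification of Lie algebras,'' but stops short of producing the lemma that delivers it; without some version of Lemma \ref{issmoothlem}, there is a gap, since sheaf-exactness on the formal smooth site does not by itself give an isomorphism on Lie algebras. The same mechanism (or, as the paper notes, \cite[Proposition 5.1]{Chai}) gives $c(E) = c(T_1)$, so you need not even separately invoke \cite[Theorem 2.3]{BX} for that equality.

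Once those two gaps are filled, your conclusion $c(B) \ne c(T_2) + c(E)$ from Corollary \ref{nonaddcor} is correct and is exactly the paper's.
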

\begin{proof}
The first two claims will be proven together. Note that, by Galois descent, we may replace $K$ by a finite Galois extension $L$ which splits $T_2$ and $X^\ast(T_1).$ It is well-known that there is a canonical isomorphism 
$E_L(L)=\mathrm{Ext}^1_L (E_L, \Gm)$ in the category of commutative $L$-group schemes. By the discussion on p. 1272 of \cite{Bosch}, the analogous isomorphism exists with respect to the rigid category. This shows that the exact sequence 
$$0 \to T_2 \to T/X^\ast(T_1) \to T_1 / X^\ast (T_1) \to 0$$ algebraises uniquely. In order to see the second claim, we consider the exact sequence
$$0 \to X^\ast(T_1) \to T \to B \to 0$$ of rigid analytic $K$-groups. By \cite[Proposition 4.5]{BX}, we obtain an induced complex
$$0 \to \widehat{\mathscr{N}} \to \widehat{\mathscr{T}} \to \widehat{\mathscr{B}} \to H ^ 1 (\Gal(K\sep/K), X ^\ast(T_1))$$ of smooth formal algebraic groups over $\Spf \Og_K$ which is exact when restricted to the small formal smooth site of $\Spf \Og_K.$ Here, the first four objects are the formal Néron lft-models of $0,$ $X^\ast(T_1),$ $T,$ and $B,$ respectively \cite[Proposition 4.5]{BX}. The notation is justified by the fact that those formal group schemes are the formal completions of the usual Néron lft-models  \cite[Theorem 6.2]{BS}. Because $\widehat{\mathscr{N}}$ is étale over $\Spf \Og_K,$ Lemma \ref{issmoothlem} tells us that the maps $$\Lie \mathscr{T} \otimes_{\Og_K} \Og_K/\mathfrak{m}_K^n \to \Lie \mathscr{B} \otimes_{\Og_K} \Og_K/\mathfrak{m}_K^n$$ are isomorphisms for all $n\in \N.$ Hence the map $\Lie \mathscr{T}\to \Lie \mathscr{B}$ is an isomorphism. Note, moreover, that this isomorphism is compatible with finite separable extensions of $K.$ This shows that $c(B)=c(T);$ the proof that $c(E)=c(T_1)$ is entirely analogous (alternatively, one could invoke \cite[Proposition 5.1]{Chai}). The final claim is now an immediate consequence of Corollary \ref{nonaddcor}.
\end{proof}


\section{Invariance under duality and isogeny}

In this section, we shall prove that the base change conductor is invariant under duality for Abelian varieties (Theorem \ref{0023}), answering a question of Chai \cite[8.2]{Chai}.
We shall also prove that the base change conductor is invariant under isogeny for tori (Theorem \ref{0025}), giving another proof for a result of Chai, Yu, and de Shalit \cite[Theorem on p. 367 and Theorem 12.1]{CY}.

\subsection{Dimension counting functions}
Let $D^{b}(\mathrm{IPAlg} / k)$ be
the bounded derived category of $\mathrm{IPAlg} / k$.

\begin{definition}
	Define full subcategories $\mathcal{D}$
	(resp.\ $\mathcal{D}_{u}$) of $D^{b}(\mathrm{IPAlg} / k)$
	to be the smallest full triangulated subcategory
	closed under isomorphisms
	containing perfections of connected (resp.\ connected unipotent) algebraic groups,
	pro-finite-\'etale groups and torsion \'etale groups.
\end{definition}

Let $K_{0}(\mathcal{D})$ be the Grothendieck group of $\mathcal{D}$.

\begin{proposition}
	There exists a unique group homomorphism
	$\chi \colon K_{0}(\mathcal{D}) \to \Z$
	that sends the perfection of a connected algebraic group to its dimension
	and kills pro-finite-\'etale groups and torsion \'etale groups.
\end{proposition}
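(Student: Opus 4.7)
Uniqueness is immediate: $\mathcal{D}$ is, by construction, the smallest triangulated subcategory of $D^{b}(\mathrm{IPAlg} / k)$ containing the three specified classes of objects, so their classes generate $K_{0}(\mathcal{D})$, and the proposed $\chi$ is prescribed on all of these generators.

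For existence, the plan is to realize $\chi$ as the Euler characteristic attached to an abelian heart inside $\mathrm{IPAlg} / k$. Let $\mathcal{A} \subset \mathrm{IPAlg} / k$ be the full subcategory of objects $G$ for which $G^{0}$ is the perfection of a connected algebraic group and $\pi_{0}(G)$ sits in a short exact sequence $0 \to P \to \pi_{0}(G) \to E \to 0$ with $P$ pro-finite-\'etale and $E$ torsion \'etale. All three types of generators of $\mathcal{D}$ lie in $\mathcal{A}$ (viewed as complexes in degree zero). The first step is to verify that $\mathcal{A}$ is closed under kernels, cokernels, and extensions in $\mathrm{IPAlg} / k$, so that $\mathcal{A}$ is an abelian subcategory. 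Granting this, I would define $d \colon \mathrm{Ob}(\mathcal{A}) \to \Z$ by $d(G) = \dim G^{0}$, interpreting the dimension as that of the underlying algebraic group (perfection does not alter dimension). Additivity of $d$ in short exact sequences of $\mathcal{A}$ reduces, after passing to connected components, to the classical additivity of dimension for connected algebraic groups, since the component-group parts contribute zero. This produces a homomorphism $d \colon K_{0}(\mathcal{A}) \to \Z$.

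To descend to $\mathcal{D}$, I would show that every $G^{\bullet} \in \mathcal{D}$ has all its cohomology objects in $\mathcal{A}$: this holds trivially for the generators placed in degree zero, and propagates through cones and shifts via the long exact cohomology sequence together with the closure of $\mathcal{A}$ under kernels, cokernels, and extensions. The formula
\[
    \chi(G^{\bullet}) := \sum_{i \in \Z} (-1)^{i} d(H^{i}(G^{\bullet}))
\]
then gives a well-defined integer for each object of $\mathcal{D}$, and the standard argument with the long exact cohomology sequence shows that $\chi$ is additive on distinguished triangles, hence descends to a group homomorphism $K_{0}(\mathcal{D}) \to \Z$. Evaluating on each of the three classes of generators (each concentrated in degree zero, with only $H^{0}$ nonzero) confirms the prescribed values. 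The main obstacle I anticipate is the verification that $\mathcal{A}$ is closed under the relevant operations inside $\mathrm{IPAlg} / k$: because the category mixes pro-algebraic and ind-algebraic structures, one must confirm that the connected-component and component-group functors commute with enough limits and colimits to preserve the hybrid structural description of $\mathcal{A}$ along morphisms. Once this closure is settled, everything else is formal.
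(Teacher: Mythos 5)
The uniqueness argument is fine and matches the paper. For existence, however, your approach has a genuine gap at exactly the point you flag: the proposed subcategory $\mathcal{A}$ is \emph{not} closed under extensions in $\mathrm{IPAlg}/k$. To see this, take $G'' = \Gm^{\mathrm{perf}}$, an object of $\mathcal{A}$, and let $G = \overline{\Gm^{\mathrm{perf}}}$ be its universal covering in the sense of Serre (\cite{Ser60}). Then there is a short exact sequence
\[
    0 \to \pi_{1}(\Gm^{\mathrm{perf}}) \to \overline{\Gm^{\mathrm{perf}}} \to \Gm^{\mathrm{perf}} \to 0
\]
in $\mathrm{PAlg}/k$, with $\pi_{1}(\Gm^{\mathrm{perf}})$ a nontrivial profinite group, hence in $\mathcal{A}$. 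The middle term is connected (it is a universal covering), so $G^{0} = G$; yet $G$ is a genuine inverse limit of algebraic groups and is not the perfection of an algebraic group. Thus $G \notin \mathcal{A}$, while both ends of the extension lie in $\mathcal{A}$. Moreover, rotating the triangle shows $\overline{\Gm^{\mathrm{perf}}}$ is the mapping cone of a morphism $\Gm^{\mathrm{perf}}[-1] \to \pi_{1}(\Gm^{\mathrm{perf}})$ between generators of $\mathcal{D}$, so it actually occurs as an object of $\mathcal{D}$ concentrated in degree zero with $H^{0} \notin \mathcal{A}$. Your propagation step therefore fails: the cohomology of objects of $\mathcal{D}$ escapes $\mathcal{A}$, and $d$ cannot be applied to $H^{i}(G^{\bullet})$ in general.

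The fix is essentially what the paper does. Rather than working with $(-)^{0}$ and requiring the identity component to be literally algebraic, one first applies Serre's universal covering functor $G \mapsto \overline{G}$, which is exact on $\mathrm{PAlg}/k$, preserves the dimension of objects of $\mathrm{Alg}/k$, and kills pro-finite-\'etale groups; it extends to an exact endofunctor of $\mathrm{IPAlg}/k$ killing torsion \'etale groups, hence to a triangulated endofunctor of $D^{b}(\mathrm{IPAlg}/k)$. After passing through $\overline{(\cdot)}$, the relevant subcategory --- universal coverings of (perfections of) connected algebraic groups --- \emph{is} stable under kernels, cokernels, and extensions, so every $H^{n}(\overline{G^{\bullet}})$ for $G^{\bullet} \in \mathcal{D}$ lands there and has a well-defined dimension. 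Then $\chi(G^{\bullet}) = \sum_{n}(-1)^{n}\dim H^{n}(\overline{G^{\bullet}})$ factors through $K_{0}(\mathcal{D})$. Your plan would work if you replaced $\mathcal{A}$ by the subcategory of $G$ with $\overline{G^{0}}$ the universal covering of a connected algebraic group (equivalently, $G^{0}$ algebraic up to profinite error) and defined $d(G)$ as the dimension of the algebraic group covered by $\overline{G^{0}}$; that enlarged class is closed under the needed operations, and this is really the same argument reformulated.
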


\begin{proof}
	The uniqueness is clear.
	For the existence,
	recall the universal covering functor
	$\mathrm{PAlg} / k \to \mathrm{PAlg} / k$, $G \mapsto \overline{G}$
	from \cite[Section 6.2, Definition 2]{Ser60},
	which is exact by \cite[Section 10.3, Theorem 3]{Ser60}.
	It preserves the dimension of $G \in \mathrm{Alg} / k$
	and kills pro-finite-\'etale groups.
	It extends to an exact endofunctor $G \mapsto \overline{G}$ on $\mathrm{IPAlg} / k$
	and hence to a triangulated endofunctor on $D^{b}(\mathrm{IPAlg} / k)$,
	which kills torsion \'etale groups.
        Combining all these facts, it formally follows that for
	any $G \in \mathcal{D}$ and any $n \in \Z$,
	the object $H^{n}(\overline{G})$ is the universal covering of a connected algebraic group.
	Therefore we have a well-defined integer
	$\sum_{n} (-1)^{n} \dim H^{n}(\overline{G})$ for each $G \in \mathcal{D}$.
	This assignment factors through $K_{0}(\mathcal{D})$.
\end{proof}

For any morphism $f$ in $D^{b}(\mathrm{IPAlg} / k)$
whose mapping cone belongs to $\mathcal{D}$,
we define $\chi(f)$ to be the value of $\chi$ at a mapping cone of $f$.

Let $D(k^{\mathrm{indrat}}_{\mathrm{proet}})$ be the derived category
of sheaves of abelian groups on $\Spec k^{\mathrm{indrat}}_{\mathrm{proet}}$.
The fully faithful embedding
    $
            \mathrm{IPAlg} / k
        \hookrightarrow
            \mathrm{Ab}(k^{\mathrm{indrat}}_{\mathrm{proet}})
    $
induces a fully faithful embedding
    $
            D^{b}(\mathrm{IPAlg} / k)
        \hookrightarrow
            D(k^{\mathrm{indrat}}_{\mathrm{proet}})
    $
again by \cite[Proposition (2.3.4)]{Suz}.
Let $G \mapsto G^{\mathrm{SD}}$ be the contravariant endofunctor
	$
		R \operatorname{\mathbf{Hom}}
		_{k^{\mathrm{indrat}}_{\mathrm{proet}}}
		(\;\cdot\;, \Z)
	$
on $D(k^{\mathrm{indrat}}_{\mathrm{proet}})$.

\begin{proposition} \label{0016} \mbox{}
	\begin{enumerate}
		\item \label{0000}
			The functor $\mathrm{SD}$ maps $\mathcal{D}$ to $\mathcal{D}_{u}$
			and restricts to a contravariant autoequivalence on $\mathcal{D}_{u}$.
		\item \label{0001}
			For any $G \in \mathcal{D}_{u}$, we have $\chi(G) = \chi(G^{\mathrm{SD}})$.
	\end{enumerate}
\end{proposition}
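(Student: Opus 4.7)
The plan is to reduce both statements to computations on the generators of $\mathcal{D}$ and $\mathcal{D}_{u}$, using that $\mathrm{SD}$ is a contravariant triangulated endofunctor of $D(k^{\mathrm{indrat}}_{\mathrm{proet}})$ and that $\chi$ factors through $K_{0}(\mathcal{D})$. The workhorse throughout is the distinguished triangle obtained by applying $R \operatorname{\mathbf{Hom}}_{k^{\mathrm{indrat}}_{\mathrm{proet}}}(G, \;\cdot\;)$ to the short exact sequence $0 \to \Z \to \Q \to \Q / \Z \to 0$, namely
\begin{equation*}
    G^{\mathrm{SD}}
    \to R \operatorname{\mathbf{Hom}}_{k^{\mathrm{indrat}}_{\mathrm{proet}}}(G, \Q)
    \to R \operatorname{\mathbf{Hom}}_{k^{\mathrm{indrat}}_{\mathrm{proet}}}(G, \Q / \Z)
    \to G^{\mathrm{SD}}[1].
\end{equation*}

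For part (\ref{0000}), I would treat the generators separately. When $G$ is the perfection of a connected unipotent algebraic group, its sections over ind-rational $k$-algebras are $p$-primary torsion while $\Q$ is uniquely $p$-divisible and torsion-free, which forces $R \operatorname{\mathbf{Hom}}_{k^{\mathrm{indrat}}_{\mathrm{proet}}}(G, \Q) = 0$. Combined with \cite[Proposition (2.4.1) (b)]{Suz}, which identifies $R \operatorname{\mathbf{Hom}}_{k^{\mathrm{indrat}}_{\mathrm{proet}}}(G, \Q / \Z)$ with the Breen-Serre dual $G^{*}$ placed in degree $1$, the triangle above yields $G^{\mathrm{SD}} \cong G^{*}[-2]$, which lies in $\mathcal{D}_{u}$. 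For a pro-finite-\'etale $P = \varprojlim P_{i}$, Pontryagin duality applied termwise gives $P^{\mathrm{SD}} \cong (\varinjlim P_{i}^{\vee})[-1]$, a shifted torsion \'etale object in $\mathcal{D}_{u}$; dually a torsion \'etale $E$ has $E^{\mathrm{SD}}$ a shift of a pro-finite-\'etale object. For the perfection of a general connected algebraic group I would use its filtration by an abelian-variety quotient, a toric piece and a unipotent piece, reducing to the unipotent case together with Cartier-like computations exhibiting $T^{\mathrm{SD}}$ for a torus as a shift of its character lattice (pro-finite-\'etale) and analogous Barsotti-Weil-type computations exhibiting $A^{\mathrm{SD}}$ for an abelian variety as a shift of its total Tate module. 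In all cases the output lies in $\mathcal{D}_{u}$, so the triangulated closure forces $\mathrm{SD}(\mathcal{D}) \subseteq \mathcal{D}_{u}$. The restriction to an autoequivalence of $\mathcal{D}_{u}$ then follows by iterating $\mathrm{SD}$ on generators: Breen-Serre involutivity gives $(G^{*}[-2])^{\mathrm{SD}} \cong G^{**} \cong G$ for connected unipotent $G$, and Pontryagin biduality gives $(P^{\vee}[-1])^{\mathrm{SD}} \cong P^{\vee \vee} \cong P$ (and similarly for the torsion \'etale generators).

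For part (\ref{0001}), both $G \mapsto \chi(G)$ and $G \mapsto \chi(G^{\mathrm{SD}})$ are additive on distinguished triangles in $\mathcal{D}_{u}$, so it suffices to compare them on the three types of generators. For $G$ a perfection of a connected unipotent algebraic group, $\chi(G) = \dim G$ and $\chi(G^{\mathrm{SD}}) = \chi(G^{*}[-2]) = \chi(G^{*}) = \dim G^{*}$, which equals $\dim G$ by \cite[Proposition 1.2.1]{Beg}. For a pro-finite-\'etale or torsion \'etale $G$, both $\chi(G)$ and $\chi(G^{\mathrm{SD}})$ vanish by the very construction of $\chi$ (such objects are killed on both sides). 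Additivity over triangles then propagates the equality $\chi = \chi \circ \mathrm{SD}$ to all of $\mathcal{D}_{u}$.

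The principal technical obstacle is the handling of the non-unipotent generators of $\mathcal{D}$: while \cite[Proposition (2.4.1) (b)]{Suz} provides a clean identification of $R \operatorname{\mathbf{Hom}}_{k^{\mathrm{indrat}}_{\mathrm{proet}}}(G, \Q / \Z)$ in the unipotent case, for tori and abelian varieties one must both verify the vanishing $R \operatorname{\mathbf{Hom}}_{k^{\mathrm{indrat}}_{\mathrm{proet}}}(\;\cdot\;, \Q) = 0$ and pass through Cartier-/Barsotti-Weil-type dualities adapted to the ind-rational pro-\'etale topology to identify $R \operatorname{\mathbf{Hom}}_{k^{\mathrm{indrat}}_{\mathrm{proet}}}(\;\cdot\;, \Q / \Z)$ with a shifted pro-finite-\'etale object. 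This is precisely the mechanism by which $\mathrm{SD}$ converts the multiplicative/abelian generators of $\mathcal{D}$ into the discrete/\'etale generators of $\mathcal{D}_{u}$, explaining the asymmetry $\mathrm{SD} \colon \mathcal{D} \to \mathcal{D}_{u}$ rather than $\mathrm{SD} \colon \mathcal{D} \to \mathcal{D}$.
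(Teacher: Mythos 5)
Your argument for part (2) — reduce to generators, compute $\chi$ and $\chi \circ \mathrm{SD}$ on each, use \cite[Proposition 1.2.1]{Beg} for the equality of dimensions under Breen-Serre duality, and propagate by additivity over triangles — is essentially the paper's argument, just written out; the paper cites the same reference and records the identity $(\mathbf{G}_{\mathrm{a}}^{\mathrm{perf}})^{\mathrm{SD}} \cong (\mathbf{G}_{\mathrm{a}}^{\mathrm{perf}})[-2]$ as the basic translation between $\mathrm{SD}$ and Breen-Serre duality on unipotent generators. For part (1), the paper simply cites \cite[Proposition (2.4.1) (b), (d)]{Suz}, which already packages the generator-by-generator computation you are trying to reconstruct.

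The reconstruction contains a genuine error in the torus case. You claim that $T^{\mathrm{SD}}$, for a torus $T$, is a shift of the character lattice $X^{*}(T)$ and you describe that lattice as ``pro-finite-\'etale''. But $X^{*}(T) \cong \Z^{n}$ is a finitely generated free abelian group; it is neither torsion nor profinite, and it does not lie in $\mathcal{D}_{u}$ (one cannot build $\Z$ out of connected unipotent groups, pro-finite-\'etale groups and torsion \'etale groups by exact triangles). Were your formula correct, the conclusion of part (1) would already fail for $T = \mathbf{G}_{\mathrm{m}}^{\mathrm{perf}}$. The source of the error is a conflation of Cartier duality, which pairs $T$ against $\mathbf{G}_{\mathrm{m}}$ and produces the lattice, with the functor $\mathrm{SD} = R\operatorname{\mathbf{Hom}}_{k^{\mathrm{indrat}}_{\mathrm{proet}}}(\;\cdot\;, \Z)$; the correct $\mathrm{SD}$-dual of $\mathbf{G}_{\mathrm{m}}^{\mathrm{perf}}$ is a shift of a profinite object (roughly the prime-to-$p$ profinite completion of $\Z$), which does land in $\mathcal{D}_{u}$. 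Your unipotent computation, your abelian-variety computation (the total Tate module genuinely is pro-finite-\'etale over an algebraically closed residue field), your reduction of a general connected group to its abelian, toric and unipotent pieces, and your biduality argument for the autoequivalence on $\mathcal{D}_{u}$ are all sound in outline; only the identification of the toric contribution needs to be corrected.
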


\begin{proof}
	\eqref{0000} is \cite[Proposition (2.4.1) (b), (d)]{Suz}.
	\eqref{0001} is \cite[Proposition 1.2.1]{Beg}
	(note that
            $
                    (\mathbf{G}_{\mathrm{a}}^{\mathrm{perf}})^{\mathrm{SD}}
                \cong
                    (\mathbf{G}_{\mathrm{a}}^{\mathrm{perf}})[-2]
            $
        by the paragraph after the proof of
        \cite[Proposition (2.4.1)]{Suz}).
\end{proof}


\subsection{Base change conductors as dimensions}

\begin{definition}
	Let $G$ be a semiabelian variety over $K$ with N\'eron lft-model $\mathscr{G}$.
	Let $L / K$ be a finite extension.
	Define $c(G)_{L}'$ to be the dimension of 
	$(\mathbf{\Gamma}(L, G) / \mathbf{\Gamma}(\mathcal{O}_{L}, \mathscr{G}))^{0}$.
\end{definition}

This definition makes sense
by the first statement of the following proposition:

\begin{proposition} \label{0015}
	Let $G$ be a semiabelian variety over $K$ with N\'eron lft-model $\mathscr{G}$.
	Let $L / K$ be a finite extension.
	Then $(\mathbf{\Gamma}(L, G) / \mathbf{\Gamma}(\mathcal{O}_{L}, \mathscr{G}))^{0}$
	is an object of $\mathrm{Alg} / k$.
	If $G$ has semiabelian reduction over $L$,
	then its dimension $c(G)_{L}'$ is equal to the base change conductor $c(G)$ times $e_{L / K}$.
\end{proposition}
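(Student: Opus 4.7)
The plan is to realise the sheaf $\mathbf{\Gamma}(L, G) / \mathbf{\Gamma}(\mathcal{O}_{L}, \mathscr{G})$ as the sheaf cokernel of a morphism of perfections of Greenberg transforms, and then to compute the dimension of its identity component via Theorem \ref{Liepointsthm}.

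Let $\mathscr{H}$ denote the N\'eron lft-model of $G_{L}$ over $\mathcal{O}_{L}$, and let $\varphi \colon \mathscr{G}_{\mathcal{O}_{L}} \to \mathscr{H}$ be the canonical morphism of smooth $\mathcal{O}_{L}$-group schemes extending $\mathrm{id}_{G_{L}}$ (produced by the N\'eron mapping property applied to the smooth scheme $\mathscr{G}_{\mathcal{O}_{L}}$). By the representability theorems recalled in Section \ref{Indrat-Proet-Section}---extended to a general semiabelian $G$ by d\'evissage from the tori and Abelian variety cases---one has
\[
    \mathbf{\Gamma}(L, G) \cong \operatorname{Gr}^{\mathrm{perf}} \mathscr{H},
    \qquad
    \mathbf{\Gamma}(\mathcal{O}_{L}, \mathscr{G}) \cong \operatorname{Gr}^{\mathrm{perf}} \mathscr{G}_{\mathcal{O}_{L}}
\]
as sheaves on $\Spec k^{\mathrm{indrat}}_{\mathrm{proet}}$, and $\operatorname{Gr}^{\mathrm{perf}}$ commutes with the formation of identity components. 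Both $\mathscr{G}^{0}_{\mathcal{O}_{L}}$ and $\mathscr{H}^{0}$ are smooth of finite type over $\mathcal{O}_{L}$, the latter being a semiabelian scheme under the reduction hypothesis.

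Next, I apply Proposition \ref{Diconstructprop} to the two-term complex $\mathscr{C}^{\bullet} \colon 0 \to \mathscr{G}^{0}_{\mathcal{O}_{L}} \to \mathscr{H}^{0} \to 0$ (placed in degrees $1$ and $2$), whose generic fibre is the identity on $G_{L}$ and hence exact. The induced map on $\mathcal{O}_{L}$-points is injective, since both sides embed into $G_{L}(L)$ by separatedness and the map is the identity on generic fibres; hence $D_{1} = 0$. By Remark \ref{kersmooth}, $D_{2}^{\mathrm{perf}}$ is canonically the (pro-algebraic, hence sheaf) cokernel of $\operatorname{Gr}^{\mathrm{perf}} \mathscr{G}^{0}_{\mathcal{O}_{L}} \to \operatorname{Gr}^{\mathrm{perf}} \mathscr{H}^{0}$. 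A direct connected-\'etale analysis then shows that $(\mathbf{\Gamma}(L, G) / \mathbf{\Gamma}(\mathcal{O}_{L}, \mathscr{G}))^{0}$ is the quotient of $\mathbf{\Gamma}(L, G)^{0} / \mathbf{\Gamma}(\mathcal{O}_{L}, \mathscr{G})^{0} \cong D_{2}^{\mathrm{perf}}$ by an \'etale subgroup of $\pi_{0}(\mathbf{\Gamma}(\mathcal{O}_{L}, \mathscr{G}))$; consequently it lies in $\mathrm{Alg} / k$ and has dimension $\dim_{k} D_{2}$, which settles the first assertion.

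Finally, Theorem \ref{Liepointsthm} yields $\dim_{k} D_{2} = \chi_{\mathrm{points}}(\mathscr{C}^{\bullet}) = \chi_{\mathrm{Lie}}(\mathscr{C}^{\bullet})$. Since the Lie algebra depends only on the identity component, $\operatorname{Lie} \mathscr{G}^{0}_{\mathcal{O}_{L}} = \operatorname{Lie} \mathscr{G} \otimes_{\mathcal{O}_{K}} \mathcal{O}_{L}$ and $\operatorname{Lie} \mathscr{H}^{0} = \operatorname{Lie} \mathscr{H}$ are free $\mathcal{O}_{L}$-modules of the same rank $\dim G$; the induced map is an isomorphism after $\otimes_{\mathcal{O}_{L}} L$, so its kernel vanishes and $\chi_{\mathrm{Lie}}(\mathscr{C}^{\bullet})$ reduces to $\ell_{\mathcal{O}_{L}}(\operatorname{coker}(\operatorname{Lie} \mathscr{G} \otimes_{\mathcal{O}_{K}} \mathcal{O}_{L} \to \operatorname{Lie} \mathscr{H})) = e_{L / K} c(G)$, by the very definition of the base change conductor under the semiabelian-reduction hypothesis. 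The main subtlety I foresee is the identity-component bookkeeping: one has to verify that the sheaf-theoretic $(\;\cdot\;)^{0}$ of the quotient differs from the naive quotient of identity components only by an \'etale (hence $0$-dimensional) piece, so that both the algebraicity and the dimension coming from the LLR machinery descend correctly to $(\mathbf{\Gamma}(L, G) / \mathbf{\Gamma}(\mathcal{O}_{L}, \mathscr{G}))^{0}$.
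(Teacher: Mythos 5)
Your proof is correct and follows essentially the same route as the paper's: apply the LLR machinery (Proposition \ref{Diconstructprop} together with Theorem \ref{Liepointsthm}) to the base-change map $\mathscr{G}^{0}_{\mathcal{O}_{L}} \to \mathscr{G}_{L}^{0}$ on identity components of N\'eron lft-models, identify $D_{2}^{\mathrm{perf}}$ with $\mathbf{\Gamma}(L, G)^{0}/\mathbf{\Gamma}(\mathcal{O}_{L}, \mathscr{G})^{0}$, compute $\chi_{\mathrm{Lie}}$ via the definition of $c(G)$, and pass to $(\mathbf{\Gamma}(L, G)/\mathbf{\Gamma}(\mathcal{O}_{L}, \mathscr{G}))^{0}$ via a surjection with finite \'etale kernel. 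The only cosmetic difference is that the paper works with the three-term complex $0 \to \mathscr{G}' \to \mathscr{G}^{0}_{\mathcal{O}_{L}} \to \mathscr{G}_{L}^{0} \to 0$ including the scheme-theoretic kernel $\mathscr{G}'$, whose smoothening vanishes and thus contributes nothing to $\chi_{\mathrm{Lie}}$, so your two-term version is equivalent.
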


\begin{proof}
	Let $\mathscr{G}_{L}$ be the N\'eron lft-model of $G \times_{K} L$
	($= G \times_{\Spec K} \Spec L$).
	Let $\mathscr{G}'$ be the kernel of the natural morphism
	$\mathscr{G}^{0} \times_{\mathcal{O}_{K}} \mathcal{O}_{L} \to \mathscr{G}_{L}^{0}$.
	Then the smooth algebraic group associated with the sequence
		\begin{equation} \label{0026}
				0
			\to
				\mathscr{G}'
			\to
				\mathscr{G}^{0} \times_{\mathcal{O}_{K}} \mathcal{O}_{L}
			\to
				\mathscr{G}_{L}^{0}
			\to
				0
		\end{equation}
	by Proposition \ref{Diconstructprop} is
		$
				\operatorname{Gr}(\mathscr{G}_{L}^{0})
			/
				\operatorname{Gr}(
						\mathscr{G}^{0}
					\times_{\mathcal{O}_{K}}
						\mathcal{O}_{L}
				)
		$.
	Its perfection is
		\[
					\mathbf{\Gamma}(\mathcal{O}_{L}, \mathscr{G}_{L}^{0})
				/
					\mathbf{\Gamma}(\mathcal{O}_{L}, \mathscr{G}^{0})
			\cong
					\mathbf{\Gamma}(L, G)^{0}
				/
					\mathbf{\Gamma}(\mathcal{O}_{L}, \mathscr{G})^{0}.
		\]
	If $G$ has semiabelian reduction over $L$,
	then its dimension is the value of $\chi_{\Lie}$ at the sequence \eqref{0026}
        viewed as a complex with non-zero terms in degrees $0$, $1$ and $2$
        by Theorem \ref{Liepointsthm}.
        But $\widetilde{\mathscr{G}'} = 0$
        and the map $\Lie(\mathscr{G}^{0} \times_{\mathcal{O}_{K}} \mathcal{O}_{L}) \to \Lie(\mathscr{G}^{0}_{L})$ is injective
        with torsion cokernel of length $e_{L / K} c(G)$.
        Hence the value of $\chi_{\Lie}$ at \eqref{0026} is $e_{L / K} c(G)$.
	Since the natural morphism
		\[
					\mathbf{\Gamma}(L, G)^{0}
				/
					\mathbf{\Gamma}(\mathcal{O}_{L}, \mathscr{G})^{0}
			\to
				(\mathbf{\Gamma}(L, G) / \mathbf{\Gamma}(\mathcal{O}_{L}, \mathscr{G}))^{0}
		\]
	is surjective with finite kernel, the result follows.
\end{proof}

\begin{proposition} \label{IsogL}
Let $\varphi \colon G_1\to G_2$ be an isogeny of semiabelian varieties over $K$ and let $L/K$ be a finite Galois extension. For $i=1,2,$ let $\mathscr{G}_i$ be the Néron lft-model of $G_i$ over $\Og_K.$ Then the morphism 
$$(\mathbf{\Gamma}(L,G_{1,L})/\mathbf{\Gamma}(\Og_L,\mathscr{G}_{1,L})) ^ 0\to (\mathbf{\Gamma}(L,G_{2,L})/\mathbf{\Gamma}(\Og_L,\mathscr{G}_{2,L})) ^ 0 $$
induces an isogeny on maximal semiabelian quotients. In particular, the mapping cone of this morphism (in $\mathcal{D}$) lies in $\mathcal{D}_u.$
\end{proposition}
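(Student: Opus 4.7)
The strategy is to exhibit the mapping cone in $D^{b}(\mathrm{IPAlg}/k)$ as being built, via distinguished triangles, from cohomology objects attached to the finite flat kernel $N := \ker \varphi$, each of which lies in $\mathcal{D}_{u}$. The ``isogeny on maximal semiabelian quotients'' statement then follows, since no nontrivial torus or abelian-variety factor can appear as a cohomology object of something in $\mathcal{D}_{u}$.

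By the proof of Proposition \ref{0015}, the natural surjection $\mathbf{\Gamma}(L, G_{1,L})^{0} / \mathbf{\Gamma}(\mathcal{O}_{L}, \mathscr{G}_{1,L})^{0} \twoheadrightarrow Q_{1}^{0}$ (and similarly for $i=2$) has finite kernel, so it is harmless to replace $Q_{i}^{0}$ by the left-hand side. The octahedral axiom applied to the defining short exact sequences then reduces the problem to showing that the mapping cones of (i) $\mathbf{\Gamma}(L, G_{1,L})^{0} \to \mathbf{\Gamma}(L, G_{2,L})^{0}$ and (ii) $\mathbf{\Gamma}(\mathcal{O}_{L}, \mathscr{G}_{1,L})^{0} \to \mathbf{\Gamma}(\mathcal{O}_{L}, \mathscr{G}_{2,L})^{0}$ both lie in $\mathcal{D}_{u}$.

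For (i), the fppf exact sequence $0 \to N_{L} \to G_{1,L} \to G_{2,L} \to 0$ yields, after applying $R\mathbf{\Gamma}(L, -)$, a distinguished triangle identifying this cone (up to finite \'etale pieces from the passage to identity components) with a truncation of $R\mathbf{\Gamma}(L, N_{L})[1]$. By Proposition \ref{SuzFinFlatProp}, $\mathbf{\Gamma}(L, N_{L})$ is finite \'etale and $\mathbf{H}^{1}(L, N_{L}) \in \mathrm{IPAlg}/k$; its $\pi_{0}$ is finite \'etale by Propositions \ref{FinDuality} and \ref{FinDualMixed}, and its identity component is pro-unipotent, being an extension of the Breen--Serre dual of $\mathbf{H}^{1}(\mathcal{O}_{L}, M)$ (for $M$ the Cartier dual of a finite flat model of $N_{L}$ over $\mathcal{O}_{L}$) by $\mathbf{H}^{1}(\mathcal{O}_{L}, N_{L})$, both of which are pro-unipotent as in the proof of Proposition \ref{LocCohStr}. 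Hence $R\mathbf{\Gamma}(L, N_{L}) \in \mathcal{D}_{u}$, and so is the cone of (i). For (ii), where no short exact sequence of N\'eron lft-models is available, the morphism $\mathscr{G}_{1} \to \mathscr{G}_{2}$ extending $\varphi$ has a quasi-finite flat fppf-kernel $\widetilde{\mathscr{N}}$ with generic fibre $N$ and an fppf-cokernel $\mathscr{C}$ supported on the special fibre. Splitting this into two short exact sequences (for image and cokernel) and taking $R\mathbf{\Gamma}(\mathcal{O}_{L}, -)$ expresses the cone of (ii) in terms of $R\mathbf{\Gamma}(\mathcal{O}_{L}, \widetilde{\mathscr{N}}_{L})$ and $R\mathbf{\Gamma}(\mathcal{O}_{L}, \mathscr{C}_{L})$. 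The former is pro-unipotent-by-finite-\'etale by Propositions \ref{FinFlatOK} and \ref{LocCohStr} applied to the finite part of $\widetilde{\mathscr{N}}_{L}$ (the purely \'etale part contributing only finite \'etale pieces); the latter is a torsion-\'etale contribution from the component groups of the N\'eron models, since $\mathscr{C}$ is supported on $\Spec k$. Hence the cone of (ii) also lies in $\mathcal{D}_{u}$.

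The principal obstacle is step (ii): the failure of exactness of N\'eron lft-models forces us to work with the quasi-finite kernel $\widetilde{\mathscr{N}}$ and the cokernel sheaf $\mathscr{C}$ rather than a clean short exact sequence of finite flat schemes, and verifying that both contribute only pro-unipotent-by-finite-\'etale pieces in $\mathcal{D}_{u}$ is the technical heart of the argument.
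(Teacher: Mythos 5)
Your proposal takes a genuinely different route from the paper, and it has a real gap. The paper's proof is a two-line formal argument: since $\varphi$ is an isogeny of degree $d$, there is an isogeny $\psi\colon G_2 \to G_1$ with $\psi\circ\varphi = [d]$ and $\varphi\circ\psi = [d]$. Functoriality of $G \mapsto (\mathbf{\Gamma}(L,G)/\mathbf{\Gamma}(\Og_L,\mathscr{G}))^0$ and of the maximal semiabelian quotient then forces the composites $Q_1 \to Q_2 \to Q_1$ and $Q_2 \to Q_1 \to Q_2$ to be multiplication by $d$, which is an isogeny on semiabelian $Q_j$; hence both $Q_1\to Q_2$ and $Q_2\to Q_1$ are surjective with finite kernel, so the first claim holds, and the $\mathcal{D}_u$ statement is an immediate corollary (the kernel and cokernel are then extensions of finite \'etale by connected unipotent perfect groups). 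Your proposal inverts this logic: you try to prove the $\mathcal{D}_u$ claim first by decomposing the cone cohomologically, and to extract the isogeny claim afterward. This is logically valid, but it replaces a formal quasi-inverse trick with a hard computation, and that computation is where the gap lies.

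Concretely, step (ii) rests on the assumption that the scheme-theoretic kernel $\widetilde{\mathscr{N}}$ of $\mathscr{G}_1 \to \mathscr{G}_2$ is quasi-finite flat over $\Og_K$ with generic fibre $N$, so that you can split $\mathscr{G}_1 \to \mathscr{G}_2$ into two short exact sequences of fppf sheaves. This is not true in general: the kernel of a morphism of N\'eron (lft-)models need not be flat, and its special fibre can be positive-dimensional whenever the induced morphism on special fibres is not surjective. Indeed, exactly this failure is what makes Proposition \ref{Chigammaprop} and the isogeny non-invariance examples in Section 4 interesting; and elsewhere (e.g.\ the proof of Proposition \ref{0019} and \ref{0021}), the paper carefully works instead with the \emph{schematic closure} $\mathscr{N}$ of $N$ inside $\mathscr{G}_1$, which \emph{is} finite flat, takes the fppf quotient $\mathscr{G}_1/\mathscr{N}$, and compares this quotient with $\mathscr{G}_2$ as a separate step --- precisely because $\mathscr{N} \ne \ker(\mathscr{G}_1\to\mathscr{G}_2)$ in general. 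Relatedly, your treatment of the cokernel $\mathscr{C}$ (``a torsion-\'etale contribution from the component groups'') is not justified: even granting that the cokernel is supported on the special fibre, it receives contributions from the failure of exactness at the level of identity components, not only component groups, and these can be connected unipotent rather than torsion \'etale. You have correctly located the hard spot, but as written step (ii) does not close. Given how short and robust the quasi-inverse argument is, I would recommend adopting it here and reserving the cohomological machinery for the later propositions where no such trick is available.
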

\begin{proof}
Let $Q_1$ and $Q_2$ be the maximal semiabelian quotients of the left and right hand side, respectively. Let $d$ be the degree of the isogeny $\varphi$, so we have another isogeny $\psi\colon G_2 \to G_1$ such that $\psi\circ\varphi$ and $\varphi\circ \psi$ are both multiplication by $d.$ Then the compositions $Q_1 \to Q_2 \to Q_1$ and $Q_2 \to Q_1 \to Q_2$ are multiplication by $d,$ which is an isogeny as the $Q_j$ are semiabelian. Hence both maps $Q_1\to Q_2$ and $Q_2 \to Q_1$ are surjective and have finite kernel, as was claimed. In particular, both the kernel and the cokernel of the morphism from the proposition are extensions of a connected unipotent perfect algebraic group and a finite étale (perfect) algebraic group.
\end{proof}


\subsection{Pairings on N\'eron models}

Let $A$ and $B$ be Abelian varieties over $K$ dual to each other.
Let $\mathscr{A}$ and $\mathscr{B}$ be their N\'eron models.
Let $\otimes^{L} = \otimes^{L}_{\Z}$ be the derived tensor product functor.
As explained in \cite[Sections 4 and 5]{Bosch} or \cite[Chapter III, Appendix C]{Milne},
the morphism $A \otimes^{L} B \to \Gm[1]$ in $D(K_{\fppf})$
($= D(\Spec K_{\fppf})$) defined by the Poincar\'e biextension
canonically extends to a morphism
	\begin{equation} \label{0006}
		\mathscr{A} \otimes^{L} \mathscr{B}^{0} \to \Gm[1]
	\end{equation}
in $D(\mathcal{O}_{K, \fppf})$.

For a morphism $X \to Y$ in an abelian category,
we denote by $[X \to Y]$ the complex
concentrated in degrees $-1$ and $0$.
Its image in the derived category is
also denoted by $[X \to Y]$.

\begin{proposition} \label{BiextK}
	Let $u \colon A_{1} \to A_{2}$ be a morphism of Abelian varieties over $K$.
	Let $v \colon B_{2} \to B_{1}$ be the morphism induced on the duals.
	Then there exists a canonical morphism
		\begin{equation} \label{0007}
					[A_{1} \to A_{2}][-1]
				\otimes^{L}
					[B_{2} \to B_{1}]
			\to
				\Gm[1]
		\end{equation}
	in $D(K_{\fppf})$ such that the diagram
		\[
			\begin{CD}
					[A_{1} \to A_{2}][-1]
				@>>>
					A_{1}
				@>>>
					A_{2}
				\\
				@VVV @VVV @VVV
				\\
					R \operatorname{\mathbf{Hom}}_{K_{\fppf}}([B_{2} \to B_{1}], \Gm[1])
				@>>>
					R \operatorname{\mathbf{Hom}}_{K_{\fppf}}(B_{1}, \Gm[1])
				@>>>
					R \operatorname{\mathbf{Hom}}_{K_{\fppf}}(B_{2}, \Gm[1])
			\end{CD}
		\]
	is a morphism of exact triangles,
	where the middle and right vertical morphisms are
	the morphisms induced by the Poincar\'e biextensions.
\end{proposition}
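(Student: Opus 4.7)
The plan is to use tensor-Hom adjunction together with the naturality of the Poincar\'e biextension and the functoriality of homotopy fibers in a triangulated category. By the adjunction
\[
    \Hom_{D(K_{\fppf})}\bigl( [A_{1} \to A_{2}][-1] \otimes^{L} [B_{2} \to B_{1}],\, \Gm[1] \bigr)
    =
    \Hom_{D(K_{\fppf})}\bigl( [A_{1} \to A_{2}][-1],\, R \operatorname{\mathbf{Hom}}_{K_{\fppf}}([B_{2} \to B_{1}], \Gm[1]) \bigr),
\]
producing the desired morphism is equivalent to producing a morphism $[A_{1} \to A_{2}][-1] \to R \operatorname{\mathbf{Hom}}_{K_{\fppf}}([B_{2} \to B_{1}], \Gm[1])$, and the diagram in the statement translates, under this correspondence, to a diagram of $R \operatorname{\mathbf{Hom}}_{K_{\fppf}}$-adjoints.

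For $i = 1, 2$, the Poincar\'e biextension of $A_{i}$ with $B_{i}$ represents a canonical class in $\Ext^{1}(A_{i} \otimes^{L} B_{i}, \Gm)$, equivalently a canonical morphism $\alpha_{i} \colon A_{i} \to R \operatorname{\mathbf{Hom}}_{K_{\fppf}}(B_{i}, \Gm[1])$ in $D(K_{\fppf})$. That $v \colon B_{2} \to B_{1}$ is dual to $u \colon A_{1} \to A_{2}$ is precisely the assertion that the square
\[
    \begin{CD}
        A_{1} @>u>> A_{2} \\
        @V{\alpha_{1}}VV @VV{\alpha_{2}}V \\
        R \operatorname{\mathbf{Hom}}_{K_{\fppf}}(B_{1}, \Gm[1]) @>>{v^{*}}> R \operatorname{\mathbf{Hom}}_{K_{\fppf}}(B_{2}, \Gm[1])
    \end{CD}
\]
commutes in $D(K_{\fppf})$. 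Applying the contravariant exact functor $R \operatorname{\mathbf{Hom}}_{K_{\fppf}}(-, \Gm[1])$ to the mapping cone triangle $B_{2} \xrightarrow{v} B_{1} \to [B_{2} \to B_{1}] \to B_{2}[1]$ identifies $R \operatorname{\mathbf{Hom}}_{K_{\fppf}}([B_{2} \to B_{1}], \Gm[1])$ with the homotopy fiber of $v^{*}$, while $[A_{1} \to A_{2}][-1]$ is by construction the homotopy fiber of $u$. The commutative square therefore induces the desired morphism on fibers. The morphism-of-exact-triangles property follows automatically: the middle and right squares of the diagram in the statement commute by the duality of $u$ and $v$, while the left square commutes by the construction of the induced map on fibers.

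The main obstacle is canonicity: in a bare triangulated category, axiom TR3 guarantees existence but not uniqueness of the induced map on fibers. To pin down a canonical choice, I would work at the chain level, choosing an injective resolution $\Gm \hookrightarrow I^{\bullet}$ on the fppf site so that each $R \operatorname{\mathbf{Hom}}_{K_{\fppf}}(B_{i}, \Gm[1])$ is represented by a concrete complex; the Poincar\'e classes $\alpha_{i}$ then lift to honest chain maps, the commutative square above lifts (after suitable adjustment of representatives) to a strict commutative square of complexes, and the induced map on kernels gives the canonical chain-level morphism between the fibers. Equivalently, one appeals to the standard dg-enhancement of $D(K_{\fppf})$, in which formation of homotopy fibers of a commutative square is functorial.
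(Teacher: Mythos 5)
Your reduction via tensor--Hom adjunction and the identification of $[A_1\to A_2][-1]$ and $R\operatorname{\mathbf{Hom}}_{K_{\fppf}}([B_2\to B_1],\Gm[1])$ as homotopy fibers is the same framing the paper uses, and you have correctly located the difficulty: axiom TR3 gives the existence of a map completing the square into a morphism of triangles, but not its uniqueness or canonicity. The problem is that the fix you propose does not actually resolve this. Choosing an injective resolution $\Gm\hookrightarrow I^\bullet$ and asserting that ``after suitable adjustment of representatives'' the square of chain maps commutes strictly does not pin down a canonical morphism on fibers: the adjustment amounts to a choice of chain homotopy, different homotopies in general induce different maps on mapping fibers, and you give no reason why your adjustment is canonical rather than merely possible. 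The dg-enhancement remark has the same issue: in a dg- or $\infty$-categorical enhancement, the homotopy fiber construction is functorial in squares equipped with a specified $2$-morphism filling them, not in squares that merely commute in the homotopy category $D(K_{\fppf})$. So the canonicity claim is precisely where your argument has a gap, and that gap is the actual mathematical content of the proposition.

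The paper closes the gap with specific geometric input rather than a generic triangulated- or dg-category maneuver. The defining property of the dual morphism $v$ of $u$ is the canonical isomorphism of rigidified $\Gm$-biextensions $(\mathrm{id}\times v)^* P_1 \cong (u\times\mathrm{id})^* P_2$ on $A_1\times_K B_2$, where $P_i$ is the Poincar\'e biextension on $A_i\times_K B_i$. The explicit construction in SGA 7, Expos\'e VII (Theorems 3.2.5 and 3.6.4) of the morphism $A_i\otimes^L B_i\to\Gm[1]$ from the biextension $P_i$ is functorial in the biextension, so this isomorphism of biextensions is exactly the canonical ``filler'' of the square (the canonical homotopy, in your language), and it produces a canonical morphism on fibers. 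Your proof would become correct if you replaced the vague ``suitable adjustment of representatives'' by this biextension isomorphism and invoked the functoriality of the SGA 7 construction; as written, the step that requires a real idea is left as an unjustified assertion.
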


\begin{proof}
	For $i = 1, 2$,
	let $P_{i}$ be the (rigidified) Poincar\'e bundle on $A_{i} \times_{K} B_{i}$.
	Then the pullback of $P_{1}$ by the morphism
	$\mathrm{id} \times v \colon A_{1} \times B_{2} \to A_{1} \times B_{1}$
	and the pullback of $P_{2}$ by the morphism
	$u \times \mathrm{id} \colon A_{1} \times B_{2} \to A_{2} \times B_{2}$
	are canonically identified.
	Then the explicit construction of the morphisms $A_{i} \otimes^{L} B_{i} \to \Gm[1]$
	from $P_{i}$ in \cite[Expos\'e VII, Theorems 3.2.5 and 3.6.4]{Gro72}
	gives a canonical choice of a left vertical morphism in the diagram
	that completes it into a morphism of exact triangles.
\end{proof}

\begin{proposition} \label{0011}
	Let $u \colon A_{1} \to A_{2}$ be a morphism of Abelian varieties over $K$.
	Let $v \colon B_{2} \to B_{1}$ be the morphism induced on the duals.
	Let $u \colon \mathscr{A}_{1} \to \mathscr{A}_{2}$
	and $v \colon \mathscr{B}_{2} \to \mathscr{B}_{1}$ be
	the morphisms induced on the N\'eron models.
	Then the morphism \eqref{0007} canonically extends to a morphism
		\begin{equation} \label{0012}
					[\mathscr{A}_{1} \to \mathscr{A}_{2}][-1]
				\otimes^{L}
					[\mathscr{B}_{2}^{0} \to \mathscr{B}_{1}^{0}]
			\to
				\Gm[1]
		\end{equation}
	in $D(\mathcal{O}_{K, \fppf})$.
	The resulting diagram
		\begin{equation} \label{0013}
			\begin{CD}
					[\mathscr{A}_{1} \to \mathscr{A}_{2}][-1]
				@>>>
					\mathscr{A}_{1}
				@>>>
					\mathscr{A}_{2}
				\\
				@VVV @VVV @VVV
				\\
					R \operatorname{\mathbf{Hom}}
					_{\mathcal{O}_{K, \fppf}}
					(
                            [\mathscr{B}_{2}^{0} \to \mathscr{B}_{1}^{0}], \Gm[1]
                        )
				@>>>
					R \operatorname{\mathbf{Hom}}
					_{\mathcal{O}_{K, \fppf}}
					(\mathscr{B}_{1}^{0}, \Gm[1])
				@>>>
					R \operatorname{\mathbf{Hom}}
					_{\mathcal{O}_{K, \fppf}}
					(\mathscr{B}_{2}^{0}, \Gm[1])
			\end{CD}
		\end{equation}
is a morphism of exact triangles.
\end{proposition}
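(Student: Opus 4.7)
The plan is to construct \eqref{0012} by invoking the mapping-cone construction in the derived category, after establishing a suitable commutative square over $\mathcal{O}_{K}$ that lifts the commutativity on the generic fibre furnished by Proposition \ref{BiextK}. First I would recall that, by Grothendieck's classical construction (as reviewed in \cite[Section 4]{Bosch} and \cite[Chapter III, Appendix C]{Milne}), the Poincar\'e biextension morphism \eqref{0006} exists for each of the pairs $(A_{i}, B_{i})$ with $i = 1, 2$, yielding morphisms $\mathscr{A}_{i} \otimes^{L} \mathscr{B}_{i}^{0} \to \Gm[1]$ in $D(\mathcal{O}_{K, \fppf})$ or, equivalently, morphisms $\mathscr{A}_{i} \to R\operatorname{\mathbf{Hom}}_{\mathcal{O}_{K, \fppf}}(\mathscr{B}_{i}^{0}, \Gm[1])$. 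The core claim is then the commutativity, in $D(\mathcal{O}_{K, \fppf})$, of the square
$$
\begin{CD}
    \mathscr{A}_{1} @>>> R\operatorname{\mathbf{Hom}}_{\mathcal{O}_{K, \fppf}}(\mathscr{B}_{1}^{0}, \Gm[1]) \\
    @V{u}VV @V{v^{\ast}}VV \\
    \mathscr{A}_{2} @>>> R\operatorname{\mathbf{Hom}}_{\mathcal{O}_{K, \fppf}}(\mathscr{B}_{2}^{0}, \Gm[1]).
\end{CD}
$$
Granting this, the morphism \eqref{0012} is defined as the induced map on the shifted mapping cones of the two columns, and \eqref{0013} automatically becomes a morphism of exact triangles by the standard machinery of triangulated categories.

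The commutativity of this square amounts to the equality, in $D(\mathcal{O}_{K, \fppf})$, of the two composites
$$
\mathscr{A}_{1} \otimes^{L} \mathscr{B}_{2}^{0} \xrightarrow{\mathrm{id} \otimes v} \mathscr{A}_{1} \otimes^{L} \mathscr{B}_{1}^{0} \to \Gm[1]
\quad\text{and}\quad
\mathscr{A}_{1} \otimes^{L} \mathscr{B}_{2}^{0} \xrightarrow{u \otimes \mathrm{id}} \mathscr{A}_{2} \otimes^{L} \mathscr{B}_{2}^{0} \to \Gm[1].
$$
Under the Yoneda-type identification of $\operatorname{Ext}^{1}_{\mathcal{O}_{K, \fppf}}(\mathscr{A}_{1} \otimes^{L} \mathscr{B}_{2}^{0}, \Gm)$ with $\Gm$-biextensions, these correspond to the two pullbacks $(\mathrm{id} \times v)^{\ast}$ and $(u \times \mathrm{id})^{\ast}$ of the extended Poincar\'e biextensions on $\mathscr{A}_{1} \times_{\mathcal{O}_{K}} \mathscr{B}_{1}^{0}$ and $\mathscr{A}_{2} \times_{\mathcal{O}_{K}} \mathscr{B}_{2}^{0}$, respectively. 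Their restrictions to the generic fibre $A_{1} \times_{K} B_{2}$ are canonically identified via the equality of pullback line bundles exploited in the proof of Proposition \ref{BiextK}.

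The hard part will be descending this generic identification from $K$ to $\mathcal{O}_{K}$. For this I would invoke the rigidity of $\Gm$-biextensions of smooth products of the form $\mathscr{A} \times_{\mathcal{O}_{K}} \mathscr{B}^{0}$: the restriction functor from biextensions over $\mathcal{O}_{K}$ to biextensions over $K$ should be fully faithful, since any two biextensions extending the same generic one differ by a $\Gm$-torsor on $\mathscr{A}_{1} \times_{\mathcal{O}_{K}} \mathscr{B}_{2}^{0}$ that is trivial on the generic fibre, and such a torsor must itself be trivial by the N\'eron mapping property applied to $\mathscr{A}_{1}$ together with the smoothness of $\mathscr{B}_{2}^{0}$ and the separatedness of $\Gm$. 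Applying this principle to the two pullback biextensions yields the required canonical isomorphism over $\mathcal{O}_{K}$, establishing the commutative square and thereby producing both the morphism \eqref{0012} and the morphism of exact triangles \eqref{0013}.
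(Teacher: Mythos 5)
Your overall strategy matches the paper's: express the desired morphism as the induced map on mapping cones of the two columns of the commutative square, reduce the commutativity of that square to identifying the pullback biextensions $(\mathrm{id} \times v)^{\ast}\mathscr{P}_{1}$ and $(u \times \mathrm{id})^{\ast}\mathscr{P}_{2}$ on $\mathscr{A}_{1} \times_{\mathcal{O}_{K}} \mathscr{B}_{2}^{0}$, note that their generic fibres are canonically identified by the argument of Proposition \ref{BiextK}, and then extend the identification over $\mathcal{O}_{K}$ by a rigidity/full-faithfulness principle for $\Gm$-biextensions. That is exactly the structure of the paper's proof.

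The gap is in your justification of the key step, the full faithfulness of restriction of biextensions from $\mathcal{O}_{K}$ to $K$. Your sketch argues that ``any two biextensions extending the same generic one differ by a $\Gm$-torsor\dots\ that is trivial on the generic fibre,'' and that such a torsor must be trivial by the N\'eron mapping property together with smoothness and separatedness. This conflates two distinct issues and is not a proof. First, what you really need is a bijection between \emph{morphisms} of biextensions over $\mathcal{O}_{K}$ and over $K$ (a specific generic isomorphism must extend canonically, not merely the existence of some isomorphism); morphisms of biextensions form a torsor under the group of biadditive morphisms $\mathscr{A}_{1} \times \mathscr{B}_{2}^{0} \to \Gm$, not under $\Gm$-torsors. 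Second, the ``difference'' of two biextensions is itself a biextension (via the contracted product), so triviality must be shown in the category of biextensions, not just at the level of the underlying line bundle; the N\'eron mapping property on its own does not handle this. The paper instead appeals directly to the full faithfulness result for restriction of $\Gm$-biextensions in SGA 7, Expos\'e VIII, Theorem 7.1 (b), which does exactly the descent you need in one stroke. So your proposal has the right idea but should cite that theorem rather than the informal rigidity heuristic.
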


\begin{proof}
        Let $\mathscr{P}_{i}$ be the canonical extension of
        the Poincar\'e $\Gm$-biextension on $A_{i} \times B_{i}$ to
        $\mathscr{A}_{i} \times \mathscr{B}_{i}^{0}$.
   	Then the pullback of $\mathscr{P}_{1}$ by the morphism
	   $
                \mathrm{id} \times v
            \colon
                \mathscr{A}_{1} \times \mathscr{B}_{2}^{0}
            \to
                \mathscr{A}_{1} \times \mathscr{B}_{1}^{0}
            $
	and the pullback of $\mathscr{P}_{2}$ by the morphism
	   $
                u \times \mathrm{id}
            \colon
                \mathscr{A}_{1} \times \mathscr{B}_{2}^{0}
            \to
                \mathscr{A}_{2} \times \mathscr{B}_{2}^{0}
          $
	give two $\Gm$-biextensions on
        $\mathscr{A}_{1} \times \mathscr{B}_{2}^{0}$.
        Their generic fibers are canonically identified
        by the proof of Proposition \ref{BiextK}.
        Therefore, by the fully faithfulness result of biextensions
        in \cite[Expos\'e VIII, Theorem 7.1 (b)]{Gro72},
        we know that these two $\Gm$-biextensions on
        $\mathscr{A}_{1} \times \mathscr{B}_{2}^{0}$
        are canonically identified.
        By the same argument as the proof of Proposition \ref{BiextK},
        this gives a canonical choice of a left vertical morphism
        in the diagram that completes it into a morphism of exact triangles.
\end{proof}


\subsection{Dimensions of duality pairings}

Recall from \cite[(5.2.1.1)]{Suz} the canonical ``trace'' isomorphism
	\begin{equation} \label{0008}
			R \mathbf{\Gamma}_{x}(\mathcal{O}_{K}, \Gm)
		\cong
			\Z[-1]
	\end{equation}
in $D(k^{\mathrm{indrat}}_{\mathrm{proet}})$,
where $\mathbf{\Gamma}_{x}(\mathcal{O}_{K}, \;\cdot\;)$ denotes
the kernel of the natural morphism
    $
            \mathbf{\Gamma}(\mathcal{O}_{K}, \;\cdot\;)
        \to
            \mathbf{\Gamma}(K, \;\cdot\;)
    $
\cite[Section 3.3]{Suz}.
Let $A, B$ be Abelian varieties dual to each other
and $\mathscr{A}, \mathscr{B}$ their N\'eron models.
Let $L / K$ be a finite Galois extension.
Then the morphism \eqref{0006} induces a morphism
	\[
				R \mathbf{\Gamma}_{x}(\mathcal{O}_{L}, \mathscr{A})
			\otimes^{L}
				R \mathbf{\Gamma}(\mathcal{O}_{L}, \mathscr{B}^{0})
		\to
			R \mathbf{\Gamma}_{x}(\mathcal{O}_{L}, \Gm[1])
		\cong
			\Z
	\]
by \cite[Proposition (3.3.4)]{Suz}
and hence a morphism
	\begin{equation} \label{0010}
				R \mathbf{\Gamma}_{x}(\mathcal{O}_{L}, \mathscr{A})
			\to
				R \mathbf{\Gamma}(\mathcal{O}_{L}, \mathscr{B}^{0})^{\mathrm{SD}}.
	\end{equation}
For $G \in \mathrm{IPAlg} / k$,
define
	$
			G^{\mathrm{SD}'}
		=
			\operatorname{\mathbf{Ext}}
			^{1}_{k^{\mathrm{indrat}}_{\mathrm{proet}}}
			(G, \Q / \Z)
	$.
Using the calculations presented in the following subsections, we shall show that $c(A)-c(B)$ vanishes by showing that this quantity equals its own additive inverse. In particular, sign and indexing conventions are essential.
\begin{proposition} \label{0014}
	Let $L / K$ be a finite Galois extension.
	Then the $n$-th cohomology objects $H^{n}$ of
	$R \mathbf{\Gamma}_{x}(\mathcal{O}_{L}, \mathscr{A})$
	for $n \in \Z$ are given by
		\begin{gather*}
					H^{1}
				\cong
					\frac{
						\mathbf{\Gamma}(L, A)
					}{
						\mathbf{\Gamma}(\mathcal{O}_{L}, \mathscr{A})
					},
			\quad
					H^{2}
				\cong
					\mathbf{H}^{1}(L, A),
			\\
					H^{n}
				=
					0,
				\quad
					n \ne 1, 2.
		\end{gather*}
	The $n$-th cohomology objects $H^{n}$ of
	$R \mathbf{\Gamma}(\mathcal{O}_{L}, \mathscr{B}^{0})^{\mathrm{SD}}$
	are given by
		\[
					H^{2}
				\cong
					\mathbf{\Gamma}(\mathcal{O}_{L}, \mathscr{B}^{0})^{\mathrm{SD}'},
			\quad
					H^{n}
				=
					0,
				\quad
					n \ne 2.
		\]
	Under these isomorphisms, the morphism \eqref{0010} in $H^{2}$
	is given by the composite
		\[
				\mathbf{H}^{1}(L, A)
			\cong
				\mathbf{\Gamma}(L, B)^{\mathrm{SD}'}
			\twoheadrightarrow
				\mathbf{\Gamma}(\mathcal{O}_{L}, \mathscr{B}^{0})^{\mathrm{SD}'}
		\]
	of the duality isomorphism (Proposition \ref{SuzDualityProp})
        and the natural morphism,
	the latter of which is surjective.
\end{proposition}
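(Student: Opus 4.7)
The plan is to compute each of the two complexes from its defining distinguished triangle together with the vanishing results of Section \ref{Indrat-Proet-Section}, and then to identify the $H^{2}$-morphism by functoriality of the Poincar\'e pairing \eqref{0006}.

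For $R\mathbf{\Gamma}_{x}(\mathcal{O}_{L}, \mathscr{A})$, I would use the defining triangle
    \[
            R\mathbf{\Gamma}_{x}(\mathcal{O}_{L}, \mathscr{A})
        \to
            R\mathbf{\Gamma}(\mathcal{O}_{L}, \mathscr{A})
        \to
            R\mathbf{\Gamma}(L, A)
        \xrightarrow{+1}.
    \]
The essential vanishing is $\mathbf{H}^{n}(\mathcal{O}_{L}, \mathscr{A}) = 0$ for $n \ge 1$: evaluated at an algebraically closed extension $k'/k$, one has $\mathbf{H}^{n}(\mathcal{O}_{L}, \mathscr{A})(k') = H^{n}(\mathbf{O}_{L}(k'), \mathscr{A})$, and since $\mathbf{O}_{L}(k')$ is a strictly Henselian DVR with residue field $k'$ and $\mathscr{A}$ is smooth, smooth base change reduces this to $H^{n}(k', \mathscr{A}_{k'}) = 0$. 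Together with $\mathbf{H}^{n}(L, A) = 0$ for $n \ge 2$ and the injectivity $\mathbf{\Gamma}(\mathcal{O}_{L}, \mathscr{A}) \hookrightarrow \mathbf{\Gamma}(L, A)$ from the N\'eron universal property, the long exact sequence immediately yields the stated description of the cohomology objects.

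For $R\mathbf{\Gamma}(\mathcal{O}_{L}, \mathscr{B}^{0})^{\mathrm{SD}}$, the same smoothness argument identifies $R\mathbf{\Gamma}(\mathcal{O}_{L}, \mathscr{B}^{0})$ with $G := \mathbf{\Gamma}(\mathcal{O}_{L}, \mathscr{B}^{0})$ in degree $0$; this is a connected object of $\mathrm{PAlg}/k$. Applying $R\operatorname{\mathbf{Hom}}(G, -)$ to $0 \to \Z \to \Q \to \Q/\Z \to 0$ and using that $\operatorname{\mathbf{Ext}}^{*}(G, \Q) = 0$ (consequence of the divisibility of $G$ as a sheaf on the ind-rational pro-\'etale site), one obtains $\operatorname{\mathbf{Ext}}^{n+1}(G, \Z) \cong \operatorname{\mathbf{Ext}}^{n}(G, \Q/\Z)$ and $\operatorname{\mathbf{Hom}}(G, \Z) = 0$. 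Connectedness gives $\operatorname{\mathbf{Hom}}(G, \Q/\Z) = 0$, while by definition $\operatorname{\mathbf{Ext}}^{1}(G, \Q/\Z) = G^{\mathrm{SD}'}$; the higher vanishing follows from the filtration of the Greenberg transform $G$ into the special fibre $\mathscr{B}^{0}_{k}$ and a pro-unipotent part, combined with the vanishing of $\operatorname{\mathbf{Ext}}^{\ge 2}(-, \Q/\Z)$ on the constituents cited in Section \ref{Indrat-Proet-Section}.

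Finally, to identify the morphism \eqref{0010} in $H^{2}$: its generic restriction is the morphism $R\mathbf{\Gamma}(L, A) \to R\mathbf{\Gamma}(L, B)^{\mathrm{SD}}$ induced by the Poincar\'e pairing $A \otimes^{L} B \to \Gm[1]$, which in $H^{1}$ is exactly the duality isomorphism of Proposition \ref{SuzDualityProp}. Factoring the integral pairing \eqref{0006} through $\mathscr{B}^{0} \hookrightarrow \mathscr{B}$ then produces the stated surjection $\mathbf{\Gamma}(L, B)^{\mathrm{SD}'} \twoheadrightarrow G^{\mathrm{SD}'}$; its surjectivity reduces to $\operatorname{\mathbf{Ext}}^{2}(Q, \Q/\Z) = 0$ for $Q := \mathbf{\Gamma}(L, B)/G$, which is an ind-algebraic extension of the finite \'etale component group of $\mathscr{B}$ by a direct limit of perfections of connected unipotent groups. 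The main obstacle is precisely this last compatibility --- checking that \eqref{0010} in $H^{2}$ really does factor as claimed --- which requires carefully tracking the trace isomorphism \eqref{0008} through the long exact sequence and verifying that the integral Poincar\'e biextension \eqref{0006} is compatible with its generic fibre under the duality formalism developed in \cite{Suz}.
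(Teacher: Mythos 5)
Your proposal follows essentially the same route as the paper: the triangle relating $R\mathbf{\Gamma}_{x}$, $R\mathbf{\Gamma}(\mathcal{O}_{L}, -)$ and $R\mathbf{\Gamma}(L, -)$, the vanishing $\mathbf{H}^{n}(\mathcal{O}_{L}, \mathscr{A}) = 0$ for $n \ge 1$ and $\mathbf{H}^{n}(L, A) = 0$ for $n \ge 2$, the connectedness of $\mathbf{\Gamma}(\mathcal{O}_{L}, \mathscr{B}^{0})$, and the commutative square relating the local pairing to the one over $K$. For the second computation the paper simply invokes \cite[Proposition (2.4.1) (a)]{Suz} (the SD-dual of a connected pro-algebraic group is concentrated in degree $2$ and given by SD$'$), whereas you re-derive it from the Serre duality sequence $0 \to \Z \to \Q \to \Q/\Z \to 0$ together with the filtration of the Greenberg transform — that is a valid, more self-contained argument, though nothing more than what the cited proposition packages. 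One small imprecision: in arguing the surjectivity of $\mathbf{\Gamma}(L, B)^{\mathrm{SD}'} \twoheadrightarrow \mathbf{\Gamma}(\mathcal{O}_{L}, \mathscr{B}^{0})^{\mathrm{SD}'}$ you describe the quotient $Q := \mathbf{\Gamma}(L, B)/\mathbf{\Gamma}(\mathcal{O}_{L}, \mathscr{B}^{0})$ as an ind-algebraic extension of the component group by a direct limit of perfections of connected unipotent groups; but since $B$ is an abelian variety, $\mathbf{\Gamma}(L, B) = \mathbf{\Gamma}(\mathcal{O}_{L}, \mathscr{B})$, so $Q$ is exactly the finite \'etale component group $\pi_{0}(\mathscr{B}_{k})$ — the unipotent part you write down is trivial. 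The conclusion $\operatorname{\mathbf{Ext}}^{2}(Q, \Q/\Z) = 0$ is of course still correct, so this does not affect the validity of your argument, but the description suggests you were thinking of the torus (or general semiabelian) setting rather than the abelian variety case at hand.
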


\begin{proof}
	The exact triangle
		\[
				R \mathbf{\Gamma}(\mathcal{O}_{L}, \mathscr{A})
			\to
				R \mathbf{\Gamma}(L, A)
			\to
				R \mathbf{\Gamma}_{x}(\mathcal{O}_{L}, \mathscr{A})[1]
		\]
	and $\mathbf{H}^{n}(\mathcal{O}_{L}, \mathscr{A}) = 0$ for $n \ge 1$ show
	the result for $R \mathbf{\Gamma}_{x}(\mathcal{O}_{L}, \mathscr{A})$.
	For $R \mathbf{\Gamma}(\mathcal{O}_{L}, \mathscr{B}^{0})^{\mathrm{SD}}$,
	we have
		\[
				R \mathbf{\Gamma}(\mathcal{O}_{L}, \mathscr{B}^{0})
			\cong
				\mathbf{\Gamma}(\mathcal{O}_{L}, \mathscr{B}^{0})
			\cong
				\mathbf{\Gamma}(\mathcal{O}_{L}, \mathscr{B})^{0}
		\]
	by \cite[Proposition Proposition (3.4.2) (b)]{Suz}, which is connected.
	Hence the result follows from \cite[Proposition (2.4.1) (a)]{Suz}.
	We have a commutative diagram
		\[
			\begin{CD}
				R \mathbf{\Gamma}(L, A)[-1]
			@>>>
				R \mathbf{\Gamma}(L, B)^{\mathrm{SD}}
			\\ @VVV @VVV \\
				R \mathbf{\Gamma}_{x}(\mathcal{O}_{L}, \mathscr{A})
			@>>>
				R \mathbf{\Gamma}(\mathcal{O}_{L}, \mathscr{B}^{0})^{\mathrm{SD}},
			\end{CD}
		\]
	which gives the description of the morphism on $H^{2}$.
	The surjectivity is the vanishing of
		$
			\operatorname{\mathbf{Ext}}
			^{\ge 2}_{k^{\mathrm{indrat}}_{\mathrm{proet}}}
			(\;\cdot\;, \Q / \Z)
		$
	for pro-algebraic groups \cite[Proposition (2.4.1) (a)]{Suz}.
\end{proof}

\begin{proposition} \label{0017}
	Let $A_{i}, B_{i}, \mathscr{A}_{i}, \mathscr{B}_{i}$ be
	as in Proposition \ref{0011}.
	Assume that the morphism $A_{1} \to A_{2}$ is an isogeny.
	Let $L / K$ be a finite Galois extension.
	Consider the morphism
		\begin{equation} \label{0018}
				R \mathbf{\Gamma}_{x}(
					\mathcal{O}_{L},
					[\mathscr{A}_{1} \to \mathscr{A}_{2}][-1]
				)
			\to
				R \mathbf{\Gamma}(
					\mathcal{O}_{L},
					[\mathscr{B}_{2}^{0} \to \mathscr{B}_{1}^{0}]
				)^{\mathrm{SD}}
		\end{equation}
	induced by the morphism \eqref{0012}.
	Then its mapping cone belongs to $\mathcal{D}$.
	The value of $\chi$ at this cone is equal to
	$c(A_{1})_{L}' - c(A_{2})_{L}' - c(B_{1})_{L}' + c(B_{2})_{L}'$.
\end{proposition}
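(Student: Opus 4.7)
The plan is to derive \eqref{0018} from the morphism of exact triangles \eqref{0013} supplied by Proposition \ref{0011}, and then reduce the computation of the cone to the single-abelian-variety case handled by Proposition \ref{0014}. Applying $R\mathbf{\Gamma}_{x}(\mathcal{O}_{L}, -)$ to the top row of \eqref{0013} directly yields the exact triangle
\[
    R\mathbf{\Gamma}_{x}(\mathcal{O}_{L}, [\mathscr{A}_{1} \to \mathscr{A}_{2}][-1])
    \to R\mathbf{\Gamma}_{x}(\mathcal{O}_{L}, \mathscr{A}_{1})
    \to R\mathbf{\Gamma}_{x}(\mathcal{O}_{L}, \mathscr{A}_{2})
    \xrightarrow{+1}.
\]
For the bottom row, the adjunction \cite[Proposition (3.3.4)]{Suz} between $R\mathbf{\Gamma}_{x}$ and $R\operatorname{\mathbf{Hom}}$, combined with the trace isomorphism \eqref{0008}, identifies
\[
        R\mathbf{\Gamma}_{x}(\mathcal{O}_{L}, R\operatorname{\mathbf{Hom}}_{\mathcal{O}_{K, \mathrm{fppf}}}(G, \Gm[1]))
    \cong
        R\mathbf{\Gamma}(\mathcal{O}_{L}, G)^{\mathrm{SD}}
\]
for $G$ equal to each of $[\mathscr{B}_{2}^{0} \to \mathscr{B}_{1}^{0}]$, $\mathscr{B}_{1}^{0}$, $\mathscr{B}_{2}^{0}$, so that the bottom row becomes the analogous exact triangle of $R\mathbf{\Gamma}(\mathcal{O}_{L}, -)^{\mathrm{SD}}$-objects. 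The three induced vertical morphisms are \eqref{0018} on the left and the pairings \eqref{0010} for $A_{1}$ and $A_{2}$ on the right.

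Write $\alpha$ for \eqref{0018}, $\beta_{i}$ for the pairing for $A_{i}$, and $C_{i} = \mathrm{cone}(\beta_{i})$. The $3 \times 3$ lemma (or an octahedral argument) applied to the resulting morphism of triangles produces an exact triangle
\[
    \mathrm{cone}(\alpha) \to C_{1} \to C_{2} \xrightarrow{+1}.
\]
Proposition \ref{0014} computes the cohomology of source and target of $\beta_{i}$ and asserts that $\beta_{i}$ is surjective on $H^{2}$; the cohomology long exact sequence then yields $H^{0}(C_{i}) \cong \mathbf{\Gamma}(L, A_{i}) / \mathbf{\Gamma}(\mathcal{O}_{L}, \mathscr{A}_{i})$, $H^{1}(C_{i}) \cong \ker(\mathbf{H}^{1}(L, A_{i}) \twoheadrightarrow \mathbf{\Gamma}(\mathcal{O}_{L}, \mathscr{B}_{i}^{0})^{\mathrm{SD}'})$, and $H^{j}(C_{i}) = 0$ for $j \ne 0, 1$. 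Applying $\operatorname{\mathbf{Ext}}^{\ast}_{k^{\mathrm{indrat}}_{\mathrm{proet}}}(-, \Q/\Z)$ to $0 \to \mathbf{\Gamma}(\mathcal{O}_{L}, \mathscr{B}_{i}^{0}) \to \mathbf{\Gamma}(L, B_{i}) \to \mathbf{\Gamma}(L, B_{i}) / \mathbf{\Gamma}(\mathcal{O}_{L}, \mathscr{B}_{i}^{0}) \to 0$ and using that $\mathbf{\Gamma}(\mathcal{O}_{L}, \mathscr{B}_{i}^{0})$ is connected identifies $H^{1}(C_{i}) \cong (\mathbf{\Gamma}(L, B_{i}) / \mathbf{\Gamma}(\mathcal{O}_{L}, \mathscr{B}_{i}^{0}))^{\mathrm{SD}'}$.

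By Proposition \ref{0015}, the connected parts of $H^{0}(C_{i})$ and of $\mathbf{\Gamma}(L, B_{i}) / \mathbf{\Gamma}(\mathcal{O}_{L}, \mathscr{B}_{i}^{0})$ lie in $\mathrm{Alg}/k$; being cokernels of N\'eron-model base change for abelian varieties, they are perfections of connected unipotent algebraic groups, while their component groups are finite \'etale quotients of the finite N\'eron component groups of $A_{i}$ and $B_{i}$. Hence $C_{i} \in \mathcal{D}$ and consequently $\mathrm{cone}(\alpha) \in \mathcal{D}$. Proposition \ref{0015} gives $\dim H^{0}(C_{i}) = c(A_{i})_{L}'$, and Proposition \ref{0016}\eqref{0001} (dimension preservation under Breen-Serre duality) gives $\dim H^{1}(C_{i}) = c(B_{i})_{L}'$. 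Hence $\chi(C_{i}) = c(A_{i})_{L}' - c(B_{i})_{L}'$, and the triangle yields
\[
    \chi(\mathrm{cone}(\alpha)) = \chi(C_{1}) - \chi(C_{2}) = c(A_{1})_{L}' - c(A_{2})_{L}' - c(B_{1})_{L}' + c(B_{2})_{L}',
\]
as claimed.

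The main obstacle is Step 1: carefully verifying that applying $R\mathbf{\Gamma}_{x}(\mathcal{O}_{L}, -)$ to the bottom row of \eqref{0013} really produces the advertised three vertical morphisms, with the leftmost being \eqref{0018} and with signs and functorial compatibilities matching those of \eqref{0010}. The remaining steps are essentially bookkeeping via long exact sequences and the results already assembled earlier in the paper.
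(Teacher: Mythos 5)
Your overall architecture is sound and essentially runs parallel to the paper's: you build the morphism of exact triangles, then compute Euler characteristics via long exact sequences and Propositions \ref{0014}--\ref{0016}. The difference is that you take cones \emph{vertically} (cones $C_i$ of the two right-hand pairing maps $\beta_i$) and fit $\operatorname{cone}(\alpha)$ into a triangle with them, while the paper takes cones \emph{horizontally} (the top and bottom rows $C$ and $D$) and analyses the morphism $C \to D$ through a $3\times 3$ diagram. Both routes are legitimate in principle.

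However, there is a genuine gap in your dimension count. You assert that, being cokernels of N\'eron-model base change for abelian varieties, the connected parts of $\mathbf{\Gamma}(L, B_i)/\mathbf{\Gamma}(\mathcal{O}_L, \mathscr{B}_i^0)$ are perfections of \emph{unipotent} algebraic groups, and you then use Proposition \ref{0016}(\ref{0001}) to conclude $\dim H^1(C_i) = c(B_i)_L'$. This is false in general. Consider an elliptic curve $B_i$ with additive reduction over $\mathcal{O}_K$ acquiring good reduction over $\mathcal{O}_L$: the special fibre of $\mathscr{B}^0_i$ is $\mathbf{G}_a$, that of $\mathscr{B}^0_{i,L}$ is an elliptic curve $\overline{B}$, and since there are no nonzero homomorphisms $\mathbf{G}_a \to \overline{B}$, the cokernel of $\operatorname{Gr}(\mathscr{B}_i^0 \times \mathcal{O}_L) \to \operatorname{Gr}(\mathscr{B}_{i,L}^0)$ has $\overline{B}$ as a quotient, so $(\mathbf{\Gamma}(L, B_i)/\mathbf{\Gamma}(\mathcal{O}_L, \mathscr{B}_i^0))^0$ is \emph{not} in $\mathcal{D}_u$. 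Since Proposition \ref{0016}(\ref{0001}) guarantees $\chi(G) = \chi(G^{\mathrm{SD}})$ only for $G \in \mathcal{D}_u$ (and $\mathrm{SD}$ does not preserve dimensions of abelian varieties), your individual computation $\dim H^1(C_i) = c(B_i)_L'$ fails. The paper avoids this by never computing the two $C_i$ separately: it invokes Proposition \ref{IsogL} to show that the morphism $\mathbf{\Gamma}(L, B_2)/\mathbf{\Gamma}(\mathcal{O}_L, \mathscr{B}_2^0) \to \mathbf{\Gamma}(L, B_1)/\mathbf{\Gamma}(\mathcal{O}_L, \mathscr{B}_1^0)$ induces an isogeny on semiabelian quotients, so that the \emph{cone} of this morphism lies in $\mathcal{D}_u$; only then is Proposition \ref{0016}(\ref{0001}) applicable, and the semiabelian parts cancel in the difference. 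To repair your proof you would need to replace the per-term dimension claim by a comparison of the two $H^1(C_i)$ via Proposition \ref{IsogL}, which brings you right back to the paper's argument.
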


\begin{proof}
	Denote the morphism in question as $C \to D$
	and a choice of its mapping cone as $E$.
	We have a morphism of exact triangles
		\[
			\begin{CD}
					C
				@>>>
					R \mathbf{\Gamma}_{x}(\mathcal{O}_{L}, \mathscr{A}_{1})
				@>>>
					R \mathbf{\Gamma}_{x}(\mathcal{O}_{L}, \mathscr{A}_{2})
				\\ @VVV @VVV @VVV \\
					D
				@>>>
					R \mathbf{\Gamma}(\mathcal{O}_{L}, \mathscr{B}_{1}^{0})^{\mathrm{SD}}
				@>>>
					R \mathbf{\Gamma}(\mathcal{O}_{L}, \mathscr{B}_{2}^{0})^{\mathrm{SD}}
			\end{CD}
		\]
	by \eqref{0013} and the naturality of the morphisms in 
        \cite[Proposition (3.3.4)]{Suz}.
	Under the isomorphisms in Proposition \ref{0014},
	the long exact sequence for the upper triangle can be written as
		\[
			\begin{CD}
					0
				@>>>
					H^{1}(C)
				@>>>
					\dfrac{
						\mathbf{\Gamma}(L, A_{1})
					}{
						\mathbf{\Gamma}(\mathcal{O}_{L}, \mathscr{A}_{1})
					}
				@>>>
					\dfrac{
						\mathbf{\Gamma}(L, A_{2})
					}{
						\mathbf{\Gamma}(\mathcal{O}_{L}, \mathscr{A}_{2})
					}
				\\
				@>>>
					H^{2}(C)
				@>>>
					\mathbf{H}^{1}(L, A_{1})
				@>>>
					\mathbf{H}^{1}(L, A_{2})
				@>>>
					0
			\end{CD}
		\]
	and $H^{n}(C) = 0$ for $n \ne 1, 2$,
	where the last surjectivity is the vanishing of
	$\mathbf{H}^{\ge 2}(L, \;\cdot\;)$ for finite flat group schemes
	\cite[Proposition (3.4.3) (b)]{Suz}.
	Let $H^{2}(C)'$ be the image of the morphism to $H^{2}(C)$ in this sequence
	and let $H^{2}(C)''$ be the image of the morphism from $H^{2}(C)$ in this sequence.
	The long exact sequence for the lower triangle reduces to an exact sequence
		\[
				0
			\to
				H^{2}(D)
			\to
				\mathbf{\Gamma}(\mathcal{O}_{L}, \mathscr{B}_{1}^{0})^{\mathrm{SD}'}
			\to
				\mathbf{\Gamma}(\mathcal{O}_{L}, \mathscr{B}_{2}^{0})^{\mathrm{SD}'}
			\to
				0
		\]
	and $H^{n}(D) = 0$ for $n \ne 2$,
	where the last morphism is surjective
	because the kernel of
		$
				\mathbf{\Gamma}(\mathcal{O}_{L}, \mathscr{B}_{2}^{0})
			\to
				\mathbf{\Gamma}(\mathcal{O}_{L}, \mathscr{B}_{1}^{0})
		$
	is contained in the kernel of
		$
				\mathbf{\Gamma}(L, B_{2})
			\to
				\mathbf{\Gamma}(L, B_{1})
		$
	(which is finite \'etale) and the vanishing of
		$
			\operatorname{\mathbf{Ext}}
			^{\ge 2}_{k^{\mathrm{indrat}}_{\mathrm{proet}}}
			(\;\cdot\;, \Q / \Z)
		$
	for pro-algebraic groups.
	Hence, with Proposition \ref{0014}, we have a commutative diagram with exact rows and columns
		\[
			\begin{CD}
				@.
				@.
					0
				@.
					0
				@.
				\\ @. @. @VVV @VVV @. \\
				@.
				@.
					\left( 
						\dfrac{
							\mathbf{\Gamma}(L, B_{1})
						}{
							\mathbf{\Gamma}(\mathcal{O}_{L}, \mathscr{B}_{1}^{0})
						}
					\right)^{\mathrm{SD}'}
				@>>>
					\left( 
						\dfrac{
							\mathbf{\Gamma}(L, B_{2})
						}{
							\mathbf{\Gamma}(\mathcal{O}_{L}, \mathscr{B}_{2}^{0})
						}
					\right)^{\mathrm{SD}'}
				@.
				\\ @. @. @VVV @VVV @. \\
					0
				@>>>
					H^{2}(C)''
				@>>>
					\mathbf{H}^{1}(L, A_{1})
				@>>>
					\mathbf{H}^{1}(L, A_{2})
				@>>>
					0
				\\ @. @VVV @VVV @VVV @. \\
					0
				@>>>
					H^{2}(D)
				@>>>
					\mathbf{\Gamma}(\mathcal{O}_{L}, \mathscr{B}_{1}^{0})^{\mathrm{SD}'}
				@>>>
					\mathbf{\Gamma}(\mathcal{O}_{L}, \mathscr{B}_{2}^{0})^{\mathrm{SD}'}
				@>>>
					0
				\\ @. @. @VVV @VVV @. \\
				@.
				@.
					0
				@.
					0.
				@.
			\end{CD}
		\]
	The morphism
		\[
				\dfrac{
					\mathbf{\Gamma}(L, B_{2})
				}{
					\mathbf{\Gamma}(\mathcal{O}_{L}, \mathscr{B}_{2}^{0})
				}
			\to
				\dfrac{
					\mathbf{\Gamma}(L, B_{1})
				}{
					\mathbf{\Gamma}(\mathcal{O}_{L}, \mathscr{B}_{1}^{0})
				}
		\]
	induces an isogeny on the semiabelian quotients
        by Proposition \ref{IsogL}.
	From these and Proposition \ref{0016}, we know that
	$H^{1}(C)$, $H^{2}(C)'$, $\Ker(H^{2}(C)'' \to H^{2}(D))^{0}$ and
	$\operatorname{coker}(H^{2}(C)'' \to H^{2}(D))^{0}$ are
	perfections of algebraic groups
	and
		\begin{gather*}
					\chi(H^{1}(C)) - \chi(H^{2}(C)')
				=
					c(A_{1})_{L}' - c(A_{2})_{L}',
			\\
					\chi(H^{2}(C)'' \to H^{2}(D))
				=
					- c(B_{1})_{L}' + c(B_{2})_{L}'.
		\end{gather*}
	Therefore $\Ker(H^{n}(C) \to H^{n}(D))$ and
	$\operatorname{coker}(H^{n}(C) \to H^{n}(D))$
	are objects of $\mathcal{D}$ for all $n$.
	Hence $E \in \mathcal{D}$.
	We have isomorphisms and an exact sequence
		\begin{gather*}
					H^{0}(E)
				\cong
					H^{1}(C),
			\\
					0
				\to
					H^{2}(C)'
				\to
					H^{1}(E)
				\to
					\Ker(H^{2}(C)'' \to H^{2}(D))
				\to
					0,
			\\
					H^{2}(E)
				\cong
					\operatorname{coker}(H^{2}(C)'' \to H^{2}(D)),
			\\
					H^{n}(E)
				=
					0,
			\quad
					n \ne 0, 1, 2.
		\end{gather*}
	Applying $\chi$, we get the result.
\end{proof}


\subsection{Invariance of isogeny conductors under duality}

Assume that $K$ has equal characteristic.
Let $A_{1} \to A_{2}$ be an isogeny of Abelian varieties over $K.$ 
Let $B_{2} \to B_{1}$ be the dual isogeny.
Let $\mathscr{A}_{1} \to \mathscr{A}_{2}$
and $\mathscr{B}_{2} \to \mathscr{B}_{1}$ be
the morphisms induced on the N\'eron models. Let $L / K$ be a finite Galois extension.
We shall show below that the equality $$c(A_{1})_{L}' - c(A_{2})_{L}' = c(B_{1})_{L}' - c(B_{2})_{L}'$$
holds. Choosing $L$ such that one (hence all) of the semiabelian varieties just mentioned have semiabelian reduction over $\Og_L$, dividing by $e_{L/K}$ and using Proposition \ref{0015} shows $c(A_1)-c(A_2)=c(B_1)-c(B_2).$ In particular, if $A_1$ and $A_2$ are dual to one another, this implies $c(A_1)=c(A_2).$ The kernel $F$ of the isogeny $A_1 \to A_2$ admits a canonical filtration 
$$0=F_0 \subset F_1 \subset F_2 \subset F_3 \subset F_4=F$$
whose associated graded object is the direct sum of an infinitesimal multiplicative group scheme, an infinitesimal unipotent group scheme, a finite étale group scheme of $p$-power order, and a finite étale group scheme of order not divisible by $p,$ respectively. This filtration induces a factorisation of the morphisms $A_1\to A_2$ and $B_2 \to B_1,$ so we may assume without loss of generality that $F$ itself belongs to one of the classes of finite $K$-group schemes just listed. Note that, if $F$ is étale of order invertible in $\Og_K,$ then the maps $\Lie \mathscr{A}_1 \to \Lie \mathscr{A}_2$ and $\Lie \mathscr{B}_2 \to \Lie \mathscr{B}_1$ are isomorphisms. In particular, we may assume that the isogeny $A_1 \to A_2$ has $p$-power degree.

\begin{proposition} \label{0019}
    Assume that $A_{1} \to A_{2}$ has
    multiplicative (hence connected) kernel.
    Then $c(A_{1})_{L}' - c(A_{2})_{L}' = c(B_{1})_{L}' - c(B_{2})_{L}'$.
\end{proposition}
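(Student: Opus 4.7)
The plan is to show that the mapping cone of \eqref{0018} has $\chi$-value zero; by Proposition \ref{0017} this is equivalent to the desired equality. The core input will be local Cartier duality (Propositions \ref{FinDuality} and \ref{FinDualMixed}) applied to an integral extension of the multiplicative kernel of $A_{1} \to A_{2}$.

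The first step is to extend the kernel data to $\mathcal{O}_{K}$. I take the scheme-theoretic closure $\mathscr{F}$ of $F = \ker(A_{1} \to A_{2})$ in $\mathscr{A}_{1}^{0}$; since $F$ is multiplicative of $p$-power order, $\mathscr{F}$ is a finite flat multiplicative $\mathcal{O}_{K}$-group scheme, and its Cartier dual $\mathscr{M}$ is a finite \'etale $\mathcal{O}_{K}$-group scheme with generic fibre $M = \ker(B_{2} \to B_{1})$. Using that a quotient of a smooth $\mathcal{O}_{K}$-group scheme by a flat closed subgroup of multiplicative type is smooth, together with the N\'eron universal property, I will verify that the sequences
$$0 \to \mathscr{F} \to \mathscr{A}_{1}^{0} \to \mathscr{A}_{2}^{0} \to 0, \quad 0 \to \mathscr{M} \to \mathscr{B}_{2}^{0} \to \mathscr{B}_{1}^{0} \to 0$$
are short exact in the fppf topology. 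Component-group discrepancies between $\mathscr{A}_{i}$ and $\mathscr{A}_{i}^{0}$ (and likewise on the $B$ side) are finite \'etale for Abelian varieties and thus contribute $0$ to the eventual $\chi$-count.

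The second step is to use the compatibility of the Poincar\'e biextension pairing (Proposition \ref{0011}) with the Cartier duality pairing $\mathscr{F} \otimes^{L} \mathscr{M} \to \Gm[1]$. This will let me identify the morphism \eqref{0018}, up to terms of $\chi$-value zero coming from individual Poincar\'e pairings on the $(A_{i}, B_{i})$ and from component groups, with the local Cartier-duality morphism
$$R \mathbf{\Gamma}_{x}(\mathcal{O}_{L}, \mathscr{F}) \to R \mathbf{\Gamma}(\mathcal{O}_{L}, \mathscr{M})^{\mathrm{SD}}[-1],$$
where the shift $[-1]$ arises because $[\mathscr{B}_{2}^{0} \to \mathscr{B}_{1}^{0}]$ is quasi-isomorphic to $\mathscr{M}[1]$. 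The third step is the vanishing. Since $\mathscr{M}$ is \'etale, Propositions \ref{FinFlatOK} and \ref{EtCriterion} give $\mathbf{H}^{n}(\mathcal{O}_{L}, \mathscr{M}) = 0$ for $n \ge 1$, so the target is finite \'etale concentrated in degree $2$. The source $R \mathbf{\Gamma}_{x}(\mathcal{O}_{L}, \mathscr{F})$ is concentrated in degree $2$ with value $\mathbf{H}^{1}(L, F) / \mathbf{H}^{1}(\mathcal{O}_{L}, \mathscr{F})$, whose connected component is, by Propositions \ref{FinDuality} and \ref{FinDualMixed}, the Breen-Serre dual of $\mathbf{H}^{1}(\mathcal{O}_{L}, \mathscr{M}) = 0$ and hence vanishes. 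Thus both source and target are finite \'etale, and $\chi$ of the cone is zero.

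The main obstacle will be Step~1, namely rigorously verifying that the integral extensions of the kernels give short exact sequences of fppf sheaves on $\mathcal{O}_{K}$ involving the identity components of the N\'eron models. The multiplicativity hypothesis on $F$ is essential here: Chai's example \cite[(6.10)]{Chai} shows that the base change conductor is not isogeny-invariant for Abelian varieties in general, so no exactness of this type can hold without an additional hypothesis on the kernel.
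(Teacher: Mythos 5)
Your Step~1 contains two claims that are false in general, and both are exactly what the rest of the argument needs to be true.

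First, the scheme-theoretic closure $\mathscr{F}$ of $F = \ker(A_{1} \to A_{2})$ in $\mathscr{A}_{1}^{0}$ is \emph{not} multiplicative over $\mathcal{O}_{K}$ in general; multiplicativity of the generic fibre does not propagate to the closure. The paper's own Construction~I (Subsection \ref{consIsubs}) is built precisely on this failure: for an ordinary elliptic curve $\mathscr{E}/\mathcal{O}_{K}$ with supersingular special fibre, the scheme-theoretic closure in $\mathscr{E}$ of the multiplicative part of $\mathscr{E}_{K}[p^{n}]$ is explicitly \emph{not} multiplicative (that is how the counterexamples are produced). Consequently the Cartier dual $\mathscr{M}$ need not be \'etale over $\mathcal{O}_{K}$, which kills Step~3: $\mathbf{H}^{1}(\mathcal{O}_{L}, \mathscr{M})$ need not vanish, and Proposition~\ref{EtCriterion} will in general return a positive, $L$-dependent length coming from $\Omega^{1}_{\mathscr{M}/\mathcal{O}_{K}}$. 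Second, the sequence $0 \to \mathscr{F} \to \mathscr{A}_{1}^{0} \to \mathscr{A}_{2}^{0} \to 0$ is also not exact: the fppf quotient $\mathscr{A}_{1}^{0}/\mathscr{F}$ is smooth, hence maps to $\mathscr{A}_{2}$ by the N\'eron property, but that map is merely an isomorphism on generic fibres. It can fail to be an open immersion, and the failure (the composite of dilatations from \cite[Corollary 2.3]{LLR}) is exactly the quantity that encodes the isogeny conductor. Your proposal implicitly assumes this discrepancy away. The same problem occurs on the $B$-side.

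The paper avoids both pitfalls: it introduces $\mathscr{A}_{2}' := \mathscr{A}_{1}/\mathscr{N}$ (the actual fppf quotient, distinct from $\mathscr{A}_{2}$) and computes the contribution of the comparison morphism $\mathscr{A}_{2}' \to \mathscr{A}_{2}$ separately, using \cite[Theorem 2.1 (b)]{LLR}; and it does \emph{not} assume $\mathscr{M}$ \'etale but instead uses Proposition~\ref{EtCriterion} only to assert that the dimension of $\mathbf{H}^{1}(\mathcal{O}_{L}, \mathscr{M})$, divided by $e_{L/K}$, is independent of $L$. The conclusion then comes from noting that the combination $c(A_{1})_{L}' - c(A_{2})_{L}' - c(B_{1})_{L}' + c(B_{2})_{L}'$, divided by $e_{L/K}$, is $L$-independent, and vanishes at $L = K$. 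Your argument as written would in fact prove a strictly stronger and false statement (individual isogeny invariance of $c$ on the $A$ side), which should itself be a warning sign; the paper explicitly recalls that $c$ is not isogeny-invariant for Abelian varieties (\cite[(6.10)]{Chai}).
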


\begin{proof}
	The morphism
		$
				\mathbf{\Gamma}(\mathcal{O}_{L}, \mathscr{B}_{2}^{0})
			\to
				\mathbf{\Gamma}(\mathcal{O}_{L}, \mathscr{B}_{1}^{0})
		$
	has finite \'etale kernel
	and induces an isogeny on the semiabelian quotients.
	For each $i$,
	the group $\mathbf{\Gamma}(\mathcal{O}_{L}, \mathscr{B}_{i}^{0})$
	is the perfection of the Greenberg transform of infinite level of $\mathscr{B}_{i}^{0}$.
	By assumption, the generic fiber of the morphism
	$\mathscr{B}_{2}^{0} \to \mathscr{B}_{1}^{0}$
	is \'etale surjective.
	Therefore the cokernel of
		$
				\mathbf{\Gamma}(\mathcal{O}_{L}, \mathscr{B}_{2}^{0})
			\to
				\mathbf{\Gamma}(\mathcal{O}_{L}, \mathscr{B}_{1}^{0})
		$
	is the perfection of a unipotent algebraic group
	whose dimension is the $\mathcal{O}_{L}$-length of
	the cokernel of
		\[
				\Lie(\mathscr{B}_{2}^{0}) \otimes_{\mathcal{O}_{K}} \mathcal{O}_{L}
			\to
				\Lie(\mathscr{B}_{1}^{0}) \otimes_{\mathcal{O}_{K}} \mathcal{O}_{L}
		\]
	by \cite[Theorem 2.1 (b)]{LLR}.
	This $\mathcal{O}_{L}$-length divided by $e_{L / K}$ is independent of the choice of $L$.
	With Proposition \ref{0016},
	we know that
		$
			R \mathbf{\Gamma}(
				\mathcal{O}_{L},
				[\mathscr{B}_{2}^{0} \to \mathscr{B}_{1}^{0}]
			)^{\mathrm{SD}}
		$
	is an object of $\mathcal{D}_{u}$
	and the value of $\chi / e_{L / K}$ at this object is independent of the choice of $L$.
	
	For
		$
			R \mathbf{\Gamma}_{x}(
				\mathcal{O}_{L},
				[\mathscr{A}_{1} \to \mathscr{A}_{2}][-1]
			)
		$,
	let $N$ be the kernel of $A_{1} \to A_{2}$.
	Let $\mathscr{N}$ be the schematic closure of $N$ in $\mathscr{A}_{1}$,
	which is finite flat over $\mathcal{O}_{K}$
        since $N$ is infinitesimal.
	Let $\mathscr{A}_{2}'$ be the fppf quotient $\mathscr{A}_{1} / \mathscr{N}$,
	which is a smooth separated group scheme over $\mathcal{O}_{K}$
        by \cite[Théorème 4.C]{An}.
        The term-wise exact sequence of complexes
            \[
                    0
                \to
                    \mathscr{N}
                \to
                    [\mathscr{A}_{1} \to \mathscr{A}_{2}][-1]
                \to
                    [\mathscr{A}_{2}' \to \mathscr{A}_{2}][-1]
                \to
                    0
            \]
        induces an exact triangle
		\[
				R \mathbf{\Gamma}_{x}(\mathcal{O}_{L}, \mathscr{N})
			\to
				R \mathbf{\Gamma}_{x}(
					\mathcal{O}_{L},
					[\mathscr{A}_{1} \to \mathscr{A}_{2}][-1]
				)
			\to
				R \mathbf{\Gamma}_{x}(
					\mathcal{O}_{L},
					[\mathscr{A}_{2}' \to \mathscr{A}_{2}][-1]
				).
		\]
	    Let $\mathscr{M}$ be the Cartier dual of $\mathscr{N}$.
	We have a canonical isomorphism
		\[
				R \mathbf{\Gamma}_{x}(\mathcal{O}_{L}, \mathscr{N})
			\cong
				R \mathbf{\Gamma}(\mathcal{O}_{L}, \mathscr{M})^{\mathrm{SD}}[-1]
		\]
	by \cite[Theorem (5.2.1.2)]{Suz}.
        Proposition \ref{EtCriterion} shows that these isomorphic objects are in $\mathcal{D}$ and that the
	dimension of $\mathbf{H}^{1}(\mathcal{O}_{L}, \mathscr{M})$
	is the $\mathcal{O}_{L}$-length of the pullback of
	$\Omega^{1}_{\mathscr{M} \times_{\mathcal{O}_{K}} \mathcal{O}_{L} / \mathcal{O}_{L}}$
	along the zero section.
	Hence the values of $\chi / e_{L / K}$ at
	$R \mathbf{\Gamma}(\mathcal{O}_{L}, \mathscr{M})^{\mathrm{SD}}$
	and hence at $R \mathbf{\Gamma}_{x}(\mathcal{O}_{L}, \mathscr{N})$
	are independent of the choice of $L$.
	
	The morphism $\mathscr{A}_{2}' \to \mathscr{A}_{2}$ is an isomorphism over $K$.
	Hence
		\[
				R \mathbf{\Gamma}_{x}(
					\mathcal{O}_{L},
					[\mathscr{A}_{2}' \to \mathscr{A}_{2}][-1]
				)
			\cong
				R \mathbf{\Gamma}(
					\mathcal{O}_{L},
					[\mathscr{A}_{2}' \to \mathscr{A}_{2}][-1]
				).
		\]
	By the same reasoning as the case of
    		$
    			R \mathbf{\Gamma}(
    				\mathcal{O}_{L},
    				[\mathscr{B}_{2}^{0} \to \mathscr{B}_{1}^{0}]
    			)
    		$
            above,
        this object is in $\mathcal{D}$ and
	the value of $\chi / e_{L / K}$ at this object is independent of the choice of $L$.
	
	With Proposition \ref{0017},
	all these together imply that
	$c(A_{1})_{L}' - c(A_{2})_{L}' - c(B_{1})_{L}' + c(B_{2})_{L}'$
	divided by $e_{L / K}$ is independent of the choice of $L$.
	Hence we may assume that $L = K$.
	But then every term becomes zero.
\end{proof}

\begin{proposition} \label{0020}
	Assume that $A_{1} \to A_{2}$ has \'etale kernel.
	Then $c(A_{1})_{L}' - c(A_{2})_{L}' = c(B_{1})_{L}' - c(B_{2})_{L}'$.
\end{proposition}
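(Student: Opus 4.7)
The plan is to reduce Proposition \ref{0020} to the previously established Proposition \ref{0019} by invoking the biduality theorem for Abelian varieties. Concretely, I would consider the dual isogeny $B_{2} \to B_{1}$, whose kernel $M$ is by Cartier duality the Cartier dual of the kernel $N$ of $A_{1} \to A_{2}$. By hypothesis $N$ is \'etale over $K$, and, after the reductions carried out in the discussion preceding Proposition \ref{0019}, has $p$-power order. Hence $M$ is a finite flat commutative $K$-group scheme of multiplicative type and $p$-power order. Because $K$ has equal characteristic $p$, such a group scheme is automatically connected: its base change to $K^{\mathrm{sep}}$ is a product of copies of $\mu_{p^{n}}$, each of which is infinitesimal in characteristic $p$, and connectedness descends via the connected-\'etale sequence. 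Thus $B_{2} \to B_{1}$ is an isogeny of Abelian varieties with multiplicative (hence connected) kernel, precisely the hypothesis of Proposition \ref{0019}.

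By biduality for Abelian varieties, the dual of the isogeny $B_{2} \to B_{1}$ is canonically identified with the original isogeny $A_{1} \to A_{2}$. Applying Proposition \ref{0019} to $B_{2} \to B_{1}$, with the roles of the pairs $(A_{1}, A_{2})$ and $(B_{1}, B_{2})$ consequently interchanged, immediately yields
\[
        c(B_{2})_{L}' - c(B_{1})_{L}'
    =
        c(A_{2})_{L}' - c(A_{1})_{L}',
\]
which rearranges to give the desired equality $c(A_{1})_{L}' - c(A_{2})_{L}' = c(B_{1})_{L}' - c(B_{2})_{L}'$.

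Because this is a purely formal application of duality, I do not foresee any substantive obstacle; Proposition \ref{0019} already contains all the hard work involving the site $\Spec k^{\mathrm{indrat}}_{\mathrm{proet}}$, the Begueri--Suzuki duality for finite flat group schemes, and the Poincar\'e biextension pairing on N\'eron models. The only point requiring care is orientational bookkeeping: one must verify that the dual pair really slots into Proposition \ref{0019} with the correct orientation, and that Cartier duality interchanges \'etale and multiplicative finite flat commutative group schemes of $p$-power order over $K$, which is a standard fact in equal characteristic $p$.
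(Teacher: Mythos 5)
Your proposal is correct and matches the paper's own argument, which reads: ``The dual isogeny $B_{2} \to B_{1}$ has multiplicative kernel. Hence the proposition follows from Proposition \ref{0019} by switching $A_{i}$ and $B_{i}$.'' You have simply spelled out the biduality and the orientation bookkeeping in more detail; there is no substantive difference.
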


\begin{proof}
	The dual isogeny $B_{2} \to B_{1}$ has multiplicative kernel.
	Hence the proposition follows from Proposition \ref{0019}
	by switching $A_{i}$ and $B_{i}$.
\end{proof}

\begin{proposition} \label{0021}
	Assume that $A_{1} \to A_{2}$ has unipotent connected kernel.
	Then $c(A_{1})_{L}' - c(A_{2})_{L}' = c(B_{1})_{L}' - c(B_{2})_{L}'$.
\end{proposition}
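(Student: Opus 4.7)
The plan is to adapt the strategy of Proposition \ref{0019} to the unipotent connected (local-local) kernel case. Let $N := \ker(A_{1} \to A_{2})$, which is an infinitesimal unipotent $K$-group scheme of $p$-power order; its Cartier dual $N^{\vee}$ is of the same type. Let $\mathscr{N}$ be the schematic closure of $N$ in $\mathscr{A}_{1}$ (finite flat over $\mathcal{O}_{K}$), set $\mathscr{M} := \mathscr{N}^{\vee}$, and $\mathscr{A}_{2}' := \mathscr{A}_{1} / \mathscr{N}$, which is smooth by \cite[Théorème 4.C]{An}. Symmetrically, on the dual side, let $\mathscr{N}'$ be the closure of $N^{\vee}$ in $\mathscr{B}_{2}^{0}$ and $\mathscr{B}_{1}^{0\prime} := \mathscr{B}_{2}^{0} / \mathscr{N}'$.

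I would decompose the morphism $f$ of Proposition \ref{0017} by means of the exact triangle on the $A$-side
$$R \mathbf{\Gamma}_{x}(\mathcal{O}_{L}, \mathscr{N}) \to R \mathbf{\Gamma}_{x}(\mathcal{O}_{L}, [\mathscr{A}_{1} \to \mathscr{A}_{2}][-1]) \to R \mathbf{\Gamma}_{x}(\mathcal{O}_{L}, [\mathscr{A}_{2}' \to \mathscr{A}_{2}][-1])$$
and the analogous triangle on the $B$-side coming from $0 \to \mathscr{N}' \to \mathscr{B}_{2}^{0} \to \mathscr{B}_{1}^{0\prime} \to 0$ after applying $\mathrm{SD}$. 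The biextension naturality from Proposition \ref{0011} ensures that $f$ respects these decompositions via the generic Cartier duality identification $\mathscr{M}_{K} \cong N^{\vee} \cong \mathscr{N}'_{K}$. For the ``generically iso'' pieces $R \mathbf{\Gamma}(\mathcal{O}_{L}, [\mathscr{A}_{2}' \to \mathscr{A}_{2}][-1])$ and $R \mathbf{\Gamma}(\mathcal{O}_{L}, [\mathscr{B}_{1}^{0\prime} \to \mathscr{B}_{1}^{0}])^{\mathrm{SD}}$, Theorem \ref{Liepointsthm} shows they lie in $\mathcal{D}$ and their $\chi$ divided by $e_{L/K}$ equals an $\mathcal{O}_{K}$-length of a Lie cokernel, hence is independent of $L$, exactly as in the proof of \ref{0019}.

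The core new difficulty is controlling the ``kernel'' pieces $R \mathbf{\Gamma}_{x}(\mathcal{O}_{L}, \mathscr{N}) \cong R \mathbf{\Gamma}(\mathcal{O}_{L}, \mathscr{M})^{\mathrm{SD}}[-1]$ (by the local duality \cite[Theorem (5.2.1.2)]{Suz}) and $R \mathbf{\Gamma}(\mathcal{O}_{L}, \mathscr{N}')^{\mathrm{SD}}$: here both $\mathscr{M}$ and $\mathscr{N}'$ have local-local generic fiber, so Proposition \ref{EtCriterion} does not apply and $\mathbf{H}^{1}(\mathcal{O}_{L}, \mathscr{M})$ is only pro-algebraic, not algebraic. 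The equal characteristic hypothesis enters crucially here: I would use an Artin--Schreier / Witt-vector resolution of $\mathscr{M}$ and $\mathscr{N}'$ by smooth group schemes over $\mathcal{O}_{K}$ to reduce the relevant cohomology computations to those of $\Ga$- or Witt-type objects, on which Theorem \ref{Liepointsthm} applies cleanly. Combining this with Proposition \ref{LocCohStr} (which describes $\mathbf{H}^{1}(K, N^{\vee})^{0} / \mathbf{H}^{1}(\mathcal{O}_{L}, \mathscr{M})$ as a direct limit of perfections of connected unipotent algebraic groups), Proposition \ref{FinDuality}, and the Breen--Serre $\chi$-invariance on $\mathcal{D}_{u}$ from Proposition \ref{0016}, the net $\chi$-contribution of the kernel pieces turns out to equal $e_{L/K}$ times an $\mathcal{O}_{K}$-length measuring the difference between $\mathscr{M}$ and $\mathscr{N}'$ as integral models of $N^{\vee}$, essentially a pullback length of $\Omega^{1}$ along the zero section, transported from \ref{EtCriterion} to the local-local setting via the resolution.

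The main obstacle will be making the cancellation between the two kernel pieces fully precise: although $\mathscr{M}$ and $\mathscr{N}'$ share their generic fiber $N^{\vee}$, they are not canonically isomorphic integrally, and the pro-algebraic (non-algebraic) nature of their $\mathbf{H}^{1}$ forces us to work at the ind/pro level rather than inside $\mathrm{Alg} / k$ throughout. Organising this carefully to produce an $L$-independent $\mathcal{O}_{K}$-length invariant is the heart of the proof. Once this is accomplished, the established independence of $\chi / e_{L/K}$ on $L$ lets us specialise to $L = K$, where every Lie- and $\Omega^{1}$-length contribution vanishes identically, giving $\chi = 0$ and hence the desired equality $c(A_{1})_{L}' - c(A_{2})_{L}' = c(B_{1})_{L}' - c(B_{2})_{L}'$.
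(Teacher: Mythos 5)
Your proposal correctly identifies the overall strategy (reduce to Proposition~\ref{0017}, decompose along the A-side triangle coming from $\mathscr{N}$ and $\mathscr{A}_2' = \mathscr{A}_1/\mathscr{N}$ and an analogous B-side triangle, handle the generically-isomorphism pieces via Theorem~\ref{Liepointsthm} exactly as in Proposition~\ref{0019}) and correctly locates the core difficulty: the two ``kernel'' pieces have local-local generic fibres, so Proposition~\ref{EtCriterion} is unavailable and $\mathbf{H}^1(\mathcal{O}_L, \cdot)$ is only pro-algebraic. However, you then describe ``organising this carefully to produce an $L$-independent $\mathcal{O}_K$-length invariant'' as ``the heart of the proof'' without actually giving the mechanism; the vague suggestion of an Artin--Schreier/Witt resolution does not explain how two integral models of $N^{\vee}$ that are not canonically comparable would be made to cancel. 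This is a genuine gap, and it is precisely the place where the paper's argument has a concrete idea you are missing.

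The paper sidesteps the cancellation problem entirely by \emph{not} decomposing the two sides of the morphism~\eqref{0018} separately. Instead, it builds a commutative square~\eqref{0028} whose bottom edge is \eqref{0018}, whose left and right edges are the triangle maps you describe, and whose top edge is the local duality pairing between $\mathscr{N}$ and the schematic closure $\mathscr{M}$ of $M = \ker(B_2 \to B_1)$. The octahedral axiom then reduces the problem to showing $L$-independence of $\chi/e_{L/K}$ for the \emph{other three} edges of the square. The left and right edges are controlled by generically-iso complexes exactly as in Proposition~\ref{0019}. The crucial top edge, after local duality \cite[Theorem (5.2.1.2)]{Suz}, becomes $R\mathbf{\Gamma}_x(\mathcal{O}_L, \mathscr{N}) \to R\mathbf{\Gamma}_x(\mathcal{O}_L, \mathscr{N}')$ where $\mathscr{N}'$ is the Cartier dual of $\mathscr{M}$; the cone is $R\mathbf{\Gamma}_x(\mathcal{O}_L, [\mathscr{N} \to \mathscr{N}'])$ which is generically an isomorphism (both have generic fibre $N$), hence coincides with $R\mathbf{\Gamma}(\mathcal{O}_L, [\mathscr{N} \to \mathscr{N}'])$. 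The paper then controls this via B\'egueri's \emph{canonical} smooth resolutions $0 \to \mathscr{N} \to \Res_{\mathscr{M}'/\mathcal{O}_K}\Gm \to H \to 0$ (and similarly for $\mathscr{N}'$); canonicity is what gives the commuting diagram that splits the cone into two generically-iso pieces of smooth group schemes, to which the Lie-length argument applies. Without this two-step move --- the octahedral reduction to the single cone $[\mathscr{N} \to \mathscr{N}']$, and then the functorial smooth resolution --- you do not have a way to compare the two integral models, which is why your proposal stalls at exactly that point.

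Secondary note: your $\mathscr{M}$ denotes the Cartier dual of $\mathscr{N}$, whereas in the paper $\mathscr{M}$ is the schematic closure of $M$ in $\mathscr{B}_2$ (and $\mathscr{M}'$ is the Cartier dual of $\mathscr{N}$). This is purely notational, but it obscures that the comparison you need is between $\mathscr{N}$ and the Cartier dual of the closure of $M$, not between two a priori unrelated integral models of $N^{\vee}$; making that explicit might have led you to the top edge of the square.
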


\begin{proof}
	Let $N$ and $M$ be the kernels of $A_{1} \to A_{2}$ and $B_{2} \to B_{1}$, respectively.
	Let $\mathscr{N}$ and $\mathscr{M}$ be the schematic closures of
	$N$ in $\mathscr{A}_{1}$ and $M$ in $\mathscr{B}_{2}$, respectively.
	Then both $\mathscr{N}$ and $\mathscr{M}$ are finite flat.
	Let $\mathscr{A}_{2}'$ and $\mathscr{B}_{1}'$ be the fppf quotients
	$\mathscr{A}_{1} / \mathscr{N}$ and $\mathscr{B}_{2} / \mathscr{M}$, respectively,
	which are smooth separated group schemes.
	Let $\mathscr{N}'$ be the Cartier dual of $\mathscr{M}$
	and $\mathscr{M}'$ be the Cartier dual of $\mathscr{N}$.
    We have $\mathscr{M} \subset \mathscr{B}_{2}^{0}$
        and $\mathscr{B}_{2}^{0} / \mathscr{M} \cong \mathscr{B}_{1}'^{0}$.
	By \eqref{0012}, we have natural morphisms
		\begin{equation} \label{0029}
				\mathscr{N} \otimes^{L} \mathscr{M}
			\to
					[\mathscr{A}_{1} \to \mathscr{A}_{2}][-1]
				\otimes^{L}
					[\mathscr{B}_{2}^{0} \to \mathscr{B}_{1}^{0}][-1]
			\to
				\Gm
		\end{equation}
	in $D(\mathcal{O}_{K, \fppf})$.
	Over the generic fibers, it gives the Cartier duality between $N$ and $M$.
	This fits in the commutative diagram
		\begin{equation} \label{0028}
			\begin{CD}
					R \mathbf{\Gamma}_{x}(\mathcal{O}_{L}, \mathscr{N})
				@>>>
					R \mathbf{\Gamma}(\mathcal{O}_{L}, \mathscr{M})^{\mathrm{SD}}[-1]
				\\ @VVV @AAA \\
					R \mathbf{\Gamma}_{x}(
						\mathcal{O}_{L},
						[\mathscr{A}_{1} \to \mathscr{A}_{2}][-1]
					)
				@>>>
					R \mathbf{\Gamma}(
						\mathcal{O}_{L},
						[\mathscr{B}_{2}^{0} \to \mathscr{B}_{1}^{0}]
					)^{\mathrm{SD}}
			\end{CD}
		\end{equation}
	along with the morphism \eqref{0018}.
	As in the final step in the proof of Proposition \ref{0019},
	it is enough to show that
	the value of $\chi / e_{L / K}$ at a mapping cone
	of the lower horizontal morphism
	is independent of the choice of $L / K$.
	For this, by the octahedral axiom,
	it is enough to show that
	the value of $\chi / e_{L / K}$ at a mapping cone
	of any other three morphisms in the diagram
	is well-defined and independent of the choice of $L / K$.
	
	For the right vertical morphism of \eqref{0028},
	we have an exact triangle
		\[
				R \mathbf{\Gamma}(\mathcal{O}_{L}, \mathscr{M})
			\to
				R \mathbf{\Gamma}(
					\mathcal{O}_{L},
					[\mathscr{B}_{2}^{0} \to \mathscr{B}_{1}^{0}]
				)[-1]
			\to
				R \mathbf{\Gamma}(
					\mathcal{O}_{L},
					[\mathscr{B}_{1}'^{0} \to \mathscr{B}_{1}^{0}]
				)[-1].
		\]
	Since $\mathscr{B}_{1}'^{0} \to \mathscr{B}_{1}^{0}$ is an isomorphism
	on the generic fibers,
	the value of $\chi / e_{L / K}$ at the third term is
	well-defined and independent of the choice of $L / K$
	by the same argument as the first paragraph of the proof of
	Proposition \ref{0019}.
	By Proposition \ref{0016},
	we get the desired statement in this case.
	
	For the left vertical morphism of \eqref{0028},
	we have an exact triangle
		\[
				R \mathbf{\Gamma}_{x}(\mathcal{O}_{L}, \mathscr{N})
			\to
				R \mathbf{\Gamma}_{x}(
					\mathcal{O}_{L},
					[\mathscr{A}_{1} \to \mathscr{A}_{2}]
				)[-1]
			\to
				R \mathbf{\Gamma}_{x}(
					\mathcal{O}_{L},
					[\mathscr{A}_{2}' \to \mathscr{A}_{2}]
				)[-1].
		\]
	Since $\mathscr{A}_{2}' \to \mathscr{A}_{2}$ is an isomorphism
	on the generic fibers, we have
		\[
				R \mathbf{\Gamma}_{x}(
					\mathcal{O}_{L},
					[\mathscr{A}_{2}' \to \mathscr{A}_{2}]
				)
			\cong
				R \mathbf{\Gamma}(
					\mathcal{O}_{L},
					[\mathscr{A}_{2}' \to \mathscr{A}_{2}]
				).
		\]
	The same argument as the previous case suffices.
	
	For the upper horizontal morphism of \eqref{0028},
	let $\mathscr{N} \to \mathscr{N}'$ be the morphism induced by
	\eqref{0029}.
	Then the upper horizontal morphism of \eqref{0028}
	can be written as the induced morphism
		\[
				R \mathbf{\Gamma}_{x}(\mathcal{O}_{L}, \mathscr{N})
			\to
				R \mathbf{\Gamma}_{x}(\mathcal{O}_{L}, \mathscr{N}')
		\]
	by \cite[Theorem (5.2.1.2)]{Suz}.
	We have
		\[
				R \mathbf{\Gamma}_{x}(\mathcal{O}_{L}, [\mathscr{N} \to \mathscr{N}'])
			\cong
				R \mathbf{\Gamma}(\mathcal{O}_{L}, [\mathscr{N} \to \mathscr{N}'])
		\]
	since $\mathscr{N} \to \mathscr{N}'$ is an isomorphism
	on the generic fibers.
	Note that these generic fibers are not necessarily smooth.
	Let $G$ be the Weil restriction $\Res_{\mathscr{M}' / \mathcal{O}_{K}} \Gm$,
	which is a smooth affine group scheme over $\mathcal{O}_{K}$.
	Let $\mathscr{N} \hookrightarrow G$ be the natural inclusion.
	Set $H = G / \mathscr{N}$,
	which is a smooth affine group scheme over $\mathcal{O}_{K}$.
	Let $G'$ be the Weil restriction $\Res_{\mathscr{M} / \mathcal{O}_{K}} \Gm$.
	Let $\mathscr{N}' \hookrightarrow G'$ be the natural inclusion
	and set $H' = G' / \mathscr{N}'$.
	We have a commutative diagram with exact rows
		\[
			\begin{CD}
					0
				@>>>
					\mathscr{N}
				@>>>
					G
				@>>>
					H
				@>>>
					0,
				\\ @. @VVV @VVV @VVV @. \\
					0
				@>>>
					\mathscr{N}'
				@>>>
					G'
				@>>>
					H'
				@>>>
					0.
			\end{CD}
		\]
	(These are B\'egueri's canonical smooth resolutions;
	see \cite[Chapter III, Theorem A.5]{Milne}.)
	The vertical morphisms are isomorphisms on the generic fibers.
	The diagram induces an exact triangle
		\[
				R \mathbf{\Gamma}(\mathcal{O}_{L}, [\mathscr{N} \to \mathscr{N}'])
			\to
				\frac{
					\mathbf{\Gamma}(\mathcal{O}_{L}, G')
				}{
					\mathbf{\Gamma}(\mathcal{O}_{L}, G)
				}
			\to
				\frac{
					\mathbf{\Gamma}(\mathcal{O}_{L}, H')
				}{
					\mathbf{\Gamma}(\mathcal{O}_{L}, H)
				}.
		\]
        As before, the values of $\chi / e_{L / K}$ at the second and third terms
	are well-defined and independent of the choice of $L$.
	Hence the same is true at the first term.
\end{proof}

\begin{proposition} \label{0022}
	We have $c(A_{1})_{L}' - c(A_{2})_{L}' = c(B_{1})_{L}' - c(B_{2})_{L}'$.
	In particular, we have
	$c(A_{1}) - c(A_{2}) = c(B_{1}) - c(B_{2})$.
\end{proposition}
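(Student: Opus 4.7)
The plan is to combine Propositions \ref{0019}, \ref{0020}, and \ref{0021} by decomposing $A_1 \to A_2$ along the canonical filtration of its kernel recalled at the start of this subsection. The formal glue is the observation that the invariant
\[
    \Delta(A_1 \to A_2)
    :=
        c(A_1)_L' - c(A_2)_L' - c(B_1)_L' + c(B_2)_L'
\]
is additive under composition of isogenies: given a factorization $A_1 \to A' \to A_2$ with dual factorization $B_2 \to B' \to B_1$, the identity $\Delta(A_1 \to A_2) = \Delta(A_1 \to A') + \Delta(A' \to A_2)$ is a straightforward telescoping.

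Next I would invoke the canonical filtration $0 = F_0 \subset F_1 \subset F_2 \subset F_3 \subset F_4 = F$ of the kernel $F$ of $A_1 \to A_2$, whose successive subquotients are respectively infinitesimal multiplicative, infinitesimal unipotent, \'etale of $p$-power order, and \'etale of order prime to $p$. Setting $A_1^{(j)} := A_1 / F_j$ yields a factorization of $A_1 \to A_2$ into four consecutive isogenies with kernels of these four types, together with a compatible dual factorization on the $B_i$ side. By the additivity, it suffices to verify $\Delta = 0$ on each piece. The first piece is covered by Proposition \ref{0019}; the second by Proposition \ref{0021} (an infinitesimal unipotent group is connected unipotent); the third by Proposition \ref{0020}. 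The fourth case is trivial, as already noted in the paragraph preceding Proposition \ref{0019}: when the kernel is \'etale of order invertible in $\mathcal{O}_K$, the induced maps of Lie algebras are isomorphisms on both sides, so each of the two differences $c(A_1^{(j)})_L' - c(A_1^{(j+1)})_L'$ and its dual counterpart vanishes individually.

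For the ``in particular'' statement, I would choose a finite Galois extension $L / K$ large enough that $A_1, A_2$ (and therefore $B_1, B_2$) all acquire semiabelian, in fact good, reduction over $\mathcal{O}_L$. Proposition \ref{0015} then gives $c(A_i)_L' = e_{L/K} \cdot c(A_i)$ and analogously for the $B_i$, so dividing the identity just established by $e_{L/K}$ yields the desired equality $c(A_1) - c(A_2) = c(B_1) - c(B_2)$ of absolute base change conductors.

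The substantive work has already been done in Propositions \ref{0019}, \ref{0020}, and \ref{0021}, whose proofs rely on the duality machinery of Section \ref{Indrat-Proet-Section}, in particular on Proposition \ref{0017} and the trace isomorphism \eqref{0008}. Once these three case-by-case results are in hand, the assembly sketched above is purely formal; the main obstacle therefore lies not in the present step but in the careful mapping-cone computation that delivers the unipotent connected case of Proposition \ref{0021}.
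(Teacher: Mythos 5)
Your proposal is correct and matches the paper's own route: it makes explicit the telescoping additivity of $\Delta(A_1 \to A_2)$, factors the isogeny along the canonical four-step filtration of the kernel, and applies Propositions \ref{0019}, \ref{0021}, \ref{0020} together with the trivial prime-to-$p$ case. One small inaccuracy worth flagging: in the final paragraph you assert that $L$ may be chosen so that the abelian varieties acquire ``in fact good'' reduction, but a general abelian variety only acquires \emph{semiabelian} reduction over a finite extension (an abelian variety with multiplicative reduction never acquires good reduction over any finite extension); this does not affect the argument, since Proposition \ref{0015} requires only semiabelian reduction.
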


\begin{proof}
	This follows from Propositions \ref{0017}, \ref{0019}, \ref{0020} and \ref{0021}.
\end{proof}

\begin{theorem} \label{0023}
	The base change conductor is invariant under duality for Abelian varieties over $K$.
	More precisely, we have $c(A) = c(B)$
	for any Abelian varieties $A$ and $B$ over $K$ dual to each other.
\end{theorem}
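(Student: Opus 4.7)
The plan is to deduce Theorem \ref{0023} directly from Proposition \ref{0022} by means of a one-line ``polarisation trick''. Since the mixed-characteristic case is already due to Chai \cite[Theorem 6.7]{Chai}, I may restrict attention to the equal-characteristic case, which is precisely the setting in which Proposition \ref{0022} has been established.

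The key input is that any Abelian variety $A$ over $K$ admits a polarisation $\lambda \colon A \to B$, where $B = A^{\vee}$; such a $\lambda$ is by definition an isogeny which is symmetric in the sense that $\lambda^{\vee} = \lambda$ under the canonical biduality $A^{\vee\vee} \cong A$. I would then apply Proposition \ref{0022} with $A_{1} := A$, $A_{2} := B$, and isogeny $\lambda \colon A_{1} \to A_{2}$. The duals become $B_{1} = A_{1}^{\vee} = B$ and $B_{2} = A_{2}^{\vee} = A$, so that the dual isogeny $B_{2} \to B_{1}$ is canonically identified with $\lambda^{\vee} \colon A \to B$, which by symmetry equals $\lambda$. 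Proposition \ref{0022} therefore gives
\[
    c(A) - c(B) \;=\; c(B) - c(A),
\]
and since both sides live in $\mathbf{Q}$ this forces $c(A) = c(B)$, as desired.

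Essentially no new obstacle appears at this final stage: the reduction of an arbitrary isogeny to one whose kernel is of a single ``type'' (infinitesimal multiplicative, infinitesimal unipotent, \'etale of $p$-power order, or \'etale of order prime to $p$), carried out in the paragraph preceding Proposition \ref{0019}, together with the case-by-case verifications in Propositions \ref{0019}, \ref{0020}, and \ref{0021}, guarantees that Proposition \ref{0022} is applicable to \emph{every} isogeny between Abelian varieties over $K$, in particular to the polarisation $\lambda$. All the substantive work is therefore hidden in the duality machinery of Section \ref{Indrat-Proet-Section} and in the chain of propositions culminating in \ref{0022}; the theorem itself follows as a purely formal consequence of the symmetry of polarisations.
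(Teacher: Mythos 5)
Your proof is correct and follows the same strategy as the paper: apply Proposition \ref{0022} to an isogeny $A \to B = A^\vee$, obtain $c(A) - c(B) = c(B) - c(A)$, and conclude. One small remark: the symmetry of the polarisation ($\lambda^\vee = \lambda$) plays no role in the argument. Once you set $A_1 = A$ and $A_2 = B$, biduality already gives $B_1 = A_1^\vee = B$ and $B_2 = A_2^\vee = A$, so the identity $c(A_1) - c(A_2) = c(B_1) - c(B_2)$ reads $c(A) - c(B) = c(B) - c(A)$ regardless of what the dual isogeny $B_2 \to B_1$ happens to be. The paper indeed just invokes ``any isogeny $A \to B$'' without requiring a polarisation, which is marginally cleaner; your version is correct but carries an unnecessary hypothesis.
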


\begin{proof}
	Apply Proposition \ref{0022} to any isogeny $A \to B$ to the dual.
	We obtain $c(A) - c(B) = c(B) - c(A)$.
	Thus $c(A) = c(B)$.
\end{proof}

\begin{remark} \label{0030}
	The proof of Proposition \ref{0019} also works for mixed characteristic $K$
	with little modifications.
	In this case, $\mathcal{N}$ is only quasi-finite flat separated
	and not necessarily finite.
	We have
		$
				R \mathbf{\Gamma}(\mathcal{O}_{L}, \mathcal{N})
			\cong
				R \mathbf{\Gamma}(\mathcal{O}_{L}, \mathcal{N}^{\mathrm{f}})
		$
	by \cite[Proposition (5.2.3.5)]{Suz},
	where $\mathcal{N}^{\mathrm{f}}$ is the finite part of $\mathcal{N}$.
	We additionally need to know that
	$R \mathbf{\Gamma}(L, N)$ for mixed characteristic $K$
	is an object of $\mathcal{D}_{u}$
	and the value of $\chi / e_{L / K}$ at this object is
	independent of the choice of $L$.
	But these follow from Proposition \ref{SuzFinFlatProp}
	and \cite[Proposition 4.3.2, Theorem 4.3.3]{Beg},
	where the dimension of $\mathbf{H}^{1}(L, N)$ is
	shown to be $v_{L}(\# N)$
	(where $v_{L}$ is the normalized valuation for $L$
	and $\# N$ the order of $N$).
\end{remark}


\subsection{Isogeny invariance for tori}

Assume that $K$ has equal characteristic.

\begin{theorem}[{cf.\ \cite{CY}}] \label{0025}
	The base change conductor for tori is isogeny invariant.
	More precisely, if $T_{1} \to T_{2}$ is an isogeny between tori over $K$,
	then $c(T_{1}) = c(T_{2})$.
\end{theorem}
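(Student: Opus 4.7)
The plan is to adapt the argument of Proposition \ref{0019} to tori, exploiting that any finite $K$-subgroup scheme of a torus is automatically of multiplicative type. Given an isogeny $\varphi \colon T_{1} \to T_{2}$ with kernel $N$, I first reduce to the case where $N$ is infinitesimal (equivalently, connected) multiplicative: the \'etale part of $N$, being of order prime to $p$, gives an \'etale isogeny which induces an isomorphism on Lie algebras of N\'eron models and therefore preserves $c(-)$. Let $\mathscr{T}_{1}, \mathscr{T}_{2}$ be the N\'eron lft-models of $T_{1}, T_{2}$, let $\mathscr{N}$ be the schematic closure of $N$ in $\mathscr{T}_{1}^{0}$ (which is finite flat over $\mathcal{O}_{K}$ since $N$ is infinitesimal), and set $\mathscr{T}_{2}' = \mathscr{T}_{1}^{0} / \mathscr{N}$, a smooth separated $\mathcal{O}_{K}$-group scheme of finite type with generic fiber $T_{2}$.

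The key reduction is to show that for every finite Galois extension $L / K$, the quantity
\[
    \frac{c(T_{1})_{L}' - c(T_{2})_{L}'}{e_{L / K}}
\]
is independent of $L$. Once this is established, the theorem follows: for $L$ large enough that both $T_{1}$ and $T_{2}$ split over $\mathcal{O}_{L}$, Proposition \ref{0015} gives $c(T_{i})_{L}' = e_{L / K} c(T_{i})$ and so the ratio equals $c(T_{1}) - c(T_{2})$; on the other hand, for $L = K$ the auxiliary group $\mathscr{G}'$ in the proof of Proposition \ref{0015} vanishes, hence $c(T_{i})_{K}' = 0$ and the ratio is zero. These two evaluations of the same $L$-independent quantity force $c(T_{1}) = c(T_{2})$.

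To prove the $L$-independence I follow the bookkeeping in the proof of Proposition \ref{0019}. The difference $c(T_{1})_{L}' - c(T_{2})_{L}'$ can be read off, modulo finite \'etale contributions killed by $\chi$, as the value of $\chi$ at a suitable object of $\mathcal{D}$ built from the two-term complex $[\mathscr{T}_{1}^{0} \to \mathscr{T}_{2}^{0}][-1]$. The term-wise exact sequence of complexes
\[
    0
    \to
        \mathscr{N}
    \to
        [\mathscr{T}_{1}^{0} \to \mathscr{T}_{2}^{0}][-1]
    \to
        [\mathscr{T}_{2}' \to \mathscr{T}_{2}^{0}][-1]
    \to
        0
\]
induces an exact triangle after applying $R \mathbf{\Gamma}_{x}(\mathcal{O}_{L}, -)$, which splits the contribution into two pieces. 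The piece $\mathscr{T}_{2}' \to \mathscr{T}_{2}^{0}$ is an isomorphism on generic fibers, so its contribution computes as an $\mathcal{O}_{L}$-length of a cokernel of Lie algebras by Theorem \ref{Liepointsthm} and \cite[Theorem 2.1(b)]{LLR}, and this length divided by $e_{L / K}$ is manifestly independent of $L$. For the piece $\mathscr{N}$, the duality isomorphism
\[
    R \mathbf{\Gamma}_{x}(\mathcal{O}_{L}, \mathscr{N})
    \cong
    R \mathbf{\Gamma}(\mathcal{O}_{L}, \mathscr{M})^{\mathrm{SD}}[-1]
\]
of \cite[Theorem (5.2.1.2)]{Suz}, applied to the Cartier dual $\mathscr{M}$ of $\mathscr{N}$ (which is \'etale on the generic fiber), combined with Proposition \ref{EtCriterion}, expresses the contribution as the $\mathcal{O}_{L}$-length of the zero-section pullback of $\Omega^{1}_{\mathscr{M} \times_{\mathcal{O}_{K}} \mathcal{O}_{L} / \mathcal{O}_{L}}$, which once again scales linearly in $e_{L / K}$.

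The main technical task will be to verify that the various intermediate objects indeed lie in $\mathcal{D}$ and that the $\chi$-values behave as described, following the same careful bookkeeping as in the proof of Proposition \ref{0019}. In fact, this step is strictly easier than in the abelian variety case because there is no dual ``$B$-side'' to track in parallel: one works throughout with a single complex built from $T_{1}, T_{2}$ and $\mathscr{N}$, and the rigidity of the $\chi / e_{L / K}$ quantity is easier to extract. Once the $L$-independence is secured, evaluating at $L = K$ and at a splitting $L$ yields $c(T_{1}) = c(T_{2})$.
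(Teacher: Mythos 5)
Your plan is correct and uses the same key ingredients as the paper, but packages the bookkeeping differently. The paper's proof works directly with the morphism $\mathbf{\Gamma}(L, T_1)/\mathbf{\Gamma}(\mathcal{O}_L, \mathscr{T}_1) \to \mathbf{\Gamma}(L, T_2)/\mathbf{\Gamma}(\mathcal{O}_L, \mathscr{T}_2)$, shows by a $3{\times}3$ diagram chase (using $\mathbf{H}^{\ge 1}(L, T_i) = 0$ and $\mathbf{\Gamma}(L, N) = 0$) that it has the same kernel and cokernel as $\mathbf{\Gamma}(\mathcal{O}_L, \mathscr{T}_2)/\mathbf{\Gamma}(\mathcal{O}_L, \mathscr{T}_1) \to \mathbf{H}^1(L, N)$, and then splits the latter using the exact sequence with $\mathbf{H}^1(\mathcal{O}_L, \mathscr{N})$ and $\mathbf{\Gamma}(\mathcal{O}_L, \mathscr{T}_2)/\mathbf{\Gamma}(\mathcal{O}_L, \mathscr{T}_2')$. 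You instead apply $R\mathbf{\Gamma}_x(\mathcal{O}_L, -)$ to the two-term complex $[\mathscr{T}_1^0 \to \mathscr{T}_2^0][-1]$ and use the term-wise short exact sequence with $\mathscr{N}$ to split off the two contributions, mirroring the structure of Proposition~\ref{0019} more closely. Both routes ultimately reduce the $\mathscr{N}$-piece to $\mathbf{H}^1(\mathcal{O}_L, \mathscr{M})$ for the Cartier dual $\mathscr{M}$ via \cite[Theorem (5.2.1.2)]{Suz} and Proposition~\ref{EtCriterion}, and the $[\mathscr{T}_2' \to \mathscr{T}_2^0]$-piece to a length of a Lie-algebra cokernel via \cite[Theorem 2.1(b)]{LLR}; and both use Proposition~\ref{IsogL} to place the relevant cone in $\mathcal{D}_u$. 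The differences are presentational: the paper's concrete diagram chase avoids introducing $R\mathbf{\Gamma}_x$ of complexes of Néron models, which you would need to check lies in $\mathcal{D}$ and whose $\chi$-value you would need to identify with $c(T_2)'_L - c(T_1)'_L$ (this does work, using $\mathbf{H}^{\ge 1}(L, T_i) = 0$, but it is a verification you have only deferred). Your route has the virtue of being manifestly parallel to the Abelian variety case, at the cost of carrying the full derived formalism. One small remark on the endgame: the vanishing $c(T_i)'_K = 0$ is even more immediate than you state, since $\mathbf{\Gamma}(K, T_i) = \mathbf{\Gamma}(\mathcal{O}_K, \mathscr{T}_i)$ by the Néron mapping property, so the quotient is zero on the nose without needing to unwind Proposition~\ref{0015}.
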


\begin{proof}
	Let $N$ be the kernel of $T_{1} \to T_{2}$.
	Let $\mathscr{T}_{i}$ be the N\'eron lft-model of $T_{i}$.
	Let $L / K$ be any finite Galois extension. Note first that the maximal étale quotient of $N$ has order invertible in $\Og_K$ because $N$ is multiplicative. In particular, we can factorise $T_1\to T_2$ into an isogeny with infinitesimal kernel and an isogeny of degree prime to $p.$ It clearly suffices to show invariance of $c(-)$ for the two isogenies separately. If the degree is invertible in $\Og_K,$ then the map $\Lie \mathscr{T}_1 \to \Lie \mathscr{T}_2$ is an isomorphism, and the same holds true after base change to $L$ (with Néron lft-models taken over $\Og_L$). Therefore we shall henceforth assume that $N$ is infinitesimal.
    
	We have $\mathbf{H}^{n}(L, T_{i}) = 0$ for all $n \ge 1$
	by \cite[Proposition (3.4.3) (e)]{Suz}.
	Also we have $\mathbf{\Gamma}(L, N) = 0$ since $N$ is infinitesimal.
	Therefore we have a commutative diagram with exact rows and columns
		\[
			\begin{CD}
				@.
					0
				@.
					0
				\\
				@. @VVV @VVV
				\\
					0
				@>>>
					\mathbf{\Gamma}(\mathcal{O}_{L}, \mathscr{T}_{1})
				@>>>
					\mathbf{\Gamma}(\mathcal{O}_{L}, \mathscr{T}_{2})
				@>>>
					\dfrac{
						\mathbf{\Gamma}(\mathcal{O}_{L}, \mathscr{T}_{2})
					}{
						\mathbf{\Gamma}(\mathcal{O}_{L}, \mathscr{T}_{1})
					}
				@>>>
					0
				\\
				@. @VVV @VVV @VVV
				\\
					0
				@>>>
					\mathbf{\Gamma}(L, T_{1})
				@>>>
					\mathbf{\Gamma}(L, T_{2})
				@>>>
					\mathbf{H}^{1}(L, N)
				@>>>
					0
				\\
				@. @VVV @VVV
				\\
				@.
					\dfrac{
						\mathbf{\Gamma}(L, T_{1})
					}{
						\mathbf{\Gamma}(\mathcal{O}_{L}, \mathscr{T}_{1})
					}
				@>>>
					\dfrac{
						\mathbf{\Gamma}(L, T_{2})
					}{
						\mathbf{\Gamma}(\mathcal{O}_{L}, \mathscr{T}_{2})
					}
				\\
				@. @VVV @VVV
				\\
				@.
					0
				@.
					0.
			\end{CD}
		\]
        Therefore the morphism
            \begin{equation} \label{0027}
					\frac{
						\mathbf{\Gamma}(L, T_{1})
					}{
						\mathbf{\Gamma}(\mathcal{O}_{L}, \mathscr{T}_{1})
					}
				\to
					\frac{
						\mathbf{\Gamma}(L, T_{2})
					}{
						\mathbf{\Gamma}(\mathcal{O}_{L}, \mathscr{T}_{2})
					}
            \end{equation}
        and the morphism
		\begin{equation} \label{0024}
				\frac{
					\mathbf{\Gamma}(\mathcal{O}_{L}, \mathscr{T}_{2})
				}{
					\mathbf{\Gamma}(\mathcal{O}_{L}, \mathscr{T}_{1})
				}
			\to
				\mathbf{H}^{1}(L, N)
		\end{equation}
        have isomorphic kernels and isomorphic cokernels.
	By Proposition \ref{IsogL}, the morphism \eqref{0027}
	has mapping cone in $\mathcal{D}_{u}$
	and the value of $\chi$ at this cone is $- c(T_{1})_{L}' + c(T_{2})_{L}'$.
        Hence the same properties hold for the morphism \eqref{0024}.
	Therefore it is enough to show that
	the value of $\chi / e_{L / K}$ at the cone of the morphism \eqref{0024} is
        independent of the choice of $L$.
	
	Let $\mathscr{N}$ be the schematic closure of $N$ in $\mathscr{T}_{1}$,
	which is finite flat over $\mathcal{O}_{K}$.
	Let $\mathscr{T}_{2}'$ be the fppf quotient $\mathscr{T}_{1} / \mathscr{N}$,
	which is a smooth separated group scheme over $\mathcal{O}_{K}$.
	Consider the induced two morphisms
		\[
				\mathbf{\Gamma}(\mathcal{O}_{L}, \mathscr{T}_{1})
			\to
				\mathbf{\Gamma}(\mathcal{O}_{L}, \mathscr{T}_{2}')
			\to
				\mathbf{\Gamma}(\mathcal{O}_{L}, \mathscr{T}_{2}).
		\]
	The cokernel of the first morphism is
	$\mathbf{H}^{1}(\mathcal{O}_{L}, \mathscr{N})$
	since $\mathbf{H}^{1}(\mathcal{O}_{L}, \mathscr{T}_{1}) = 0$
	by \cite[Proposition (3.4.2) (a)]{Suz}.
	The kernel of the second morphism is zero
	since $\mathscr{T}_{2}' \to \mathscr{T}_{2}$ is an isomorphism over $L$.
	Therefore we have an exact sequence
		\[
				0
			\to
				\mathbf{H}^{1}(\mathcal{O}_{L}, \mathscr{N})
			\to
				\frac{
					\mathbf{\Gamma}(\mathcal{O}_{L}, \mathscr{T}_{2})
				}{
					\mathbf{\Gamma}(\mathcal{O}_{L}, \mathscr{T}_{1})
				}
			\to
				\frac{
					\mathbf{\Gamma}(\mathcal{O}_{L}, \mathscr{T}_{2})
				}{
					\mathbf{\Gamma}(\mathcal{O}_{L}, \mathscr{T}_{2}')
				}
			\to
				0.
		\]
	Consider the resulting morphisms
		\[
				\mathbf{H}^{1}(\mathcal{O}_{L}, \mathscr{N})
			\hookrightarrow
				\frac{
					\mathbf{\Gamma}(\mathcal{O}_{L}, \mathscr{T}_{2})
				}{
					\mathbf{\Gamma}(\mathcal{O}_{L}, \mathscr{T}_{1})
				}
			\to
				\mathbf{H}^{1}(L, N).
		\]
	Their composite is injective by \cite[Proposition (3.4.6)]{Suz}.
	Hence the value of $\chi$ at the morphism \eqref{0024} is equal to
		\[
				- \dim
				\frac{
					\mathbf{\Gamma}(\mathcal{O}_{L}, \mathscr{T}_{2})
				}{
					\mathbf{\Gamma}(\mathcal{O}_{L}, \mathscr{T}_{2}')
				}
			+
				\dim
				\frac{
					\mathbf{H}^{1}(L, N)
				}{
					\mathbf{H}^{1}(\mathcal{O}_{L}, \mathscr{N})
				}.
		\]
	The first dimension divided by $e_{L / K}$ is independent of $L$
	by \cite[Theorem 2.1 (b)]{LLR} as before.
	The second dimension divided by $e_{L / K}$ is independent of $L$
	by Propositions \ref{FinDuality} and \ref{EtCriterion}.
	Combining these two, we get the result.
\end{proof}

Again, a slight modification of this proof also works in the mixed characteristic case (see Remark \ref{0030}).


\textsc{Mathematisches Institut der Heinrich-Heine-Universität Düsseldorf, Universitätsstr. 1, 40225 Düsseldorf, Germany} \\
\it E-mail address: \rm \texttt{otto.overkamp@uni-duesseldorf.de}\\
\\
\textsc{
   Department of Mathematics, Chuo University,
   1-13-27 Kasuga, Bunkyo-ku, Tokyo 112-8551, Japan
} \\
\it E-mail address: \rm \texttt{tsuzuki@gug.math.chuo-u.ac.jp} 
\end{document}